\numberwithin{equation}{section}
\def\RR{{\mathbb R}}
\def\MM{{\mathbb M}}
\def\NN{{\mathbb N}}
\def\OO{{\mathbb O}}
\def\SO{{\mathbb S\mathbb O}}
\def\SS{{\mathbb S}}
\def\Z{\mathcal{Z}}
\def\R{\mathcal{R}}
\def\X{Y}
\def\M{X}
\def\cal{\mathcal}
\def\det{{\rm det}}
\def\Det{{\rm Det}}
\def\Aff{{\rm Aff}}
\def\domain{D(f)}
\def\func{\hat f}
\def\exp{{\rm e}}
\def\wto{\rightharpoonup}
\def\dto{{\scriptstyle\buildrel{\scriptstyle\longrightarrow}
\over{{}_{\scriptstyle\longrightarrow}}}}
\def\eps{\varepsilon}
\def\then{\Rightarrow}
\def\inff{\mathop{\inf\limits_{a\in\RR^N}}}
\def\infff{\mathop{\inff\limits_{b\in\RR^m}}}
\def\cupp{\mathop{\cup}}
\def\capp{\mathop{\cap}}
\def\rank{{\rm rang}}
\def\ell{l}
\def\diag{{\rm diag}} 
\def\AffET{\Aff^{\rm reg}}
\def\AffETli{\AffET_{\rm li}}
\newtheorem{theorem}{Th\'eor\`eme}[section]
\newtheorem{lemma}[theorem]{Lemme}
\newtheorem{proposition}[theorem]{Proposition}
\newtheorem{corollary}[theorem]{Corollaire}
\newtheorem{question}[theorem]{Question}
\theoremstyle{definition}
\newtheorem{definition}[theorem]{D\'efinition}
\theoremstyle{remark}
\newtheorem{remark}[theorem]{Remarque}
\newcommand\trans{\pitchfork}
\begin{document}

\title{Relaxation et passage 3D-2D avec contraintes de type d\'eterminant}

\author[Omar Anza Hafsa \& Jean-Philippe Mandallena]{Omar Anza Hafsa \& Jean-Philippe Mandallena}

\address{UNIVERSITE MONTPELLIER II, UMR-CNRS 5508, Place Eug\`ene Bataillon, 34095 Montpellier, France.}

\email{Omar.Anza-Hafsa@univ-montp2.fr}

\address{UNIVERSITE DE NIMES, Site des Carmes, Place Gabriel P\'eri, 30021 N\^\i mes, France.}

\email{jean-philippe.mandallena@unimes.fr}

\keywords{Calcul des variations, hyper\'elasticit\'e, relaxation, passage 3D-2D, $\Gamma$-convergence, \'energie de membrane non lin\'eaire, contrainte d\'eterminant strictement positif, contraintes de type d\'eterminant.\\
\indent{\em Key words.} Calculus of variations, hyperelasticity, relaxation, 3D-2D passage, $\Gamma$ convergence, nonlinear membrane energy, constraint determinant positive, determinant type constraints.}

\date{Janvier 2009 - January 2009}

\begin{abstract}
L'objectif de cet article est, d'une part, de rendre accessible un ensemble d'outils et m\'ethodes pour traiter  la relaxation et le passage 3D-2D avec contraintes de type d\'eterminant, et, d'autre part, de donner une d\'emonstration compl\`ete du passage 3D-2D par $\Gamma$-convergence sous la contrainte d\'eterminant strictement positif.

\smallskip

\begin{center}
\large \sc Relaxation and 3D-2D passage with determinant type constraints
\end{center}

\noindent{\sc Abstract.} The goal of this paper is, on the one hand, to make available a set of tools and methods to deal with relaxation and 3D-2D passage with determinant type constraints, and, on the other hand, to give a complete proof of the 3D-2D passage by $\Gamma$-convergence under the constraint determinant positive.
\end{abstract}

\maketitle

\tableofcontents

\section{Introduction et pr\'eliminaires}

Une mod\'elisation math\'ematique r\'ealiste de l'hyper\'elasticit\'e doit prendre en compte {\em la non interp\'en\'etrabilit\'e de la mati\`ere} et {\em la n\'ecessit\'e d'une \'energie infinie pour compresser un volume de mati\`ere en un point}. Ainsi la densit\'e d'\'energie $W:\MM^{m\times N}\to[0,+\infty]$ (avec $m=N=3$ dans le cadre de l'hyper\'elasticit\'e) associ\'ee \`a un mat\'eriau hyper\'elastique  doit  satisfaire les deux contraintes suivantes :
\begin{eqnarray}
&& W(F)=+\infty \hbox{ si et seulement si }\det F\leq 0\ ;\label{Cst1}\\
&& W(F)\to+\infty\hbox{ lorsque }\det F\to 0,\label{Cst2}
\end{eqnarray}
o\`u $\MM^{m\times N}$ est l'espace des matrices \`a $m$ lignes et $N$ colonnes et $\det F$ est le d\'eterminant de $F$ (lorsque $m=N$). Cet article se concentre sur le d\'eveloppement d'outils et m\'ethodes pour traiter la relaxation et le passage 3D-2D en pr\'esence des contraintes (\ref{Cst1}) et (\ref{Cst2}) dans le cadre du calcul des variations dit ``moderne", i.e., l'\'etude de la minimisation des fonctionnelles int\'egrales du point de vue de la m\'ethode directe du calcul des variations (voir \S 1.1). Bien que ces contraintes apparaissent en hyper\'elasticit\'e peu de choses sont connues sur le calcul des variations avec contraintes de type d\'eterminant. En effet, la plupart des r\'esultats concernent le cas o\`u $W$ satisfait la condition de croissance d'ordre $p>1$ suivante 
\[
W(F)\leq c(1+|F|^p)\hbox{ pour tout }F\in\MM^{m\times N}\hbox{ avec }c>0,
\]
ce qui est incompatible avec (\ref{Cst1}) et (\ref{Cst2}) (voir \S 1.2-1.4). En fait, il est possible de d\'epasser le cas \`a croissance d'ordre $p$ en d\'egageant le concept plus ``souple" d'int\'egrande $p$-ample, i.e., $W$ est $p$-ample si 
\[
\Z W(F)\leq c(1+|F|^p)\hbox{ pour tout }F\in\MM^{m\times N}\hbox{ avec }c>0
\]
(avec $\Z W$ d\'efinie en \eqref{DefOfZW}). Celui-ci permet de traiter la relaxation et le passage 3D-2D sous la contrainte d\'eterminant non nul (voir \S 2.1-2.4 et \S 3.1-3.2), i.e., $W$ satisfait \eqref{Cst1prime} et \eqref{Cst2}, aussi bien que le passage 3D-2D sous la contrainte d\'eterminant strictement positif (voir \S 3.1 et \S 3.3), i.e., $W$ satisfait \eqref{Cst1} et \eqref{Cst2}, qui, \'etant plus difficile \`a traiter, n\'ecessite l'utilisation de deux th\'eor\`emes d'approximation ainsi qu'un th\'eor\`eme de permutation de l'infimum et de l'int\'egrale (voir \S 4-5).

\medskip

L'objectif principal de cet article est de fournir une d\'emonstration compl\`ete du th\'eor\`eme de passage 3D-2D par $\Gamma$-convergence sous la contrainte d\'eterminant strictement positif que nous avons mis au point dans \cite{oah-jpm08b} (voir le th\'eor\`eme \ref{ThMemb2}). Dans un souci de synth\`ese, il nous a paru n\'ecessaire de donner une vue d\'etaill\'ee et unifi\'ee, avec des am\'eliorations, de nos travaux \cite{oah-jpm06,oah-jpm07,oah-jpm08b,oah-jpm08a}.  On retrouvera donc  des r\'esultats d\'ej\`a d\'emontr\'es, mais aussi d'autres qui \'etaient seulement \'enonc\'es et difficiles d'acc\`es : un de nos objectifs \'etant de rendre accessible un corpus d'outils et m\'ethodes pour traiter la relaxation et le passage 3D-2D en pr\'esence de  contraintes de type d\'eterminant.  Enfin, nous avons essay\'e autant que possible, tout au long de cet article, d'\^etre pr\'ecis sur les multiples contributions (Percivale 1991, Le Dret-Raoult 1993, Ben Belgacem 1996, Ben Belgacem-Bennequin 1996) qui ont amen\'e \`a la r\'esolution compl\`ete du  probl\`eme du passage 3D-2D par $\Gamma$-convergence sous les contraintes \eqref{Cst1} et \eqref{Cst2}.

\subsection{G\'en\'eralit\'es sur le calcul des variations}
 
De fa\c con g\'en\'erale le calcul des variations vise \`a minimiser des fonctionnelles int\'egrales du type 
$$
E(\phi)=\int_{\Omega}L(x,\phi(x),\nabla \phi(x))dx,
$$
o\`u $\Omega\subset\RR^N$ est un ouvert born\'e et $L:\Omega\times\RR^N\times\MM^{m\times N}\to\RR\cup\{+\infty\}$ est l'int\'egrande, lorsque $\phi:\Omega\to\RR^m$ satisfait une condition au bord du type $\phi=\phi_0$ sur $\partial\Omega$ et d\'ecrit un espace de Banach $X$ (r\'eflexif) sur lequel $E$ et la condition au bord ont un sens. Pour traiter ce type de probl\`eme de minimisation on utilise {\em la m\'ethode directe du calcul des variations} qui consiste \`a v\'erifier les trois points suivants :
\begin{enumerate}
\item[$\diamond$] $\mathcal{A}:=\{\phi\in X:\phi=\phi_0\hbox{ sur }\partial\Omega\}$ est non vide et $E$ est minor\'ee sur $\mathcal{A}$, assurant ainsi que $-\infty<\alpha:=\inf\{E(\phi):\phi\in\mathcal{A}\}<+\infty$\ ;
\item[$\diamond$] il existe une suite $\{\bar\phi_n\}_{n\geq 1}$ minimisante pour $E$ dans $\mathcal{A}$, i.e., $\{\bar\phi_n\}_{n\geq 1}\subset\mathcal{A}$ et $\lim_{n\to+\infty}E(\bar\phi_n)=\alpha$, telle que, \`a une sous-suite pr\`es, $\bar \phi_n\wto \bar\phi$ avec $\bar\phi\in\mathcal{A}$ et $\bar\phi=\phi_0$ sur $\partial\Omega$ (o\`u ``$\wto$" d\'esigne  la convergence faible dans $X$)\ ;
\item[$\diamond$] $E$ est s\'equentiellement faiblement semi-continue inf\'erieurement (sfsci) dans $X$, i.e., $E(\phi)\leq\liminf_{n\to+\infty}E(\phi_n)$
pour tout $\phi_n\wto \phi$ dans $X$. 
\end{enumerate}
 On a alors $\bar \phi\in\mathcal{A}$ et $E(\bar\phi)\leq \lim_{n\to+\infty}E(\bar \phi_n)=\alpha$, i.e., $\bar \phi$ est un minimiseur de $E$ sur $\mathcal{A}$. 

Ici,  $X=W^{1,p}(\Omega;\RR^m)$ avec $p>1$ et $L(x,s,F)=W(F)-\langle f(x),s\rangle$ o\`u $\langle\cdot,\cdot\rangle$ est le produit scalaire dans $\RR^N$, $f\in L^q(\Omega;\RR^m)$ avec ${1/p}+{1/q}=1$ et $W:\MM^{m\times N}\to[0,+\infty]$ est Borel mesurable et coercive, i.e., il existe $C>0$ tel que $W(F)\geq C|F|^p$ pour chaque $F\in\MM^{m\times N}$. Avec ce cadre de travail, les deux premiers points se v\'erifient assez facilement. Le point difficile \`a v\'erifier est  le troisi\`eme, i.e., montrer que $E:W^{1,p}(\Omega;\RR^m)\to\RR\cup\{+\infty\}$ d\'efinie par $E:=I-J$ avec $I:W^{1,p}(\Omega;\RR^m)\to[0,+\infty]$ et $J:W^{1,p}(\Omega;\RR^m)\to\RR$ donn\'ees respectivement par 
\begin{equation}\label{DefsOfFuncts}
 I(\phi):=\int_{\Omega}W(\nabla \phi(x))dx\hbox{ et }J(\phi):=\int_{\Omega}\langle f(x),\phi(x)\rangle dx
\end{equation}
est sfsci dans $W^{1,p}(\Omega;\RR^m)$. Comme $J$ est (une forme) lin\'eaire et fortement continue dans $W^{1,p}(\Omega;\RR^m)$ tout revient \`a \'etudier la semi-continuit\'e inf\'erieure faible de $I$.  En pr\'esence des conditions (\ref{Cst1}) et (\ref{Cst2}) (et sans hypoth\`ese de polyconvexit\'e) ce probl\`eme de semi-continuit\'e inf\'erieure n'est pas encore r\'esolu (voir \cite{ball98,ball02}). Dans \cite{oah-jpm07,oah-jpm08a} cette question a \'et\'e abord\'e sous l'angle de la relaxation par la mise au point d'un th\'eor\`eme g\'en\'eral utilisable lorsque $W$ satisfait (\ref{Cst2}) et 
\begin{eqnarray}\label{Cst1prime}
 W(F)=+\infty\hbox{ si et seulement si }\det F=0.
\end{eqnarray}
(On peut noter que la condition (\ref{Cst1prime}) est ``moins m\'ecanique" que (\ref{Cst1}) car bien qu'elle interdise ``la compression d'un volume de mati\`ere en un point", elle n'emp\^eche pas ``l'interp\'en\'etration de la mati\`ere".)  Le passage 3D-2D en pr\'esence des conditions (\ref{Cst1prime}) et (\ref{Cst2}) a \'et\'e trait\'e dans \cite{oah-jpm06}. Utilisant la m\'ethode d\'evelopp\'ee dans \cite{oah-jpm06} et les travaux de Ben Belgacem (voir \cite{benbelgacem96,benbelgacem97,benbelgacem00}), dans \cite{oah-jpm08b} nous avons \'etudi\'e le  passage 3D-2D sous les contraintes plus r\'ealistes (\ref{Cst1}) et (\ref{Cst2}).

Avant de d\'ecrire en d\'etail ces r\'esultats (voir \S 2 pour \cite{oah-jpm07,oah-jpm08a} et voir \S 3 pour \cite{oah-jpm06,oah-jpm08b}) nous donnons une vue succincte de quelques notions (quasiconvexit\'e et semi-continuit\'e inf\'erieure (voir \S 1.2), relaxation (voir \S 1.3), passage 3D-2D (voir \S 1.4)) du calcul des variations.

\subsection{Quasiconvexit\'e et semi-continuit\'e inf\'erieure}

Le concept de quasiconvexit\'e a \'et\'e introduit par Morrey en 1952 (voir \cite{morrey52}).  
\begin{definition}\label{Def_Quasi} 
On dit que $W$ est quasiconvexe si  
 $$
 W(F)\leq \int_Y W(F+\nabla\varphi(x))dx
 $$
 pour tout $F\in\MM^{m\times N}$ et tout $\varphi\in W^{1,\infty}_0(Y;\RR^m)$ avec $Y:=]0,1[^N$. 
\end{definition}
 Malgr\'e les apparences, la d\'efinition \ref{Def_Quasi} est ind\'ependante de $Y$ (voir \cite[Proposition 2.3]{ball-murat84}, voir aussi la proposition \ref{FonsecaProperties}(a)). La quasiconvexit\'e n'est pas identique \`a la convexit\'e. Plus pr\'ecis\'ement, si $W$ est convexe, i.e., 
\begin{equation}\label{CondConvexity}
W(tF+(1-t)F^\prime)\leq tW(F)+(1-t)W(F^\prime)
\end{equation}
pour tous $F,F^\prime\in\MM^{m\times N}$ et tout $t\in]0,1[$, alors $W$ est quasiconvexe. La r\'eciproque est vraie dans le cas scalaire, i.e., $\min\{m,N\}=1$, mais fausse dans le cas vectoriel, i.e., $\min\{m,N\}>1$, (par exemple, si $m=N>1$, l'application $F\mapsto|\det F|$ est quasiconvexe mais n'est pas convexe). La quasiconvexit\'e est aussi reli\'ee \`a d'autres notions de convexit\'e : la polyconvexit\'e et la rang-1 convexit\'e. On dit que $W$ est polyconvexe si $W(F)=g({\rm T}(F))$ o\`u $g$ est une certaine fonction convexe et ${\rm T}(F)$ est le vecteur form\'e par toutes les matrices des ``sous-d\'eterminants" de $F$. Par exemple, si $m=N=3$, $W$ est polyconvexe si et seulement si il existe une fonction convexe $g:\MM^{3\times 3}\times\MM^{3\times 3}\times\RR\to[0,+\infty]$ telle que 
$
W(F)=g(F,{\rm cof} F,\det F)b
$
pour tout $F\in\MM^{3\times 3}$ o\`u ${\rm cof}F$ est la matrice des cofacteurs de $F$. La polyconvexit\'e implique la quasiconvexit\'e mais la r\'eciproque est fausse (voir \cite{alibert-dacorogna92,zhang92}). On dit que $W$ est rang-1 convexe si (\ref{CondConvexity}) a lieu pour tout $t\in]0,1[$ et tous $F,F^\prime\in\MM^{m\times N}$ avec ${\rm rang}(F-F^\prime)\leq1$. Si $W$ est quasiconvexe et finie alors $W$ est rang-1 convexe. Si $m\geq 3$ et $N\geq 2$, la rang-1 convexit\'e n'implique pas la quasiconvexit\'e (voir \cite{sverak94}). (Le cas $m=2$ et $N\geq 2$ est encore ouvert.)

A cause du r\'esultat suivant (voir \cite[Corollary 3.2]{ball-murat84}) la quasiconvexit\'e est un concept central du calcul des variations. 
\begin{theorem}\label{CondNessDeQuasi} 
Si $I$ est sfsci dans $W^{1,p}(\Omega;\RR^m)$ alors $W$ est quasiconvexe.
\end{theorem}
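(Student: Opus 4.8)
The plan is to test the sequential weak lower semicontinuity (sfsci) of $I$ against an oscillating sequence converging weakly to an affine map. Fix $F\in\MM^{m\times N}$ and $\varphi\in W^{1,\infty}_0(Y;\RR^m)$; we must prove $W(F)\le\int_Y W(F+\nabla\varphi)\,dx$, and we may assume the right-hand side is finite, since otherwise there is nothing to prove. Put $L_F(x):=Fx$; as $\Omega$ is bounded, $L_F\in W^{1,p}(\Omega;\RR^m)$ and $I(L_F)=|\Omega|\,W(F)$ with $0<|\Omega|<+\infty$. The goal is to build $\phi_n\wto L_F$ in $W^{1,p}(\Omega;\RR^m)$ with
\[
I(\phi_n)=|\Omega|\int_Y W(F+\nabla\varphi)\,dx\qquad\hbox{for every }n\ge1 .
\]
Granting this, the sfsci of $I$ gives $|\Omega|\,W(F)=I(L_F)\le\liminf_{n\to+\infty}I(\phi_n)=|\Omega|\int_Y W(F+\nabla\varphi)\,dx$, and dividing by $|\Omega|$ yields the inequality of D\'efinition~\ref{Def_Quasi}; since $F$ and $\varphi$ are arbitrary, $W$ is quasiconvexe.

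For the construction, extend $\varphi$ to a $Y$-periodic locally Lipschitz map $\tilde\varphi:\RR^N\to\RR^m$ (legitimate because $\varphi$ vanishes on $\partial Y$); then $\nabla\tilde\varphi$ is the $Y$-periodic extension of $\nabla\varphi$ and $\int_Y\nabla\varphi\,dx=0$ by the divergence theorem, so $\nabla\tilde\varphi$ has zero mean over every unit cube with integer corners. Fix a locally finite partition, up to a Lebesgue-null set, of $\Omega$ into pairwise disjoint open cubes $\{Q_i\}_{i\ge1}$ — for instance a Whitney decomposition — with each $Q_i$ of side length $r_i$ and lower corner $a_i$, and for $n\ge1$ define $\phi_n$ on each $Q_i$ by
\[
\phi_n(x):=Fx+\frac{r_i}{k_i}\,\tilde\varphi\Big(\frac{k_i}{r_i}(x-a_i)\Big),\qquad x\in Q_i,
\]
where $k_i=k_i(n)\in\NN$ is chosen large enough that $r_i/k_i<1/n$. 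Since $\tilde\varphi$ vanishes on $\partial Y$, hence on the boundary of $]0,k_i[^N$, each piece equals $Fx$ on $\partial Q_i$, so the pieces glue into a single $\phi_n\in W^{1,\infty}(\Omega;\RR^m)\subset W^{1,p}(\Omega;\RR^m)$ with $\|\phi_n-L_F\|_{L^\infty}\le\|\varphi\|_{L^\infty}/n$, $\|\nabla\phi_n\|_{L^\infty}\le|F|+\|\nabla\varphi\|_{L^\infty}$, and $\nabla\phi_n(x)=F+\nabla\tilde\varphi\big(\frac{k_i}{r_i}(x-a_i)\big)$ on $Q_i$. A change of variables on each $Q_i$, using that $]0,k_i[^N$ is tiled by $k_i^N$ integer-corner unit cubes on each of which $W(F+\nabla\tilde\varphi)$ coincides, up to a translation, with $W(F+\nabla\varphi)$, gives $\int_{Q_i}W(\nabla\phi_n)\,dx=|Q_i|\int_Y W(F+\nabla\varphi)\,dx$; summing over $i$ (all terms being non-negative) produces the displayed energy identity, exactly and for every $n$, so no truncation of $W$ and no passage to a limit are needed here, and the value stays finite by our reduction.

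It remains to check $\phi_n\wto L_F$ in $W^{1,p}(\Omega;\RR^m)$. From $\|\phi_n-L_F\|_{L^\infty}\to0$ and $|\Omega|<+\infty$ we get $\phi_n\to L_F$ strongly in $L^p(\Omega;\RR^m)$. The gradients $\{\nabla\phi_n\}$ are bounded in $L^\infty(\Omega;\MM^{m\times N})\hookrightarrow L^p(\Omega;\MM^{m\times N})$, so, as $p>1$, it is enough to identify their weak limit by testing against $\psi\in C_c^\infty(\Omega;\MM^{m\times N})$: by local finiteness of the decomposition only finitely many $Q_i$ meet the support of $\psi$, and for each of these $\int_{Q_i}\psi:(\nabla\phi_n-F)\,dx=\int_{Q_i}\psi:\nabla\tilde\varphi\big(\frac{k_i}{r_i}(x-a_i)\big)\,dx\to0$ as $n\to+\infty$ by the Riemann--Lebesgue lemma (the mean of $\nabla\tilde\varphi$ being zero), so the finite sum tends to $0$; hence $\nabla\phi_n\wto F$ in $L^p$ and $\phi_n\wto L_F$ in $W^{1,p}(\Omega;\RR^m)$. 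Applying the sfsci of $I$ as in the first paragraph finishes the proof. I expect the one step needing a genuine argument to be this last one — controlling the oscillatory gradients over an exhaustion of a possibly irregular bounded open set $\Omega$ — and it is handled precisely by combining a locally finite (Whitney-type) cube decomposition with the \emph{exact} tiling afforded by $\varphi\in W^{1,\infty}_0(Y;\RR^m)$: this reduces both the energy computation and the weak-convergence test to finite sums and sidesteps the boundary-layer and truncation technicalities that otherwise appear when one oscillates a single bump over all of $\Omega$ or works directly with a Borel $W$ possibly equal to $+\infty$.
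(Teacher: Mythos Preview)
The paper does not supply its own proof of this statement; it is quoted (with a reference to Ball--Murat) as a known result and then used. Your argument is correct and follows the classical route: tile $\Omega$ by rescaled copies of $Y$ via a locally finite (Whitney-type) cube decomposition, oscillate the $Y$-periodic extension of $\varphi$ on each cube at frequency $k_i(n)\to+\infty$, and apply the sfsci of $I$ to the resulting sequence. Each step checks out: the periodic extension is legitimate since $\varphi\in W^{1,\infty}_0(Y;\RR^m)$ vanishes on $\partial Y$; the countable energy sum $\sum_i|Q_i|\int_Y W(F+\nabla\varphi)\,dx$ is justified because $W\ge0$; the gluing into a single $W^{1,\infty}(\Omega;\RR^m)$ function works because the pieces agree with $Fx$ on cube interfaces and the decomposition is locally finite (so the weak gradient is the piecewise one, with no interfacial contributions); and the weak convergence of the gradients follows from the uniform $L^\infty$ bound together with the Riemann--Lebesgue lemma applied on the finitely many cubes meeting the support of any given test $\psi\in C_c^\infty(\Omega;\MM^{m\times N})$ (for each such fixed $i$, $r_i>0$ is fixed and $k_i(n)>nr_i\to+\infty$). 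This is essentially the standard textbook proof; the Vitali-covering oscillating constructions the paper uses elsewhere (e.g.\ in establishing $\overline{I}_{\rm aff}\le\Z I$ on affine maps) are in the same spirit, with Vitali coverings by scaled cubes playing the role of your Whitney decomposition.
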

 Un probl\`eme (ouvert) important est de savoir si la quasiconvexit\'e est aussi une condition suffisante pour que $I$ soit sfsci dans $W^{1,p}(\Omega;\RR^m)$. Du point de vue de l'hyper\'elasticit\'e, la question (ouverte) est de savoir s'il existe une r\'eciproque du th\'eor\`eme \ref{CondNessDeQuasi} qui est compatible avec (\ref{Cst1}) et (\ref{Cst2}). 

En 1984, Ball et Murat ont d\'emontr\'e le th\'eor\`eme suivant (voir \cite[Theorem 4.5(i)]{ball-murat84} voir aussi \cite{ball77} pour des r\'esultats plus g\'en\'eraux faisant intervenir la polyconvexit\'e).
\begin{theorem}\label{BallSciTheorem}
Soit $W:\MM^{N\times N}\to[0,+\infty]$ d\'efinie par $W(F):=|F|^p+h(\det F)$ avec $h:\RR\to[0,+\infty]$ et $p\geq N$. Si $h$ est sci et convexe {(}ce qui implique que $W$ est polyconvexe{)} alors $I$ est sfsci dans $W^{1,p}(\Omega;\RR^m)$.
\end{theorem}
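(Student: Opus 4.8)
Le plan est de scinder $I=I_1+I_2$, o\`u (ici $m=N$)
$$
I_1(\phi):=\int_\Omega|\nabla\phi(x)|^p\,dx,\qquad I_2(\phi):=\int_\Omega h(\det\nabla\phi(x))\,dx,
$$
et de montrer s\'epar\'ement que $I_1$ et $I_2$ sont sfsci dans $W^{1,p}(\Omega;\RR^N)$. Soit $\phi_n\wto\phi$ dans $W^{1,p}$\ ; on peut supposer $\sup_n I(\phi_n)<+\infty$ (sinon il n'y a rien \`a montrer) et, quitte \`a extraire une sous-suite, que $\lim_n I(\phi_n)$ existe, si bien que $\{\phi_n\}$ est born\'ee dans $W^{1,p}$.

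Le terme $I_1$ est le plus facile\ : l'application $\phi\mapsto\|\nabla\phi\|_{L^p}^p$ est convexe (semi-norme $\phi\mapsto\|\nabla\phi\|_{L^p}$ compos\'ee avec $t\mapsto t^p$, convexe croissante sur $[0,+\infty[$) et fortement continue sur $W^{1,p}$, donc sfsci\ ; d'o\`u $\liminf_n I_1(\phi_n)\geq I_1(\phi)$.

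Le terme $I_2$ repose sur la continuit\'e faible du jacobien. On \'etablirait d'abord, par r\'ecurrence sur l'ordre des sous-d\'eterminants (chaque mineur d'ordre $k$ de $\nabla\phi$ s'\'ecrivant, gr\^ace \`a l'identit\'e de Piola, comme une divergence d'expressions ne faisant intervenir que $\phi$ et les mineurs d'ordre $k-1$), que $\cof\nabla\phi_n\wto\cof\nabla\phi$ dans $L^{p/(N-1)}(\Omega)$\ ; puis, en combinant la compacit\'e de Rellich-Kondrachov ($\phi_n\to\phi$ fortement dans $L^r(\Omega;\RR^N)$ pour tout $r<+\infty$) avec l'identit\'e distributionnelle
$$
\det\nabla\phi=\sum_{j=1}^N\partial_j\big(\phi^1\,(\cof\nabla\phi)_{1j}\big),
$$
on obtiendrait que $\det\nabla\phi_n\wto\det\nabla\phi$ dans $L^{p/N}(\Omega)$ si $p>N$, et que $\det\nabla\phi_n\to\det\nabla\phi$ dans $\mathcal D'(\Omega)$, la suite $\{\det\nabla\phi_n\}$ restant born\'ee dans $L^1(\Omega)$, si $p=N$.

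Il resterait \`a en d\'eduire que $I_2$ est sfsci. Pour $p>N$ : de $\det\nabla\phi_n\wto\det\nabla\phi$ dans $L^{p/N}(\Omega)$ (avec $p/N>1$) et du fait que $v\mapsto\int_\Omega h(v)\,dx$ est convexe et fortement sci sur $L^{p/N}(\Omega)$ (convexit\'e de $h$\ ; semi-continuit\'e forte par le lemme de Fatou, $h$ \'etant positive et sci), donc sfsci, on tire $\liminf_n I_2(\phi_n)\geq I_2(\phi)$ et le th\'eor\`eme s'ensuit. La difficult\'e principale sera le cas critique $p=N$\ : on ne dispose alors que de la convergence distributionnelle de $\det\nabla\phi_n$ et d'une borne $L^1$, sans \'equi-int\'egrabilit\'e en g\'en\'eral, et l'on ne peut donc pas passer directement \`a la limite dans $\int_\Omega h(\det\nabla\phi_n)\,dx$\ ; il faudra un argument d'approximation suppl\'ementaire (r\'egularisation de $\phi_n$ et troncature de $h$, \`a la Ball) pour ma\^\i triser d'\'eventuelles concentrations du jacobien. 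On notera toutefois que si $h$ est surlin\'eaire \`a l'infini, la borne sur $\int_\Omega h(\det\nabla\phi_n)\,dx$ donne l'\'equi-int\'egrabilit\'e de $\{\det\nabla\phi_n\}$ (crit\`ere de de la Vall\'ee-Poussin), d'o\`u $\det\nabla\phi_n\wto\det\nabla\phi$ dans $L^1(\Omega)$ et la conclusion comme ci-dessus\ ; ce sont les $h$ born\'es ou \`a croissance lin\'eaire qui exigeront ce travail.
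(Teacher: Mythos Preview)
Le papier ne d\'emontre pas ce th\'eor\`eme\ : il le cite simplement comme r\'esultat de Ball et Murat \cite[Theorem 4.5(i)]{ball-murat84}, sans en donner de preuve. Il n'y a donc pas de ``preuve du papier'' \`a laquelle comparer ta proposition.

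Ton esquisse est correcte dans l'esprit et suit la ligne classique de Ball (1977). La d\'ecomposition $I=I_1+I_2$ est l\'egitime parce que les deux termes sont positifs, donc $\liminf(I_1+I_2)\geq\liminf I_1+\liminf I_2$. Le traitement de $I_1$ est correct, et celui de $I_2$ pour $p>N$ aussi (continuit\'e faible du jacobien dans $L^{p/N}$, puis sfsci de la fonctionnelle convexe $v\mapsto\int h(v)$). Tu identifies \`a juste titre le cas critique $p=N$ comme le point d\'elicat, et ta remarque sur le cas surlin\'eaire (de la Vall\'ee-Poussin $\Rightarrow$ Dunford-Pettis $\Rightarrow$ convergence faible $L^1$) est juste. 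Pour $h$ \`a croissance au plus lin\'eaire, la voie que tu \'evoques (r\'egularisation et troncature) est effectivement celle de Ball\ ; une alternative, plus proche de l'article original de Ball--Murat, consiste \`a exploiter directement la structure polyconvexe $W(F)=g(F,\det F)$ avec $g$ convexe et \`a raisonner par affines minorantes de $g$ combin\'ees \`a une localisation (ce qui revient, en langage moderne, \`a un argument de mesures de Young). Ton ``il faudra un argument suppl\'ementaire'' est donc honn\^ete mais reste un peu vague\ ; pour une esquisse c'est acceptable.
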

 Le th\'eor\`eme \ref{BallSciTheorem} est compatible avec (\ref{Cst1}) et (\ref{Cst2}) (par exemple, on peut l'utiliser avec $h:\RR\to[0,+\infty]$ donn\'ee par $h(t)={1/ t}$  si $t>0$ et $h(t)=+\infty$ si $t\leq 0$). Cependant, il n'est pas compl\'etement satisfaisant puisqu'il fait appel \`a la polyconvexit\'e qui n'est pas \'equivalente \`a la quasiconvexit\'e.  

En 1984, Acerbi et Fusco ont prouv\'e le th\'eor\`eme suivant (voir \cite[Theorem II.4]{acerbi-fusco84}).
\begin{theorem}\label{AcerbiFuscoTheorem}
Si $W$ est continue, quasiconvexe et satisfait la condition de croissance d'ordre $p$ suivante 
\begin{eqnarray}\label{CondCroiss}
W(F)\leq c(1+|F|^p)\hbox{ pour tout }F\in\MM^{m\times N} \hbox{ avec }c>0\hbox{,}
\end{eqnarray}
alors $I$ est sfsci dans $W^{1,p}(\Omega;\RR^m)$.
\end{theorem}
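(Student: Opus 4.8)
\emph{Proof proposal.} The plan is to use the blow-up method. Let $\phi_n\wto\phi$ in $W^{1,p}(\Omega;\RR^m)$; after extracting a subsequence I may assume that $\int_\Omega W(\nabla\phi_n)\,dx$ converges to $\ell:=\liminf_n\int_\Omega W(\nabla\phi_n)\,dx$ and that the nonnegative Radon measures $\mu_n:=W(\nabla\phi_n)\,dx$ and $\lambda_n:=|\nabla\phi_n|^p\,dx$ on $\Omega$ converge weak-$*$ to finite measures $\mu$ and $\lambda$ (their masses are uniformly bounded by the growth condition \eqref{CondCroiss} together with the boundedness of $\{\phi_n\}$ in $W^{1,p}$). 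Since $\mu(\Omega)\le\liminf_n\mu_n(\Omega)=\ell$ and the Lebesgue-absolutely-continuous part of $\mu$ satisfies $\frac{d\mu}{dx}\,dx\le\mu$, it suffices to prove that
\[
W(\nabla\phi(x_0))\le\frac{d\mu}{dx}(x_0)\qquad\text{for a.e. }x_0\in\Omega,
\]
because then $\int_\Omega W(\nabla\phi)\,dx\le\int_\Omega\frac{d\mu}{dx}\,dx\le\mu(\Omega)\le\ell$.

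Fix a \emph{good} point $x_0\in\Omega$, by which I mean: a Lebesgue point of $\nabla\phi$, a point of approximate differentiability of $\phi$ with approximate gradient $\nabla\phi(x_0)$, and a point at which $\frac{d\mu}{dx}(x_0)=\lim_{\rho\to0}\mu(B_\rho(x_0))/|B_\rho(x_0)|$ and $\frac{d\lambda}{dx}(x_0)=\lim_{\rho\to0}\lambda(B_\rho(x_0))/|B_\rho(x_0)|$ exist and are finite; a.e. $x_0$ has all these properties. I blow up at $x_0$: for small $\rho>0$ and $y\in B_1:=B_1(0)$ set
\[
w_{n,\rho}(y):=\frac{\phi_n(x_0+\rho y)-\phi(x_0)-\rho\nabla\phi(x_0)y}{\rho},
\]
so that $\nabla\phi(x_0)+\nabla w_{n,\rho}(y)=\nabla\phi_n(x_0+\rho y)$ and, by a change of variables,
\[
\frac1{|B_1|}\int_{B_1}W\bigl(\nabla\phi(x_0)+\nabla w_{n,\rho}\bigr)\,dy=\frac{\mu_n(B_\rho(x_0))}{|B_\rho(x_0)|};
\]
together with $\phi_n\wto\phi$ this also shows that $\limsup_n\int_{B_1}|\nabla w_{n,\rho}|^p\,dy$ is dominated by a quantity tending, as $\rho\to0$, to $C\bigl(\tfrac{d\lambda}{dx}(x_0)+|\nabla\phi(x_0)|^p\bigr)$. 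For all but countably many $\rho$ one has $\mu(\partial B_\rho(x_0))=0$, so the middle term above converges as $n\to\infty$ to $\mu(B_\rho(x_0))/|B_\rho(x_0)|$; moreover $w_{n,\rho}\wto w_\rho:=\rho^{-1}\bigl(\phi(x_0+\rho\,\cdot)-\phi(x_0)-\rho\nabla\phi(x_0)\,\cdot\bigr)$ in $W^{1,p}(B_1;\RR^m)$, hence strongly in $L^p(B_1;\RR^m)$ by the compact Sobolev embedding, while $w_\rho\to0$ in $L^p(B_1;\RR^m)$ as $\rho\to0$. A diagonal extraction then produces $\rho_k\downarrow0$ and $n_k\uparrow\infty$ such that $v_k:=w_{n_k,\rho_k}$ satisfies $v_k\to0$ in $L^p(B_1;\RR^m)$, $M:=\sup_k\int_{B_1}|\nabla v_k|^p\,dy<+\infty$, and $\frac1{|B_1|}\int_{B_1}W(\nabla\phi(x_0)+\nabla v_k)\,dy\to\frac{d\mu}{dx}(x_0)$. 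The bound $M<+\infty$ is the one place where the controlled choice of the radii $\rho_k$ (along the Lebesgue points of $\lambda$) is used: no coercivity of $W$ being assumed, it supplies the local $L^p$-control on the gradients that is otherwise missing.

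The heart of the matter is to replace $v_k$ by a function $\tilde v_k\in W^{1,p}_0(B_1;\RR^m)$ admissible in Definition \ref{Def_Quasi}, without increasing the energy in the limit. I use a slicing argument on a thin boundary shell: fix $t\in(0,1)$ and an integer $s\ge1$, cut $\{1-t<|y|<1\}$ into $s$ concentric sub-shells, and, since the $s$ integrals of $|\nabla v_k|^p$ over them sum to at most $M$, select a sub-shell $S_k$ (its index may depend on $k$) with $\int_{S_k}|\nabla v_k|^p\,dy\le M/s$. With $\theta_k\in C_c^\infty(B_1)$ equal to $1$ on the ball bounded inside by $S_k$, equal to $0$ outside it, and $|\nabla\theta_k|\le Cs/t$, set $\tilde v_k:=\theta_k v_k$. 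Using only $0\le W(F)\le c(1+|F|^p)$ — on $B_{1-t}$ one has $\tilde v_k=v_k$; on $S_k$, $|\nabla\tilde v_k|^p\le C(|\nabla v_k|^p+(s/t)^p|v_k|^p)$ and $|S_k|\le C_N t$; elsewhere $\tilde v_k=0$, so $W(\nabla\phi(x_0)+\nabla\tilde v_k)=W(\nabla\phi(x_0))$ — I obtain
\[
\frac1{|B_1|}\int_{B_1}W(\nabla\phi(x_0)+\nabla\tilde v_k)\,dy\le\frac1{|B_1|}\int_{B_1}W(\nabla\phi(x_0)+\nabla v_k)\,dy+C\Bigl(t\,(1+|\nabla\phi(x_0)|^p)+\frac Ms\Bigr)+\frac{C(s/t)^p}{|B_1|}\|v_k\|_{L^p(B_1)}^p.
\]
Quasiconvexity in the form $W(F)\le\frac1{|B_1|}\int_{B_1}W(F+\nabla\varphi)\,dy$ for all $\varphi\in W^{1,p}_0(B_1;\RR^m)$ — which follows from Definition \ref{Def_Quasi}, the independence of the test domain (the remark after Definition \ref{Def_Quasi}), and an approximation of $W^{1,p}_0$-maps by $C_c^\infty$-maps combined with the continuity and the growth of $W$ and dominated convergence — gives $W(\nabla\phi(x_0))\le\frac1{|B_1|}\int_{B_1}W(\nabla\phi(x_0)+\nabla\tilde v_k)\,dy$. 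Letting $k\to\infty$, then $s\to\infty$ and $t\to0$, all error terms vanish and $W(\nabla\phi(x_0))\le\frac{d\mu}{dx}(x_0)$, which completes the proof.

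I expect the main obstacle to be the interplay between the boundary cut-off and the absence of coercivity: the slicing trick is what turns the ``free'' sequence $v_k$ into an admissible variation for quasiconvexity, but it is harmless only when $\int_{B_1}|\nabla v_k|^p\,dy$ stays bounded, which here must be engineered by choosing the blow-up radii along the Lebesgue points of the weak-$*$ limit $\lambda$ of $|\nabla\phi_n|^p\,dx$; the attendant measure-theoretic bookkeeping (weak-$*$ convergence of the energy and gradient measures, Besicovitch differentiation, the diagonal extraction) is the technical core. A variant would follow the original Acerbi--Fusco scheme, first invoking a Lipschitz truncation lemma to reduce to the case $\{\nabla\phi_n\}$ bounded in $L^\infty$ (harmless thanks to the $p$-growth bound) and only then performing the blow-up and cut-off; there the measure-theoretic part is lighter, but the truncation lemma becomes the delicate ingredient.
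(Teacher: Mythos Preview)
The paper does not give its own proof of this theorem: it is quoted from the literature (Acerbi--Fusco 1984, \cite[Theorem II.4]{acerbi-fusco84}) and used as a black box in the proof of Th\'eor\`eme~\ref{GeneralThRelax}. So there is nothing in the paper to compare your argument against line by line.

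That said, your proof is correct. It is the Fonseca--M\"uller blow-up argument rather than the original Acerbi--Fusco route via Lipschitz truncation, a distinction you yourself flag at the end. The steps are all in order: passing to weak-$*$ limits of the energy and gradient measures, differentiating at a good point, the diagonal extraction producing $v_k\to0$ in $L^p(B_1)$ with $\sup_k\|\nabla v_k\|_{L^p}<\infty$ (your remark that this bound has to be \emph{engineered} via the choice of radii, since $W$ is not assumed coercive here, is exactly right), the De~Giorgi slicing to force $\tilde v_k\in W^{1,p}_0(B_1)$, and the passage from $W^{1,\infty}_0$ to $W^{1,p}_0$ test functions in the quasiconvexity inequality via continuity, $p$-growth and dominated convergence. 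The error bookkeeping in your displayed inequality is fine once one notes that $W\ge0$, so the full integral of $W(\nabla\phi(x_0)+\nabla v_k)$ over $B_1$ dominates the integral over the region $\{\theta_k=1\}$.

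Compared with the original Acerbi--Fusco scheme (truncate $\{\phi_n\}$ to Lipschitz maps on large sets, then argue), your approach trades the delicate truncation lemma for a bit more measure theory; both are standard and either would be acceptable here.
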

 Ce th\'eor\`eme ne fait pas intervenir la polyconvexit\'e mais n'est pas applicable en hyper\'elasticit\'e. En effet, il est clair que (\ref{CondCroiss}) est incompatible avec (\ref{Cst1}) et (\ref{Cst2}). A notre connaissance, du point de vue de l'hyper\'elasticit\'e, le th\'eor\`eme \ref{AcerbiFuscoTheorem} n'a pas encore \'et\'e d\'epass\'e.

\subsection{Relaxation}

Lorsque $I$ n'est pas sfsci dans $W^{1,p}(\Omega;\RR^m)$ (ou bien si on ne sait pas d\'emontrer que $I$ est sfsci dans $W^{1,p}(\Omega;\RR^m)$) on ne peut pas, \`a l'aide de la m\'ethode directe du calcul des variations, r\'esoudre le probl\`eme de minimisation suivant 
\begin{equation}\label{P}
\inf\Big\{E(\phi)=I(\phi)-J(\phi):\phi\in\mathcal{A}\Big\}=:\alpha
\end{equation}
o\`u $\mathcal{A}:=\{\phi\in W^{1,p}(\Omega;\RR^m):\phi=\phi_0\hbox{ sur }\partial\Omega\}$ avec $\phi_0\in W^{1,p}(\Omega;\RR^m)$ et $I,J$ sont d\'efinies en (\ref{DefsOfFuncts}). On consid\`ere alors le probl\`eme suivant 
\begin{equation}\label{Prelax}
\inf\Big\{\overline{E}(\phi):\phi\in\mathcal{A}\Big\}=:\overline{\alpha}
\end{equation}
avec $\overline{E}:W^{1,p}(\Omega;\RR^m)\to\RR\cup\{+\infty\}$ d\'efinie par 
\begin{equation}\label{PrelaxBiS}
\overline{E}(\phi):=\inf\left\{\liminf_{n\to+\infty}E(\phi_n):\mathcal{A}\ni\phi_n\wto\phi\right\}.
\end{equation}
En fait, $\overline{E}$ est la r\'egularis\'ee sci par rapport \`a la convergence faible de $W^{1,p}(\Omega;\RR^m)$ de la fonctionnelle valant $E$ sur $\mathcal{A}$ et $+\infty$ sinon, et donc $\overline{E}$ est sfsci dans $W^{1,p}(\Omega;\RR^m)$. Ainsi (\ref{Prelax}) peut \^etre trait\'e par la m\'ethode directe du calcul des variations. Le passage de (\ref{P}) \`a (\ref{Prelax}) est appel\'e {\em relaxation}. Son int\'er\^et  vient des trois propri\'et\'es suivantes :
\begin{enumerate}
\item[$\diamond$] $\alpha=\overline{\alpha}\ ;$
\item[$\diamond$] si $\phi_n\wto\bar\phi$  dans $W^{1,p}(\Omega;\RR^m)$ avec $\{\phi_n\}_{n\geq 1}$ une suite minimisante pour $E$ dans $\mathcal{A}$ alors $\bar\phi$ est un minimiseur de $\overline{E}$ dans $\mathcal{A}$\ ;
\item[$\diamond$] si $\bar\phi$ est un minimiseur de $\overline{E}$ dans $\mathcal{A}$ alors il existe une suite minimisante $\{\phi_n\}_{n\geq 1}$ pour $E$ dans $\mathcal{A}$ telle que $\phi_n\wto\bar\phi$ dans $W^{1,p}(\Omega;\RR^m)$.
\end{enumerate}
Le probl\`eme (\ref{Prelax}) est appel\'e le probl\`eme relax\'e de (\ref{P}) et les solutions de (\ref{Prelax}) sont dites solutions g\'en\'eralis\'ees de (\ref{P}). L'inconv\'enient de la relaxation est que l'expression dans (\ref{PrelaxBiS}) de la nouvelle fonctionnelle \`a minimiser $\overline{E}$ n'est pas une formule explicite. On est ainsi amen\'e \`a \'etudier l'existence  d'une repr\'esentation (plus explicite) pour $\overline{E}$.
 
 Pour simplifier, dans ce qui suit on supposera qu'il n'y a pas de condition au bord, i.e., $\mathcal{A}=W^{1,p}(\Omega;\RR^m)$. Rappelant que $J$ est lin\'eaire et fortement continue dans $W^{1,p}(\Omega;\RR^m)$, il est facile de voir que 
$
\overline{E}=\overline{I}-J
$
avec $\overline{I}:W^{1,p}(\Omega;\RR^m)\to[0,+\infty]$ donn\'ee par 
\begin{equation}\label{DefFunctRelax}
\overline{I}(\phi):=\inf\left\{\liminf_{n\to+\infty}I(\phi_n):W^{1,p}(\Omega;\RR^m)\ni\phi_n\wto\phi\right\}.
\end{equation}
Du point de vue de l'hyper\'elasticit\'e, le fait que la fonctionnelle $I$ n'est pas sfsci dans $W^{1,p}(\Omega;\RR^m)$ signifie que la densit\'e d'\'energie $W$ caract\'erise le comportement du mat\'eriau \`a l'\'echelle microscopique. Le passage de $I$ \`a $\overline{I}$ s'interpr\`ete donc comme un passage ``micro-macro". Comme la fonctionnelle $\overline{I}$ est  cens\'ee  repr\'esenter le comportement macroscopique d'un mat\'eriau hyper\'elastique, il est naturel de se demander  si elle poss\`ede une repr\'esentation int\'egrale du type 
\begin{eqnarray}\label{FunctRelax}
\overline{I}(\phi)=\int_\Omega \overline{W}(\nabla\phi(x))dx\hbox{ pour tout }\phi\in W^{1,p}(\Omega;\RR^m)
\end{eqnarray}
avec $\overline{W}:\MM^{m\times N}\to[0,+\infty]$ (qui sera n\'ecessairement quasiconvexe d'apr\`es le  th\'eor\`eme \ref{CondNessDeQuasi}). Un bon candidat pour $\overline{W}$ est l'enveloppe quasiconvexe de $W$.

\begin{definition}
L'enveloppe quasiconvexe de $W$ est l'unique fonction {\rm(}lorsqu'elle existe{\rm)} $\mathcal{Q} W:\MM^{m\times N}\to[0,+\infty]$ telle que {\rm:}
\begin{enumerate}
\item[$\diamond$] $\mathcal{Q}W$ est Borel mesurable et $\mathcal{Q}W\leq W$\ ;
\item[$\diamond$] pour tout $g:\MM^{m\times N}\to[0,+\infty]$, si $g$ est Borel mesurable, quasiconvexe et $g\leq W$ alors $g\leq \mathcal{Q}W$.
\end{enumerate}
On dit que $\mathcal{Q}W$ est la plus grande fonction quasiconvexe qui est inf\'erieure \`a $W$.
\end{definition} 

Posons $\Aff_0(Y;\RR^m):=\{\varphi\in\Aff(Y;\RR^m):\varphi=0\hbox{ sur }\partial Y\}$ (avec $\Aff(Y;\RR^m)$ d\'esignant l'ensemble des fonctions continues et affines par morceaux de $Y$ dans $\RR^m$, i.e., $\varphi\in\Aff(Y;\RR^m)$ si et seulement si $\varphi$ est continue et il existe une famille finie $\{V_i\}_{i\in I}$ de sous-ensembles ouverts et disjoints de $Y$ telle que $|Y\setminus\cup_{i\in I} V_i|=0$ et pour chaque $i\in I$, $|\partial V_i|=0$ et $\nabla\varphi(x)=F_i$ dans $V_i$  avec $F_i\in\MM^{m\times N}$) et d\'efinissons $\Z W:\MM^{m\times N}\to[0,+\infty]$ par 
\begin{equation}\label{DefOfZW}
\Z W(F):=\inf\left\{\int_YW(F+\nabla\varphi(x))dx:\varphi\in\Aff_0(Y;\RR^m)\right\}.
\end{equation}
En 1982,  Dacorogna a d\'emontr\'e le th\'eor\`eme suivant (voir \cite[Theorem 5]{dacorogna82} voir aussi \cite[Statement III.7]{acerbi-fusco84}).
\begin{theorem}\label{DacorognaTheorem} Si $W$ est continue et satisfait {(\ref{CondCroiss})} alors {(\ref{FunctRelax})} a lieu avec $\overline{W}=\mathcal{Q}W=\Z W.$
\end{theorem}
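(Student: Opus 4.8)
\emph{Plan de la d\'emonstration.} On scinde l'argument en une partie ``alg\'ebrique'' (identification de $\Z W$ \`a $\mathcal{Q}W$) et une partie ``analytique'' (repr\'esentation int\'egrale de $\overline{I}$ via la minoration par semi-continuit\'e et la majoration par construction de suites de r\'ecup\'eration).

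\textbf{\'Etape 1 : $\Z W=\mathcal{Q}W$.} En prenant $\varphi=0$ dans \eqref{DefOfZW} on a $0\le\Z W\le W$, de sorte que $\Z W$ satisfait aussi \eqref{CondCroiss} (et est en particulier \`a valeurs finies) ; un argument de recouvrement et de changement d'\'echelle montre que l'infimum dans \eqref{DefOfZW} est inchang\'e lorsqu'on remplace $Y$ par tout ouvert born\'e $D$ v\'erifiant $|\partial D|=0$ (``localit\'e'' de $\Z W$). On montre ensuite que $\Z W$ est rang-$1$ convexe par la construction classique de lamination simple : pour $A,B$ avec $\rank(A-B)\le1$ et $F=\lambda A+(1-\lambda)B$, on recolle sur des couches minces perpendiculaires \`a la direction de rang un des versions \`a petite \'echelle de fonctions presque optimales pour $\Z W(A)$ et $\Z W(B)$, les couches de transition \'etant de mesure $o(1)$ et \`a gradient born\'e, donc contr\^ol\'ees par \eqref{CondCroiss} ; il vient $\Z W(F)\le\lambda\,\Z W(A)+(1-\lambda)\,\Z W(B)$. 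Une fonction rang-$1$ convexe \`a valeurs finies \'etant localement lipschitzienne, $\Z W$ est continue. Utilisant la localit\'e, on v\'erifie alors l'in\'egalit\'e de quasiconvexit\'e de la d\'efinition~\ref{Def_Quasi} d'abord pour $\varphi\in\Aff_0(Y;\RR^m)$ — sur chaque cellule $V_i$ o\`u $\nabla\varphi=F_i$ on ins\`ere une fonction presque optimale pour $\Z W(F+F_i)$ et l'on recolle le tout en un unique \'el\'ement de $\Aff_0(Y;\RR^m)$ — puis pour $\varphi\in W^{1,\infty}_0(Y;\RR^m)$ quelconque, en approchant $\varphi$ par des fonctions affines par morceaux \`a gradients uniform\'ement born\'es et en invoquant la continuit\'e de $\Z W$ et le th\'eor\`eme de convergence domin\'ee. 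Ainsi $\Z W$ est bor\'elienne (en fait continue), quasiconvexe et $\Z W\le W$, d'o\`u $\Z W\le\mathcal{Q}W$. R\'eciproquement, si $g$ est bor\'elienne, quasiconvexe et $g\le W$, alors pour tout $\varphi\in\Aff_0(Y;\RR^m)\subset W^{1,\infty}_0(Y;\RR^m)$ on a $g(F)\le\int_Y g(F+\nabla\varphi)dx\le\int_YW(F+\nabla\varphi)dx$ ; en passant \`a l'infimum sur $\varphi$ on obtient $g\le\Z W$. Donc $\Z W$ est la plus grande fonction quasiconvexe bor\'elienne major\'ee par $W$, i.e. $\mathcal{Q}W$ existe et vaut $\Z W$.

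\textbf{\'Etape 2 : minoration $\overline{I}(\phi)\ge\int_\Omega\Z W(\nabla\phi)dx$.} D'apr\`es l'\'etape 1, $\mathcal{Q}W=\Z W$ est continue, quasiconvexe et satisfait \eqref{CondCroiss} ; le th\'eor\`eme~\ref{AcerbiFuscoTheorem} appliqu\'e \`a l'int\'egrande $\mathcal{Q}W$ assure que $\psi\mapsto\int_\Omega\mathcal{Q}W(\nabla\psi)dx$ est sfsci dans $W^{1,p}(\Omega;\RR^m)$. Comme $\mathcal{Q}W\le W$, pour tout $\phi_n\wto\phi$ on a $\liminf_n I(\phi_n)\ge\liminf_n\int_\Omega\mathcal{Q}W(\nabla\phi_n)dx\ge\int_\Omega\mathcal{Q}W(\nabla\phi)dx$, et en passant \`a l'infimum sur les suites intervenant dans \eqref{DefFunctRelax} on obtient $\overline{I}(\phi)\ge\int_\Omega\Z W(\nabla\phi)dx$.

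\textbf{\'Etape 3 : majoration $\overline{I}(\phi)\le\int_\Omega\Z W(\nabla\phi)dx$.} Il s'agit de construire des suites de r\'ecup\'eration. Pour $\phi$ affine avec $\nabla\phi\equiv F$ et $\varphi\in\Aff_0(Y;\RR^m)$ presque optimale (prolong\'ee par $Y$-p\'eriodicit\'e), on pose $\varphi_j(x):=\tfrac{1}{j}\varphi(jx)$, on la tronque pr\`es de $\partial\Omega$ par une fonction de troncature $\theta_j$ dont la couche de transition est de mesure $o(1)$ et avec $|\nabla(\theta_j\varphi_j)|$ born\'e, puis $\phi_j:=\phi+\theta_j\varphi_j$ ; alors $\phi_j\wto\phi$ dans $W^{1,p}(\Omega;\RR^m)$ et, par p\'eriodicit\'e et \eqref{CondCroiss} sur la couche \'evanescente, $\limsup_j I(\phi_j)\le|\Omega|\,(\Z W(F)+\eps)$, d'o\`u $\overline{I}(\phi)\le\int_\Omega\Z W(\nabla\phi)dx$ apr\`es $\eps\to0$. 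Pour $\phi$ affine par morceaux on r\'ep\`ete cette construction sur chaque cellule — les modifications s'annulant sur les interfaces, les morceaux se recollent en une application admissible — et l'on somme. Enfin, pour $\phi\in W^{1,p}(\Omega;\RR^m)$ quelconque, on approche $\phi$ fortement dans $W^{1,p}$ par des applications affines par morceaux $\psi_k$ ; comme $\overline{I}$ est sfsci et $\psi\mapsto\int_\Omega\Z W(\nabla\psi)dx$ fortement continue sur $W^{1,p}(\Omega;\RR^m)$ gr\^ace \`a \eqref{CondCroiss}, il vient $\overline{I}(\phi)\le\liminf_k\overline{I}(\psi_k)\le\liminf_k\int_\Omega\Z W(\nabla\psi_k)dx=\int_\Omega\Z W(\nabla\phi)dx$. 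Les \'etapes 2 et 3 fournissent \eqref{FunctRelax} avec $\overline{W}=\mathcal{Q}W=\Z W$.

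\textbf{Principale difficult\'e.} Le point d\'elicat est l'\'etape 1, et en son sein la preuve que $\Z W$ est quasiconvexe : \'etablir la continuit\'e de $\Z W$ passe par la construction de lamination (avec la gestion des couches de transition via \eqref{CondCroiss}), et le passage des fonctions affines par morceaux \`a une fonction-test quelconque de $W^{1,\infty}_0(Y;\RR^m)$ utilise \`a la fois cette densit\'e et la continuit\'e qu'on vient d'obtenir. Les estimations de troncature et de recollement de l'\'etape 3 sont de routine d\`es que l'on utilise syst\'ematiquement \eqref{CondCroiss} pour absorber l'erreur sur des ensembles de mesure \'evanescente.
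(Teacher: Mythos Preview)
Your proof is correct and follows essentially the same architecture as the paper: identify $\Z W=\mathcal{Q}W$ via Fonseca-type properties (localit\'e, rang-$1$ convexit\'e, continuit\'e, quasiconvexit\'e par recollement sur les cellules affines puis passage \`a $W^{1,\infty}_0$), apply Acerbi--Fusco to $\Z W$ for the lower bound, and build recovery sequences on piecewise affine maps before extending by density.

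The only noteworthy technical divergence is in the upper-bound construction (your \'Etape~3). You use periodization $\varphi_j(x)=j^{-1}\varphi(jx)$ followed by a cutoff near $\partial\Omega$ (or $\partial V_i$), relying on \eqref{CondCroiss} to absorb the error on the transition layer. The paper instead proves the corresponding step (Lemme~\ref{FundamentalLemmma1}) by a Vitali covering: each $V_i$ is covered by rescaled copies $a_{i,j}+\alpha_{i,j}Y$ with $\alpha_{i,j}<1/n$, and the near-optimal competitor for $\Z W(F_i)$ is rescaled onto each small cube. Because the competitor vanishes on $\partial Y$, no cutoff is needed and the identity $\int_\Omega W(\nabla\phi+\nabla\phi_n)=\sum_i|V_i|\int_Y W(F_i+\nabla\varphi_i)$ holds \emph{exactly}, with no boundary-layer error. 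This buys the paper a stronger result: the argument goes through assuming only $p$-growth on $\Z W$ (Th\'eor\`eme~\ref{GeneralThRelax}), not on $W$ itself, whereas your cutoff argument genuinely needs \eqref{CondCroiss} on $W$. Under the hypotheses of the present theorem both routes are valid.
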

 Un probl\`eme (ouvert) important (du point de vue de l'hyper\'elasticit\'e) est de savoir si on a (\ref{FunctRelax}) avec $\overline{W}=\mathcal{Q}W=\Z W$ lorsque $W$ satisfait (\ref{Cst1}) et (\ref{Cst2}). Dans \cite{oah-jpm08a} il a \'et\'e montr\'e que (\ref{FunctRelax}) a lieu avec $\overline{W}=\mathcal{Q}W=\Z W$ lorsque $W$ satisfait (\ref{Cst1prime}) et (\ref{Cst2}) (voir \S 2).

\begin{remark}
En 1982, Dacorogna a aussi prouv\'e le th\'eor\`eme suivant de repr\'esen-tation de la quasiconvexifi\'ee (voir \cite[Theorem 6.9 p. 271]{dacorogna08}).
\begin{theorem}\label{QuasiconvexificationFormulaTheorem}
Si $W$ est finie alors $\mathcal{Q}W=\Z W$.
\end{theorem}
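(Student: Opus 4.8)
The plan is to prove at one stroke that $\mathcal{Q}W$ exists and is given by the formula \eqref{DefOfZW}, by showing that $\Z W$ is exactly the largest Borel measurable quasiconvex function lying below $W$. One inclusion is immediate: $0\le\Z W\le W$, since $\varphi=0$ is admissible in \eqref{DefOfZW}. The other easy half is that $\Z W$ dominates every admissible competitor: if $g:\MM^{m\times N}\to[0,+\infty]$ is Borel measurable, quasiconvex and $g\le W$, then for every $\varphi\in\Aff_0(Y;\RR^m)$ — which belongs to $W^{1,\infty}_0(Y;\RR^m)$ — Definition \ref{Def_Quasi} gives $g(F)\le\int_Y g(F+\nabla\varphi(x))\,dx\le\int_Y W(F+\nabla\varphi(x))\,dx$, and taking the infimum over all such $\varphi$ yields $g(F)\le\Z W(F)$ for every $F\in\MM^{m\times N}$. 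Thus everything reduces to showing that $\Z W$ is itself Borel measurable and quasiconvex.

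The heart of the matter is the quasiconvexity of $\Z W$, i.e. the inequality $\Z W(F)\le\int_Y\Z W(F+\nabla\varphi(x))\,dx$ for every $F$ and every $\varphi\in W^{1,\infty}_0(Y;\RR^m)$. I would first prove it for piecewise affine test functions $\varphi\in\Aff_0(Y;\RR^m)$ by a localization-and-gluing argument: if $\nabla\varphi=F_i$ on the open piece $V_i$, fix $\eps>0$, choose $\psi_i\in\Aff_0(Y;\RR^m)$ with $\int_Y W(F+F_i+\nabla\psi_i)\,dx<\Z W(F+F_i)+\eps$, cover each $V_i$ up to a null set by countably many pairwise disjoint homothetic copies of $Y$ contained in $V_i$, insert on each copy the correspondingly rescaled $\psi_i$ (which keeps the resulting map continuous, piecewise affine and zero on $\partial Y$), and use a change of variables together with the finiteness of $W$ (so that all integrals and the accumulated errors are controlled) to obtain $\tilde\varphi\in\Aff_0(Y;\RR^m)$ with $\int_Y W(F+\nabla\tilde\varphi(x))\,dx\le\int_Y\Z W(F+\nabla\varphi(x))\,dx+(1+|Y|)\eps$. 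Letting $\eps\to0$ gives the piecewise affine version, equivalently $\Z(\Z W)=\Z W$. Testing this with standard laminate (sawtooth) constructions then shows that $\Z W$ is rank-one convex, hence separately convex, and therefore — being finite — continuous.

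Once $\Z W$ is known to be continuous, the upgrade from $\Aff_0(Y;\RR^m)$ to arbitrary $\varphi\in W^{1,\infty}_0(Y;\RR^m)$ is routine: approximate $\varphi$ by $\varphi_k\in\Aff_0(Y;\RR^m)$ with $\varphi_k\to\varphi$ uniformly, $\sup_k\|\nabla\varphi_k\|_{L^\infty}<+\infty$ and $\nabla\varphi_k\to\nabla\varphi$ almost everywhere, then pass to the limit by dominated convergence in $\int_Y\Z W(F+\nabla\varphi_k(x))\,dx$. This yields the full quasiconvexity inequality, so $\Z W$ is a continuous (hence Borel measurable) quasiconvex function with $\Z W\le W$; together with the first paragraph this makes $\Z W$ the largest Borel measurable quasiconvex function below $W$, which is precisely the assertion $\mathcal{Q}W=\Z W$ (independence from the chosen cube $Y$ being Proposition \ref{FonsecaProperties}(a)).

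The step I expect to be the main obstacle is the localization-and-gluing construction for $\Aff_0(Y;\RR^m)$ test functions: keeping the rescaled perturbations inside the pieces $V_i$, controlling the countably many error terms, and checking that the glued map is still continuous, piecewise affine and zero on $\partial Y$. It is exactly there, and in the ensuing argument that $\Z W$ is rank-one convex and hence continuous, that the hypothesis ``$W$ finie'' is genuinely used — and it is what makes the final passage to general Lipschitz test functions legitimate without imposing any regularity on $W$ itself.
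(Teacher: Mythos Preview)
Your proposal is correct and follows essentially the same route as the paper: both show that $\Z W$ is quasiconvex by first establishing the inequality for piecewise affine test functions via a localization--gluing argument (this is exactly Proposition~\ref{FonsecaProperties}(d), proved through Proposition~\ref{FonsecaProperties}(a)), then deducing rank-one convexity and hence continuity of $\Z W$ from its finiteness (Proposition~\ref{FonsecaProperties}(b)--(c)), and finally upgrading to general $W^{1,\infty}_0$ test functions by density and dominated convergence. The only difference is organizational: the paper packages the intermediate facts about $\Z W$ into Proposition~\ref{FonsecaProperties} (Fonseca) and quotes them in \S2.1.3, whereas you develop them inline.
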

(Une preuve de ce th\'eor\`eme est donn\'ee en \S 2.1.3.) 
\end{remark}

\subsection{Passage 3D-2D}

Consid\'erons un mat\'eriau hyper\'elastique occupant dans une configuration de r\'ef\'erence l'ouvert $\Sigma_\eps:=\Sigma\times]-{\eps\over 2},{\eps\over 2}[\subset\RR^3$ o\`u  $\Sigma\subset\RR^2$ est un ouvert born\'e  et $\eps>0$ est ``tr\`es petit" (cela signifie que le mat\'eriau (tridimensionnel) a une tr\`es petite \'epaisseur, il est ``presque" bidimensionnel). {\em Le passage 3D-2D} consiste \`a trouver une mod\'elisation bidimensionnelle du mat\'eriau consid\'er\'e. Dans notre cas, il s'agit de montrer que $I_\eps:W^{1,p}(\Sigma_\eps;\RR^3)\to[0,+\infty]$ d\'efinie par 
\begin{equation}\label{TriDimEnerMem}
I_\eps(\phi):={1\over \eps}\int_{\Sigma_\eps}W(\nabla\phi(x,x_3))dxdx_3
\end{equation} 
(o\`u $W:\MM^{3\times 3}\to[0,+\infty]$ est la densit\'e d'\'energie du mat\'eriau tridimensionnel repr\'esent\'e par $\Sigma_\eps$ et $(x,x_3)$ avec $x\in\Sigma$ et $x_3\in]-{\eps\over 2},{\eps\over 2}[$ d\'esigne un point de $\Sigma_\eps$) ``converge variationnellement" lorsque $\eps\to 0$  (voir la d\'efinition \ref{DefGammaPiConvergence}) vers $I_{\rm mem}:W^{1,p}(\Sigma;\RR^3)\to[0,+\infty]$ donn\'ee par 
\begin{equation}\label{BiDimEnerMem}
I_{\rm mem}(\psi):=\int_\Sigma W_{\rm mem}(\nabla \psi(x))dx
\end{equation}
avec $W_{\rm mem}:\MM^{3\times 2}\to[0,+\infty]$ (qui sera la densit\'e d'\'energie du mat\'eriau bidimensionnel repr\'esent\'e par $\Sigma$), et de donner une formule (qui d\'ependra de $W$) pour $W_{\rm mem}$. La fonctionnelle $I_{\rm mem}$ est appel\'ee l'\'energie de membrane non lin\'eaire associ\'ee au mat\'eriau bidimensionnel mod\'elis\'e par $\Sigma$. 

La limite de $I_\eps$ lorsque $\eps\to 0$ est calcul\'ee au sens de la $\Gamma(\pi)$-convergence (une variante de la $\Gamma$-convergence de De Giorgi (voir \cite{degiorgi75,degiorgi-franzoni75}, voir aussi la d\'efinition \ref{GammaCvgDef}) d\'evelopp\'ee par Anzellotti, Baldo et Percivale (voir \cite{anzellotti-baldo-percivale94})). Soit $\pi=\{\pi_\eps\}_\eps$ avec  $\pi_\eps:W^{1,p}(\Sigma_\eps;\RR^3)\to W^{1,p}(\Sigma;\RR^3)$ d\'efinie par 
\begin{equation}\label{PourAJOUTRmK}
\pi_\eps(\phi):={1\over\eps}\int_{-{\eps\over 2}}^{\eps\over 2}\phi(\cdot,x_3)dx_3.
\end{equation}

\begin{definition}\label{DefGammaPiConvergence} On dit que $I_\eps$ $\Gamma(\pi)$-converge vers $I_{\rm mem}$ et on \'ecrit 
$$
I_{\rm mem}=\Gamma(\pi)\hbox{\rm -}\lim_{\eps\to 0}I_\eps
$$
si les deux assertions suivantes sont satisfaites {\rm:}
\begin{enumerate}
\item[$\diamond$] pour tout $\psi\in W^{1,p}(\Sigma;\RR^3)$ et tout $\{\phi_\eps\}_\eps\subset W^{1,p}(\Sigma_\eps;\RR^3)$, 
$$
\hbox{si }\pi_\eps(\phi_\eps)\wto \psi\hbox{ dans }W^{1,p}(\Sigma;\RR^3)\hbox{ alors }I_{\rm mem}(\psi)\leq\liminf_{\eps\to 0}I_\eps(\phi_\eps)\ ;
$$
\item[$\diamond$] pour tout $\psi\in W^{1,p}(\Sigma;\RR^3)$, il existe $\{\phi_\eps\}_\eps\subset W^{1,p}(\Sigma_\eps;\RR^3)$ tel que 
$$
\pi_\eps(\phi_\eps)\wto \psi\hbox{ dans }W^{1,p}(\Sigma;\RR^3)\hbox{ et }I_{\rm mem}(\psi)\geq\limsup_{\eps\to 0}I_\eps(\phi_\eps).
$$
\end{enumerate}
\end{definition}

A notre connaissance, la $\Gamma(\pi)$-convergence de (\ref{TriDimEnerMem}) vers (\ref{BiDimEnerMem}) a \'et\'e \'etudi\'ee pour la premi\`ere fois par Percivale en 1991 (voir \cite[\S 3]{percivale91}). Dans son travail, il a essay\'e de prendre en compte les conditions  (\ref{Cst1}) et (\ref{Cst2}). Mais, comme ses r\'esultats contenaient quelques erreurs, il ne les a pas publi\'es. Malgr\'e tout, Percivale a introduit la ``bonne" formule pour $W_{\rm mem}$, i.e., $W_{\rm mem}=\mathcal{Q} W_0$ (l'enveloppe quasiconvexe de $W_0$) avec $W_0:\MM^{3\times 2}\to[0,+\infty]$ donn\'ee par 
\begin{equation}\label{W_0}
W_0(\xi):=\inf\Big\{W(\xi\mid\zeta):\zeta\in\RR^3\Big\}
\end{equation}
avec $(\xi\mid \zeta)$ d\'esignant la matrice de $\MM^{3\times 3}$ correspondant \`a $(\xi,\zeta)\in\MM^{3\times 2}\times\RR^3$. 

En 1993, ind\'ependamment de Percivale, Le Dret et Raoult ont d\'emontr\'e le th\'eor\`eme suivant (voir \cite{ledret-raoult93} et \cite[Theorem 2]{ledret-raoult95}).

\begin{theorem}\label{LeDretRaoult}
Si $W$ est continue et satisfait {(\ref{CondCroiss})} alors $I_{\rm mem}=\Gamma(\pi)\hbox{\rm -}\lim_{\eps\to 0}I_\eps$ avec $W_{\rm mem}=\mathcal{Q} W_0$.
\end{theorem}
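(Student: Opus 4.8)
\emph{Esquisse de d\'emonstration.} Le plan est de replier le domaine mince sur un domaine fixe, puis d'\'etablir s\'epar\'ement les deux in\'egalit\'es de la d\'efinition \ref{DefGammaPiConvergence}, en exploitant le fait que le passage 3D-2D se d\'ecouple en deux op\'erations : une optimisation dans la direction transverse, qui remplace $W$ par $W_0$, puis une relaxation, qui remplace $W_0$ par $\mathcal{Q}W_0$. Posons $\Sigma_1:=\Sigma\times]-{1\over 2},{1\over 2}[$ et, pour $\phi\in W^{1,p}(\Sigma_\eps;\RR^3)$, $\tilde\phi(x,y_3):=\phi(x,\eps y_3)$ ; un changement de variables donne
\begin{equation*}
I_\eps(\phi)=\int_{\Sigma_1}W\bigl(\nabla_x\tilde\phi(x,y_3)\mid{1\over \eps}\partial_{y_3}\tilde\phi(x,y_3)\bigr)\,dxdy_3\qquad\hbox{et}\qquad\pi_\eps(\phi)(x)=\int_{-{1\over 2}}^{{1\over 2}}\tilde\phi(x,y_3)\,dy_3.
\end{equation*}
On note d'abord que $W_0$ d\'efinie en \eqref{W_0} est finie (car $W_0\leq W(\cdot\mid 0)\leq c(1+|\cdot|^p)$), satisfait \eqref{CondCroiss}, et est continue : la semi-continuit\'e sup\'erieure est imm\'ediate, et la semi-continuit\'e inf\'erieure r\'esulte de ce que, sous l'hypoth\`ese de coercivit\'e $W(F)\geq C|F|^p$ du \S 1.1, tout $\zeta_n$ presque optimal associ\'e \`a une suite $\xi_n\to\xi$ reste born\'e. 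Par le th\'eor\`eme \ref{QuasiconvexificationFormulaTheorem}, $\mathcal{Q}W_0=\Z W_0$ existe ; \'etant finie et quasiconvexe, elle est continue et satisfait encore \eqref{CondCroiss}. On pose enfin $V:\MM^{3\times3}\to[0,+\infty[$, $V(F):=\mathcal{Q}W_0(\Pi F)$, o\`u $\Pi F\in\MM^{3\times2}$ d\'esigne les deux premi\`eres colonnes de $F$ : $V$ est continue, satisfait \eqref{CondCroiss}, et est quasiconvexe (une int\'egrande quasiconvexe sur $\MM^{3\times2}$ le demeure vue sur $\MM^{3\times3}$, puisque toute fonction de $W^{1,\infty}_0(]0,1[^3;\RR^3)$ se restreint en une fonction de $W^{1,\infty}_0(]0,1[^2;\RR^3)$ sur chaque tranche).

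\emph{In\'egalit\'e ``$\liminf$''.} Soient $\psi\in W^{1,p}(\Sigma;\RR^3)$ et $\{\phi_\eps\}_\eps$ avec $\pi_\eps(\phi_\eps)\wto\psi$ dans $W^{1,p}(\Sigma;\RR^3)$ ; on peut supposer $\liminf_\eps I_\eps(\phi_\eps)<+\infty$, puis, quitte \`a extraire, $\sup_\eps I_\eps(\phi_\eps)<+\infty$. La coercivit\'e donne $\{\nabla_x\tilde\phi_\eps\}$ born\'ee dans $L^p(\Sigma_1)$ et $\partial_{y_3}\tilde\phi_\eps\to 0$ dans $L^p(\Sigma_1)$ ; combin\'e avec l'in\'egalit\'e de Poincar\'e dans la direction $y_3$ et avec le fait que $\pi_\eps(\phi_\eps)=\int_{-1/2}^{1/2}\tilde\phi_\eps(\cdot,y_3)dy_3$ est born\'ee dans $L^p(\Sigma)$, ceci montre que $\{\tilde\phi_\eps\}$ est born\'ee dans $W^{1,p}(\Sigma_1;\RR^3)$. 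Quitte \`a extraire encore, $\tilde\phi_\eps\wto\Phi$ dans $W^{1,p}(\Sigma_1;\RR^3)$ avec $\partial_{y_3}\Phi=0$ ; comme $\pi_\eps(\phi_\eps)\wto\psi$, on a n\'ecessairement $\Phi(x,y_3)=\psi(x)$. Comme $\mathcal{Q}W_0\leq W_0\leq W(\cdot\mid\zeta)$ pour tout $\zeta\in\RR^3$, on a ponctuellement $V(\nabla\tilde\phi_\eps)=\mathcal{Q}W_0(\nabla_x\tilde\phi_\eps)\leq W\bigl(\nabla_x\tilde\phi_\eps\mid{1\over \eps}\partial_{y_3}\tilde\phi_\eps\bigr)$, d'o\`u $\int_{\Sigma_1}V(\nabla\tilde\phi_\eps)\leq I_\eps(\phi_\eps)$. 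Le th\'eor\`eme \ref{AcerbiFuscoTheorem} appliqu\'e \`a $V$ sur $\Sigma_1$ donne la semi-continuit\'e inf\'erieure faible de $u\mapsto\int_{\Sigma_1}V(\nabla u)$ ; en identifiant $\psi$ \`a la fonction $(x,y_3)\mapsto\psi(x)$ sur $\Sigma_1$, pour laquelle $V(\nabla\psi)=\mathcal{Q}W_0(\nabla_x\psi)$, on obtient
\begin{equation*}
I_{\rm mem}(\psi)=\int_{\Sigma_1}V(\nabla\psi)\,dxdy_3\leq\liminf_{\eps\to0}\int_{\Sigma_1}V(\nabla\tilde\phi_\eps)\leq\liminf_{\eps\to0}I_\eps(\phi_\eps).
\end{equation*}

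\emph{In\'egalit\'e ``$\limsup$'' et conclusion.} On construit une suite de r\'ecup\'eration en trois temps. (a) Si $\psi(x)=\xi x+b$ est affine, on choisit $\zeta\in\RR^3$ avec $W(\xi\mid\zeta)\leq W_0(\xi)+\delta$ et on pose $\tilde\phi_\eps(x,y_3):=\xi x+b+\eps y_3\zeta$ : alors $\pi_\eps(\phi_\eps)=\psi$ et $I_\eps(\phi_\eps)=|\Sigma|\,W(\xi\mid\zeta)\leq|\Sigma|(W_0(\xi)+\delta)$. (b) Si $\psi\in\Aff(\Sigma;\RR^3)$ avec $\nabla\psi=\xi_i$ sur $V_i$, on choisit $\zeta_i$ avec $W(\xi_i\mid\zeta_i)\leq W_0(\xi_i)+\delta$, on lisse le champ $\zeta$ valant $\zeta_i$ sur $V_i$ en un champ lipschitzien $\zeta_\rho$ \'egal \`a $\zeta$ hors d'un $\rho$-voisinage des interfaces et \`a valeurs dans l'enveloppe convexe des $\zeta_i$ (donc born\'e ind\'ependamment de $\rho$), et on pose $\tilde\phi_\eps(x,y_3):=\psi(x)+\eps y_3\zeta_\rho(x)\in W^{1,p}(\Sigma_1;\RR^3)$ : alors $\pi_\eps(\phi_\eps)=\psi$ et, comme $\eps y_3\nabla\zeta_\rho\to0$ uniform\'ement \`a $\rho$ fix\'e, la continuit\'e de $W$ et la convergence domin\'ee donnent $\lim_{\eps\to0}I_\eps(\phi_\eps)=\int_\Sigma W(\nabla\psi\mid\zeta_\rho)\leq\int_\Sigma W_0(\nabla\psi)+\delta|\Sigma|+M\,|\{\zeta_\rho\neq\zeta\}|$, o\`u $M$ majore $W$ sur l'ensemble born\'e o\`u vivent les arguments ; on fait ensuite $\rho\to0$ puis $\delta\to0$. (c) Pour $\psi\in W^{1,p}(\Sigma;\RR^3)$ quelconque, on l'approche fortement dans $W^{1,p}$ par des $\psi_k\in\Aff(\Sigma;\RR^3)$ ; comme $W_0$ est continue et \`a croissance d'ordre $p$, $\int_\Sigma W_0(\nabla\psi_k)\to\int_\Sigma W_0(\nabla\psi)$, et un proc\'ed\'e diagonal entre les suites de (b) fournit $\{\phi_\eps\}_\eps$ tel que $\pi_\eps(\phi_\eps)\wto\psi$ et $\limsup_\eps I_\eps(\phi_\eps)\leq\int_\Sigma W_0(\nabla\psi)$. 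Ainsi la $\Gamma(\pi)$-limite sup\'erieure $I''(\psi):=\inf\{\limsup_\eps I_\eps(\phi_\eps):\pi_\eps(\phi_\eps)\wto\psi\}$ v\'erifie $I''\leq I_0:=\int_\Sigma W_0(\nabla\cdot)$ ponctuellement sur $W^{1,p}(\Sigma;\RR^3)$ ; \'etant s\'equentiellement faiblement semi-continue inf\'erieurement (propri\'et\'e g\'en\'erale des $\Gamma$-limites sup\'erieures), elle v\'erifie $I''\leq\overline{I_0}$, et le th\'eor\`eme \ref{DacorognaTheorem} appliqu\'e \`a $W_0$ donne $\overline{I_0}(\psi)=\int_\Sigma\mathcal{Q}W_0(\nabla\psi)=I_{\rm mem}(\psi)$. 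Avec l'in\'egalit\'e ``$\liminf$'' et l'in\'egalit\'e triviale entre $\Gamma(\pi)$-limites inf\'erieure et sup\'erieure, il vient $I_{\rm mem}=\Gamma(\pi)\hbox{\rm -}\lim_{\eps\to0}I_\eps$ avec $W_{\rm mem}=\mathcal{Q}W_0$.

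\emph{Difficult\'e principale.} Le point d\'elicat est la partie ``$\limsup$''. D'une part, la construction (b) exige de rendre $\tilde\phi_\eps$ globalement $W^{1,p}$ en traversant les interfaces de $\psi$ affine par morceaux tout en contr\^olant l'\'energie, d'o\`u le lissage du champ transverse $\zeta$ et le contr\^ole de l'ensemble ``mauvais'' $\{\zeta_\rho\neq\zeta\}$ ; passer par l'approximation affine par morceaux \'evite en outre toute question de s\'election mesurable d'un $\zeta$ presque optimal. D'autre part, et surtout, la construction directe ne fournit que $W_0$ comme majorant : c'est l'\'etape de relaxation (semi-continuit\'e inf\'erieure abstraite du $\Gamma(\pi)$-$\limsup$, jointe au th\'eor\`eme \ref{DacorognaTheorem}) qui abaisse ce majorant jusqu'\`a $\mathcal{Q}W_0$ et produit la bonne densit\'e de membrane. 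C\^ot\'e ``$\liminf$'', la seule v\'eritable subtilit\'e est l'extraction d'une sous-suite compacte pour $\{\tilde\phi_\eps\}$ (via la coercivit\'e) et l'identification de sa limite comme ind\'ependante de $y_3$ et \'egale \`a $\psi$, apr\`es quoi tout repose sur l'in\'egalit\'e ponctuelle $\mathcal{Q}W_0\leq W_0\leq W(\cdot\mid\zeta)$ et sur le th\'eor\`eme d'Acerbi-Fusco appliqu\'e \`a l'int\'egrande $V$.
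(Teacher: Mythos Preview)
Le papier n'offre aucune d\'emonstration du th\'eor\`eme~\ref{LeDretRaoult} : il l'\'enonce comme r\'esultat de Le~Dret et Raoult et renvoie \`a \cite{ledret-raoult93,ledret-raoult95}. Il n'y a donc pas de preuve interne \`a confronter directement \`a la v\^otre. Cela \'etant, votre argument est correct et suit de pr\`es la strat\'egie originale de Le~Dret--Raoult : changement d'\'echelle sur le domaine fixe~$\Sigma_1$, in\'egalit\'e de $\liminf$ via la fonction auxiliaire $V(F)=\mathcal{Q}W_0(\Pi F)$ quasiconvexe sur~$\MM^{3\times3}$ et le th\'eor\`eme~\ref{AcerbiFuscoTheorem}, puis suites de r\'ecup\'eration construites par optimisation transverse et relaxation abstraite (sfsci du $\Gamma(\pi)$-$\limsup$ jointe au th\'eor\`eme~\ref{DacorognaTheorem}).

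Il est toutefois instructif de comparer avec le cadre g\'en\'eral d\'evelopp\'e en~\S3 pour les th\'eor\`emes~\ref{ThMemb1} et~\ref{ThMemb2}, qui fournit une route alternative m\^eme dans le cas \`a croissance~$p$. Pour la $\liminf$, la proposition~\ref{det>0Prop1} ne passe pas par l'extension tridimensionnelle~$V$ : elle tranche en~$x_3$, applique Fatou, et obtient directement $\Gamma\hbox{-}\liminf\mathcal{I}_\eps\geq\overline{\mathcal{I}}$, l'identification $\overline{\mathcal{I}}=I_{\rm mem}$ venant ensuite des th\'eor\`emes de relaxation~\ref{GeneralThRelax} et~\ref{ThExample:m=N+1}. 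Pour la $\limsup$, le papier (propositions~\ref{det>0Prop2}--\ref{det>0Prop3}) proc\`ede par repr\'esentation non int\'egrale de~$\mathcal{I}$ et s\'election continue du champ transverse, ce qui est indispensable sous contraintes de d\'eterminant mais superflu sous~\eqref{CondCroiss}. Votre approche est donc plus \'el\'ementaire et autonome dans ce cadre ; celle du papier s'adapte en revanche aux int\'egrandes non born\'ees, o\`u l'astuce~$V$ et le lissage brut de~$\zeta$ \'echouent (on ne contr\^ole plus~$M$ sur l'ensemble de transition).
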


 Bien que ce th\'eor\`eme ne soit pas compatible avec les contraintes de l'hyper\'elasti-cit\'e (\ref{Cst1}) et (\ref{Cst2}), il \'etablit un cadre math\'ematique convenable pour \'etudier la r\'eduction de dimension d'un point de vue variationnel (ce th\'eor\`eme est en fait le point de d\'epart de beaucoup de travaux sur le sujet).  

Apr\`es Percivale, en 1996, Ben Belgacem a aussi consid\'er\'e les conditions (\ref{Cst1}) et (\ref{Cst2}). Dans \cite[Theorem 1]{benbelgacem97}, il annon\c cait avoir r\'eussi \`a traiter la $\Gamma(\pi)$-convergence de (\ref{TriDimEnerMem}) vers (\ref{BiDimEnerMem}) en pr\'esence de (\ref{Cst1}) et (\ref{Cst2}). Dans \cite{benbelgacem00}, qui est l'article correspondant \`a la note \cite{benbelgacem97}, l'\'enonc\'e  \cite[Theorem 1]{benbelgacem97} n'est pas compl\'etement d\'emontr\'e (cependant, une preuve plus d\'etaill\'ee, mais pas enti\`erement compl\`ete, peut \^etre trouv\'ee dans sa th\`ese \cite{benbelgacem96}). De plus, pour Ben Belgacem, $W_{\rm mem}=\mathcal{Q}\mathcal{R}W_0$ (l'enveloppe quasiconvexe de l'enveloppe rang-1 convexe de $W_0$). En fait, il a \'et\'e montr\'e dans \cite{oah-jpm07,oah-jpm06} que $\mathcal{Q}\mathcal{R}W_0=\mathcal{Q}W_0$ (voir la remarque \ref{QRW_0=QW_0}). N\'eanmoins, les travaux de Ben Belgacem apportent des avanc\'ees substantielles. En particulier, ils mettent en lumi\`ere l'importance des th\'eor\`emes d'approximation pour des fonctions de Sobolev par des immersions lisses dans l'\'etude du passage 3D-2D en pr\'esence de (\ref{Cst1}) et (\ref{Cst2}).

Dans \cite{oah-jpm06} nous avons trait\'e la $\Gamma(\pi)$-convergence de (\ref{TriDimEnerMem}) vers (\ref{BiDimEnerMem}) en pr\'esence de (\ref{Cst1prime}) et (\ref{Cst2}) (voir le th\'eor\`eme \ref{ThMemb1} et la remarque \ref{ample-integrand}). Utilisant la m\'ethode developp\'ee dans \cite{oah-jpm06} et quelques r\'esultats de Ben Belgacem, dans \cite{oah-jpm08b} nous avons \'etudi\'e la $\Gamma(\pi)$-convergence de (\ref{TriDimEnerMem}) vers (\ref{BiDimEnerMem}) sous les contraintes (\ref{Cst1}) et (\ref{Cst2}) (voir le th\'eor\`eme \ref{ThMemb2}).

\section{Relaxation avec contraintes de type d\'eterminant}

\subsection{Th\'eor\`emes g\'en\'eraux}

Dans \cite{oah-jpm07,oah-jpm08a} nous avons mis au point le th\'eor\`eme de repr\'esentation int\'egrale suivant pour $\overline{I}$ d\'efinie en (\ref{DefFunctRelax}). Ce th\'eor\`eme contient et d\'epasse le th\'eor\`eme \ref{DacorognaTheorem} (voir la remarque \ref{ThmDacorognaCoDe}). En particulier, il est compatible avec (\ref{Cst1prime}) et (\ref{Cst2}) (voir \S 2.3).
\begin{theorem}\label{GeneralThRelax}
Si $\Z W$ d\'efinie en {(\ref{DefOfZW})} satisfait la condition de croissance d'ordre $p$ suivante 
\begin{eqnarray}\label{Cond-CroissZW}
\Z W(F)\leq c(1+|F|^p)\hbox{ pour tout }F\in\MM^{m\times N} \hbox{ avec } c>0\hbox{,}
\end{eqnarray}
alors {(\ref{FunctRelax})} a lieu avec $\overline{W}=\mathcal{Q}W=\Z W$.
\end{theorem}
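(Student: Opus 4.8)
The plan is to establish the two inequalities $\overline{I}(\phi)\leq\int_\Omega \Z W(\nabla\phi)\,dx$ and $\int_\Omega \mathcal{Q}W(\nabla\phi)\,dx\leq\overline{I}(\phi)$, then close the loop using the chain of inequalities $\mathcal{Q}W\leq\Z W$ (a general fact, proved in \S 2.1.3 via Theorem \ref{QuasiconvexificationFormulaTheorem} when $W$ is finite, but which here only needs $\mathcal{Q}W\le W$ together with quasiconvexity of $\Z W$) and the observation that $\overline{W}:=\overline{I}(\cdot)$-density, once it exists, is quasiconvex by Theorem \ref{CondNessDeQuasi} and satisfies $\overline{W}\leq W$, hence $\overline{W}\leq\mathcal{Q}W$. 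Thus it suffices to prove
\begin{equation}\label{sandwich-plan}
\int_\Omega \mathcal{Q}W(\nabla\phi(x))\,dx\;\leq\;\overline{I}(\phi)\;\leq\;\int_\Omega \Z W(\nabla\phi(x))\,dx,
\end{equation}
since then $\int_\Omega\mathcal{Q}W(\nabla\phi)\,dx\leq\overline{I}(\phi)\leq\int_\Omega\Z W(\nabla\phi)\,dx$ and, because $\mathcal{Q}W\leq\Z W$ pointwise while also $\Z W$ is quasiconvex and $\leq W$ so $\Z W\leq\mathcal{Q}W$, all three terms agree; this simultaneously identifies $\overline{W}=\mathcal{Q}W=\Z W$ and proves the integral representation \eqref{FunctRelax}.

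For the upper bound in \eqref{sandwich-plan}, I would first treat $\phi$ affine, $\nabla\phi\equiv F$: given $\eta>0$ pick $\varphi\in\Aff_0(Y;\RR^m)$ with $\int_Y W(F+\nabla\varphi)\,dx\leq\Z W(F)+\eta$, extend $\varphi$ by $Y$-periodicity, and set $\phi_n(x):=\phi(x)+\frac1n\varphi(nx)$; then $\phi_n\wto\phi$ in $W^{1,p}$ (here the growth condition \eqref{Cond-CroissZW} gives the $L^p$-bound on $\nabla\phi_n$ needed for weak convergence and for applying the Riemann--Lebesgue lemma), and $\lim_n I(\phi_n)=|\Omega|\,\big(\Z W(F)+O(\eta)\big)$ after the standard averaging argument, using that $W\leq$ (something $L^1_{\rm loc}$ along the sequence) — more precisely one controls $\limsup_n I(\phi_n)$ by $\int_Y W(F+\nabla\varphi)$ via periodicity and a density/approximation step handling the non-periodic boundary layer, whose measure tends to $0$. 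For general $\phi\in W^{1,p}(\Omega;\RR^m)$ one first approximates $\phi$ strongly in $W^{1,p}$ by $\phi^k\in\Aff(\Omega;\RR^m)$ (piecewise affine), applies the affine case on each piece together with a diagonalization, and uses \eqref{Cond-CroissZW} to pass to the limit in $\int\Z W(\nabla\phi^k)\to\int\Z W(\nabla\phi)$; lower semicontinuity of $\overline{I}$ closes the diagonal argument. The role of \eqref{Cond-CroissZW} is exactly to guarantee that $\Z W$ is finite, continuous (it is rank-one convex hence, with $p$-growth, locally Lipschitz), and that all the weakly convergent recovery sequences stay bounded in $W^{1,p}$.

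For the lower bound in \eqref{sandwich-plan}, let $\phi_n\wto\phi$ in $W^{1,p}(\Omega;\RR^m)$ be arbitrary; I want $\int_\Omega\mathcal{Q}W(\nabla\phi)\,dx\leq\liminf_n I(\phi_n)=\liminf_n\int_\Omega W(\nabla\phi_n)\,dx$. Since $\mathcal{Q}W\leq W$ it is enough to show $\int_\Omega\mathcal{Q}W(\nabla\phi)\,dx\leq\liminf_n\int_\Omega\mathcal{Q}W(\nabla\phi_n)\,dx$, i.e. that $\psi\mapsto\int_\Omega\mathcal{Q}W(\nabla\psi)\,dx$ is sequentially weakly lower semicontinuous on $W^{1,p}$. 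Now $\mathcal{Q}W$ is quasiconvex (by its very definition) and, crucially, satisfies $\mathcal{Q}W\leq\Z W\leq c(1+|\cdot|^p)$ by hypothesis \eqref{Cond-CroissZW} and the general inequality $\mathcal{Q}W\leq\Z W$; it is also Borel measurable. With upper $p$-growth plus quasiconvexity, the lower semicontinuity would follow from the Acerbi--Fusco theorem \ref{AcerbiFuscoTheorem} provided $\mathcal{Q}W$ were continuous — and indeed a quasiconvex function with $p$-growth that is real-valued is automatically (locally Lipschitz, hence) continuous, so Theorem \ref{AcerbiFuscoTheorem} applies to $\mathcal{Q}W$ directly. (One must first check $\mathcal{Q}W$ is everywhere finite, which follows from $\mathcal{Q}W\leq\Z W<+\infty$.) This gives the left inequality in \eqref{sandwich-plan} and completes the proof.

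The main obstacle I anticipate is the verification that $\mathcal{Q}W$ (rather than $W$ itself, which may take the value $+\infty$ and be wildly discontinuous) inherits enough regularity — finiteness, then continuity — to feed into Theorem \ref{AcerbiFuscoTheorem}, and dually that $\Z W$ is continuous for the recovery-sequence construction; both reduce to the elementary but slightly delicate fact that a rank-one-convex (a fortiori quasiconvex) real-valued function obeying a $p$-growth bound is locally Lipschitz. A secondary technical point is the boundary-layer estimate in the affine case (controlling $I(\phi_n)$ on the thin region where the periodic extension of $\varphi$ fails to vanish on $\partial\Omega$), and the diagonalization reconciling the piecewise-affine approximation of $\phi$ with the periodic oscillation on each piece — these are routine once the $p$-growth of $\Z W$ is in hand, since it makes $\int\Z W(\nabla\phi^k)$ continuous under strong $W^{1,p}$ convergence.
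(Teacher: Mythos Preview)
Your overall strategy — sandwiching $\overline{I}(\phi)$ between $\int_\Omega \mathcal{Q}W(\nabla\phi)$ and $\int_\Omega \Z W(\nabla\phi)$, then identifying $\mathcal{Q}W=\Z W$ — is the paper's route (the paper applies Acerbi--Fusco to $\Z W$ rather than to $\mathcal{Q}W$, which is tidier since one avoids the a priori existence question for $\mathcal{Q}W$, but this is cosmetic). However, there is a genuine gap.

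\textbf{You never prove that $\Z W$ is quasiconvex.} You invoke it (``$\Z W$ is quasiconvex and $\leq W$ so $\Z W\leq\mathcal{Q}W$'') to close the sandwich, but the only tool you cite is Theorem~\ref{QuasiconvexificationFormulaTheorem}, which requires $W$ itself to be finite --- and the entire purpose of Theorem~\ref{GeneralThRelax} is to allow $W=+\infty$ on a large set. The paper bridges this gap through Theorem~\ref{QuasiconvexificationFormulaTheorem}-bis: since $\Z W$ is finite by \eqref{Cond-CroissZW}, it is continuous (Proposition~\ref{FonsecaProperties}(c)); Dacorogna's formula applied to the \emph{finite} function $\Z W$ then gives $\mathcal{Q}\Z W=\Z\Z W$; and the nontrivial identity $\Z\Z W=\Z W$ (Proposition~\ref{FonsecaProperties}(d), due to Fonseca) yields $\mathcal{Q}\Z W=\Z W$, i.e.\ $\Z W$ is quasiconvex. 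Without this bootstrap your sandwich does not close. (Incidentally, your parenthetical justification ``only needs $\mathcal{Q}W\le W$ together with quasiconvexity of $\Z W$'' is the argument for the \emph{opposite} inequality $\Z W\leq\mathcal{Q}W$; the inequality $\mathcal{Q}W\leq\Z W$ comes from testing the quasiconvexity of $\mathcal{Q}W$ against $\Aff_0$ competitors.)

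A secondary issue: your recovery construction for piecewise-affine $\phi$ by periodic oscillation on each piece does not glue across piece boundaries, and a cutoff layer cannot repair this because on that layer the gradient may lie where $W=+\infty$. The paper's Lemma~\ref{FundamentalLemmma1} sidesteps this entirely: on each piece $V_i$ one Vitali-covers by arbitrarily small cubes and plants in each a rescaled copy of the $\Aff_0(Y)$ near-minimizer $\varphi_i$; the global perturbation then lies in $\Aff_0(\Omega;\RR^m)$, its gradient ranges over the finite set $\{G_{i,l}\}$, and no region ever carries the bare gradient $F_i$ on which $W$ might blow up.
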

\begin{proof}
Soit $\Z I:W^{1,p}(\Omega;\RR^m)\to[0,+\infty]$ d\'efinie par 
$$
\Z I(\phi):=\int_\Omega\Z W(\nabla\phi(x))dx
$$
et soient $\overline{I}_{\rm aff},\overline{\Z I}_{\rm aff},\overline{\Z I}:W^{1,p}(\Omega;\RR^m)\to[0,+\infty]$ donn\'ees par :
\begin{enumerate}
\item[$\diamond$] $\displaystyle\overline{I}_{\rm aff}(\phi):=\inf\left\{\liminf_{n\to+\infty}I(\phi_n):\Aff(\Omega;\RR^m)\ni\phi_n\wto\phi\right\};$
\item[$\diamond$] $\displaystyle\overline{\Z I}_{\rm aff}(\phi):=\inf\left\{\liminf_{n\to+\infty}\Z I(\phi_n):\Aff(\Omega;\RR^m)\ni\phi_n\wto\phi\right\};$
\item[$\diamond$] $\displaystyle\overline{\Z I}(\phi):=\inf\left\{\liminf_{n\to+\infty}\Z I(\phi_n):W^{1,p}(\Omega;\RR^m)\ni\phi_n\wto\phi\right\}.$
\end{enumerate}
On a besoin du lemme  suivant qui est valable sans l'hypoth\`ese (\ref{Cond-CroissZW}).  
\begin{lemma}\label{FundamentalLemmma1}
$\overline{I}_{\rm aff}=\overline{\Z I}_{\rm  aff}$.
\end{lemma}
(Pour une preuve du lemme \ref{FundamentalLemmma1} voir \S 2.1.1.) Comme $\Z W$ satisfait (\ref{Cond-CroissZW}) et $\Aff(\Omega;\RR^m)$ est fortement dense dans $W^{1,p}(\Omega;\RR^m)$ (voir la remarque \ref{Ekeland-TemamDensityRemark}) on a $\overline{\Z I}_{\rm  aff}=\overline{\Z I}$. Donc $\overline{I}_{\rm aff}=\overline{\Z I}$ par le lemme \ref{FundamentalLemmma1}. De plus, $\overline{I}\leq\overline{I}_{\rm aff}$ et $\overline{\Z I}\leq \overline{I}$, d'o\`u $\overline{I}=\overline{I}_{\rm aff}=\overline{\Z I}$. D'autre part, Fonseca a d\'emontr\'e la proposition suivante (voir \cite{fonseca88}).
\begin{proposition}\label{FonsecaProperties}
$\Z W$ satisfait les quatre propri\'et\'es qui suivent.
\begin{enumerate} 
\item[(a)] Pour tout ouvert born\'e $D\subset\RR^N$ avec $|\partial D|=0$ et tout $F\in\MM^{m\times N}$,
$$
\Z W(F)=\inf\left\{{1\over |D|}\int_D W(F+\nabla\varphi(x))dx:\varphi\in\Aff_0(D;\RR^m)\right\}.
$$
Ainsi $\displaystyle\Z W(F)\leq{1\over |D|}\int_D W(F+\nabla\varphi(x))dx$
pour tout $\varphi\in\Aff_0(D;\RR^m).$
\item[(b)] Si $\Z W$ est finie alors $\Z W$ est rang-1 convexe.
\item[(c)] Si $\Z W$ est finie alors $\Z W$ est continue.
\item[(d)]  Pour tout ouvert born\'e $D\subset\RR^N$ avec $|\partial D|=0$, tout $F\in\MM^{m\times N}$ et tout $\varphi\in\Aff_0(D;\RR^m)$,
$$
\Z W(F)\leq{1\over |D|}\int_D \Z W(F+\nabla\varphi(x))dx.
$$
\end{enumerate}
\end{proposition}
(Pour une preuve de la proposition \ref{FonsecaProperties} voir \S 2.1.2.) Ici on utilise seulement la proposition \ref{FonsecaProperties}(c), i.e., $Z W$ est continue puisque $\Z W$ satisfait (\ref{Cond-CroissZW}). En fait, on peut montrer le r\'esultat suivant qui est une extension facile du th\'eor\`eme de repr\'esentation de la quasiconvexifi\'ee de Dacorogna (voir le th\'eor\`eme \ref{QuasiconvexificationFormulaTheorem}).

\newtheorem*{QuasiconvexificationFormulaTheorem}{\bf Th\'eor\`eme \ref{QuasiconvexificationFormulaTheorem}-bis}

\begin{QuasiconvexificationFormulaTheorem}
{\em Si $\Z W$ est finie alors $\Z W$ est continue et $\mathcal{Q} W=\Z W$.}  
\end{QuasiconvexificationFormulaTheorem}

(Pour une preuve du th\'eor\`eme \ref{QuasiconvexificationFormulaTheorem}-bis voir \S 2.1.4.) Ainsi, $\Z W$ est continue, quasiconvexe et satisfait (\ref{Cond-CroissZW}), donc $\overline{\Z I}=\Z I$ par le th\'eor\`eme \ref{AcerbiFuscoTheorem}, et le th\'eor\`eme \ref{GeneralThRelax} suit.
\end{proof}

\begin{remark}\label{ThmDacorognaCoDe}
Si $W$ v\'erifie {(\ref{CondCroiss})} alors il est clair que $\Z W$ satisfait (\ref{Cond-CroissZW}). On obtient donc le th\'eor\`eme suivant (qui est un petit peu plus g\'en\'eral que le th\'eor\`eme de relaxation de Dacorogna, voir le th\'eor\`eme \ref{DacorognaTheorem}).

\newtheorem*{DacorognaTheorem}{\bf Th\'eor\`eme \ref{DacorognaTheorem}-bis}

\begin{DacorognaTheorem}
{\em Si $W$ v\'erifie {(\ref{CondCroiss})} alors {(\ref{FunctRelax})} a lieu avec $\overline{W}=\mathcal{Q}W=\Z W.$}
\end{DacorognaTheorem}
\end{remark}

\begin{remark}
En analysant la d\'emonstration du th\'eor\`eme \ref{GeneralThRelax}, on voit que l'on a en fait d\'emontr\'e le r\'esultat suivant.

\newtheorem*{GeneralThRelax}{\bf Th\'eor\`eme \ref{GeneralThRelax}-bis}
\begin{GeneralThRelax}
Si $\Z W$ satisfait (\ref{Cond-CroissZW}) alors $\overline{I}(\phi)=\overline{I}_{\rm aff}(\phi)=\int_\Omega\overline{W}(\nabla\phi(x))dx$ pour tout $\phi\in W^{1,p}(\Omega;\RR^m)$ avec $\overline{W}=\mathcal{Q} W=\Z W$.
\end{GeneralThRelax}
\end{remark}
\begin{remark} En utilisant la m\^eme m\'ethode, dans le cas $p=\infty$ on peut montrer le th\'eor\`eme suivant.
\begin{theorem}
Si $\Z W$ est finie alors $\overline{I}(\phi)=\overline{I}_{\rm aff}(\phi)=\int_\Omega\overline{W}(\nabla\phi(x))dx$ pour tout $\phi\in W^{1,\infty}(\Omega;\RR^m)$ avec $\overline{W}=\mathcal{Q} W=\Z W$.
\end{theorem}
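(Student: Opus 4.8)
The plan is to mimic, line by line, the proof of Theorem~\ref{GeneralThRelax}, replacing the growth hypothesis (\ref{Cond-CroissZW}) and the appeal to the Acerbi--Fusco theorem by the weaker finiteness hypothesis on $\Z W$ and an appeal to the corresponding $p=\infty$ tools. First I would set $p=\infty$ throughout, keep the definitions of $\Z I$, $\overline I_{\rm aff}$, $\overline{\Z I}_{\rm aff}$ and $\overline{\Z I}$ on $W^{1,\infty}(\Omega;\RR^m)$ (now with weak-$*$ convergence in $W^{1,\infty}$), and invoke Lemma~\ref{FundamentalLemmma1}, which as stated is valid without any growth hypothesis, to get $\overline I_{\rm aff}=\overline{\Z I}_{\rm aff}$.

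The next step is to pass from $\overline{\Z I}_{\rm aff}$ to $\overline{\Z I}$. In the $p>1$ proof this used the strong density of $\Aff(\Omega;\RR^m)$ in $W^{1,p}$ together with the $p$-growth of $\Z W$; here I would instead use that $\Z W$ is finite, hence, by Proposition~\ref{FonsecaProperties}(c), continuous, hence locally bounded, so that along a sequence $\phi_n\wto^* \phi$ in $W^{1,\infty}$ the gradients stay in a fixed bounded set and one may approximate $\phi$ by piecewise affine maps with uniformly bounded gradients (standard mollification/triangulation with gradient bounds) so that $\Z I$ is continuous under this approximation; this yields $\overline{\Z I}_{\rm aff}=\overline{\Z I}$. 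Combining with the trivial inequalities $\overline I\le\overline I_{\rm aff}$ and $\overline{\Z I}\le\overline I$ (the latter because $\Z W\le W$) gives $\overline I=\overline I_{\rm aff}=\overline{\Z I}$, exactly as before.

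Then I would apply Theorem~\ref{QuasiconvexificationFormulaTheorem} (Dacorogna's quasiconvexification formula, valid for finite $W$): since $\Z W$ is finite it is continuous and $\mathcal Q W=\Z W$, so in particular $\Z W$ is quasiconvex. It remains to show $\overline{\Z I}=\Z I$, i.e.\ that the functional with quasiconvex continuous (and locally bounded, being finite-valued and continuous) integrand $\Z W$ is sequentially weakly-$*$ lower semicontinuous on $W^{1,\infty}(\Omega;\RR^m)$, and that this lower semicontinuous functional is represented by the integral $\int_\Omega\Z W(\nabla\phi)\,dx$. This is the $p=\infty$ analogue of the Acerbi--Fusco theorem: for a sequence bounded in $W^{1,\infty}$ one has a uniform $L^\infty$ gradient bound, so one can work on a fixed ball in matrix space where $\Z W$ is bounded and uniformly continuous, and the classical blow-up/lower semicontinuity argument applies (alternatively one cites the known $p=\infty$ lower semicontinuity result for continuous quasiconvex integrands). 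Putting these together, $\overline I(\phi)=\overline I_{\rm aff}(\phi)=\overline{\Z I}(\phi)=\Z I(\phi)=\int_\Omega\Z W(\nabla\phi(x))\,dx$ with $\overline W=\mathcal Q W=\Z W$, which is the claim.

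The main obstacle is the second step, the passage $\overline{\Z I}_{\rm aff}=\overline{\Z I}$, together with the $p=\infty$ semicontinuity used in the last step: in the $p>1$ setting both were handled cleanly by $p$-growth plus standard density and the Acerbi--Fusco theorem, whereas here one only has finiteness of $\Z W$. The point to be careful about is that finiteness plus Proposition~\ref{FonsecaProperties}(c) gives continuity but not a global growth bound, so the approximation of an arbitrary $W^{1,\infty}$ map by piecewise affine maps and the blow-up argument must be confined to a ball in $\MM^{m\times N}$ determined by the uniform gradient bound of the sequence at hand, and one must check that the diagonal extraction producing the recovery sequence respects this uniform bound. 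Once it is observed that weak-$*$ convergent sequences in $W^{1,\infty}$ automatically carry such a uniform bound, everything localises and the $p>1$ arguments transcribe verbatim.
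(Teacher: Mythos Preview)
Your proposal is correct and follows exactly what the paper indicates: the paper states this theorem without proof, merely remarking ``En utilisant la m\^eme m\'ethode, dans le cas $p=\infty$'', and your plan transcribes the proof of Th\'eor\`eme~\ref{GeneralThRelax} to $W^{1,\infty}$ with the right modifications (uniform gradient bounds along weak-$*$ sequences replacing the $p$-growth condition). One small correction: where you invoke Th\'eor\`eme~\ref{QuasiconvexificationFormulaTheorem} you actually need Th\'eor\`eme~\ref{QuasiconvexificationFormulaTheorem}-bis, since your hypothesis is that $\Z W$ (not $W$) is finite --- this is precisely what the paper uses in the proof of Th\'eor\`eme~\ref{GeneralThRelax}.
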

\end{remark}
Un r\'esultat analogue au th\'eor\`eme \ref{GeneralThRelax} a \'et\'e prouv\'e par Ben Belgacem  (voir \cite[Theorem 3.1]{benbelgacem00}).  Soit la suite  $\{\R_i W\}_{i\geq 0}$ d\'efinie par $\R_0 W=W$ et pour tout $i\geq 1$ et tout $F\in\MM^{m\times N}$,
\begin{equation}\label{Kohn-StrangFormula}
\R_{i+1}W(F):=\infff\limits_{t\in[0,1]}\big\{(1-t)\R_i W(F-t a\otimes b)+t\R_i W(F+(1-t)a\otimes b)\big\}
\end{equation}
avec $a\otimes b\in\MM^{m\times N}$ donn\'e par $(a\otimes b)x:=\langle a,x\rangle b$ pour tout $x\in\RR^N$, o\`u $\langle\cdot,\cdot\rangle$ d\'esigne le produit scalaire dans $\RR^N$. D'apr\`es Kohn et Strang, $\R_{i+1}W\leq\R_i W$ pour tout $i\geq 0$ et $\R W=\inf_{i\geq 0}\R_i W$ (voir \cite{kohn-strang86}) o\`u $\R W$ d\'esigne l'enveloppe rang-1 convexe de $W$, i.e., la plus grande fonction rang-1 convexe qui est inf\'erieure \`a $W$. Le th\'eor\`eme de Ben Belgacem s'\'enonce comme suit.

\begin{theorem}\label{RelaxBenBelgacemTh}
Si {:}
\begin{enumerate}
\item[$\diamond$] $\OO_W:={\rm int}\{F\in\MM^{m\times N}:\Z\R_i W(F)\leq\R_{i+1}W(F)\hbox{ pour tout }i\geq 0\}$ est dense dans $\MM^{m\times N}$ ;
\item[$\diamond$] pour tout $i\geq 1$, tout $F\in\MM^{m\times N}$ et tout $\{F_n\}_n\subset \OO_W$,
$$
\hbox{si } F_n\to F \hbox{ alors }\R_i W(F)\geq\limsup_{n\to+\infty}\R_i W(F_n)\hbox{,}
$$
\end{enumerate}
et si $\R W$ satisfait la  condition de croissance d'ordre $p$ suivante 
\begin{eqnarray}\label{RWGrowth-Cond}
\R W(F)\leq c(1+|F|^p) \hbox{ pour tout }F\in\MM^{m\times N} \hbox{ avec }c>0\hbox{,}
\end{eqnarray}
alors {(\ref{FunctRelax})} a lieu avec $\overline{W}=\mathcal{Q}\R W$.
\end{theorem}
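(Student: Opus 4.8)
\medskip

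\noindent\emph{Plan de d\'emonstration.} Pour $j\ge 0$ je poserais $\R_j I(\phi):=\int_\Omega\R_j W(\nabla\phi(x))dx$ et $\R I(\phi):=\int_\Omega\R W(\nabla\phi(x))dx$, et j'introduirais les relax\'ees $\overline{\R_j I}_{\rm aff}$, $\overline{\R I}_{\rm aff}$ (le long de $\Aff(\Omega;\RR^m)$) et $\overline{\R I}$ (le long de $W^{1,p}(\Omega;\RR^m)$), d\'efinies comme $\overline I_{\rm aff}$, $\overline{\Z I}_{\rm aff}$, $\overline{\Z I}$ dans la preuve du th\'eor\`eme \ref{GeneralThRelax} (en rempla\c cant $W$ par $\R_j W$, resp. $\R W$); chacune de ces relax\'ees est sfsci dans $W^{1,p}(\Omega;\RR^m)$. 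Notons d'embl\'ee que $\R W$ est finie par (\ref{RWGrowth-Cond}), donc $\mathcal{Q}\R W=\Z\R W$ par le th\'eor\`eme \ref{QuasiconvexificationFormulaTheorem}, que cette fonction est continue par la proposition \ref{FonsecaProperties}(c), et que, puisque $\Z\R W\le\R W$, elle v\'erifie (\ref{Cond-CroissZW}). Le plan comporte trois \'etapes~: (1) montrer que $\overline{\R_j I}_{\rm aff}=\overline{\R_{j+1}I}_{\rm aff}$ pour tout $j\ge 0$; (2) passer \`a la limite $i\to\infty$ pour obtenir $\overline I_{\rm aff}=\overline{\R I}_{\rm aff}$; (3) appliquer le th\'eor\`eme \ref{GeneralThRelax}-bis \`a l'int\'egrande finie $\R W$ et conclure par un argument \`a la Acerbi--Fusco.

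Le point central est l'\'etape (1). L'in\'egalit\'e $\overline{\R_{j+1}I}_{\rm aff}\le\overline{\R_j I}_{\rm aff}$ est imm\'ediate car $\R_{j+1}W\le\R_j W$. Pour la r\'eciproque je montrerais d'abord que $\overline{\R_j I}_{\rm aff}(\phi_F)\le|\Omega|\,\R_{j+1}W(F)$ pour tout $F\in\MM^{m\times N}$, o\`u $\phi_F(x):=Fx$. Si $F\in\OO_W$, alors $\Z\R_j W(F)\le\R_{j+1}W(F)$; en supposant ce second membre fini (sinon rien \`a prouver) et en utilisant la d\'efinition (\ref{DefOfZW}) de $\Z$ pour l'int\'egrande $\R_j W$, pour chaque $\delta>0$ on choisit $\varphi\in\Aff_0(Y;\RR^m)$ avec $\int_Y\R_j W(F+\nabla\varphi(y))dy\le\Z\R_j W(F)+\delta\le\R_{j+1}W(F)+\delta$; en recollant des copies p\'eriodiques r\'eduites de $\varphi$ sur un quadrillage fin de $\Omega$ on obtient $\Phi_\nu\in\Aff(\Omega;\RR^m)$ avec $\Phi_\nu\wto\phi_F$ dans $W^{1,p}(\Omega;\RR^m)$ et $\R_j I(\Phi_\nu)\to|\Omega|\int_Y\R_j W(F+\nabla\varphi)dy$, d'o\`u, apr\`es $\delta\to 0$, l'in\'egalit\'e voulue pour $F\in\OO_W$. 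Pour $F$ quelconque, $\OO_W$ \'etant dense, je prendrais $F_s\to F$ avec $F_s\in\OO_W$; comme $\phi_{F_s}\to\phi_F$ fortement dans $W^{1,p}(\Omega;\RR^m)$, la sfsci de $\overline{\R_j I}_{\rm aff}$ et la seconde hypoth\`ese de semi-continuit\'e (avec $i=j+1\ge 1$) donnent
\[
\overline{\R_j I}_{\rm aff}(\phi_F)\le\liminf_{s\to+\infty}\overline{\R_j I}_{\rm aff}(\phi_{F_s})\le|\Omega|\limsup_{s\to+\infty}\R_{j+1}W(F_s)\le|\Omega|\,\R_{j+1}W(F).
\]
Un argument classique de localisation et de recollement \'etend ceci en $\overline{\R_j I}_{\rm aff}(\psi)\le\R_{j+1}I(\psi)$ pour toute $\psi\in\Aff(\Omega;\RR^m)$; enfin, pour $\phi\in W^{1,p}(\Omega;\RR^m)$ et $\Aff(\Omega;\RR^m)\ni\psi_n\wto\phi$, la sfsci de $\overline{\R_j I}_{\rm aff}$ donne $\overline{\R_j I}_{\rm aff}(\phi)\le\liminf_n\overline{\R_j I}_{\rm aff}(\psi_n)\le\liminf_n\R_{j+1}I(\psi_n)$, puis, en passant \`a l'infimum sur ces suites, $\overline{\R_j I}_{\rm aff}\le\overline{\R_{j+1}I}_{\rm aff}$.

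\'Etape (2)~: l'\'etape (1) donne $\overline I_{\rm aff}=\overline{\R_0 I}_{\rm aff}=\overline{\R_i I}_{\rm aff}$ pour tout $i\ge 0$, donc $\overline I_{\rm aff}(\psi)\le\R_i I(\psi)$ pour toute $\psi\in\Aff(\Omega;\RR^m)$ et tout $i$; comme $\R_iW\downarrow\R W$ (Kohn et Strang), en faisant $i\to\infty$ (somme finie sur les morceaux affines de $\psi$) on a $\overline I_{\rm aff}(\psi)\le\R I(\psi)$, et la sfsci de $\overline I_{\rm aff}$ jointe \`a la d\'efinition de $\overline{\R I}_{\rm aff}$ donne $\overline I_{\rm aff}\le\overline{\R I}_{\rm aff}$; l'in\'egalit\'e inverse vient de $\R W\le W$. \'Etape (3)~: $\Z\R W$ v\'erifiant (\ref{Cond-CroissZW}), le th\'eor\`eme \ref{GeneralThRelax}-bis appliqu\'e \`a $\R W$ donne $\overline{\R I}(\phi)=\overline{\R I}_{\rm aff}(\phi)=\int_\Omega\mathcal{Q}\R W(\nabla\phi(x))dx$, d'o\`u $\overline I_{\rm aff}(\phi)=\int_\Omega\mathcal{Q}\R W(\nabla\phi(x))dx$ par l'\'etape (2). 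Enfin $\overline I\le\overline I_{\rm aff}$ trivialement, et r\'eciproquement, $\mathcal{Q}\R W$ \'etant quasiconvexe, continue (proposition \ref{FonsecaProperties}(c)) et v\'erifiant (\ref{CondCroiss}) (car $\le\R W$), la fonctionnelle $\phi\mapsto\int_\Omega\mathcal{Q}\R W(\nabla\phi(x))dx$ est sfsci dans $W^{1,p}(\Omega;\RR^m)$ par le th\'eor\`eme \ref{AcerbiFuscoTheorem}, et $\mathcal{Q}\R W\le\R W\le W$ donne $\int_\Omega\mathcal{Q}\R W(\nabla\phi(x))dx\le\overline I(\phi)$ pour tout $\phi$; ainsi (\ref{FunctRelax}) a lieu avec $\overline W=\mathcal{Q}\R W$.

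La principale difficult\'e est le passage, au sein de l'\'etape (1), d'un gradient microscopique arbitraire $F\notin\OO_W$ \`a un gradient voisin dans $\OO_W$~: c'est exactement l\`a qu'interviennent la densit\'e de $\OO_W$ et la semi-continuit\'e sup\'erieure s\'equentielle des $\R_iW$ le long de $\OO_W$. L'autre point technique est la construction de lamination elle-m\^eme (le recollement p\'eriodique de $\varphi$, avec contr\^ole des couches de transition), classique mais \`a mener avec le soin habituel; il faut aussi v\'erifier, ce qui est sans danger, que les valeurs infinies \'eventuelles de $\R_j W$ rendent les in\'egalit\'es concern\'ees triviales.
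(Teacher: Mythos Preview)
The paper does not give its own proof of Theorem~\ref{RelaxBenBelgacemTh}: it is stated as Ben Belgacem's result (with a reference to \cite[Theorem~3.1]{benbelgacem00}) and no argument is reproduced. Your plan is nonetheless a faithful reconstruction of the natural strategy---inducting on the Kohn--Strang iterates $\R_iW$, then combining with Theorem~\ref{GeneralThRelax}-bis and Theorem~\ref{AcerbiFuscoTheorem}---and Steps~(2) and~(3) are correct as written.

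The one place where your sketch is too optimistic is the extension, inside Step~(1), from the single-gradient case $\phi_F$ to an arbitrary $\psi\in\Aff(\Omega;\RR^m)$. You call this ``un argument classique de localisation et de recollement'', but your construction for $F\notin\OO_W$ proceeds by replacing $F$ with some $F_s\in\OO_W$; the resulting test functions then have gradient $F_s$ (plus a compactly supported oscillation) rather than $F$, and so do \emph{not} match $\psi$ on the internal interfaces $\partial V_i$---the gluing across the $V_i$'s fails. The fix is simple but worth spelling out: since $\OO_W$ is open and dense and the index set $I$ is finite, a Baire-type argument produces arbitrarily small $L\in\MM^{m\times N}$ with $F_i+L\in\OO_W$ for every $i\in I$; replacing $\psi$ by $\psi_s(x):=\psi(x)+L_sx$ (with $L_s\to 0$) reduces to the case where all gradients lie in $\OO_W$, for which the Vitali construction of Lemma~\ref{FundamentalLemmma1} (applied with $\R_jW$ in place of $W$) gives $\overline{\R_jI}_{\rm aff}(\psi_s)\le\R_{j+1}I(\psi_s)$ directly. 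Passing to the limit $s\to\infty$ via the sfsci of $\overline{\R_jI}_{\rm aff}$ and the second hypothesis of the theorem then yields the inequality for $\psi$ itself.
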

 Le th\'eor\`eme \ref{RelaxBenBelgacemTh} est aussi compatible avec (\ref{Cst1prime}) et (\ref{Cst2}) (voir \cite{benbelgacem00} et \cite[Chapitre 1]{benbelgacem96}). Ben Belgacem est le premier a avoir mis au point un th\'eor\`eme de repr\'esentation int\'egrale pour $\overline{I}$ qui contient et d\'epasse le th\'eor\`eme \ref{DacorognaTheorem}. (En fait, la premi\`ere tentative de d\'epassement du th\'eor\`eme \ref{DacorognaTheorem} est due \`a Percivale (voir \cite[\S 2]{percivale91})). De mani\`ere g\'en\'erale, comme la rang-1 convexit\'e et la quasiconvexit\'e ne co\"\i ncident pas, les th\'eor\`emes \ref{GeneralThRelax} et \ref{RelaxBenBelgacemTh} ne sont pas identiques. Cependant, on a la proposition ``mixte" suivante.
\begin{proposition}\label{PropMixte}
Si $\R W$ satisfait {(\ref{RWGrowth-Cond})} et si $\Z W$ est finie alors {(\ref{FunctRelax})} a lieu avec $\overline{W}=\mathcal{Q}W$.
\end{proposition}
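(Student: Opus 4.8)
Le plan est de d\'eduire la proposition \ref{PropMixte} du th\'eor\`eme \ref{GeneralThRelax} : il suffit de v\'erifier que, sous l'hypoth\`ese ``$\Z W$ finie'' et la condition de croissance (\ref{RWGrowth-Cond}) sur $\R W$, la fonction $\Z W$ satisfait d\'ej\`a la condition de croissance (\ref{Cond-CroissZW}).

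D'abord, j'exploiterais la finitude de $\Z W$ : d'apr\`es le th\'eor\`eme \ref{QuasiconvexificationFormulaTheorem}-bis, $\Z W$ est alors continue et $\mathcal{Q} W = \Z W$. En particulier $\mathcal{Q} W$ est finie ; \'etant quasiconvexe et finie, elle est rang-1 convexe. Comme de plus $\mathcal{Q} W \leq W$, c'est une fonction rang-1 convexe major\'ee par $W$, donc $\mathcal{Q} W \leq \R W$ par maximalit\'e de l'enveloppe rang-1 convexe.

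Ensuite, en combinant cette in\'egalit\'e avec (\ref{RWGrowth-Cond}), j'obtiendrais, pour tout $F\in\MM^{m\times N}$,
$$
\Z W(F) = \mathcal{Q} W(F) \leq \R W(F) \leq c(1+|F|^p),
$$
c'est-\`a-dire (\ref{Cond-CroissZW}). Le th\'eor\`eme \ref{GeneralThRelax} s'appliquerait alors et fournirait (\ref{FunctRelax}) avec $\overline{W} = \mathcal{Q} W = \Z W$, ce qui est l'\'enonc\'e voulu. Accessoirement, on a $\mathcal{Q}\R W = \mathcal{Q} W$ (car $\mathcal{Q} W$, \'etant quasiconvexe et major\'ee par $\R W$, v\'erifie $\mathcal{Q} W \leq \mathcal{Q}\R W$, tandis que $\mathcal{Q}\R W \leq \R W \leq W$ entra\^ine $\mathcal{Q}\R W \leq \mathcal{Q} W$), de sorte que cette conclusion co\"\i ncide avec celle du th\'eor\`eme \ref{RelaxBenBelgacemTh}, mais obtenue sans les hypoth\`eses de densit\'e de ce dernier.

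Je ne vois pas d'obstacle r\'eel ; le seul point m\'eritant un peu d'attention est l'in\'egalit\'e $\mathcal{Q} W \leq \R W$, qui repose de fa\c con essentielle sur la finitude de $\Z W$ : sans elle, $\mathcal{Q} W$ pourrait valoir $+\infty$ et l'implication ``quasiconvexe finie $\Rightarrow$ rang-1 convexe'' ne s'appliquerait plus.
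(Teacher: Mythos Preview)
Your proof is correct and follows essentially the same route as the paper: show $\Z W\le \R W$, transfer the growth condition (\ref{RWGrowth-Cond}) to $\Z W$, and apply Theorem~\ref{GeneralThRelax}. The only difference is a minor shortcut: the paper obtains the rank-1 convexity of $\Z W$ directly from Proposition~\ref{FonsecaProperties}(b) (``$\Z W$ finie $\Rightarrow$ $\Z W$ rang-1 convexe''), whereas you first pass through Theorem~\ref{QuasiconvexificationFormulaTheorem}-bis to get $\mathcal{Q}W=\Z W$ and then use ``quasiconvexe finie $\Rightarrow$ rang-1 convexe''; both routes land on the same inequality $\Z W\le \R W$.
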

\begin{proof} Comme $\Z W$ est finie, de la proposition \ref{FonsecaProperties}(b) on d\'eduit que $\Z W$ est rang-1 convexe, donc $\Z W\leq \R W$. Ainsi $\Z W$ satisfait (\ref{Cond-CroissZW}) puisque $\R W$ satisfait (\ref{RWGrowth-Cond}) et la proposition \ref{PropMixte} suit par le th\'eor\`eme \ref{GeneralThRelax}.
\end{proof}

\begin{remark}\label{QW-QRW-LiNk}
Si $\Z W$ est finie alors $\mathcal{Q}\R W=\mathcal{Q} W=\Z W$. En effet,  du th\'eor\`eme \ref{QuasiconvexificationFormulaTheorem}-bis on d\'eduit que $\Z W$ est continue et $\mathcal{Q}W=\Z W$. Ainsi $\mathcal{Q}W$ est continue et quasiconvexe donc rang-1 convexe, d'o\`u $\mathcal{Q} W\leq \R W$ et par cons\'equent $\mathcal{Q} W\leq \mathcal{Q}\R W$. D'autre part, $\mathcal{Q}\R W\leq\mathcal{Q}W$ puisque $\R W\leq W$, ce qui donne le r\'esultat.
\end{remark}

\subsubsection{D\'emonstration du lemme \ref{FundamentalLemmma1}} Il est facile de voir que $\overline{\Z I}_{\rm  aff}\leq\overline{I}_{\rm aff}$, donc tout revient \`a montrer que $\overline{I}_{\rm aff}\leq\overline{\Z I}_{\rm  aff}$. Pour cela, il suffit de prouver que 
\begin{eqnarray}\label{step1VitAli}
\hbox{si }\phi\in\Aff(\Omega;\RR^m)\hbox{ alors }
\overline{I}_{\rm aff}(\phi)\leq \int_\Omega \Z W(\nabla \phi(x))dx.
\end{eqnarray}
Soit $\phi\in\Aff(\Omega;\RR^m)$. Par d\'efinition, il existe une famille finie $\{V_i\}_{i\in I}$ de sous-ensembles ouverts et disjoints de $\Omega$ telle que $|\Omega\setminus\cup_{i\in I}V_i|=0$ et pour chaque $i\in I$, $|\partial V_i|=0$ et $\nabla\phi(x)=F_i$ dans $V_i$ avec $F_i\in\MM^{m\times N}$.   \'Etant donn\'es $\delta>0$ et $i\in I$, on  consid\`ere $\varphi_i\in \Aff_0(Y;\RR^m)$ tel que 
\begin{equation}\label{Zk}
\int_Y W(F_i+\nabla\varphi_i(y))dy\leq\Z W(F_i)+{\delta\over|\Omega|}.
\end{equation}
Soit $n\geq 1$. Par le th\'eor\`eme de recouvrement de Vitali, il existe une famille au plus d\'enombrable $\{a_{i,j}+\alpha_{i,j}Y\}_{j\in J_{i}}$ de sous-ensembles disjoints de  $V_i$, o\`u $a_{i,j}\in\RR^N$ et $0<\alpha_{i,j}<{1\over n}$, telle que
$
\big|V_i\setminus\cup_{j\in J_{i}}(a_{i,j}+\alpha_{i,j}Y)\big|=0
$
 (donc $\sum_{j\in J_i}\alpha_{i,j}^N=|V_i|$). D\'efinissons $\phi_n:\Omega\to\RR^m$ par 
$$
\phi_n(x):=
\alpha_{i,j}\varphi_{i}\left({x-a_{i,j}\over \alpha_{i,j}}\right)\hbox{ si }x\in a_{i,j}+\alpha_{i,j}Y.
$$
Puisque $\varphi_i\in\Aff_0(Y;\RR^m)$, il existe une famille finie $\{Y_{i,l}\}_{l\in L_i}$ de sous-ensembles ouverts et disjoints de $Y$ telle que $|Y\setminus\cup_{l\in L_i}Y_{i,l}|=0$ et pour chaque $l\in L_i$, $|\partial Y_{i,l}|=0$ et $\nabla\varphi_i(y)=G_{i,l}$ dans $Y_{i,l}$ avec $G_{i,l}\in\MM^{m\times N}$. Posons
$
U_{i,l,n}:=\cup_{j\in J_i}a_{i,j}+\alpha_{i,j}Y_{i,l}.
$
Alors $|\Omega\setminus\cup_{i\in I}\cup_{l\in L_i}U_{i,l,n}|=0$ et pour chaque $i\in I$ et chaque $l\in L_i$, $|\partial U_{i,l,n}|=0$ et $\nabla\phi_n(x)=G_{i,l}$ dans $U_{i,l,n}$. D'o\`u $\phi_n\in\Aff_0(\Omega;\RR^m)$. D'autre part, $\|\phi_n\|_{L^\infty(\Omega;\RR^m)}\leq {1\over n}\max_{i\in I}\|\varphi_i\|_{L^\infty(Y;\RR^m)}$ et $\|\nabla\phi_n\|_{L^\infty(\Omega;\MM^{m\times N})}\leq\max_{i\in I}\|\nabla\varphi_i\|_{L^\infty(Y;\MM^{m\times N})}$, donc (\`a une sous-suite pr\`es) $\phi_n\stackrel{*}{\wto}0$ dans $W^{1,\infty}(\Omega;\RR^m)$, o\`u ``$\stackrel{*}{\wto}$" d\'esigne la convergence $*$-faible dans  $W^{1,\infty}(\Omega;\RR^m)$. D'o\`u $\phi_n\wto 0$ dans $W^{1,p}(\Omega;\RR^m)$. De plus, on a 
\begin{eqnarray*}
\int_\Omega W\left(\nabla \phi(x)+\nabla\phi_n(x)\right)dx&=&\sum_{i\in I}\int_{V_i} W\left(F_i+\nabla\phi_n(x)\right)dx\\
&=&\sum_{i\in I}\sum_{j\in J_i}\alpha_{i,j}^N\int_{Y}W\left(F_i+\nabla\varphi_{i}(y)\right)dy\\
&=&\sum_{i\in I}|V_i|\int_{Y}W\left(F_i+\nabla\varphi_{i}(y)\right)dy.
\end{eqnarray*}
Comme $\phi+\phi_n\in\Aff(\Omega;\RR^m)$ et $\phi+\phi_n\wto \phi$ dans $W^{1,p}(\Omega;\RR^m)$, utilisant (\ref{Zk}) on d\'eduit que 
\begin{eqnarray*}
\overline{I}_{\rm aff}(\phi)\leq\liminf_{n\to+\infty}\int_\Omega W\left(\nabla \phi(x)+\nabla\phi_n(x)\right)dx&\leq&\sum_{i\in I}|V_i|\Z W(F_i)+\delta\\
&=&\int_\Omega\Z W(\nabla \phi(x))dx + \delta,
\end{eqnarray*}
et (\ref{step1VitAli}) suit.\hfill$\square$

\subsubsection{D\'emonstration de la proposition \ref{FonsecaProperties}}
Dans ce qui suit on d\'emontre la proposition \ref{FonsecaProperties}.
\begin{definition}\label{local-rank1}
Soit $U\subset \MM^{m\times N}$ un ouvert. 
\begin{itemize}
\item[$\diamond$] On dit que $W$ est rang-1 convexe en $F$ dans $U$ si pour chaque $\theta\in[0,1[$, chaque $a\in\RR^N$ et chaque $b\in\RR^m$ tels que $F+\mu a\otimes b\in U$ pour tout $\mu\in[-\frac{\theta}{1-\theta},1]$, on a 
\begin{eqnarray*}\label{rank-one convex}
W(F)\le \theta W(F+a\otimes b)+(1-\theta)W(F-\frac{\theta}{1-\theta}a\otimes b)
\end{eqnarray*}
(avec $a\otimes b\in\RR^N\otimes\RR^m\subset\MM^{m\times N}$ donn\'e par $(a\otimes b)x=\langle a,x\rangle b$ pour tout $x\in\RR^N$, o\`u $\langle\cdot,\cdot\rangle$ d\'esigne le produit scalaire dans $\RR^N$).
\item[$\diamond$] On dit que $W$ est rang-1 convexe dans $U$ si $W$ est rang-1 convexe en $F$ dans $U$ pour tout $F\in U$.
\end{itemize}
\end{definition}
\begin{remark}\label{equivalence def rank one convex} 
Lorsque $U=\MM^{m\times N}$ la d\'efinition \ref{local-rank1} co\"\i ncide avec la d\'efinition classique, i.e., $W$ est rang-1 convexe si \eqref{CondConvexity} a lieu pour tout $F,F^\prime\in\MM^{m\times N}$ avec $\rank(F-F^\prime)\le 1$.
\end{remark}
La proposition \ref{FonsecaProperties}(b) se d\'eduit de la proposition suivante.
\begin{proposition}\label{prop_fonseca_ball_morrey}
La fonction $\Z W$ est rang-1 convexe dans $\rm{int}(\rm{dom}\Z W)$.
\end{proposition}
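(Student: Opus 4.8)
\emph{Esquisse de d\'emonstration.} On peut supposer $a\otimes b\neq0$ et $0<\theta<1$, les autres cas \'etant triviaux. Soient donc $F\in{\rm int}({\rm dom}\,\Z W)$ et $\theta\in\,]0,1[$, $a\in\RR^N\setminus\{0\}$, $b\in\RR^m$ tels que le segment $S:=\{F+\mu\,a\otimes b:-\frac{\theta}{1-\theta}\leq\mu\leq 1\}$ soit inclus dans ${\rm int}({\rm dom}\,\Z W)$; posons $F_1:=F+a\otimes b$ et $F_2:=F-\frac{\theta}{1-\theta}a\otimes b$, de sorte que $F=\theta F_1+(1-\theta)F_2$. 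D'apr\`es la d\'efinition \ref{local-rank1}, il s'agit d'\'etablir que $\Z W(F)\leq\theta\,\Z W(F_1)+(1-\theta)\,\Z W(F_2)$. Le plan est de majorer $\Z W(F)$ en testant la formule \eqref{DefOfZW} (sur un domaine bien choisi, ce qui est licite par la proposition \ref{FonsecaProperties}(a)) par une fonction de $\Aff_0$ obtenue en ``cousant'' un lamin\'e simple entre $F_1$ et $F_2$ avec des quasi-minimiseurs locaux de $\Z W$.

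Fixons $\eps>0$. Pour tout $\delta>0$, on commence par construire $\psi_\delta\in\Aff_0(D;\RR^m)$, o\`u $D$ est un parall\'el\'epip\`ede ayant deux faces orthogonales \`a $a$, telle que $F+\nabla\psi_\delta\in\{F_1,F_2\}$ sur une r\'eunion $A_1\cup A_2\subset D$ de mesure $\geq(1-\delta)|D|$ avec $|A_1|/|D|\to\theta$ et $|A_2|/|D|\to 1-\theta$ quand $\delta\to0$, le gradient $\nabla\psi_\delta$ ne prenant qu'un nombre fini de valeurs et $F+\nabla\psi_\delta$ demeurant dans un voisinage compact $\mathcal N$ de $S$ contenu dans ${\rm int}({\rm dom}\,\Z W)$. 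On y parvient par la construction classique d'un lamin\'e oscillant dans la direction $a$ entre $F_1$ et $F_2$, rendu nul sur $\partial D$ par une troncature continue et affine par morceaux \'egale \`a $1$ hors d'un ensemble de mesure $\leq\delta|D|$, l'amplitude et le nombre de p\'eriodes du lamin\'e \'etant pris assez grands pour que les gradients de transition cr\'e\'es par la troncature restent dans $\mathcal N$, une triangulation fine rendant le tout affine par morceaux. Notons $F_1-F$, $F_2-F$, $G_1-F,\dots,G_p-F$ les valeurs prises par $\nabla\psi_\delta$, avec $G_1,\dots,G_p\in\mathcal N$. On choisit alors $\varphi_1,\varphi_2\in\Aff_0(Y;\RR^m)$ v\'erifiant $\int_Y W(F_j+\nabla\varphi_j)\,dy\leq\Z W(F_j)+\eps$ ($j=1,2$) et, pour chaque $i$, $\chi_i\in\Aff_0(Y;\RR^m)$ v\'erifiant $\int_Y W(G_i+\nabla\chi_i)\,dy\leq\Z W(G_i)+\eps$; puis, exactement comme dans la preuve du lemme \ref{FundamentalLemmma1} (recouvrement de Vitali extrait d'une famille finie modulo un ensemble n\'egligeable, puis copies r\'eduites \`a l'\'echelle), on superpose $\varphi_1$ (resp. $\varphi_2$, resp. $\chi_i$) sur la partie de $D$ o\`u $F+\nabla\psi_\delta$ vaut $F_1$ (resp. $F_2$, resp. $G_i$). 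On obtient ainsi $\Phi_\delta\in\Aff_0(D;\RR^m)$ telle que
$$
\frac1{|D|}\int_D W(F+\nabla\Phi_\delta)\,dx\leq\frac{|A_1|}{|D|}\big(\Z W(F_1)+\eps\big)+\frac{|A_2|}{|D|}\big(\Z W(F_2)+\eps\big)+\delta\big(M+\eps\big),
$$
avec $M:=\sup_{\mathcal N}\Z W$. Par la proposition \ref{FonsecaProperties}(a), le membre de gauche est $\geq\Z W(F)$; en faisant $\delta\to0$ puis $\eps\to0$ on obtient l'in\'egalit\'e voulue, \`a la seule condition que $M<+\infty$.

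La difficult\'e principale est pr\'ecis\'ement ce dernier point, \`a savoir que $\Z W$ est localement born\'ee sup\'erieurement sur ${\rm int}({\rm dom}\,\Z W)$, de sorte que $M=\sup_{\mathcal N}\Z W<+\infty$. En effet, une fonction de $\Aff_0(D;\RR^m)$ ne peut avoir son gradient partout sur la droite $\RR\,a\otimes b$ (sinon elle serait constante, donc nulle) : les gradients de transition $G_i$ sont donc in\'evitables, ils varient avec $\delta$, et l'estimation ci-dessus reste sans objet tant qu'on ne contr\^ole pas $\Z W$ au voisinage du segment $S$. Cette borne locale est \`a \'etablir au pr\'ealable; une fois acquise, et compte tenu de la mesurabilit\'e bor\'elienne de $\Z W$, l'argument pr\'ec\'edent fournit l'in\'egalit\'e de rang-1 convexit\'e pour tout $F\in{\rm int}({\rm dom}\,\Z W)$, ce qui est la conclusion cherch\'ee. $\square$
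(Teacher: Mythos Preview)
Vous avez correctement identifi\'e le point d\'elicat, mais vous ne le r\'esolvez pas : l'affirmation $M:=\sup_{\mathcal N}\Z W<+\infty$ est pr\'ecis\'ement ce qui manque, et vous la laissez ``\`a \'etablir au pr\'ealable''. Or il n'y a pas de moyen \'evident de la prouver sans disposer d\'ej\`a de la rang-1 convexit\'e (une fonction rang-1 convexe finie est s\'epar\'ement convexe, donc localement born\'ee), ce qui rendrait l'argument circulaire. A priori, $\Z W$ n'est que Borel mesurable, et rien n'emp\^eche qu'elle soit finie sur un ouvert sans y \^etre born\'ee sur les compacts. De plus, votre construction par troncature produit in\'evitablement des gradients de transition qui balaient (apr\`es triangulation) des valeurs r\'eparties le long de tout le segment~$S$ : m\^eme si, pour chaque $\delta$, l'ensemble des $G_i$ est fini, il varie avec $\delta$ et couvre~$\mathcal N$ \`a la limite, d'o\`u la n\'ecessit\'e du $\sup$ sur~$\mathcal N$. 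Votre esquisse est donc incompl\`ete sur le point essentiel.

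La preuve de Fonseca (reproduite dans l'article) contourne cette difficult\'e par une construction g\'eom\'etrique explicite qui \'evite toute borne locale. Au lieu d'un lamin\'e tronqu\'e par une fonction plateau, on construit sur un parall\'el\'epip\`ede $P_k$ (de taille $k\times\cdots\times k\times 1$, l'axe court \'etant selon~$a$) une fonction $\varphi_k\in\Aff_0(P_k;\RR^m)$ ``en toit'' dont le gradient vaut $a\otimes b$ sur $P_k^+$, $-\frac{\theta}{1-\theta}a\otimes b$ sur $P_k^-$, et, sur les r\'egions de transition lat\'erales $T_k^i$ de largeur \emph{fixe} $k_0/2$, des valeurs de la forme $\pm\frac{2\theta}{k_0}\rho\otimes b$ avec $\rho$ unitaire orthogonal \`a~$a$. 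Le point crucial est que $k_0$ est fix\'e une fois pour toutes de sorte que $F+\frac{2\theta}{k_0}\rho\otimes b\in{\rm int}({\rm dom}\,\Z W)$ pour tout $|\rho|=1$ (ce qui n'utilise que $F\in{\rm int}({\rm dom}\,\Z W)$), et ne d\'epend pas de~$k$. Ainsi, quand $k\to\infty$, l'ensemble des gradients $F+\nabla\varphi_k$ reste un \emph{ensemble fini fixe} de matrices de ${\rm int}({\rm dom}\,\Z W)$, sur lequel $\Z W$ est trivialement finie ; seule la mesure relative $|T_k^i|/|P_k|\sim 1/k$ tend vers~$0$. On applique alors la proposition \ref{FonsecaProperties}(d) pour \'ecrire directement $\Z W(F)\le |P_k|^{-1}\int_{P_k}\Z W(F+\nabla\varphi_k)$, sans revenir \`a~$W$ via des quasi-minimiseurs, et l'on conclut en faisant $k\to\infty$.
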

Pour montrer la proposition \ref{prop_fonseca_ball_morrey} nous aurons besoin de la proposition \ref{FonsecaProperties}(a) et (d).
\begin{proof}[D\'emonstration de la proposition \ref{FonsecaProperties}(a)]\label{invariance de ZW} Soient $D\subset\RR^N$ un ouvert born\'e tel que $\vert\partial D\vert=0$, $F\in\MM^{m\times N}$ et $\varphi\in\Aff_0(D;\RR^m)$. Par le th\'eor\`eme de recouvrement de Vitali, il existe une famille au plus d\'enombrable $\{a_i+\eps_i D\}_{i\in I}$ de sous-ensembles ouverts et disjoints de $Y$, o\`u $a_i\in\RR^N$ et $0<\eps_i<1$, telle que $\vert Y\setminus \cup_{i\in I}(a_i+\eps_i D)\vert=0$ $(\mbox{donc }\sum_{i\in I}\eps_i^N=\frac{1}{\vert D\vert})$. D\'efinissons $\phi\in \Aff_0(Y;\RR^m)$ par 
\[
\phi(x)= \eps_i\varphi\left(\frac{x-a_i}{\eps_i}\right)\mbox{ si }x\in a_i+\eps_iD.
\]
Alors 
\begin{eqnarray*}
\Z W(F)\le\int_Y W(F+\nabla \phi(x))dx&=&\sum_{i\in I}\int_{a_i+\eps_i D}W\left(F+\nabla \varphi\left(\frac{x-a_i}{\eps_i}\right)\right)dx\\
&=&\sum_{i\in I}\eps_i^N\int_{D}W(F+\nabla \varphi(x))dx\\
&=&\frac{1}{\vert D\vert}\int_D W(F+\nabla \varphi(x))dx.
\end{eqnarray*}
Il suit que 
$$
\Z W(F)\le \inf\left\{\frac{1}{\vert D\vert}\int_D W(F+\nabla \varphi(x))dx:\varphi\in\Aff_0(D;\RR^m)\right\}.
$$
En \'echangeant le r\^ole de $D$ et $Y$ on obtient l'in\'egalit\'e oppos\'ee, ce qui termine la preuve.
\end{proof}
\begin{proof}[D\'emonstration de la proposition \ref{FonsecaProperties}(d)] Soient $D\subset\RR^N$ un ouvert born\'e tel que $\vert\partial D\vert=0$, $F\in\MM^{m\times N}$ et $\varphi\in\Aff_0(D;\RR^m)$. Par d\'efinition,  il existe une famille finie $\{V_i\}_{i\in I}$ de sous-ensembles ouverts et disjoints  de $D$ telle que $\vert D\setminus \cup_{i\in I} V_i\vert=0$ et pour chaque  $i\in I$, $|\partial V_i|=0$ et $\nabla \varphi(x)=F_i$ dans $V_i$ avec $F_i\in\MM^{m\times N}$. \'Etant donn\'e $\eps>0$ et $i\in I$, par la proposition \ref{FonsecaProperties}(a), il existe $\varphi_{\eps,i}\in\Aff_0(V_i;\RR^m)$ tel que 
\begin{equation}\label{RaJoUTEq1}
\frac{1}{\vert V_i\vert}\int_{V_i}W(F+F_i+\nabla\varphi_{\eps,i}(x))dx\le \Z W(F+F_{i})+\eps.
\end{equation}
D\'efinissons $\varphi_{\eps}\in\Aff_0(D;\RR^m)$ par $\varphi_\eps(x)=\varphi(x)+\varphi_{\eps,i}(x)$ si $x\in V_i$. Utilisant \`a nouveau la proposition \ref{FonsecaProperties}(a) et (\ref{RaJoUTEq1}) on d\'eduit que 
\begin{eqnarray*}
\Z W(F)\le \frac{1}{\vert D\vert}\int_D W(F+\nabla\varphi_\eps(x))dx&=& \frac{1}{\vert V_i\vert}\sum_{i\in I} \int_{V_i} W(F+F_i+\nabla\varphi_{\eps,i}(x))dx\\
&\le& \frac{1}{\vert D\vert}\int_D \Z W(F+\nabla \varphi(x))dx+\eps\end{eqnarray*}
et le r\'esultat suit en faisant $\eps\to 0$.
\end{proof}
La d\'emonstration suivante est d\^u \`a Fonseca (voir \cite{fonseca88}) : elle repose sur la construction d'une certaine fonction continue et affine par morceaux (pour des constructions similaires voir  Ball \cite{ball77} et Morrey \cite{morrey52}).
\begin{proof}[D\'emonstration de la Proposition \ref{prop_fonseca_ball_morrey}]
Pour simplifier on suppose que $N=3$ (la g\'en\'eralisation \`a $N$ quelconque est ais\'ee). Soient $F\in \mbox{int}(\mbox{dom}\Z W)$, $\theta\in [0,1]$, $a\in\RR^3$ et $b\in\RR^m$ tels que $F+\mu a\otimes b\in \mbox{int}(\mbox{dom}\Z W)$ pour tout $\mu\in [-\frac{\theta}{1-\theta},1]$. Sans perdre de g\'en\'eralit\'e on peut supposer que $|a|=1$. Soient $(\tau,\eta,a)$ une base orthonorm\'ee de $\RR^3$ et $k_0\in\NN$ tels que 
\begin{equation}\label{intdomain fonseca proof}
F+\frac{2\theta}{k}\rho\otimes b\in \mbox{int}(\mbox{dom}\Z W)
\end{equation}
pour tout $k\ge k_0$ et tout $\rho\in\RR^3$ satisfaisant $|\rho|=1$. \'Etant donn\'e $k>k_0$ on d\'efinit le parall\'elipip\`ede  $P_k\subset\RR^3$ et le rectangle $R_k\subset P_k$ par :
\begin{itemize}
    \item[]$P_k:=\{x\in\RR^3:\vert\langle \tau,x\rangle\vert\le\frac{k}{2},\vert\langle \eta,x\rangle\vert\le\frac{k}{2},-(1-\theta)\le\langle a,x\rangle\le\theta\}$ ;
    \item[]$R_k:=\{x\in P_k:\vert\langle \tau,x\rangle\vert\le\frac{k-k_0}{2},\vert\langle \eta,x\rangle\vert\le\frac{k-k_0}{2},\langle a,x\rangle=0\}$,
\end{itemize}
(o\`u $\langle\cdot,\cdot\rangle$ est le produit scalaire dans $\RR^3$) et on consid\`ere $P_k^+,P_k^-,T_k^1,T_k^2,T_k^3,T_k^4\subset P_k$ (voir la figure \ref{figfonseca}) donn\'es par :
\begin{itemize}
    \item[]$P_k^+:=\mbox{co}(\{A_1,A_2,A_3,A_4,B_1,B_2,B_3,B_4\})$ ;
    \item[]$P_k^-:=\mbox{co}(\{A_5,A_6,A_7,A_8,B_1,B_2,B_3,B_4\})$ ;
    \item[]$T_k^1:=\mbox{co}(\{A_1,A_4,A_5,A_8,B_1,B_4\})$ ;
    \item[]$T_k^2:=\mbox{co}(\{A_1,A_2,A_5,A_6,B_1,B_2\})$ ;
    \item[]$T_k^3:=\mbox{co}(\{A_2,A_3,A_6,A_7,B_2,B_3\})$ ;
    \item[]$T_k^4:=\mbox{co}(\{A_3,A_4,A_7,A_8,B_3,B_4\})$,
\end{itemize}    
o\`u $A_1,A_2,A_3,A_4,A_5,A_6,A_7,A_8$ (resp. $B_1,B_2,B_3,B_4$) sont les sommets de $P_k$ (resp. $R_k$) et co$(X)$ d\'esigne l'enveloppe convexe de l'ensemble $X\subset\MM^{m\times N}$. 

\begin{figure}[H]
\begin{center}
\scalebox{0.8} 
{
\begin{pspicture}(0,-5.41)(11.38,5.41)
\psline[linewidth=0.04cm](3.6,4.31)(8.4,4.31)
\psline[linewidth=0.04cm](2.0,1.11)(6.8,1.11)
\psline[linewidth=0.04cm](3.6,4.31)(2.0,1.11)
\psline[linewidth=0.04cm](8.4,4.31)(6.8,1.11)
\psline[linewidth=0.04cm](6.8,-3.69)(8.4,-0.49)
\psline[linewidth=0.04cm,linestyle=dashed,dash=0.16cm 0.16cm](6.8,1.11)(6.4,-0.09)
\psline[linewidth=0.04cm,linestyle=dashed,dash=0.16cm 0.16cm](2.4,-0.09)(4.0,3.11)
\psline[linewidth=0.04cm,linestyle=dashed,dash=0.16cm 0.16cm](6.4,-0.09)(8.0,3.11)
\psline[linewidth=0.04cm,linestyle=dashed,dash=0.16cm 0.16cm](8.0,3.11)(8.4,4.31)
\psline[linewidth=0.04cm,linestyle=dashed,dash=0.16cm 0.16cm](6.4,-0.09)(2.4,-0.09)
\psline[linewidth=0.04cm,linestyle=dashed,dash=0.16cm 0.16cm](2.0,1.11)(2.4,-0.09)
\psline[linewidth=0.04cm,linestyle=dashed,dash=0.16cm 0.16cm](3.6,4.31)(4.0,3.11)
\psline[linewidth=0.04cm,linestyle=dashed,dash=0.16cm 0.16cm](8.0,3.11)(4.0,3.11)
\psline[linewidth=0.04cm](2.0,1.11)(2.0,-3.69)
\psline[linewidth=0.04cm](6.8,1.11)(6.8,-3.69)
\psline[linewidth=0.04cm](2.0,-3.69)(6.8,-3.69)
\psline[linewidth=0.04cm](8.4,4.31)(8.4,-0.49)
\psline[linewidth=0.04cm,linestyle=dashed,dash=0.16cm 0.16cm](8.0,3.11)(8.4,-0.49)
\psline[linewidth=0.04cm,linestyle=dashed,dash=0.16cm 0.16cm](6.4,-0.09)(6.8,-3.69)
\psline[linewidth=0.02cm,linestyle=dashed,dash=0.16cm 0.16cm](3.6,4.31)(3.6,-0.49)
\psline[linewidth=0.04cm,linestyle=dashed,dash=0.16cm 0.16cm](2.4,-0.09)(2.0,-3.69)
\psline[linewidth=0.02cm,linestyle=dashed,dash=0.16cm 0.16cm](3.6,-0.49)(2.0,-3.69)
\psline[linewidth=0.02cm,linestyle=dashed,dash=0.16cm 0.16cm](3.6,-0.49)(8.4,-0.49)
\psline[linewidth=0.04cm,linestyle=dashed,dash=0.16cm 0.16cm](4.0,3.11)(3.6,-0.49)
\psline[linewidth=0.04cm,arrowsize=0.05291667cm 2.0,arrowlength=1.4,arrowinset=0.4]{<-}(5.2,3.51)(5.2,1.51)
\psline[linewidth=0.04cm,arrowsize=0.05291667cm 2.0,arrowlength=1.4,arrowinset=0.4]{<-}(6.8,1.51)(5.2,1.51)
\psline[linewidth=0.04cm,arrowsize=0.05291667cm 2.0,arrowlength=1.4,arrowinset=0.4]{<-}(4.4,0.31)(5.2,1.51)
\psline[linewidth=0.04cm,arrowsize=0.05291667cm 2.0,arrowlength=1.4,arrowinset=0.4]{<->}(1.6,1.11)(1.6,-0.09)
\psline[linewidth=0.04cm,arrowsize=0.05291667cm 2.0,arrowlength=1.4,arrowinset=0.4]{<->}(1.6,-0.09)(1.6,-3.69)
\psline[linewidth=0.04cm,arrowsize=0.05291667cm 2.0,arrowlength=1.4,arrowinset=0.4]{<->}(2.0,-4.09)(6.8,-4.09)
\psline[linewidth=0.04cm,arrowsize=0.05291667cm 2.0,arrowlength=1.4,arrowinset=0.4]{<->}(0.42,1.11)(0.42,-3.69)
\usefont{T1}{ptm}{m}{n}
\rput(1.0,0.5){$\theta$}
\usefont{T1}{ptm}{m}{n}
\rput(1.0,-2.0){$1-\theta$}
\usefont{T1}{ptm}{m}{n}
\rput(0.23,-1.185){$1$}
\usefont{T1}{ptm}{m}{n}
\rput(4.63,-4.385){$k$}
\psline[linewidth=0.04cm,arrowsize=0.05291667cm 2.0,arrowlength=1.4,arrowinset=0.4]{<->}(7.2,-3.77)(8.8,-0.49)
\usefont{T1}{ptm}{m}{n}
\rput(8.3,-1.985){$k$}
\usefont{T1}{ptm}{m}{n}
\rput(5.4,3.3){$a$}
\usefont{T1}{ptm}{m}{n}
\rput(4.7,0.415){$\tau$}
\usefont{T1}{ptm}{m}{n}
\rput(6.57,1.76){$\eta$}
\psline[linewidth=0.04cm,arrowsize=0.05291667cm 2.0,arrowlength=1.4,arrowinset=0.4]{<->}(7.2,1.51)(8,1.51)

\usefont{T1}{ptm}{m}{n}
\rput(7.6,1.215){$\frac{k_0}{2}$}

\psline[linewidth=0.04cm,arrowsize=0.05291667cm 2.0,arrowlength=1.4,arrowinset=0.4]{<->}(5.6,-0.09)(5.2,-0.89)
\usefont{T1}{ptm}{m}{n}
\rput(4.59,-2.0){$P_k^-$}
\usefont{T1}{ptm}{m}{n}
\rput(5.03,4.015){$P_k^+$}

\usefont{T1}{ptm}{m}{n}
\rput(5.02,-0.385){$\frac{k_0}{2}$}

\usefont{T1}{ptm}{m}{n}
\rput(6.4,0.82){$A_1$}
\usefont{T1}{ptm}{m}{n}
\rput(8.7,4.5){$A_2$}
\usefont{T1}{ptm}{m}{n}
\rput(3.50,4.60){$A_3$}
\usefont{T1}{ptm}{m}{n}
\rput(1.75,1.4){$A_4$}
\usefont{T1}{ptm}{m}{n}
\rput(7.10,-3.985){$A_5$}
\usefont{T1}{ptm}{m}{n}
\rput(8.85,-0.345){$A_6$}
\usefont{T1}{ptm}{m}{n}
\rput(3.25,-0.385){$A_7$}
\usefont{T1}{ptm}{m}{n}
\rput(1.74,-3.985){$A_8$}
\usefont{T1}{ptm}{m}{n}
\rput(6.1,0.14){$B_1$}
\usefont{T1}{ptm}{m}{n}
\rput(7.7,3.615){$B_2$}
\usefont{T1}{ptm}{m}{n}
\rput(4.28,3.4){$B_3$}
\usefont{T1}{ptm}{m}{n}
\rput(2.83,0.14){$B_4$}
\psline[linewidth=0.02cm,arrowsize=0.05291667cm 2.0,arrowlength=1.4,arrowinset=0.4]{<-}(4.8,-2.89)(5.6,-5.29)
\psline[linewidth=0.02cm,arrowsize=0.05291667cm 2.0,arrowlength=1.4,arrowinset=0.4]{<-}(8.0,0.31)(10.4,0.31)
\psline[linewidth=0.02cm,arrowsize=0.05291667cm 2.0,arrowlength=1.4,arrowinset=0.4]{<-}(3.2,0.71)(2.0,3.91)
\psline[linewidth=0.02cm,arrowsize=0.05291667cm 2.0,arrowlength=1.4,arrowinset=0.4]{<-}(6.4,3.40)(6.8,5.11)
\usefont{T1}{ptm}{m}{n}
\rput(5.9,-5.185){$T_k^1$}
\usefont{T1}{ptm}{m}{n}
\rput(10.7,0.415){$T_k^2$}
\usefont{T1}{ptm}{m}{n}
\rput(6.95,5.30){$T_k^3$}
\usefont{T1}{ptm}{m}{n}
\rput(2.15,4.14){$T_k^4$}
\end{pspicture} 
}
\end{center}
\caption{Les ensembles  $P_k^+,P_k^-,T_k^1,T_k^2,T_k^3,T_k^4$.}
\label{figfonseca}
\end{figure}
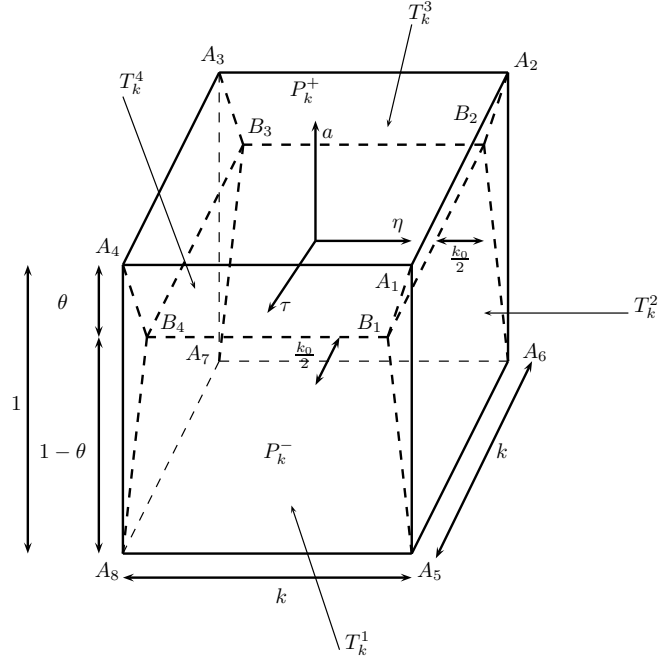

D\'efinissons $\phi_k\in\Aff_0(P_k;\RR)$ par 
$$
\phi_k(x):=\left\{
\begin{array}{ll}
\langle a,x\rangle-\theta&\hbox{si }x\in P_k^+\\
\langle-\frac{\theta}{1-\theta} a,x\rangle-\theta&\hbox{si }x\in P_k^-\\
\langle\frac{2\theta}{k_0} \tau,x\rangle-\frac{k}{k_0}\theta&\hbox{si }x\in T_k^1\\
\langle\frac{2\theta}{k_0} \eta,x\rangle-\frac{k}{k_0}\theta&\hbox{si }x\in T_k^2\\
\langle-\frac{2\theta}{k_0} \tau,x\rangle-\frac{k}{k_0}\theta&\hbox{si }x\in T_k^3\\
\langle-\frac{2\theta}{k_0} \eta,x\rangle-\frac{k}{k_0}\theta&\hbox{si }x\in T_k^4,
\end{array}
\right.
$$
et posons $\varphi_k:=\phi_k b\in\Aff_0(P_k;\RR^m)$. Utilisant la proposition \ref{FonsecaProperties}(d) on voit que 
\begin{eqnarray*}
\Z W(F)&\le& \frac{1}{\vert P_k\vert}\int_{P_k}\Z W(F+\nabla\varphi_k(x))dx\\
&=&\frac{1}{k^2}\Big(\Z W(F+a\otimes b)\vert P_k^+\vert+\Z W\big(F-\frac{\theta}{1-\theta} a\otimes b\big)\vert P_k^-\vert\\
&&+\sum_{i=1}^{4}\int_{T_k^i}\Z W(F+\nabla \varphi_k(x))dx\Big),
\end{eqnarray*}
d'o\`u 
\begin{equation}\label{pass limit fonseca proof}
\Z W(F)\le \Z W(F+a\otimes b)\frac{\vert P_k^+\vert}{k^2}+\Z W\big(F-\frac{\theta}{1-\theta} a\otimes b\big)\frac{\vert P_k^-\vert}{k^2}+4C\frac{\vert T_k^1\vert}{k^2}
\end{equation}
avec $C:=\max\{\vert \Z W(F+\nabla\varphi_k(x))\vert:x\in \cup_{i=1}^{4}T_k^i\}$ ($C<+\infty$ grace \`a (\ref{intdomain fonseca proof})). Posons $T_k^{1,+}:=T_k^1\cap\{x\in\RR^3:\langle a,x\rangle\ge 0\}$ et $T_k^{1,-}:=T_k^1\cap\{x\in\RR^3:\langle a,x\rangle\le 0\}$. Alors :
\begin{itemize}
\item[$\diamond$] $\max\{\vert T_k^{1,+}\vert,\vert T_k^{1,-}\vert\}\le \max\big\{\frac{\theta k k_0}{2},\frac{(1-\theta) k k_0}{2}\big\}$ ;
\item[$\diamond$] $\vert P_k^+\vert=\theta k^2-4\vert T_k^{1,+}\vert$ ;
\item[$\diamond$] $\vert P_k^-\vert=(1-\theta) k^2-4\vert T_k^{1,-}\vert$,
\end{itemize}
donc $\frac{\vert P_k^+\vert}{k^2}\to \theta$, $\frac{\vert P_k^-\vert}{k^2}\to 1-\theta$ et $\frac{\vert T_k^{1}\vert}{k^2}\to 0$ lorsque $k\to +\infty$, et le r\'esultat suit en faisant $k\to +\infty$ dans \eqref{pass limit fonseca proof}.
\end{proof}

\begin{definition} 
On dit que $U\subset\MM^{m\times N}$ est lamination-convexe si pour tout $F,G\in U$ satisfaisant rg$(F-G)\le 1$ on a $[F,G]=\{\lambda F+(1-\lambda)G:\lambda\in [0,1]\}\subset U$.
\end{definition}
La proposition \ref{FonsecaProperties}(c) est une cons\'equence imm\'ediate du r\'esultat suivant qui se d\'eduit ais\'ement de la proposition \ref{prop_fonseca_ball_morrey} et de \cite[Theorem 2.31 p. 47]{dacorogna08} (en remarquant qu'une fonction rang-1 convexe est s\'epar\'ement convexe).
\begin{corollary}\label{lamination} 
Si $\mbox{dom}\Z W$ est ouvert et lamination-convexe alors $\Z W$ est rang-1 convexe et continue sur $\mbox{dom}\Z W$.
\end{corollary}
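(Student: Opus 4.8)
The plan is to obtain Corollary~\ref{lamination} essentially by reading Proposition~\ref{prop_fonseca_ball_morrey} on the set $U:=\mathrm{dom}\,\Z W$ and then invoking a classical regularity result for separately convex functions. Since $U$ is open, $\mathrm{int}(\mathrm{dom}\,\Z W)=U$, so Proposition~\ref{prop_fonseca_ball_morrey} already states that $\Z W$ is rank-1 convex in $U$ in the sense of Definition~\ref{local-rank1}; this is the rank-1 convexity part of the corollary. To upgrade this to the classical three-point inequality between any two points of $U$ with rank-1 difference, I would fix $F,G\in U$ with $\rank(F-G)\le 1$, write $F-G=a\otimes b$, pick $\lambda\in[0,1[$ (the case $\lambda=1$ is trivial), and set $H:=\lambda F+(1-\lambda)G$, which lies in $U$ because $U$ is lamination-convex. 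Applying Definition~\ref{local-rank1} at the point $H$ with $\theta:=\lambda$ and the rank-1 direction $(1-\lambda)(F-G)$, an elementary affine computation gives $H+(1-\lambda)(F-G)=F$ and $H-\frac{\lambda}{1-\lambda}(1-\lambda)(F-G)=H-\lambda(F-G)=G$, while $\mu\mapsto H+\mu(1-\lambda)(F-G)$ maps the interval $[-\frac{\lambda}{1-\lambda},1]$ affinely onto the segment $[G,F]\subset U$; hence the admissibility hypothesis of Definition~\ref{local-rank1} is satisfied and
$$
\Z W\big(\lambda F+(1-\lambda)G\big)\le\lambda\,\Z W(F)+(1-\lambda)\,\Z W(G).
$$
Specializing $a\otimes b$ to a coordinate matrix, this shows in particular that $\Z W$ is convex along every coordinate segment contained in $U$, i.e. separately convex on $U$.

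For continuity I would then invoke the classical fact --- \cite[Theorem~2.31 p.~47]{dacorogna08}, keeping in mind that a rank-1 convex function is separately convex --- that a separately convex function which is finite on an open set is locally Lipschitz there. Since $U=\mathrm{dom}\,\Z W$ is open, $\Z W$ is finite on $U$ by the very definition of the effective domain, and $\Z W$ is separately convex on $U$ by the previous paragraph, it follows that $\Z W$ is locally Lipschitz, hence continuous, on $\mathrm{dom}\,\Z W$. Applied with $U=\MM^{m\times N}$, which is trivially open and lamination-convex, this recovers Proposition~\ref{FonsecaProperties}(c).

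The argument is short and I do not expect a serious obstacle; the only point needing care is the parameter identification in the first paragraph. One must choose $\theta$ and the rank-1 direction in Definition~\ref{local-rank1} so that its two comparison points are exactly $F$ and $G$, and so that the interval condition required there --- that $H+\mu\,a'\otimes b'$ stay in $U$ for $\mu\in[-\frac{\theta}{1-\theta},1]$ --- reduces precisely to the lamination-convexity of $U$ along the segment $[G,F]$. This is the only place the lamination-convexity hypothesis intervenes; everything else is a direct quotation of Proposition~\ref{prop_fonseca_ball_morrey} and of \cite[Theorem~2.31]{dacorogna08}.
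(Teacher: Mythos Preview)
Your proposal is correct and follows exactly the approach indicated by the paper, which simply states that the corollary ``se d\'eduit ais\'ement de la proposition~\ref{prop_fonseca_ball_morrey} et de \cite[Theorem~2.31 p.~47]{dacorogna08} (en remarquant qu'une fonction rang-1 convexe est s\'epar\'ement convexe)''. You have supplied precisely the missing details: the parameter identification showing how lamination-convexity of $U$ guarantees the admissibility condition in Definition~\ref{local-rank1} along the full segment $[G,F]$, and the passage from rank-1 convexity to separate convexity to continuity via Dacorogna's theorem.
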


\subsubsection{D\'emonstration du th\'eor\`eme \ref{QuasiconvexificationFormulaTheorem}} Puisque $\Z W\ge \mathcal{Q} W$ il suffit de montrer que $\Z W$ est quasiconvexe.  Soient $F\in\MM^{m\times N}$ et $\phi\in W^{1,\infty}_0(Y;\RR^m)$ tels que $\int_Y W(F+\nabla\phi(x))dx<+\infty$. Par la remarque \ref{Ekeland-TemamDensityRemark}, il existe $\{\phi_n\}_{n\ge 1}\subset\Aff_0(Y;\RR^m)$ tel que $\nabla \phi_n(x)\to \nabla \phi(x)$ p.p. dans $Y$ et $\sup_{n \ge 1}\|\nabla \phi_n\|_{L^\infty(Y;\MM^{m\times N})}=:C<+\infty$. Puisque $W$ est finie, $\Z W$ l'est aussi, et donc $\Z W$ est continue par la  proposition \ref{FonsecaProperties}(c). D'o\`u : 
\begin{itemize}
\item[$\diamond$] pour chaque $n\geq 1$, $\Z W(F+\nabla \phi_n(x))\le \sup_{\vert\zeta\vert\le C} \Z W(F+\zeta)<+\infty$ p.p. dans $Y$ ;
\item[$\diamond$] $\Z W(F+\nabla\phi_n(x))\to \Z W(F+\nabla\phi(x))$ p.p. dans $Y$.
\end{itemize}
Il suit que 
$$
\lim_{n\to +\infty} \int_Y \Z W(F+\nabla \phi_n(x))dx=\int_Y \Z W(F+\nabla \phi(x))dx.
$$
par le th\'eor\`eme de convergence domin\'ee de Lebesgue. \'Etant donn\'e $\eps>0$, consid\'e-rons $n\ge 1$ tel que 
\begin{equation}\label{RajoutEqUaTioN1}
\int_Y \Z W(F+\nabla \phi(x))dx+{\eps}
\geq\int_Y \Z W(F+\nabla \phi_{n}(x))dx=\sum_{i\in I}|V_i|\Z W(F+F_i),
\end{equation}
o\`u $\{V_i\}_{i\in I}$ est une famille finie de sous-ensembles ouverts et disjoints de $Y$  telle que $|Y\setminus\cup_{i\in I} V_i|=0$ et pour chaque $i\in I$, $|\partial V_i|=0$ et $\nabla\phi_n(x)=F_i$ dans $V_i$  avec $F_i\in\MM^{m\times N}$. Par la proposition \ref{FonsecaProperties}(a),  pour chaque $i\in I$, il existe $\varphi_i\in \Aff_0(V_i;\RR^m)$ tel que 
\begin{equation}\label{RajoutEqUaTioN2}
\Z W(F+F_i)\ge  {1\over\vert V_i\vert}\int_{V_i} W(F+\nabla\phi_n(x)+\nabla\varphi_{i}(x))dx-{\eps}.
\end{equation}
D\'efinissons $\varphi_n\in\Aff_0(Y;\RR^m)$ par $\varphi_n(x)=\phi_n(x)+\varphi_{i}(x)\hbox{ si } x\in  V_i$. Utilisant (\ref{RajoutEqUaTioN1}) et (\ref{RajoutEqUaTioN2}) on d\'eduit que 
\[
\int_Y \Z W(F+\nabla \phi(x))dx+2\eps\geq
\int_Y W(F+\nabla \varphi_n(x))dx\ge \Z W(F),
\]
et le r\'esultat suit en faisant  $\eps\to 0$.
\hfill$\square$

\subsubsection{D\'emonstration du th\'eor\`eme \ref{QuasiconvexificationFormulaTheorem}-bis}
On a toujours $\Z W\geq\mathcal{Q}W$. D'autre part, puisque $\Z W$ est finie, $\Z W$ est continue par la proposition \ref{FonsecaProperties}(c). Utilisant le th\'eor\`eme \ref{QuasiconvexificationFormulaTheorem} on d\'eduit que $\mathcal{Q}\Z W=\Z\Z W$. Or $\Z\Z W=\Z W$ par la proposition \ref{FonsecaProperties}(d), donc $\mathcal{Q} \Z W=\Z W$, i.e., $\Z W$ est quasiconvexe, d'o\`u $\Z W\leq\mathcal{Q} W$.
\hfill$\square$

\begin{remark}
D\'efinissons $\hat \Z W:\MM^{m\times N}\to[0,+\infty]$ par 
\[
\hat\Z W(F):=\inf\left\{\int_YW(F+\nabla\varphi(x):\varphi\in W^{1,\infty}_0(Y;\RR^m)\right\}
\]
(on a toujours $W\geq\Z W\geq\hat \Z W\geq\mathcal{Q} W$). Le r\'esultat suivant est une cons\'equence imm\'ediate du th\'eor\`eme \ref{QuasiconvexificationFormulaTheorem}-bis. 
\begin{corollary}
Si $\Z W$ est finie alors $\mathcal{Q}W=\Z W=\hat \Z W$.
\end{corollary}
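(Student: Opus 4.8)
The plan is to exploit the chain of inequalities $W\geq\Z W\geq\hat\Z W\geq\mathcal{Q}W$ recorded just above the statement, together with the Th\'eor\`eme \ref{QuasiconvexificationFormulaTheorem}-bis. First I would recall why that chain holds. The inequality $\Z W\geq\hat\Z W$ is immediate, since $\Aff_0(Y;\RR^m)\subset W^{1,\infty}_0(Y;\RR^m)$ and the infimum defining $\hat\Z W$ is therefore taken over a larger competitor set than the one defining $\Z W$. For $\hat\Z W\geq\mathcal{Q}W$: if $g:\MM^{m\times N}\to[0,+\infty]$ is Borel measurable, quasiconvexe and $g\leq W$, then for every $\varphi\in W^{1,\infty}_0(Y;\RR^m)$ one has $g(F)\leq\int_Y g(F+\nabla\varphi(x))dx\leq\int_Y W(F+\nabla\varphi(x))dx$, whence $g(F)\leq\hat\Z W(F)$; passing to the supremum over all such $g$ yields $\mathcal{Q}W\leq\hat\Z W$. (No growth or finiteness hypothesis is needed for this.)

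Next, since $\Z W$ is assumed finie, the Th\'eor\`eme \ref{QuasiconvexificationFormulaTheorem}-bis applies and gives $\mathcal{Q}W=\Z W$ (and, incidentally, the continuity of $\Z W$). Inserting this equality into the chain $\Z W\geq\hat\Z W\geq\mathcal{Q}W$ squeezes $\hat\Z W$ between two quantities that are now equal, so $\Z W=\hat\Z W=\mathcal{Q}W$, which is exactly the assertion of the corollary.

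I do not expect any genuine obstacle here: the corollary is an immediate consequence of the Th\'eor\`eme \ref{QuasiconvexificationFormulaTheorem}-bis once the elementary chain of inequalities is in hand. The only point deserving a line of justification — and it is routine — is the bound $\hat\Z W\geq\mathcal{Q}W$, i.e.\ that enlarging the class of test functions from $\Aff_0(Y;\RR^m)$ to $W^{1,\infty}_0(Y;\RR^m)$ still produces a function dominating the quasiconvex envelope; this is the standard argument recalled above.
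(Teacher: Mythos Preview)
Your proposal is correct and follows exactly the approach indicated in the paper: the paper merely states that the corollary is ``une cons\'equence imm\'ediate du th\'eor\`eme \ref{QuasiconvexificationFormulaTheorem}-bis'', relying on the chain $W\geq\Z W\geq\hat\Z W\geq\mathcal{Q}W$ recorded just before the statement, and your write-up spells out precisely this squeeze argument.
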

\end{remark}

Voici deux exemples qui montrent que le th\'eor\`eme \ref{GeneralThRelax} peut \^etre appliqu\'e \`a des $W$ qui ne satisfont pas (\ref{CondCroiss}) (voir \S 2.2 et \S 2.3).

\subsection{Contrainte produit vectoriel non nul}

On suppose que $W:\MM^{3\times 2}\to[0,+\infty]$ satisfait la condition suivante 
\begin{eqnarray}\label{HypExample:m=N+1}
&&\hbox{il existe }\alpha,\beta>0\hbox{ tels que pour tout }F=(F_1\mid F_2)\in\MM^{3\times 2},\\
&&\hbox{si }|F_1\land F_2|\geq\alpha\hbox{ alors }W(F)\leq\beta(1+|F|^p),\nonumber
\end{eqnarray}
avec $F_1\land F_2$ d\'esigne le produit vectoriel de $F_1$ par $F_2$. (Par exemple, on peut prendre  $W$ donn\'ee par 
$
W(F):=|F|^p+h(|F_1\land F_2|)
$
avec $h:[0,+\infty[\to[0,+\infty]$ une fonction Borel mesurable satisfaisant la propri\'et\'e suivante 
\begin{eqnarray}\label{Example:m=N+1}
\hbox{pour tout }\delta>0\hbox{ il existe }r_\delta>0\hbox{ tel que }h(t)\leq r_\delta\hbox{ pour tout }t\geq\delta
\end{eqnarray}
(par exemple, $h(0)=+\infty$ et $h(t)={1\over t^\alpha}$ si $t>0$ avec $\alpha>0$). Il est clair que $W$ ainsi d\'efinie ne satisfait pas (\ref{CondCroiss}).) Le r\'esultat suivant  a \'et\'e d\'emontr\'e dans \cite{oah-jpm07} (voir aussi \cite{oah-jpm08a}).
\begin{theorem}\label{ThExample:m=N+1}
Si $W$ v\'erifie {(\ref{HypExample:m=N+1})} alors $\Z W$ satisfait {(\ref{Cond-CroissZW})}.
\end{theorem}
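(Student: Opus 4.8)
The goal is to show that if $W:\MM^{3\times 2}\to[0,+\infty]$ satisfies \eqref{HypExample:m=N+1}, then $\Z W$ satisfies the growth condition \eqref{Cond-CroissZW}, i.e. $\Z W(F)\leq c(1+|F|^p)$ for all $F\in\MM^{3\times 2}$ with $c>0$. The strategy is to produce, for an \emph{arbitrary} matrix $F=(F_1\mid F_2)\in\MM^{3\times 2}$, a competitor $\varphi\in\Aff_0(Y;\RR^m)$ (with $Y=\,]0,1[^2$) such that $\int_Y W(F+\nabla\varphi(x))\,dx$ is controlled by $c(1+|F|^p)$; by the definition \eqref{DefOfZW} of $\Z W$ this yields the claim. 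The key point is that \eqref{HypExample:m=N+1} gives a $p$-growth bound on $W$ only on the ``good'' set where the cross product $|F_1\wedge F_2|\geq\alpha$ is bounded below; the matrices $F$ that may violate it are exactly those that are nearly rank one (or whose columns are nearly parallel), and the role of the competitor is to replace such an $F$, on average, by a mixture of matrices all lying in the good set.

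\textbf{Construction of the competitor.} First I would treat the trivial case: if $|F_1\wedge F_2|\geq\alpha$ already, take $\varphi=0$ and conclude directly from \eqref{HypExample:m=N+1}. Otherwise, the idea is to split $Y$ into two subdomains of equal measure and use a simple laminate in a direction that increases the cross product. Concretely, choose a vector $\zeta\in\RR^3$ and a direction $e\in\RR^2$ and build $\varphi\in\Aff_0(Y;\RR^m)$ that is affine with gradient $\zeta\otimes e$ on one half of $Y$ and with gradient $-\zeta\otimes e$ on the other half (adjusting the elementary laminate so that $\varphi$ vanishes on $\partial Y$, which is the standard rank-one lamination construction on the unit square, cf. the constructions recalled after Proposition~\ref{FonsecaProperties}). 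Then $F+\nabla\varphi$ takes the two values $F\pm\zeta\otimes e$, each on a set of measure $1/2$, so
\[
\int_Y W(F+\nabla\varphi(x))\,dx=\tfrac12 W(F+\zeta\otimes e)+\tfrac12 W(F-\zeta\otimes e).
\]
The point is to choose $e$ (say $e=e_1$, so only the first column is modified) and $\zeta$ so that both $F\pm\zeta\otimes e_1=(F_1\pm\zeta\mid F_2)$ have cross product at least $\alpha$ in absolute value. Since $(F_1\pm\zeta)\wedge F_2=F_1\wedge F_2\pm\zeta\wedge F_2$, one needs $|\zeta\wedge F_2|$ to dominate $\alpha$ while $|\zeta|$ stays comparable to $|F|$; this forces a case distinction according to whether $|F_2|$ is bounded below or not. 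If $|F_2|$ is not too small, pick $\zeta$ of length of order $\max(1,|F|)$ orthogonal to $F_2$ and to $F_1\wedge F_2$, so that $|\zeta\wedge F_2|=|\zeta|\,|F_2|$ is large; if $|F_2|$ is small, one must first also perturb the second column, e.g. apply a preliminary lamination in the $e_2$ direction to bring $|F_2|$ above a fixed threshold, then laminate in $e_1$ — nesting two laminations, which is still an element of $\Aff_0(Y;\RR^m)$.

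\textbf{Conclusion and main obstacle.} With such a $\varphi$, each of the (two or four) values of $F+\nabla\varphi$ lies in the good set, so \eqref{HypExample:m=N+1} applies to each term and gives $W(F+\nabla\varphi)\leq\beta(1+|F+\nabla\varphi|^p)\leq\beta\,2^{p-1}(1+(|F|+|\zeta|)^p)$, which is bounded by $c(1+|F|^p)$ once $|\zeta|$ is chosen of order $\max(1,|F|)$; averaging preserves this bound, and \eqref{Cond-CroissZW} follows with a universal constant $c=c(\alpha,\beta,p)$. The main obstacle I anticipate is the bookkeeping in the degenerate case where $F$ itself is small (both columns nearly vanishing): there $|F_1\wedge F_2|$ can be below $\alpha$ while $|F|$ is $O(1)$, so one cannot let $|\zeta|$ scale with $|F|$ — one must allow $|\zeta|$ of order $1$, which is fine because then $1+|F+\zeta\otimes e|^p$ is $O(1)$ and still absorbed by $c(1+|F|^p)\geq c$. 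Making the choice of $\zeta$ (and possibly $e$) depend on $F$ in a way that simultaneously guarantees the good-set membership of all laminate values and the bound $|\zeta|\leq c'\max(1,|F|)$ is the only genuinely delicate part; once that choice is pinned down, the rest is the routine lamination estimate above.
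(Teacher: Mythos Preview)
Your proposal has the right spirit---use piecewise-affine competitors that push the gradient into the good set $\{|G_1\wedge G_2|\ge\alpha\}$---but it contains a genuine gap. A one-dimensional laminate taking exactly the two values $\pm\zeta\otimes e$ cannot be realized by a function in $\Aff_0(Y;\RR^3)$: any such function must vanish on the whole boundary $\partial Y$, and this forces additional gradient values on a boundary layer (compare the pieces $T_k^i$ in the proof of Proposition~\ref{prop_fonseca_ball_morrey}). On those extra pieces you have no control whatsoever on the cross product of the columns, so by \eqref{HypExample:m=N+1} alone $W$ may equal $+\infty$ there; the integral $\int_Y W(F+\nabla\varphi)\,dx$ is then $+\infty$ and the bound is lost. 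The rank-one construction you cite works for \emph{finite} integrands (or once one already knows $\Z W$ is finite, cf.\ Proposition~\ref{FonsecaProperties}(b)), but here $W$ is typically $+\infty$ off the good set, so ``adjusting the laminate'' is precisely the hard part, not a routine step.

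The paper avoids this by never using a one-dimensional laminate. It builds two-dimensional tent functions whose gradients take exactly four explicitly listed values, each of which can be checked individually. The argument is in two stages: first, on a rhombus domain with $\nu$ a unit vector orthogonal to the column space, one shows that $\Z W(F)\le\gamma(1+|F|^p)$ whenever $\min\{|F_1+F_2|,|F_1-F_2|\}\ge\alpha$; then, on the square with the pyramid tent and using Proposition~\ref{FonsecaProperties}(d) (so that $\Z W$, not $W$, appears on the right), one reduces the general case to this intermediate one. The choice of $\nu$ \emph{parallel} to $F_1\wedge F_2$---not, as you suggest, orthogonal to it---yields the Pythagorean identity $|(F_1\mp\nu)\wedge(F_2\pm\nu)|^2=|F_1\wedge F_2|^2+|(F_1\pm F_2)\wedge\nu|^2$, which is what makes the verification clean and keeps $|\nu|$ of order $1$ uniformly in $F$. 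Your approach can be salvaged, but only by replacing the simple laminates with such tent functions (or, equivalently, by invoking Proposition~\ref{FonsecaProperties}(d) instead of a direct competitor for $W$), at which point you are essentially reconstructing the paper's two-step proof.
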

\begin{proof}[Sch\'ema de d\'emonstration]
Utilisant la proposition \ref{FonsecaProperties}(a) avec $D\subset\RR^2$ et $\varphi\in\Aff_0(D;\RR^3)$ bien choisis, on montre d'abord que si $W$ v\'erifie {\rm(\ref{HypExample:m=N+1})} alors $\Z W$ satisfait la condition suivante 
\begin{eqnarray}\label{IntermediateCondition}
&&\hbox{il existe }\gamma>0\hbox{ tel que pour tout }F=(F_1\mid F_2)\in\MM^{3\times 2},\\
&&\hbox{si }\min\{|F_1+F_2|,|F_1-F_2|\}\geq \alpha\hbox{ alors }\Z W(F)\leq\gamma(1+|F|^p)\nonumber
\end{eqnarray}
avec $\alpha>0$ donn\'e par (\ref{HypExample:m=N+1}). On prouve ensuite, en utilisant la proposition \ref{FonsecaProperties}(d) avec $D\subset\RR^2$ et $\varphi\in\Aff_0(D;\RR^3)$ bien choisis, que si $\Z W$ v\'erifie (\ref{IntermediateCondition}) alors $\Z W$ satisfait (\ref{Cond-CroissZW}). 
\end{proof}

 Le corollaire suivant est une cons\'equence imm\'ediate des th\'eor\`emes \ref{ThExample:m=N+1} et \ref{GeneralThRelax}.
\begin{corollary}
Si $W$ satisfait {(\ref{HypExample:m=N+1})} alors  {(\ref{FunctRelax})} a lieu avec $\overline{W}=\mathcal{Q}W=\Z W$.
\end{corollary}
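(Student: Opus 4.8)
The plan is to obtain this corollary by chaining the two results that immediately precede it, with no additional construction required. First I would observe that the hypothesis of the corollary, namely that $W:\MM^{3\times 2}\to[0,+\infty]$ satisfies (\ref{HypExample:m=N+1}), is exactly the hypothesis of Theorem \ref{ThExample:m=N+1}. Applying that theorem gives that $\Z W$, defined in (\ref{DefOfZW}), satisfies the $p$-growth bound (\ref{Cond-CroissZW}), i.e. there is $c>0$ with $\Z W(F)\leq c(1+|F|^p)$ for all $F\in\MM^{3\times 2}$.

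Second, with (\ref{Cond-CroissZW}) now available, I would invoke Theorem \ref{GeneralThRelax} directly: it states that whenever $\Z W$ satisfies (\ref{Cond-CroissZW}), the integral representation (\ref{FunctRelax}) of the relaxed functional $\overline{I}$ holds with $\overline{W}=\mathcal{Q}W=\Z W$. Combining the two steps yields the assertion. Thus the corollary is a one-line deduction and needs no further argument.

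Where the genuine work resides is Theorem \ref{ThExample:m=N+1}, whose proof is only sketched: the difficulty there is to upgrade the one-sided estimate "$|F_1\land F_2|\geq\alpha\Rightarrow W(F)\leq\beta(1+|F|^p)$" into a bound on $\Z W$ valid for \emph{every} matrix. The route is to insert well-chosen affine test maps on suitable planar domains $D\subset\RR^2$, using Proposition \ref{FonsecaProperties}(a) to reach the intermediate condition (\ref{IntermediateCondition}) (where $|F_1+F_2|$ and $|F_1-F_2|$ are both large), and then Proposition \ref{FonsecaProperties}(d) to remove the remaining restriction and arrive at (\ref{Cond-CroissZW}). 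But since both Theorem \ref{ThExample:m=N+1} and Theorem \ref{GeneralThRelax} may be assumed, the main obstacle has already been cleared, and the proof of the corollary itself is immediate.
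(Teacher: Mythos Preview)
Your proposal is correct and matches the paper's own treatment: the corollary is stated as an immediate consequence of Theorems \ref{ThExample:m=N+1} and \ref{GeneralThRelax}, exactly the two-step chain you describe. No further argument is expected or given.
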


\begin{proof}[D\'emonstration du th\'eor\`eme \ref{ThExample:m=N+1}]
On proc\`ede en deux \'etapes.

\subsubsection*{\'Etape 1}{\em Montrons que $\Z W$ satisfait (\ref{IntermediateCondition})}. Soit $F=(F_1\mid F_2)\in\MM^{3\times 2}$ tel que $\min\{|F_1+F_2|,|F_1-F_2|\}\geq\alpha$. Alors, l'une des trois possibilit\'es suivantes a lieu :
\begin{eqnarray}\label{Possibility1Vect}
&&|F_1\land F_2|\not=0\ ;\\ 
&&|F_1\land F_2|=0\hbox{ avec }F_1\not=0\ ;\label{Possibility2Vect}\\
&&|F_1\land F_2|=0\hbox{ avec }F_2\not=0. \label{Possibility3Vect}
\end{eqnarray}
Posons
$
D:=\{(x_1,x_2)\in\RR^2:x_1-1<x_2<x_1+1\hbox{ et }-x_1-1<x_2<1-x_1\}
$ 
et d\'efinissons $\psi\in\Aff_0(D;\RR)$ par 
$$
\psi(x_1,x_2):=\left\{
\begin{array}{ll}
-x_1+(x_2+1)&\hbox{si }(x_1,x_2)\in\Delta_1\\
(1-x_1)-x_2&\hbox{si }(x_1,x_2)\in\Delta_2\\
x_1+(1-x_2)&\hbox{si }(x_1,x_2)\in\Delta_3\\
(x_1+1)+x_2&\hbox{si }(x_1,x_2)\in\Delta_4
\end{array}
\right.
$$
avec :
\begin{itemize} 
\item[]$\Delta_1:=\{(x_1,x_2)\in D:x_1\geq 0\hbox{ et } x_2\leq 0\}$ ;
\item[]$\Delta_2:=\{(x_1,x_2)\in D:x_1\geq 0\hbox{ et }x_2\geq 0\}$ ;
\item[]$\Delta_3:=\{(x_1,x_2)\in D:x_1\leq 0\hbox{ et }x_2\geq 0\}$ ;
\item[]$\Delta_4:=\{(x_1,x_2)\in D:x_1\leq 0\hbox{ et }x_2\leq 0\}$.
\end{itemize}
Consid\'erons $\varphi\in\Aff_0(D;\RR^3)$ donn\'ee par 
$$
\varphi(x):=\psi(x)\nu\hbox{ avec }\left\{\begin{array}{ll}\nu={F_1\land F_2\over|F_1\land F_2|}&\hbox{si on a (\ref{Possibility1Vect})}\\
|\nu|=1 \hbox{ et }\langle F_1,\nu\rangle=0&\hbox{si on a (\ref{Possibility2Vect})}\\
|\nu|=1 \hbox{ et }\langle F_2,\nu\rangle=0&\hbox{si on a (\ref{Possibility3Vect}).}\end{array}\right.
$$
Alors 
$$
F+\nabla\varphi(x)=\left\{
\begin{array}{ll}
(F_1-\nu\mid F_2+\nu)&\hbox{si }x\in{\rm int}(\Delta_1)\\
(F_1-\nu\mid F_2-\nu)&\hbox{si }x\in{\rm int}(\Delta_2)\\
(F_1+\nu\mid F_2-\nu)&\hbox{si }x\in{\rm int}(\Delta_3)\\
(F_1+\nu\mid F_2+\nu)&\hbox{si }x\in{\rm int}(\Delta_4)
\end{array}
\right.
$$
(o\`u ${\rm int}(E)$ d\'esigne l'int\'erieur de l'ensemble $E$). Utilisant la proposition \ref{FonsecaProperties}(a) on d\'eduit que 
\begin{eqnarray}\label{Z_1}
\Z W(F)&\leq&{1\over 4}\left(W(F_1-\nu\mid F_2+\nu)+W(F_1-\nu\mid F_2-\nu)\right.\\
&&\left.+\ W(F_1+\nu\mid F_2-\nu)+W(F_1+\nu\mid F_2+\nu)\right).\nonumber
\end{eqnarray}
Or
$
|(F_1-\nu)\land(F_2+\nu)|^2=|F_1\land F_2+(F_1+F_2)\land\nu|^2
=|F_1\land F_2|^2+|(F_1+F_2)\land\nu|^2
\geq|(F_1+F_2)\land\nu|^2,
$
donc 
$$
|(F_1+\nu)\land(F_2-\nu)|\geq |(F_1+F_2)\land\nu|=|F_1+F_2|.
$$
De la m\^eme fa\c con, on obtient : 
\begin{itemize}
\item[]$|(F_1-\nu)\land(F_2-\nu)|\geq |F_1-F_2|$ ;
\item[]$|(F_1+\nu)\land(F_2-\nu)|\geq |F_1+F_2|$ ;
\item[]$|(F_1+\nu)\land(F_2+\nu)|\geq |F_1-F_2|$.
\end{itemize}
Ainsi $|(F_1-\nu)\land(F_2+\nu)|\geq\alpha$, $|(F_1-\nu)\land(F_2-\nu)|\geq\alpha$, $|(F_1+\nu)\land(F_2-\nu)|\geq\alpha$ et $|(F_1+\nu)\land(F_2+\nu)|\geq\alpha$ car $\min\{|F_1+F_2|,|F_1-F_2|\}\geq\alpha$. Utilisant {\rm(\ref{HypExample:m=N+1})} il suit que 
\begin{eqnarray*}
W(F_1-\nu\mid F_2+\nu)&\leq&\beta(1+|(F_1-\nu\mid F_2+\nu)|^p)\\
&\leq&\beta 2^p(1+|(F_1\mid F_2)|^p+|(-\nu\mid \nu)|^p)\\
&\leq&\beta 2^{2p+1}(1+|F|^p).
\end{eqnarray*}
De la m\^eme mani\`ere, on a : 
\begin{itemize}
\item[]$W(F_1-\nu\mid F_2-\nu)\leq \beta 2^{2p+1}(1+|F|^p)$ ;
\item[]$W(F_1+\nu\mid F_2-\nu)\leq \beta 2^{2p+1}(1+|F|^p)$ ;
\item[]$W(F_1+\nu\mid F_2+\nu)\leq \beta 2^{2p+1}(1+|F|^p)$,
\end{itemize}
et utilisant (\ref{Z_1}) on conclut que $\Z W(F)\leq \beta 2^{2p+1}(1+|F|^p)$.

\subsubsection*{\'Etape 2}{\em Montrons que $\Z W$ v\'erifie (\ref{Cond-CroissZW})}. Soit $F=(F_1\mid F_2)\in\MM^{3\times 2}$. Alors, l'une des quatre possibilit\'es suivantes a lieu :
\begin{eqnarray}
&&|F_1\land F_2|\not=0\ ;\label{Possibility(i)Vect}\\
&&|F_1\land F_2|=0\hbox{ avec }F_1=F_2=0\ ;\label{Possibility(ii)Vect}\\
&&|F_1\land F_2|=0\hbox{ avec }F_1\not=0\ ;\label{Possibility(iii)Vect}\\
&&|F_1\land F_2|=0\hbox{ avec }F_2\not=0. \label{Possibility(iv)Vect}
\end{eqnarray}
D\'efinissons $\psi\in\Aff_0(Y;\RR)$ par 
$$
\psi(x_1,x_2):=\left\{
\begin{array}{ll}
x_2&\hbox{si }(x_1,x_2)\in\Delta_1\\
(1-x_1)&\hbox{si }(x_1,x_2)\in\Delta_2\\
(1-x_2)&\hbox{si }(x_1,x_2)\in\Delta_3\\
x_1&\hbox{si }(x_1,x_2)\in\Delta_4
\end{array}
\right.
$$
avec :
\begin{itemize} 
\item[]$\Delta_1:=\{(x_1,x_2)\in Y:x_2\leq x_1\leq -x_2+1\}$ ;
\item[]$\Delta_2:=\{(x_1,x_2)\in Y:-x_1+1\leq x_2\leq x_1\}$ ;
\item[]$\Delta_3:=\{(x_1,x_2)\in Y:-x_2+1\leq x_1\leq x_2\}$ ;
\item[]$\Delta_4:=\{(x_1,x_2)\in Y:x_1\leq x_2\leq -x_1+1\}$.
\end{itemize}
Consid\'erons $\varphi\in\Aff_0(Y;\RR^3)$ donn\'ee par 
$$
\varphi(x):=\psi(x)\nu\hbox{ avec }
\left\{
\begin{array}{ll}
\nu={\alpha(F_1\land F_2)\over|F_1\land F_2|}&\hbox{si on a (\ref{Possibility(i)Vect})}\\  
|\nu|=\alpha&\hbox{si on a (\ref{Possibility(ii)Vect})}\\
|\nu|=\alpha\hbox{ et }\langle F_1,\nu\rangle=0& \hbox{si on a (\ref{Possibility(iii)Vect})}\\
|\nu|=\alpha\hbox{ et }\langle F_2,\nu\rangle=0& \hbox{si on a (\ref{Possibility(iv)Vect}).}
\end{array}
\right.
$$
Alors 
$$
F+\nabla\varphi(x)=\left\{
\begin{array}{ll}
(F_1\mid F_2+\nu)&\hbox{si }x\in{\rm int}(\Delta_1)\\
(F_1-\nu\mid F_2)&\hbox{si }x\in{\rm int}(\Delta_2)\\
(F_1\mid F_2-\nu)&\hbox{si }x\in{\rm int}(\Delta_3)\\
(F_1+\nu\mid F_2)&\hbox{si }x\in{\rm int}(\Delta_4).
\end{array}
\right.
$$
Utilisant la proposition \ref{FonsecaProperties}(d) on d\'eduit que 
\begin{eqnarray}\label{ZzZ}
\Z W(F)&\leq&{1\over 4}\left(\Z W(F_1\mid F_2+\nu)+\Z W(F_1-\nu\mid F_2)\right.\\
&&\left.+\ \Z W(F_1\mid F_2-\nu)+\Z W(F_1+\nu\mid F_2)\right).\nonumber
\end{eqnarray}
Or
$
|F_1+(F_2+\nu)|^2=|(F_1+F_2)+\nu|^2
=|F_1+F_2|^2+|\nu|^2
=|F_1+F_2|^2+\alpha^2
\geq \alpha^2,
$
d'o\`u
$
|F_1+(F_2+\nu)|\geq \alpha.
$
De la m\^eme fa\c con, on obtient
$
|F_1-(F_2+\nu)|\geq \alpha,
$
donc :
$$
\min\{|F_1+(F_2+\nu)|,|F_1-(F_2+\nu)|\}\geq \alpha.
$$
De la m\^eme mani\`ere, on a : 
\begin{itemize}
\item[]$\min\{|(F_1-\nu)+F_2|,|(F_1-\nu)-F_2|\}\geq\alpha$ ;
\item[]$\min\{|F_1+(F_2-\nu)|,|F_1-(F_2-\nu)|\}\geq\alpha$ ;
\item[]$\min\{|(F_1+\nu)+F_2|,|(F_1+\nu)-F_2|\}\geq\alpha$. 
\end{itemize}
Comme $\Z W$ satisfait (\ref{IntermediateCondition}) il suit que 
\begin{eqnarray*}
\Z W(F_1\mid F_2+\nu)&\leq&\gamma(1+|(F_1\mid F_2+\nu)|^p)\\
&\leq&\gamma 2^p(1+|(F_1\mid F_2)|^p+|(0\mid\nu)|^p)\\
&\leq&\max\{1,\alpha^p\}\gamma2^{p+1}(1+|F|^p).
\end{eqnarray*}
De la m\^eme fa\c con, on obtient :
\begin{itemize}
\item[]$\Z W(F_1-\nu\mid F_2)\leq\max\{1,\alpha^p\}\gamma2^{p+1}(1+|F|^p)$ ;
\item[]$\Z W(F_1\mid F_2-\nu)\leq\max\{1,\alpha^p\}\gamma2^{p+1}(1+|F|^p)$ ;
\item[]$\Z W(F_1+\nu\mid F_2)\leq\max\{1,\alpha^p\}\gamma2^{p+1}(1+|F|^p)$,
\end{itemize} 
et utilisant (\ref{ZzZ}) on conclut que $\Z W(F)\leq\max\{1,\alpha^p\}\gamma2^{p+1}(1+|F|^p)$. 
\end{proof}

\subsection{Contrainte d\'eterminant non nul}

 On suppose que $W:\MM^{3\times 3}\to[0,+\infty]$ satisfait les deux conditions suivantes : 
\begin{eqnarray}\label{Hyp1Example:m=N}
&&\hbox{pour tout }\delta>0\hbox{ il existe }c_\delta>0 \hbox{ tel que pour tout }F\in\MM^{3\times 3},\\
&&\hbox{si }|\det F|\geq\delta\hbox{ alors }W(F)\leq c_\delta(1+|F|^p)\ ;\nonumber\\
&&W(PFQ)=W(F)\hbox{ pour tout }F\in\MM^{3\times 3}\hbox{ et tout }P,Q\in\SO(3) \label{Hyp2Example:m=N}
\end{eqnarray}
avec $\SO(3):=\{Q\in\MM^{3\times 3}:Q^{\rm T}Q=QQ^{\rm T}=I_3\hbox{ et }\det Q=1\}$, o\`u  $I_3$ d\'esigne la matrice identit\'e de $\MM^{3\times 3}$ et $Q^{\rm T}$ est la matrice transpos\'ee de $Q$. (Par exemple, on peut prendre  $W$ donn\'ee par 
$
W(F):=|F|^p+h(|\det F|)
$
avec $h:[0,+\infty[\to[0,+\infty]$ v\'erifiant (\ref{Example:m=N+1}). Noter que $W$ ainsi d\'efinie ne satisfait pas (\ref{CondCroiss}) et est compatible avec (\ref{Cst1prime}) et (\ref{Cst2}).) Le r\'esultat suivant a \'et\'e d\'emontr\'e dans \cite{oah-jpm08a}.
\begin{theorem}\label{ThExample:m=N}
Si $W$ v\'erifie {(\ref{Hyp1Example:m=N})} et {(\ref{Hyp2Example:m=N})} alors $\Z W$ satisfait {(\ref{Cond-CroissZW})}.
\end{theorem}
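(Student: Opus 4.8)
The plan is to use the isotropy (\ref{Hyp2Example:m=N}) to reduce to diagonal matrices, and then, by a bounded number of laminations of the type constructed in the proof of Proposition \ref{prop_fonseca_ball_morrey}, to push an arbitrary diagonal matrix into the region $\{F:|\det F|\ge\delta_0\}$, where $\delta_0>0$ is fixed once and for all and where (\ref{Hyp1Example:m=N}) directly gives the $p$-growth of $W$, hence of $\Z W\le W$. The scheme parallels the two-step argument of Theorem \ref{ThExample:m=N+1}, but the three-dimensional bookkeeping is considerably heavier, and that bookkeeping is the real difficulty.

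\emph{Reduction.} Since the infimum defining $\Z W$ does not depend on the reference cube (Proposition \ref{FonsecaProperties}(a)), a change of variables $\varphi\mapsto P^{\rm T}\varphi(Q^{\rm T}\,\cdot\,)$ combined with (\ref{Hyp2Example:m=N}) shows that $\Z W(PFQ)=\Z W(F)$ for all $P,Q\in\SO(3)$. Writing $F=P\,\diag(\lambda_1,\lambda_2,\lambda_3)\,Q$ (singular value decomposition, $P,Q\in\SO(3)$, $0\le\lambda_1\le\lambda_2\le|\lambda_3|$) we get $\Z W(F)=\Z W(\Lambda)$ with $\Lambda:=\diag(\lambda_1,\lambda_2,\lambda_3)$ and $|F|^2=\lambda_1^2+\lambda_2^2+\lambda_3^2$, so it suffices to bound $\Z W$ at such $\Lambda$.

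\emph{Main lamination step.} The heart of the proof is the assertion that, given $\Lambda$, one can choose $b\in\RR^3$, $\theta\in(0,1)$, $k_0\in\NN\setminus\{0\}$ and an orthonormal basis $(\tau,\eta,a/|a|)$ of $\RR^3$ so that, running the construction of Proposition \ref{prop_fonseca_ball_morrey} with these data, \emph{every} perturbation occurring in it is of the form $\rho\otimes b$ with $\rho\in\{a,-\tfrac{\theta}{1-\theta}a,\pm\tfrac{2\theta}{k_0}\tau,\pm\tfrac{2\theta}{k_0}\eta\}$ and satisfies $|\det(\Lambda+\rho\otimes b)|\ge2\delta_0$ together with $|\Lambda+\rho\otimes b|\le C(|\Lambda|+1)$ for an absolute $C$. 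Granting this, one applies Proposition \ref{FonsecaProperties}(d) to each map $\varphi_k\in\Aff_0(P_k;\RR^3)$ of that construction and lets $k\to+\infty$: $\nabla\varphi_k$ is constant on each transition tetrahedron, $\Z W$ is finite at those finitely many transition matrices because their determinants have modulus $\ge2\delta_0$, and the total transition volume is of order $k_0/k\to0$; one obtains
\[
\Z W(\Lambda)\le\theta\,\Z W(\Lambda+a\otimes b)+(1-\theta)\,\Z W\bigl(\Lambda-\tfrac{\theta}{1-\theta}a\otimes b\bigr),
\]
and since each argument on the right has determinant of modulus $\ge2\delta_0$, it is $\le c_{2\delta_0}(1+|\,\cdot\,|^p)\le c(1+|\Lambda|^p)$ by (\ref{Hyp1Example:m=N}) and $\Z W\le W$. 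Observe that $\Lambda$ itself need not lie in ${\rm int}({\rm dom}\,\Z W)$: Proposition \ref{FonsecaProperties}(d) is unconditional, and only the finitely many perturbed matrices are required to be in ${\rm dom}\,\Z W$.

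\emph{Choosing the data.} This is the delicate part, handled by a finite descent on $\rank\Lambda$ (and, within $\rank\Lambda=2$, on the size of $\Lambda$). If $|\lambda_1\lambda_2\lambda_3|\ge\delta_0$ there is nothing to do. If $\rank\Lambda=2$ (so $\lambda_1=0$) and $|\lambda_2\lambda_3|\ge1$: take $b=e_1$ and a generic orthonormal basis all of whose first components are bounded away from zero; from the identity $\det(\Lambda+\rho\otimes e_1)=\rho_1\lambda_2\lambda_3$ one sees that every perturbed matrix is nonsingular, and choosing $|a|$ and $\tfrac{2\theta}{k_0}$ comparable to $\delta_0/|\lambda_2\lambda_3|$ makes all determinants $\ge2\delta_0$ with, thanks to $|\lambda_2\lambda_3|\ge1$, all perturbation sizes bounded. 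For a general rank-$2$ matrix with $|\lambda_2\lambda_3|<1$ but $|\lambda_3|$ large, a preliminary lamination with $b=e_2$ and $\tau,\eta\in{\rm span}(e_1,e_3)$ having nonzero first and third components replaces $\Lambda$ by matrices $\diag(0,\lambda_2\pm t,\lambda_3)$ with $|\lambda_2\pm t|\ge1$ (take $t=|\lambda_2|+1$) and by transition matrices whose two largest singular values have product $\ge1$ — all of them therefore belong to the already-treated rank-$2$ class, and the bound propagates, the perturbation sizes remaining of order $|\Lambda|+1$. Rank $\le1$ is reduced to rank $2$ by one further such step, and the finitely many remaining configurations with $|\Lambda|$ bounded (in particular $\Lambda=0$) are disposed of by a few more laminations with perturbations of a fixed size, the estimate there being merely a constant. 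Only a bounded number of steps occurs, each multiplying the constant by a bounded factor, so one ends with a single $c$ in (\ref{Cond-CroissZW}). The genuine difficulty throughout is exactly this: choosing $b$ aligned with the degeneracy at each step so that the transition matrices of the Fonseca construction stay nonsingular — or at worst fall into an already-handled class — while every perturbation magnitude is kept under control by $|\Lambda|+1$.
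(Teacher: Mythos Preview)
Your overall strategy---reduce to diagonal matrices via isotropy, then laminate with the Fonseca parallelepiped construction and pass to the limit using Proposition~\ref{FonsecaProperties}(d)---is sound, and your observation that Proposition~\ref{FonsecaProperties}(d) is unconditional (so one need not know \emph{a priori} that $\Lambda\in{\rm int}({\rm dom}\,\Z W)$) is correct and is indeed the mechanism by which the paper's own argument bootstraps. The paper organizes things differently: it first proves $\Z W$ is finite everywhere (Step~1, via an octahedral test function rather than the parallelepiped), then invokes continuity (Proposition~\ref{FonsecaProperties}(c)) to get a uniform bound on the compact region $|F|^2\le 3$, and rank-1 convexity (Proposition~\ref{FonsecaProperties}(b)) to handle the cases with two small and one large diagonal entry (Step~2.2). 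Your route tries to bypass the separate finiteness step and get growth directly.

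There is, however, a genuine gap in your case analysis. You write ``a finite descent on $\rank\Lambda$'' and then pass from ``$|\lambda_1\lambda_2\lambda_3|\ge\delta_0$'' directly to ``$\rank\Lambda=2$ (so $\lambda_1=0$)''. This omits the generic case of full-rank diagonal $\Lambda$ with $0<|\det\Lambda|<\delta_0$ and $|\Lambda|$ large---for instance $\Lambda=\diag(\epsilon,1,M)$ with $\epsilon$ small and $M$ large. Such a matrix is not rank~$2$, does not satisfy $|\det\Lambda|\ge\delta_0$, and is not bounded, so it falls through every branch of your argument. Your explicit computation $\det(\Lambda+\rho\otimes e_1)=\rho_1\lambda_2\lambda_3$ assumes $\lambda_1=0$; when $\lambda_1\neq0$ it becomes $(\lambda_1+\rho_1)\lambda_2\lambda_3$, and the choice of $c,\theta,k_0$ must be tuned to avoid cancellation across the six values of $\rho_1$ (which take both signs). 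This is fixable---essentially one replaces ``rank'' by ``number of singular values below a fixed threshold'' and re-runs your rank-$2$ argument with $\lambda_1$ small but nonzero---but as written the proof is incomplete on precisely the case that carries the content. A second, smaller gap: your treatment of the bounded region (``a few more laminations with perturbations of a fixed size, the estimate there being merely a constant'') needs a \emph{uniform} bound over a compact set of $\Lambda$'s, not a pointwise one; the paper obtains this from continuity of $\Z W$, which in turn requires the finiteness step you are trying to avoid. You would need to argue that your lamination parameters, and hence the resulting upper bound, can be chosen uniformly in $\Lambda$ over that compact set.
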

\begin{proof}[Sch\'ema de d\'emonstration]
Utilisant successivement la proposition \ref{FonsecaProperties}(a) et la proposition \ref{FonsecaProperties}(d) avec $D\subset\RR^3$ et $\varphi\in\Aff_0(D;\RR^3)$ bien choisis, on montre que si $W$ v\'erifie (\ref{Hyp1Example:m=N}) alors $\Z W$ est finie. De la proposition \ref{FonsecaProperties}(c) on d\'eduit que $\Z W$ est continue. Il suit que 
\begin{eqnarray}\label{Cond1:m=N}
&&\Z W(F)\leq c^\prime\hbox{ pour tout }F\in\MM^{3\times 3}\hbox{ tel que }|F|^2\leq 3\hbox{ avec }c^\prime>0.
\end{eqnarray}
De plus (par (\ref{Hyp1Example:m=N}) avec $\delta=1$) il est clair que 
\begin{eqnarray}\label{Cond2:m=N}
&&\Z W(F)\leq c_1(1+|F|^p)\hbox{ pour tout }F\in\MM^{3\times 3}\hbox{ tel que }|\det F|\geq 1\\
&&\hbox{avec }c_1>0.\nonumber
\end{eqnarray}
Utilisant \`a nouveau la proposition \ref{FonsecaProperties}(a) (avec $D\subset\RR^3$ et $\varphi\in\Aff_0(D;\RR^3)$ bien choisis) et la rang-1 convexit\'e de $\Z W$ ($\Z W$ est rang-1 convexe par la proposition \ref{FonsecaProperties}(b) puisque $\Z W$ est finie), on prouve que sous (\ref{Hyp1Example:m=N}) on a 
\begin{eqnarray}\label{Cond3:m=N}
&&\Z W(F)\leq c^{\prime\prime}(1+|F|^p)\hbox{ pour tout }F\in\MM^{3\times 3}\hbox{ tel que }F \hbox{ est diagonale,}\\
&&|\det F|\leq 1\hbox{ et }|F|^2\geq 3\hbox{ avec }c^{\prime\prime}>0.\nonumber
\end{eqnarray}
Combinant (\ref{Cond1:m=N}), (\ref{Cond2:m=N}) et (\ref{Cond3:m=N}), on voit que l'on a d\'emontr\'e que si $W$ v\'erifie (\ref{Hyp1Example:m=N}) alors $\Z W$ satisfait la condition suivante 
\begin{eqnarray}\label{Cond4:m=N}
&&\hbox{il existe }c>0\hbox{ tel que pour tout }F\in\MM^{3\times 3},\\
&&\hbox{si }F\hbox{ est diagonale alors }\Z W(F)\leq c(1+|F|^p).\nonumber
\end{eqnarray}
D'autre part, $\Z W(PFQ)=\Z W(F)$ pour tout $F\in\MM^{3\times 3}$ et tous $P,Q\in\SO(3)$ puisque $W$ v\'erifie (\ref{Hyp2Example:m=N}) et, lorsque $F$ est inversible,  $F=PQ^T\hat F Q$ avec $P,Q\in\SO(3)$ et $\hat F$ diagonale, donc pour toute matrice inversible $F$ il existe une matrice diagonale $\hat F$ telle que $\Z W(F)=\Z W(\hat F)$. Notant que $|F|=|\hat F|$ et utilisant (\ref{Cond4:m=N}) on d\'eduit que pour toute matrice inversible $F$, $\Z W(F)\leq c(1+|F|^p)$. Comme $\Z W$ est continue et l'ensemble des matrices inversibles est dense dans $\MM^{3\times 3}$, il suit que $\Z W$ satisfait (\ref{Cond-CroissZW}). 
\end{proof}

 Le corollaire suivant est une cons\'equence imm\'ediate des th\'eor\`emes \ref{ThExample:m=N} et \ref{GeneralThRelax}.
\begin{corollary}\label{CorollaryExample:m=N}
Si $W$ satisfait {(\ref{Hyp1Example:m=N})} et {(\ref{Hyp2Example:m=N})} alors {(\ref{FunctRelax})} a lieu avec $\overline{W}=\mathcal{Q}W=\Z W$.
\end{corollary}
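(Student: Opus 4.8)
The plan is to obtain the statement by simply composing the two results already established, since the corollary is an immediate consequence of Theorem~\ref{ThExample:m=N} and Theorem~\ref{GeneralThRelax}. First I would apply Theorem~\ref{ThExample:m=N}: the assumptions (\ref{Hyp1Example:m=N}) (conditional $p$-growth of $W$ on the set $\{|\det F|\geq\delta\}$) and (\ref{Hyp2Example:m=N}) (the bilateral invariance $W(PFQ)=W(F)$ for $P,Q\in\SO(3)$) are exactly its hypotheses, so that theorem yields that $\Z W$, defined in (\ref{DefOfZW}), satisfies the genuine $p$-growth condition (\ref{Cond-CroissZW}), i.e. $\Z W(F)\leq c(1+|F|^p)$ for every $F\in\MM^{3\times 3}$. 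With (\ref{Cond-CroissZW}) in hand I would then invoke Theorem~\ref{GeneralThRelax} in the case $m=N=3$, which asserts precisely that (\ref{Cond-CroissZW}) forces the integral representation (\ref{FunctRelax}) for $\overline I$ with $\overline W=\mathcal{Q}W=\Z W$. Chaining these two implications gives the corollary, and there is nothing more to do.

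The genuine content therefore lies in Theorem~\ref{ThExample:m=N}, so if a self-contained argument were wanted rather than a citation, the route --- as its sketch indicates --- would be: (i) apply Proposition~\ref{FonsecaProperties}(a) and (d) to suitably chosen $D\subset\RR^3$ and $\varphi\in\Aff_0(D;\RR^3)$ engineered so that all the competitor matrices $F+\nabla\varphi(x)$ have their determinants bounded away from $0$ in absolute value, thereby triggering (\ref{Hyp1Example:m=N}) and showing that $\Z W$ is finite; (ii) deduce continuity of $\Z W$ from Proposition~\ref{FonsecaProperties}(c) and rank-1 convexity from Proposition~\ref{FonsecaProperties}(b); (iii) prove the $p$-growth bound first for diagonal matrices, interpolating along rank-1 segments between the large-determinant regime $|\det F|\geq 1$, where (\ref{Hyp1Example:m=N}) applies directly (cf.~(\ref{Cond2:m=N})), and a fixed ball, where finiteness and continuity give a uniform bound (cf.~(\ref{Cond1:m=N})), so as to reach (\ref{Cond4:m=N}); (iv) transfer the invariance to $\Z W$ and use the decomposition $F=PQ^{\rm T}\hat F Q$ with $P,Q\in\SO(3)$, $\hat F$ diagonal and $|\hat F|=|F|$, to pass from diagonal to all invertible matrices; (v) conclude (\ref{Cond-CroissZW}) for all of $\MM^{3\times 3}$ by density of the invertible matrices together with the continuity of $\Z W$. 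The main obstacle will be the geometric design in steps (i) and (iii) --- choosing $D$ and the piecewise-affine $\varphi$ so that a determinant lower bound holds throughout, and arranging the rank-1 interpolation pattern --- after which the invariance reduction in (iv) applies cleanly; once (\ref{Cond-CroissZW}) is secured, Theorem~\ref{GeneralThRelax} finishes the proof without further effort.
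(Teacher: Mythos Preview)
Your proposal is correct and matches the paper's approach exactly: the paper states the corollary as an immediate consequence of Theorems~\ref{ThExample:m=N} and~\ref{GeneralThRelax}, and your first paragraph chains these two results precisely as intended. The additional sketch you give of the proof of Theorem~\ref{ThExample:m=N} is accurate and faithful to the paper's four-step argument, but it is not needed for the corollary itself.
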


\begin{proof}[D\'emonstration du th\'eor\`eme \ref{ThExample:m=N}]
On proc\`ede en quatre \'etapes.
\subsubsection*{\'Etape 1}
{\em Montrons que $\Z W$ est finie}. Il est clair que $\Z W(F)<+\infty$ pour tout $F\in\MM^{3\times 3}_*$  avec $\MM^{3\times 3}_*:=\{F\in\MM^{3\times 3}:\det F\not=0\}$. Donc, tout revient \`a  prouver que  $\Z W(F)<+\infty$ pour tout $F\in\MM^{3\times 3}\setminus\MM^{3\times 3}_*$. Fixons $F=(F_1\mid F_2\mid F_3)\in\MM^{3\times 3}\setminus\MM^{3\times 3}_*$ o\`u $F_1,F_2,F_3\in\RR^3$ sont les colonnes de la matrice $F$. Alors, $\rank(F)\in\{0,1,2\}$ (o\`u $\rank(F)$ d\'esigne le rang de la matrice $F$).

\subsubsection*{\'Etape 1.1}
{\em Montrons que si $\rank(F)=2$ alors $\Z W(F)<+\infty$}. Sans perdre de g\'en\'eralit\'e on peut supposer qu'il existe $\lambda,\mu\in\RR$ tels que $F_3=\lambda F_1+\mu F_2$.  \'Etant donn\'e $s\in\RR^*$, consid\'erons $D\subset\RR^3$ (voir la figure \ref{DessinDuDomaineD}) donn\'e par 
\begin{equation}\label{BoundedOpenSet}
D:={\rm int}(\cup_{i=1}^8\Delta_{i}^s)
\end{equation}
(o\`u ${\rm int}(E)$ d\'esigne l'int\'erieur de l'ensemble $E$) avec :
\begin{itemize}
\item[]$\Delta_{1}^s:=\{(x_1,x_2,x_3)\in \RR^3:x_1\geq 0,\;x_2\geq 0,\;x_3\geq 0 \hbox{ et }x_1+x_2+sx_3\leq 1\}$ ;
\item[]$\Delta_{2}^s:=\{(x_1,x_2,x_3)\in \RR^3:x_1\leq 0,\;x_2\geq 0,\;x_3\geq 0 \hbox{ et }-x_1+x_2+sx_3\leq 1\}$ ;
\item[]$\Delta_{3}^s:=\{(x_1,x_2,x_3)\in \RR^3:x_1\leq 0,\;x_2\leq 0,\;x_3\geq 0 \hbox{ et }-x_1-x_2+sx_3\leq 1\}$ ;
\item[]$\Delta_{4}^s:=\{(x_1,x_2,x_3)\in \RR^3:x_1\geq 0,\;x_2\leq 0,\;x_3\geq 0 \hbox{ et }x_1-x_2+sx_3\leq 1\}$ ;
\item[]$\Delta_{5}^s:=\{(x_1,x_2,x_3)\in\RR^3:x_1\geq 0,\;x_2\geq 0,\;x_3\leq 0 \hbox{ et }x_1+x_2-sx_3\leq 1\}$ ;
\item[]$\Delta_{6}^s:=\{(x_1,x_2,x_3)\in \RR^3:x_1\leq 0,\;x_2\geq 0,\;x_3\leq 0 \hbox{ et }-x_1+x_2-sx_3\leq 1\}$ ;
\item[]$\Delta_{7}^s:=\{(x_1,x_2,x_3)\in \RR^3:x_1\leq 0,\;x_2\leq 0,\;x_3\leq 0 \hbox{ et }-x_1-x_2-sx_3\leq 1\}$ ;
\item[]$\Delta_{8}^s:=\{(x_1,x_2,x_3)\in \RR^3:x_1\geq 0,\;x_2\leq 0,\;x_3\leq 0 \hbox{ et }x_1-x_2-sx_3\leq 1\}$.
 \end{itemize}
(Il est clair que $D$ est ouvert, born\'e et $|\partial D|=0$.) 

\begin{figure}[H]
\begin{center}
\scalebox{0.8} 
{
\begin{pspicture}(0,-3.55)(11.42,3.57)
\psline[linewidth=0.04cm,arrowsize=0.05291667cm 2.0,arrowlength=1.4,arrowinset=0.4]{->}(5.98,0.07)(3.58,-2.33)
\psline[linewidth=0.04cm,arrowsize=0.05291667cm 2.0,arrowlength=1.4,arrowinset=0.4]{->}(5.98,0.07)(5.98,3.27)
\psline[linewidth=0.04cm,linestyle=dotted,dotsep=0.16cm](5.98,0.07)(5.98,-3.53)
\psline[linewidth=0.04cm,linestyle=dotted,dotsep=0.16cm](5.98,0.07)(2.38,0.07)
\psline[linewidth=0.04cm](4.38,-1.53)(8.38,0.07)
\psline[linewidth=0.04cm](5.98,2.47)(8.38,0.07)
\psline[linewidth=0.04cm](5.98,2.47)(4.38,-1.53)
\psline[linewidth=0.04cm](4.38,-1.53)(3.58,0.07)
\psline[linewidth=0.04cm](3.58,0.07)(5.98,2.47)
\psline[linewidth=0.04cm](5.98,-2.33)(8.38,0.07)
\psline[linewidth=0.04cm](4.38,-1.53)(5.98,-2.33)
\psline[linewidth=0.04cm,linestyle=dotted,dotsep=0.16cm](5.98,0.07)(8.38,2.47)
\psline[linewidth=0.04cm,linestyle=dashed,dash=0.16cm 0.16cm](3.58,0.07)(7.58,1.67)
\psline[linewidth=0.04cm,linestyle=dashed,dash=0.16cm 0.16cm](3.58,0.07)(5.98,-2.33)
\psline[linewidth=0.04cm,linestyle=dashed,dash=0.16cm 0.16cm](7.58,1.67)(5.98,-2.33)
\psline[linewidth=0.04cm](5.98,2.47)(7.58,1.67)
\psline[linewidth=0.04cm](7.58,1.67)(8.38,0.07)
\psline[linewidth=0.04cm,arrowsize=0.05291667cm 2.0,arrowlength=1.4,arrowinset=0.4]{->}(5.98,0.07)(9.18,0.07)
\psline[linewidth=0.02,arrowsize=0.05291667cm 2.0,arrowlength=1.4,arrowinset=0.4]{<-}(6.38,-0.33)(7.58,-1.53)(8.78,-1.53)(8.78,-1.53)
\psline[linewidth=0.02,arrowsize=0.05291667cm 2.0,arrowlength=1.4,arrowinset=0.4]{<-}(7.58,1.27)(8.78,1.27)(9.18,1.27)
\psline[linewidth=0.02,arrowsize=0.05291667cm 2.0,arrowlength=1.4,arrowinset=0.4]{<-}(5.18,1.27)(3.58,1.27)(3.58,1.27)
\psline[linewidth=0.02,arrowsize=0.05291667cm 2.0,arrowlength=1.4,arrowinset=0.4]{<-}(6.38,1.67)(6.38,2.87)(7.18,2.87)(7.18,2.87)
\usefont{T1}{ptm}{m}{n}
\rput(3.55,-2.625){$x_1$}
\usefont{T1}{ptm}{m}{n}
\rput(9.5,0.16){$x_2$}
\usefont{T1}{ptm}{m}{n}
\rput(6.01,3.45){$x_3$}
\psline[linewidth=0.02,arrowsize=0.05291667cm 2.0,arrowlength=1.4,arrowinset=0.4]{<-}(4.38,-0.33)(2.78,-0.73)(1.98,-0.73)(1.98,-0.73)
\psline[linewidth=0.02,arrowsize=0.05291667cm 2.0,arrowlength=1.4,arrowinset=0.4]{<-}(4.78,-1.53)(4.78,-3.13)(5.18,-3.13)
\psline[linewidth=0.02,arrowsize=0.05291667cm 2.0,arrowlength=1.4,arrowinset=0.4]{<-}(6.38,-1.53)(7.18,-2.33)(7.98,-2.33)(7.98,-2.33)
\psline[linewidth=0.02,arrowsize=0.05291667cm 2.0,arrowlength=1.4,arrowinset=0.4]{<-}(7.58,-0.33)(8.78,-0.73)(9.58,-0.73)(9.58,-0.73)
\usefont{T1}{ptm}{m}{n}
\rput(7.47,2.975){$\Delta_3^s$}
\usefont{T1}{ptm}{m}{n}
\rput(9.47,1.375){$\Delta_2^s$}
\usefont{T1}{ptm}{m}{n}
\rput(9.87,-0.625){$\Delta_6^s$}
\usefont{T1}{ptm}{m}{n}
\rput(9.07,-1.425){$\Delta_1^s$}
\usefont{T1}{ptm}{m}{n}
\rput(8.27,-2.225){$\Delta_5^s$}
\usefont{T1}{ptm}{m}{n}
\rput(5.47,-3.025){$\Delta_8^s$}
\usefont{T1}{ptm}{m}{n}
\rput(3.285,1.375){$\Delta_4^s$}
\usefont{T1}{ptm}{m}{n}
\rput(1.67,-0.625){$\Delta_7^s$}
\usefont{T1}{ptm}{m}{n}
\rput(5.79,2.7){$\frac{1}{s}$}
\usefont{T1}{ptm}{m}{n}
\rput(6.28,-2.6){$-\frac{1}{s}$}
\end{pspicture} 
}
\end{center}
\caption{Les ensembles $\Delta^s_1$, $\Delta^s_2$, $\Delta^s_3$, $\Delta^s_4$, $\Delta^s_5$, $\Delta^s_6$, $\Delta^s_7$, $\Delta^s_8$. }
\label{DessinDuDomaineD}
\end{figure}
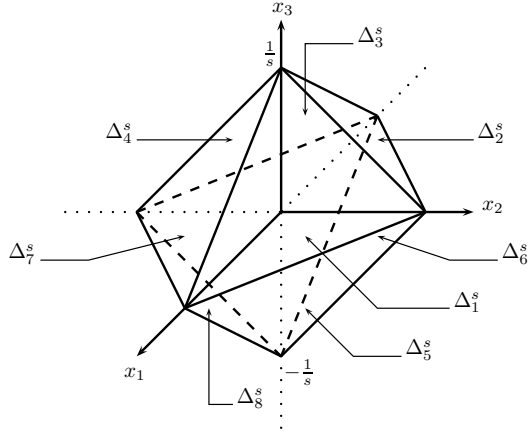

D\'efinissons $\psi_{s}\in\Aff_0(D;\RR)$ par 
\begin{equation}\label{AffineFunction}
\psi_{s}(x_1,x_2,x_3):=\left\{
\begin{array}{ll}
-(x_1-1)-x_2-sx_3&\hbox{si }(x_1,x_2,x_3)\in\Delta_{1}^s\\
x_1-(x_2-1)-sx_3&\hbox{si }(x_1,x_2,x_3)\in\Delta_{2}^s\\
(x_1+1)+x_2-sx_3&\hbox{si }(x_1,x_2,x_3)\in\Delta_{3}^s\\
-x_1+(x_2+1)-sx_3&\hbox{si }(x_1,x_2,x_3)\in\Delta_{4}^s\\
-(x_1-1)-x_2+sx_3&\hbox{si }(x_1,x_2,x_3)\in\Delta_{5}^s\\
x_1-(x_2-1)+sx_3&\hbox{si }(x_1,x_2,x_3)\in\Delta_{6}^s\\
(x_1+1)+x_2+sx_3&\hbox{si }(x_1,x_2,x_3)\in\Delta_{7}^s\\
-x_1+(x_2+1)+sx_3&\hbox{si }(x_1,x_2,x_3)\in\Delta_{8}^s.
\end{array}
\right.
\end{equation}
(Noter que $\psi_{s}$ est l'unique fonction continue, affine sur chaque $\Delta_i^s$ et nulle sur $\partial D$ telle que $\psi_{s}(0)=1$.) Fixons $s\in\RR^*\setminus\{\lambda-\mu,-(\lambda-\mu),\lambda+\mu,-(\lambda+\mu)\}$ et consid\'erons $\varphi\in\Aff_0(D;\RR^3)$ donn\'ee par 
$$
\varphi(x):=\psi_s(x)\nu\hbox{ avec }\nu:={F_1\land F_2\over| F_1\land F_2|^2}.
$$
Alors 
$$
 F+\nabla\varphi(x)=\left\{
\begin{array}{ll}
( F_1-\nu\mid F_2-\nu\mid  F_3-s\nu)&\hbox{si }x\in{\rm int}(\Delta_{1}^s)\\
( F_1+\nu\mid  F_2-\nu\mid  F_3-s\nu)&\hbox{si }x\in {\rm int}(\Delta_{2}^s)\\
( F_1+\nu\mid  F_2+\nu\mid  F_3-s\nu)&\hbox{si }x\in {\rm int}(\Delta_{3}^s)\\
( F_1-\nu\mid  F_2+\nu\mid  F_3-s\nu)&\hbox{si }x\in {\rm int}(\Delta_{4}^s)\\
( F_1-\nu\mid F_2-\nu\mid  F_3+s\nu)&\hbox{si }x\in {\rm int}(\Delta_{5}^s)\\
( F_1+\nu\mid  F_2-\nu\mid  F_3+s\nu)&\hbox{si }x\in {\rm int}(\Delta_{6}^s)\\
( F_1+\nu\mid  F_2+\nu\mid  F_3+s\nu)&\hbox{si }x\in {\rm int}(\Delta_{7}^s)\\
( F_1-\nu\mid  F_2+\nu\mid  F_3+s\nu)&\hbox{si }x\in {\rm int}(\Delta_{8}^s).
\end{array}
\right.
$$
Comme $\det F=0$, $ F_1\land F_3=\mu( F_1\land F_2)$ et $ F_2\land F_3=\lambda( F_2\land F_1)$  on a 
$$
|\det( F+\nabla\varphi(x))|=\left\{
\begin{array}{ll}
|s-(\lambda+\mu)|&\hbox{si }x\in{\rm int}(\Delta_{1}^s)\cup{\rm int}(\Delta_{7}^s)\\
|s+(\lambda-\mu)|&\hbox{si }x\in{\rm int}(\Delta_{2}^s)\cup{\rm int}(\Delta_{8}^s)\\
|s+(\lambda+\mu)|&\hbox{si }x\in{\rm int}(\Delta_{3}^s)\cup{\rm int}(\Delta_{5}^s)\\
|s-(\lambda-\mu)|&\hbox{si }x\in{\rm int}(\Delta_{4}^s)\cup{\rm int}(\Delta_{6}^s).\\
\end{array}
\right.
$$
(Pour les calculs  noter que $\det(F_1+\alpha\nu\mid F_2+\beta\nu\mid F_3+\gamma\nu)=\gamma-\alpha\lambda-\beta\mu$ avec $\alpha\in\{-1,1\}$, $\beta\in\{-1,1\}$ et $\gamma\in\{-s,s\}$.) Il suit que pour  presque tout $x\in D$,
$
|\det( F+\nabla\varphi(x)|\geq\min\{|s-(\lambda+\mu)|,|s+(\lambda-\mu)|,|s+(\lambda+\mu)|,|s-(\lambda-\mu)|\}=:\delta
$
($\delta>0$). Utilisant la proposition \ref{FonsecaProperties}(a) et {(\ref{Hyp1Example:m=N})} on d\'eduit qu'il existe $c_\delta>0$ tel que  
$$
\Z W( F)\leq{1\over|D|}\int_DW( F+\nabla\varphi(x))\leq c_\delta+{c_\delta\over|D|}\| F+\nabla \varphi\|^p_{L^p(D;\RR^3)},
$$
d'o\`u $\Z W( F)<+\infty$.

\subsubsection*{\'Etape 1.2}
{\em Montrons que si $\rank(F)=1$ alors $\Z W(F)<+\infty$}. Sans perdre de g\'en\'eralit\'e on peut supposer qu'il existe $\lambda,\mu\in\RR$ tels que $ F_2=\lambda F_1$ and $ F_3=\mu F_1$. Consid\'erons $D\subset\RR^3$ donn\'e par (\ref{BoundedOpenSet}) avec $s\in\RR^*\setminus\{-\mu,\mu\}$ et d\'efinissons $\varphi\in\Aff_0(D;\RR^3)$ par $\varphi(x):=\psi_s(x)\nu$ avec $\nu\in\RR^3\setminus\{0\}$ tel que $\langle\nu, F_1\rangle=0$, o\`u $\psi_{s}$ est d\'efinie par (\ref{AffineFunction}). Utilisant la  proposition \ref{FonsecaProperties}(d) on a 
\begin{eqnarray*}
\Z W( F)&\leq& {1\over 8}\left(\Z W( F_1-\nu\mid F_2-\nu\mid  F_3-s\nu)+\Z W( F_1+\nu\mid F_2-\nu\mid  F_3-s\nu)\right.\\
&&+\ \Z W( F_1+\nu\mid F_2+\nu\mid  F_3-s\nu)+\Z W( F_1-\nu\mid F_2+\nu\mid  F_3-s\nu)\\
&&+\ \Z W( F_1-\nu\mid F_2-\nu\mid  F_3+s\nu)+\Z W( F_1+\nu\mid F_2-\nu\mid  F_3+s\nu)\\
&&+\ \Z W( F_1+\nu\mid F_2+\nu\mid  F_3+s\nu)+\Z W( F_1-\nu\mid F_2+\nu\mid  F_3+s\nu)).
\end{eqnarray*}
Notant que $s\in\RR^*\setminus\{-\mu,\mu\}$ il est facile de voir que :
\begin{itemize}
\item[]$\rank( F_1-\nu\mid F_2-\nu\mid  F_3-s\nu)=2$ ;
\item[]$\rank( F_1+\nu\mid F_2-\nu\mid  F_3-s\nu)=2$ ;
\item[]$\rank( F_1+\nu\mid F_2+\nu\mid  F_3-s\nu)=2$ ;
\item[]$\rank( F_1-\nu\mid F_2+\nu\mid  F_3-s\nu)=2$ ;
\item[]$\rank( F_1-\nu\mid F_2-\nu\mid  F_3+s\nu)=2$ ;
\item[]$\rank( F_1+\nu\mid F_2-\nu\mid  F_3+s\nu)=2$ ;
\item[]$\rank( F_1+\nu\mid F_2+\nu\mid  F_3+s\nu)=2$ ;
\item[]$\rank( F_1-\nu\mid F_2+\nu\mid  F_3+s\nu)=2$,
\end{itemize}
et utilisant  l'\'etape 1.1 on d\'eduit que $\Z W( F)<+\infty$.

\subsubsection*{\'Etape 1.3}
{\em Montrons que $\Z W(0)<+\infty$}.  Utilisant la    proposition \ref{FonsecaProperties}(d) avec $D\subset\RR^3$ donn\'e par (\ref{BoundedOpenSet}) avec $s\in\RR^*$ et $\varphi\in\Aff_0(D;\RR^3)$ d\'efinie par $\varphi(x):=\psi_s(x)\nu$ avec $\nu\in\RR^3\setminus\{0\}$, o\`u $\psi_{s}$ est d\'efinie par (\ref{AffineFunction}), on a 
\begin{eqnarray*}
\Z W( 0)&\leq& {1\over 8}\left(\Z W(-\nu\mid -\nu\mid -s\nu)+\Z W(\nu\mid -\nu\mid  -s\nu)+\Z W(\nu\mid \nu\mid  -s\nu)\right.\\
&&+\ \Z W(-\nu\mid \nu\mid -s\nu)+\ \Z W(-\nu\mid -\nu\mid  s\nu)+\Z W(\nu\mid -\nu\mid  s\nu)\\
&&+\ \Z W(\nu\mid \nu\mid  s\nu)+\Z W(-\nu\mid \nu\mid  s\nu)).
\end{eqnarray*}
De plus, $\rank(-\nu\mid -\nu\mid -s\nu)=\rank(\nu\mid -\nu\mid  -s\nu)=\rank(\nu\mid \nu\mid  -s\nu)=\rank(-\nu\mid \nu\mid -s\nu)=\rank(-\nu\mid -\nu\mid  s\nu)=\rank(\nu\mid -\nu\mid  s\nu)=\rank(\nu\mid \nu\mid  s\nu)=\rank(-\nu\mid \nu\mid  s\nu)=1$, d'o\`u $\Z W(0)<+\infty$ par l'\'etape 1.2.

\subsubsection*{\'Etape 2}
{\em Montrons que $\Z W$ satisfait (\ref{Cond4:m=N})}. Par l'\'etape 1 et la proposition \ref{FonsecaProperties}(b) on d\'eduit que  $\Z W$ est continue, donc $\Z W$ satisfait (\ref{Cond1:m=N}). De plus, il est clair que $\Z W$ v\'erifie (\ref{Cond2:m=N}). Consid\'erons donc $ F\in\MM^{3\times 3}$ tel que $ F$ est diagonale, $|\det F|\leq 1$ et $| F|^2\geq 3$, i.e., $ F=( F_{ij})$ avec $ F_{ij}=0$ si $i\not=j$, $| F_{11} F_{22} F_{33}|\leq 1$ et $| F_{11}|^2+| F_{22}|^2+| F_{33}|^2\geq 3$. Alors, l'une des six possibilit\'es suivantes a lieu :
\begin{eqnarray}
&&| F_{11}|\leq 1,\ | F_{22}|\geq 1 \hbox{ et } | F_{33}|\geq 1\ ;\label{Possible1Det}\\
&&| F_{22}|\leq 1,\  | F_{33}|\geq 1 \hbox{ et }| F_{11}|\geq 1\ ;\label{Possible2Det}\\
&&| F_{33}|\leq 1,\  | F_{11}|\geq 1 \hbox{ et }| F_{22}|\geq 1\ ;\label{Possible3Det}\\
&&| F_{11}|\geq 1,\  | F_{22}|\leq 1 \hbox{ et } | F_{33}|\leq 1\ ;\label{Possible4Det}\\
&&| F_{22}|\geq 1,\  | F_{33}|\leq 1 \hbox{ et } | F_{11}|\leq 1\ ;\label{Possible5Det}\\
&&| F_{33}|\geq 1,\  | F_{11}|\leq 1 \hbox{ et } | F_{22}|\leq 1. \label{Possible6Det}
\end{eqnarray}

\subsubsection*{\'Etape 2.1}
{\em Montrons qu'il existe $c_2>0$ tel que si $ F$ est diagonale avec $|\det F|\leq 1$ et satisfait (\ref{Possible1Det}),  (\ref{Possible2Det}) ou  (\ref{Possible3Det}) alors $\Z W( F)\leq c_2(1+| F|^p)$.} Soient $D\subset\RR^3$ donn\'e par (\ref{BoundedOpenSet}) avec $s=1$ et $\varphi\in\Aff_0(D;\RR^3)$ d\'efinie par $\varphi(x):=\psi_1(x)\nu$, o\`u 
$$
\nu=\left\{
\begin{array}{ll}
(2,0,0)&\hbox{si on a (\ref{Possible1Det})}\\
(0,2,0)&\hbox{si on a (\ref{Possible2Det})}\\
(0,0,2)&\hbox{si on a (\ref{Possible3Det})}
\end{array}
\right.
$$
et $\psi_{1}$ est d\'efinie par (\ref{AffineFunction}) avec $s=1$. Il est facile de voir que pour presque tout $x\in D$,
$$
|\det( F+\nabla\varphi(x))|\geq\left\{
\begin{array}{ll}
|2| F_{22}|| F_{33}|-|\det F||&\hbox{si on a (\ref{Possible1Det})}\\
|2| F_{11}|| F_{33}|-|\det F||&\hbox{si on a (\ref{Possible2Det})}\\
|2| F_{11}|| F_{22}|-|\det F||&\hbox{si on a (\ref{Possible3Det}),}
\end{array}
\right.
$$
donc  $|\det( F+\nabla\varphi(x))|\geq 1$ p.p. dans $D$. Utilisant la proposition \ref{FonsecaProperties}(a) et (\ref{Hyp1Example:m=N}) et notant que  $|\nabla\varphi(x)|=2\sqrt{3}$ pour presque tout $x\in D$, on d\'eduit que 
$
\Z W( F)\leq c_2(1+| F|^p)
$
avec $c_2:=c_12^p(1+(2\sqrt{3})^p)$.

\subsubsection*{\'Etape 2.2}
{\em Montrons qu'il existe $c_3>0$ tel que si $ F$ est diagonale et satisfait (\ref{Possible4Det}), (\ref{Possible5Det}) ou (\ref{Possible6Det}), alors $\Z W( F)\leq c_3(1+| F|^p)$.} Soit $G\in\MM^{3\times 3}$ la matrice diagonale  donn\'ee par :
$$
G_{11}:=\left\{\begin{array}{ll} F_{22}+{\rm sign( F_{22})}&\hbox{si on a (\ref{Possible4Det})}\\0&\hbox{si on a (\ref{Possible5Det}) ou (\ref{Possible6Det}) ;}\end{array}\right.
$$
$$
G_{22}:=\left\{\begin{array}{ll} F_{33}+{\rm sign( F_{33})}&\hbox{si on a   (\ref{Possible5Det})}\\0&\hbox{si on a (\ref{Possible4Det}) ou (\ref{Possible6Det}) ;}\end{array}\right.
$$
$$
G_{33}:=\left\{\begin{array}{ll} F_{11}+{\rm sign( F_{11})}&\hbox{si on a (\ref{Possible6Det})}\\0&\hbox{si on a (\ref{Possible4Det}) ou (\ref{Possible5Det}),}\end{array}\right.
$$
o\`u ${\rm sign}(r)=1$ si $r\geq 0$ et ${\rm sign}(r)=-1$ si $r<0$. (Il est clair que $\rank(G)=1$.) Alors, $ F^+:= F+G$ et $ F^{-}:= F-G$ sont des matrices diagonales telles que : 
\begin{eqnarray}
&&\left\{\begin{array}{l}
| F^+_{11}|\geq 1, | F^+_{22}|\geq 1, | F^+_{33}|\leq 1\\| F^-_{11}|\geq 1, | F^-_{22}|\geq 1, | F^-_{33}|\leq 1 
\end{array}\right.
\hbox{ si on a (\ref{Possible4Det}) ;}\label{DiaG1}\\
&&
\left\{\begin{array}{l}
| F^+_{11}|\leq 1, | F^+_{22}|\geq 1, | F^+_{33}|\geq 1\\
| F^-_{11}|\leq 1, | F^-_{22}|\geq 1, | F^-_{33}|\geq 1 
\end{array}\right.
\hbox{ si on a (\ref{Possible5Det}) ;}\label{DiaG2}\\
&&
\left\{\begin{array}{l}
| F^+_{11}|\geq 1, | F^+_{22}|\leq 1, | F^+_{33}|\geq 1\\
| F^-_{11}|\geq 1, | F^-_{22}|\leq 1, | F^-_{33}|\geq 1
\end{array}\right.
\hbox{ si on a (\ref{Possible6Det}).}\label{DiaG3}
\end{eqnarray}
Comme $\Z W$ est finie (voir l'\'etape 1), de la proposition \ref{FonsecaProperties}(b) on d\'eduit que $\Z W$ est rang-1 convexe, d'o\`u 
\begin{equation}\label{Rank-OneConvexity}
\Z W( F)\leq {1\over 2}\left(\Z W( F^+)+\Z W( F^-)\right).
\end{equation}
Prenant en compte (\ref{DiaG1}), (\ref{DiaG2}) et (\ref{DiaG3}) et utilisant l'\'etape 2.1 on voit que $\Z W( F^+)\leq c_2(1+| F^+|^p)$ si $|\det  F^+|\leq 1$ (resp. $\Z W( F^-)\leq c_2(1+| F^+|^p)$ si $|\det  F^-|\leq 1$). D'autre part,  par (\ref{Hyp1Example:m=N}) on a  $\Z W( F^+)\leq c_1(1+| F^+|^p)$ si $|\det  F^+|\geq 1$ (resp. $\Z W( F^-)\leq c_1(1+| F^+|^p)$ si $|\det  F^-|\geq 1$). Notant que $| F^+|^p\leq 2^{2p}(1+| F|^p)$ (resp. $| F^-|^p\leq 2^{2p}(1+| F|^p)$) et utilisant (\ref{Rank-OneConvexity}) on d\'eduit que $\Z W( F)\leq c_3(1+| F|^p)$ avec $c_3:=2^{2p}\max\{c_1,c_2\}$.

\subsubsection*{\'Etape 2.3}
Des \'etapes 2.1 et 2.2, il suit que pour chaque $ F\in\MM^{3\times 3}$, si $ F$ est diagonale avec $| F|^2\geq 3$ et $|\det F|\leq 1$ alors $\Z W( F)\leq c_4(1+| F|^p)$ avec $c_4:=\max\{c_2,c_3\}$. D'o\`u (\ref{Cond4:m=N}) a lieu avec $c:=\max\{c_0,c_4\}$.

\subsubsection*{\'Etape 3}
{\em Montrons que $\Z W(P F Q)=\Z W( F)$ pour tout $ F\in\MM^{3\times 3}$ et tout $P, Q\in\SO(3)$.} Il suffit de prouver que 
\begin{eqnarray}
&&\Z W(P F Q)\leq \Z W( F)\hbox{ pour tout } F\in\MM^{3\times 3}\hbox{ et tout }P,Q\in\SO(3).\label{StEp3DeT}
\end{eqnarray}
En effet, \'etant donn\'es $ F\in\MM^{3\times 3}$ et $P,Q\in\SO(3)$, on a  $ F=P^{\rm T}(P F Q)Q^{\rm T}$ (avec $M^{\rm T}$ d\'esignant la matrice transpos\'ee de $M$) et utilisant (\ref{StEp3DeT}) on obtient $\Z W( F)\leq\Z W(P F Q)$. De plus, (\ref{StEp3DeT}) est \'equivalent aux deux assertions suivantes : 
\begin{eqnarray}
&&\Z W(P F)\leq \Z W( F)\hbox{ pour tout } F\in\MM^{3\times 3}\hbox{ et tout }P\in\SO(3)\ ;\label{StEp3DeT1}\\
&&\Z W( F Q)\leq \Z W( F)\hbox{ pour tout } F\in\MM^{3\times 3}\hbox{ et tout }Q\in\SO(3).\label{StEp3DeT2}
\end{eqnarray} 
En effet, (\ref{StEp3DeT1}) (resp. (\ref{StEp3DeT2})) suit de  (\ref{StEp3DeT}) en prenant $Q=I_3$ (resp. $P=I_3$)  (o\`u $I_3$ est la matrice identit\'e dans $\MM^{3\times 3}$). D'autre part, \'etant donn\'es $ F\in\MM^{3\times 3}$ et  $P,Q\in\SO(3)$,  par (\ref{StEp3DeT1}) (resp. (\ref{StEp3DeT2})) on a  $\Z W(P( F Q))\leq\Z W( F Q)$ (resp. $\Z W( F Q)\leq\Z W( F)$), d'o\`u $\Z W(P F Q)\leq\Z W( F)$. Tout revient donc \`a prouver (\ref{StEp3DeT1})  et (\ref{StEp3DeT2}). 

\subsubsection*{\'Etape 3.1}{\em Montrons (\ref{StEp3DeT1})}.
Soient $F\in\MM^{3\times 3}$ et $P\in\SO(3)$. Consid\'erons $\varphi\in \Aff_0(Y;\RR^3)$ et posons $\phi:=P\varphi$. Alors $\phi\in \Aff_0(Y;\RR^3)$ et $\nabla\phi=P\nabla\varphi$, donc 
\begin{eqnarray*}
\Z W(P F)\leq\int_Y W(P F+\nabla\phi(x))dx&=&\int_Y W\big(P( F+P^{\rm T}\nabla\phi(x))\big)dx\\
&=&\int_YW\big(P( F+\nabla\varphi(x))\big)dx.
\end{eqnarray*}
De (\ref{Hyp2Example:m=N}) on d\'eduit que 
$
\Z W(P F)\leq\int_YW( F+\nabla\varphi(x))dx
$
pour tout $\varphi\in\Aff_0(Y;\RR^3)$, d'o\`u
$
\Z W(P F)\le \Z W( F).
$

\subsubsection*{\'Etape 3.2}{\em Montrons (\ref{StEp3DeT2})}.
Soient $F\in\MM^{3\times 3}$ et $Q\in\SO(3)$. Par le th\'eor\`eme de recouvrement de Vitali, il existe une famille au plus d\'enombrable $\{a_i+\eps_i Q^{\rm T}Y\}_{i\in I}$ de sous-ensembles disjoints de $Y$, o\`u $a_i\in\RR^3$ et $0<\eps_i<1$, telle que $|Y\setminus\cup_{i\in I}(a_i+\eps_i Q^{\rm T}Y)|=0$ (donc $\sum_{i\in I}\eps_i^3=1$). Consid\'erons $\varphi\in \Aff_0(Y;\RR^3)$ et d\'efinissons $\phi\in\Aff_0(Y;\RR^3)$ par 
$$
\phi(x)=\eps_i\varphi\left(Q{{x-a_i}\over{\eps_i}}\right)\hbox{ si } x\in a_i+\eps_iQ^{\rm T}Y.
$$
Alors 
\begin{eqnarray*}
\Z W( F Q)\leq\int_Y W( F Q+\nabla\phi(x))dx&=&\sum_{i\in I}\eps_i^3\int_{Y} W\left( F Q+\nabla\varphi\left(x\right)Q\right)dx\\
&=&\int_YW\big(( F+\nabla\varphi(x))Q\big)dx.
\end{eqnarray*}
De (\ref{Hyp2Example:m=N}) on d\'eduit que 
$
\Z W( F Q)\leq\int_YW( F+\nabla\varphi(x))dx
$
pour tout $\varphi\in\Aff_0(Y;\RR^3)$, d'o\`u $\Z W( F Q)\le \Z W( F)$.

\subsubsection*{\'Etape 4}{\em Montrons que $\Z W$ satisfait (\ref{Cond-CroissZW})}.
Soient $ F\in\MM^{3\times 3}_*$ (avec $\MM^{3\times 3}_*:=\{ F\in\MM^{3\times 3}:\det F\not=0\}$) et $P\in\SO(3)$ donn\'e par $P:= F M^{-1}$ avec 
$$
M:=\left\{
\begin{array}{ll}
\sqrt{ F^{\rm T} F}&\hbox{si }\det F>0\\
-\sqrt{ F^{\rm T} F}&\hbox{si }\det F<0.
\end{array}
\right.
$$
Comme $M$ est sym\'etrique, il existe $Q\in\SO(3)$ et  $G\in\MM^{3\times 3}$ tels que $G$ est diagonale et $M=Q^{\rm T}G Q$, donc
$
 F=PQ^{\rm T}G Q.
$
D'o\`u $\Z W( F)=\Z W(G)$ par l'\'etape 3. Notant que $|G|=| F|$ et utilisant (\ref{Cond4:m=N}) (voir l'\'etape 2) on d\'eduit que $\Z W( F)\leq c(1+| F|^p)$ pour tout $ F\in\MM^{3\times 3}_*$. Comme $\Z W$ est continue (par l'\'etape 1 et la proposition \ref{FonsecaProperties}(c)) et $\MM^{3\times 3}_*$ est dense dans $\MM^{3\times 3}$, il suit que $\Z W$ v\'erifie (\ref{Cond-CroissZW}). 
\end{proof}

On peut g\'en\'eraliser le corollaire \ref{CorollaryExample:m=N} comme suit. (En fait, on peut supprimer la condition (\ref{Hyp2Example:m=N}) dans le corollaire \ref{CorollaryExample:m=N}.)

\begin{corollary}\label{CorollaryExample:m=NBis}
Si $W:\MM^{N\times N}\to[0,+\infty]$ satisfait (\ref{Hyp1Example:m=N}) alors {(\ref{FunctRelax})} a lieu avec $\overline{W}=\mathcal{Q}W=\Z W$.  
\end{corollary}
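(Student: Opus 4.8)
L'id\'ee est d'appliquer le th\'eor\`eme~\ref{GeneralThRelax} : il suffit de prouver que $\Z W$ satisfait la condition de croissance (\ref{Cond-CroissZW}). On commence par observer que l'\'etape~1 de la d\'emonstration du th\'eor\`eme~\ref{ThExample:m=N} (la r\'eduction sur les matrices de rang $\le N-1$ au moyen de la proposition~\ref{FonsecaProperties}(a) et (d)) n'utilise que l'hypoth\`ese (\ref{Hyp1Example:m=N}) ; elle montre donc, sous cette seule hypoth\`ese, que $\Z W$ est finie, et par la proposition~\ref{FonsecaProperties}(b) et (c) que $\Z W$ est rang-1 convexe et continue. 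Ce qu'il faut remplacer est l'\'etape~3, qui utilisait l'invariance (\ref{Hyp2Example:m=N}) pour ramener, par d\'ecomposition polaire, une matrice quelconque \`a une matrice diagonale (l'\'etape~2, qui \'etablit la croissance d'ordre $p$ de $\Z W$ sur les matrices diagonales, devient alors inutile). Une telle r\'eduction ne peut pas s'obtenir \`a partir de la seule rang-1 convexit\'e, puisque l'ensemble des matrices diagonales est lamination-convexe : deux matrices diagonales dont la diff\'erence est de rang $\le 1$ ne diff\`erent que par un coefficient diagonal, donc le segment qui les joint reste diagonal. On ram\`ene plut\^ot, via la rang-1 convexit\'e de $\Z W$, une matrice quelconque \`a des matrices de d\'eterminant $\ge 1$ en valeur absolue, contr\^ol\'ees directement par (\ref{Hyp1Example:m=N}).

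Plus pr\'ecis\'ement, on d\'emontre par r\'ecurrence sur $k\in\{0,1,\dots,N\}$ l'\'enonc\'e $(H_k)$ : il existe une constante $c_1>0$ (celle fournie par (\ref{Hyp1Example:m=N}) avec $\delta=1$) telle que $\Z W(F)\le c_1 3^{kp}(1+|F|^p)$ pour toute $F\in\MM^{N\times N}$ avec $|F|\ge 1$ ayant au plus $k$ valeurs singuli\`eres strictement inf\'erieures \`a $1$. Pour $k=0$, toutes les valeurs singuli\`eres de $F$ sont $\ge 1$, donc $|\det F|\ge 1$ et $\Z W(F)\le W(F)\le c_1(1+|F|^p)$ par (\ref{Hyp1Example:m=N}). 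Pour passer de $(H_k)$ \`a $(H_{k+1})$ : si $F$ a au plus $k$ valeurs singuli\`eres $<1$, on conclut par $(H_k)$ ; sinon $F$ admet une valeur singuli\`ere $\sigma<1$ de vecteurs singuliers gauche $p$ et droit $q$, et on pose $F^{\pm}:=F\pm 2\,pq^{\rm T}$. En substituant le facteur $\sigma$ par $\sigma\pm 2$ dans la d\'ecomposition en valeurs singuli\`eres de $F$, on voit que $F^{+}$ et $F^{-}$ ont chacune une valeur singuli\`ere $<1$ de moins que $F$, que $1\le|F^{\pm}|\le 3|F|$ (pour la norme euclidienne $|F|^2=\sum_i\sigma_i^2$, on a $|F^{\pm}|^2=|F|^2+4\pm4\sigma$), et que $F=\tfrac12(F^{+}+F^{-})$ avec $F^{+}-F^{-}=4\,pq^{\rm T}$ de rang $1$. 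La rang-1 convexit\'e de $\Z W$ donne $\Z W(F)\le\tfrac12\big(\Z W(F^{+})+\Z W(F^{-})\big)$, et $(H_k)$ appliqu\'e \`a $F^{\pm}$, joint \`a $1+(3|F|)^p\le 3^p(1+|F|^p)$, entra\^\i ne $\Z W(F)\le c_1 3^{(k+1)p}(1+|F|^p)$. Ainsi $(H_N)$ donne $\Z W(F)\le c_1 3^{Np}(1+|F|^p)$ pour toute $F$ avec $|F|\ge 1$ ; comme $\Z W$ est continue, elle est born\'ee par une constante $M_0$ sur $\{|F|\le 1\}$, donc $\Z W$ v\'erifie (\ref{Cond-CroissZW}) avec $c:=\max\{c_1 3^{Np},M_0\}$, et le th\'eor\`eme~\ref{GeneralThRelax} permet de conclure.

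Le principal obstacle — et le seul ingr\'edient nouveau par rapport au th\'eor\`eme~\ref{ThExample:m=N} — est de reconna\^\i tre que l'invariance (\ref{Hyp2Example:m=N}) ne servait qu'\`a rendre les matrices diagonales, que cette r\'eduction est impossible \`a partir de la seule rang-1 convexit\'e, et qu'on peut la contourner en faisant monter les valeurs singuli\`eres au-dessus de $1$ une \`a une par des perturbations de rang un \`a norme contr\^ol\'ee ; tout le reste n'est qu'un d\'ecompte des normes le long d'une r\'ecurrence finie et binaire.
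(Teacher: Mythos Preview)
Your proof is correct. The paper takes a slightly different but closely related route: after noting, as you do, that Step~1 of the proof of Theorem~\ref{ThExample:m=N} (which uses only~(\ref{Hyp1Example:m=N})) gives $\Z W$ finite, hence rank-1 convex and therefore $\Z W\le\R W$, it invokes Ben~Belgacem's Theorem~\ref{RWaCCbyBB} (since $|\det F|=v(F)$) to conclude that $\R W$, and hence $\Z W$, satisfies~(\ref{Cond-CroissZW}), and then applies Theorem~\ref{GeneralThRelax}. Your singular-value induction---push one small singular value $\sigma$ above~$1$ via the rank-one perturbations $F\pm 2\,pq^{\rm T}$ and average by rank-1 convexity---is precisely the mechanism behind the proof of Theorem~\ref{RWaCCbyBB} (cf.\ \S3.3.7), just applied directly to $\Z W$ instead of to $\R W$. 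In effect you have inlined Ben~Belgacem's argument, which makes your proof self-contained and avoids the detour through $\R W$; the paper's version, on the other hand, isolates the growth of $\R W$ as a quotable fact and reuses it elsewhere. Your side remark that the diagonal matrices form a lamination-convex set (hence cannot be reached from general matrices by rank-1 convexity alone) is a nice explanation of why Steps~2--4 of the proof of Theorem~\ref{ThExample:m=N} do not survive the removal of~(\ref{Hyp2Example:m=N}).
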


\begin{proof}
De l'\'etape 1 de la d\'emonstration du th\'eor\`eme \ref{ThExample:m=N} on voit $\Z W$ est finie, donc $\Z W$ est rang-1 convexe par la proposition \ref{FonsecaProperties}(b), d'o\`u $\Z W\leq \R W$. D'autre part, du th\'eor\`eme \ref{RWaCCbyBB} on d\'eduit que $\R W$ satisfait (\ref{RWGrowth-Cond}). En effet, comme $W$ satisfait (\ref{Hyp1Example:m=N}) et $|\det F|=v(F)$ pour tout $F\in\MM^{N\times N}$, o\`u $v(F)$ est le produit des valeurs singuli\`eres de $F$, alors $W$ v\'erifie (\ref{RWaCCbyBBCondition}). Par cons\'equent, $\Z W$ satisfait (\ref{Cond-CroissZW}), et le r\'esultat suit par le th\'eor\`eme \ref{GeneralThRelax}. 
\end{proof}

\subsection{Contrainte d\'eterminant strictement positif}

\'Etant donn\'e $n\geq 1$ quelconque, on d\'efinit $W_n:\MM^{N\times N}\to[0,+\infty]$ par 
$$
W_n(F):=\left\{
\begin{array}{ll}
W(F)&\hbox{si }\det F>0\\
h_n(F)&\hbox{si }\det F\leq 0,
\end{array}
\right.
$$
o\`u $h_n:\MM^{N\times N}\to[0,+\infty]$ est Borel mesurable, v\'erifie $h_n(F)\leq c_n(1+|F|^p)$ pour tout $F\in\MM^{N\times N}$ avec $c_n>0$ et, pour chaque $F\in\MM^{N\times N}$ tel que ${\rm det}F\leq 0$, $\{h_n(F)\}_{n\geq 1}$ est croissante et $\sup_{n\geq 1}h_n(F)=+\infty$. La suite $\{W_n\}_{n\geq 1}$ est donc croissante et,  lorsque $W$ satisfait (\ref{Cst1}), $\sup_{n\geq 1}W_n=W$. Pour plus de d\'etails sur la question suivante voir Ball \cite[\S 2.2 p. 7]{ball02}.
\begin{question}\label{CoNjEcTuRe}
Peut-on choisir $\{h_n\}_{n\geq 1}$ de mani\`ere \`a avoir le r\'esultat qui suit. Si $W$ satisfait {(\ref{Cst1})} et la condition suivante 
\begin{eqnarray}\label{Hyp3Example:m=N}
&&\hbox{pour tout }\delta>0\hbox{ il existe }c_\delta>0 \hbox{ tel que pour tout }F\in\MM^{N\times N}\hbox{,}\\
&&\hbox{si }\det F\geq\delta\hbox{ alors }W(F)\leq c_\delta(1+|F|^p)\hbox{,}\nonumber
\end{eqnarray}
alors {(\ref{FunctRelax})} a lieu avec $\overline{W}=\sup_{n\geq 1}\mathcal{Q}W_n=\sup_{n\geq 1}\Z W_n$ ?
\end{question}
\begin{remark}
Si $W$ satisfait {\rm(\ref{Cst1})} et  (\ref{Hyp3Example:m=N}) on a  
\begin{eqnarray}\label{EquCOnJEcTUrE}
&&\displaystyle\int_\Omega\sup_{n\geq 1}\mathcal{Q}W_n(\nabla\phi(x))dx=\int_\Omega\sup_{n\geq 1}\Z W_n(\nabla\phi(x))dx\leq\overline{I}(\phi)\\
&& \hbox{pour tout }\phi\in W^{1,p}(\Omega;\RR^N).\nonumber
\end{eqnarray}
En effet, d\'efinissons $I_n:W^{1,p}(\Omega;\RR^N)\to[0,+\infty]$ par 
$$
I_n(\phi):=\int_\Omega W_n(\nabla\phi(x))dx
$$ 
et notons $\overline{I}_n$ la r\'egularis\'ee sci de $I_n$ par rapport \`a la topologie faible de $W^{1,p}(\Omega;\RR^N)$. Comme $W$ v\'erifie {\rm(\ref{Cst1})}, il est clair que $\sup_{n\geq 1}\overline{I}_n\leq\overline{I}$. De plus, chaque $W_n$ v\'erifie {\rm(\ref{Hyp1Example:m=N})}  puisque $W$ satisfait {\rm(\ref{Hyp3Example:m=N})}, donc, par le corollaire {\rm\ref{CorollaryExample:m=NBis}}, $\overline{I}_n(\phi)=\int_\Omega \Z W_n(\nabla \phi(x))dx=\int_\Omega\mathcal{Q}W_n(\nabla\phi(x))dx$ pour tout $n\geq 1$ et tout $\phi\in W^{1,p}(\Omega;\RR^N)$, et (\ref{EquCOnJEcTUrE}) suit. 
\end{remark}

\section{Passage 3D-2D avec contraintes de type d\'eterminant}

\subsection{G\'en\'eralit\'es}

Dans \cite{oah-jpm06,oah-jpm08b} nous avons montr\'e les deux th\'eor\`emes suivants (voir aussi le th\'eor\`eme \ref{3D-2DTheoremBis}). 
\begin{theorem}\label{ThMemb1}
Si $W$ est continue et satisfait {(\ref{Cst1prime})}, {(\ref{Hyp1Example:m=N})} et 
\begin{eqnarray}\label{AdditionalCond}
W(\xi\mid\zeta)=W(\xi\mid-\zeta)\hbox{ pour tout }\xi\in\MM^{3\times 2}\hbox{ et tout }\zeta\in\RR^3\hbox{,}
\end{eqnarray}
alors $I_{\rm mem}=\Gamma(\pi)\hbox{\rm -}\lim_{\eps\to 0}I_\eps$ avec $W_{\rm mem}=\mathcal{Q} W_0=\Z W_0$.
\end{theorem}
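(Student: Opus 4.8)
The plan is to derive Theorem~\ref{ThMemb1} from the relaxation results of \S2 and from the Le Dret-Raoult scheme (Theorem~\ref{LeDretRaoult}), the only genuinely new ingredient being a construction of recovery sequences compatible with the constraint $\det=0$. \emph{First, the regularity of $W_0$.} For $\xi=(\xi_1\mid\xi_2)\in\MM^{3\times 2}$ with $|\xi_1\land\xi_2|\ge 1$, set $\nu:={\xi_1\land\xi_2\over|\xi_1\land\xi_2|}$; then $\det(\xi\mid\nu)=\langle\xi_1\land\xi_2,\nu\rangle=|\xi_1\land\xi_2|\ge 1$ and $|(\xi\mid\nu)|^2=|\xi|^2+1$, so \eqref{Hyp1Example:m=N} with $\delta=1$ gives $W_0(\xi)\le W(\xi\mid\nu)\le c_1\big(1+(1+|\xi|^2)^{p/2}\big)\le\beta(1+|\xi|^p)$, i.e.\ $W_0$ satisfies \eqref{HypExample:m=N+1}. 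By Theorem~\ref{ThExample:m=N+1} (in the case $m=3$, $N=2$) the function $\Z W_0$ satisfies \eqref{Cond-CroissZW}; hence it is finite, continuous and quasiconvex, $\mathcal{Q}W_0=\Z W_0$ (Theorem~\ref{QuasiconvexificationFormulaTheorem}-bis, or Theorem~\ref{GeneralThRelax}), and, applying Theorem~\ref{GeneralThRelax} with $W_0$ in the role of $W$, the functional $\psi\mapsto\int_\Sigma W_0(\nabla\psi)dx$ relaxes on $W^{1,p}(\Sigma;\RR^3)$, for the weak topology, to $I_{\rm mem}(\psi)=\int_\Sigma\mathcal{Q}W_0(\nabla\psi)dx$. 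In particular $W_{\rm mem}:=\mathcal{Q}W_0=\Z W_0$ as claimed, and $I_{\rm mem}$ is finite and strongly continuous on $W^{1,p}(\Sigma;\RR^3)$; the hypothesis \eqref{AdditionalCond} is not used at this stage.

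\emph{The $\liminf$ inequality.} Let $\pi_\eps(\phi_\eps)\wto\psi$ with $\liminf_{\eps\to0}I_\eps(\phi_\eps)<+\infty$. Since $W(\eta\mid\zeta)\ge W_0(\eta)\ge\mathcal{Q}W_0(\eta)$ for all $\eta\in\MM^{3\times 2}$, $\zeta\in\RR^3$, one has $I_\eps(\phi_\eps)\ge{1\over\eps}\int_{\Sigma_\eps}\mathcal{Q}W_0(\nabla_x\phi_\eps)\,dx\,dx_3$; running the Le Dret-Raoult lower-bound argument (averaging over the thin variable and using the quasiconvexity of $\mathcal{Q}W_0$) gives $\int_\Sigma\mathcal{Q}W_0(\nabla\pi_\eps(\phi_\eps))dx\le I_\eps(\phi_\eps)$, and since $\mathcal{Q}W_0$ is continuous with $p$-growth the functional $\int_\Sigma\mathcal{Q}W_0(\nabla\cdot)dx$ is weakly lower semicontinuous on $W^{1,p}(\Sigma;\RR^3)$ (Theorem~\ref{AcerbiFuscoTheorem}); hence $I_{\rm mem}(\psi)\le\liminf_{\eps\to0}I_\eps(\phi_\eps)$. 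In contrast with the classical setting, only the $p$-growth of $W_{\rm mem}$ is used here, and \eqref{AdditionalCond} again plays no role.

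\emph{Recovery sequences.} The upper $\Gamma(\pi)$-limit $\psi\mapsto\inf\{\limsup_{\eps\to0}I_\eps(\phi_\eps):\pi_\eps(\phi_\eps)\wto\psi\}$ is sequentially weakly, hence strongly, lower semicontinuous on $W^{1,p}(\Sigma;\RR^3)$; as $I_{\rm mem}$ is strongly continuous there and $\Aff(\Sigma;\RR^3)$ is strongly dense in $W^{1,p}(\Sigma;\RR^3)$ (Remark~\ref{Ekeland-TemamDensityRemark}), it suffices to produce the required family $\{\phi_\eps\}$ for $\psi\in\Aff(\Sigma;\RR^3)$, and then, by localizing on the affinity pieces and gluing with boundary-value matching, for $\psi$ affine, $\nabla\psi\equiv\bar\xi$, with $\limsup_{\eps\to0}I_\eps(\phi_\eps)\le|\Sigma|\,\mathcal{Q}W_0(\bar\xi)=I_{\rm mem}(\psi)$. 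Given $\delta>0$, Proposition~\ref{FonsecaProperties}(a) together with the first step provides $\theta\in\Aff_0(Y;\RR^3)$ ($Y=\,]0,1[^2$) with $\int_Y W_0(\bar\xi+\nabla\theta)dy\le\Z W_0(\bar\xi)+\delta=\mathcal{Q}W_0(\bar\xi)+\delta<+\infty$; finiteness forces $\bar\xi+\nabla\theta(y)$ to have rank $2$ for a.e.\ $y$, so on each affinity cell $Y_j$ of $\theta$ (with $\nabla\theta\equiv G_j$) one may choose $\zeta_j\in\RR^3$ with $W(\bar\xi+G_j\mid\zeta_j)\le W_0(\bar\xi+G_j)+\delta$ and, by \eqref{AdditionalCond} — which allows replacing $\zeta_j$ by $-\zeta_j$ at no cost — with $\det(\bar\xi+G_j\mid\zeta_j)>0$. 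The deformation $\phi_\eps$ is then built by the classical 3D-2D recovery construction: on the part of $\Sigma\times\,]-{\eps\over2},{\eps\over2}[$ lying over a cell $\eps(Y_j+k)$, away from an $\eps$-thin neighbourhood of the interfaces of the periodized partition, $\phi_\eps$ essentially realizes the gradient $(\bar\xi+G_j\mid\zeta_j)$ (for instance $\phi_\eps(x,x_3)\approx\bar\xi x+\eps\,\theta^{\rm per}(x/\eps)+x_3\zeta_j$, with $\theta^{\rm per}$ the continuous $Y$-periodic extension of $\theta$, whose $x_3$-average vanishes and whose $x$-gradient converges weakly to $0$, so that $\pi_\eps(\phi_\eps)\wto\psi$), and in the transition neighbourhoods it interpolates between consecutive configurations $(\bar\xi+G_j\mid\zeta_j)$ while remaining in $\{\det\ne 0\}$ with a controlled energy. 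By the near-optimality of $\theta$ and of the $\zeta_j$ and the continuity of $W$, the bulk contributes $|\Sigma|\int_Y W(\bar\xi+G_{j(y)}\mid\zeta_{j(y)})dy+o(1)\le|\Sigma|\big(\mathcal{Q}W_0(\bar\xi)+2\delta\big)+o(1)$, while the transition neighbourhoods contribute $o(1)$; letting $\eps\to0$ and then $\delta\to0$ yields the claim.

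\emph{Main obstacle.} The first two steps are essentially mechanical once Theorem~\ref{ThExample:m=N+1} is available; all the difficulty is concentrated in the transition and matching layers of the recovery construction. In the classical case (Theorem~\ref{LeDretRaoult}) these layers are handled by standard cut-off arguments paying a vanishing energy price for an \emph{unconstrained} correction; here that is forbidden, because $W=+\infty$ on $\{\det=0\}$ (hypothesis \eqref{Cst1prime}), so a naive interpolation of the piecewise-constant normal field would cross the zero-determinant set and make the energy infinite. One must instead pass from one ``optimal $3$D configuration'' $(\bar\xi+G_j\mid\zeta_j)$ to the next by a fold-type deformation keeping the determinant bounded away from $0$ and the gradient under control, which in practice forces the interpolation to use the thickness direction. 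That this is possible rests precisely on the parity hypothesis \eqref{AdditionalCond}: having oriented all the $\zeta_j$ so that $\det(\bar\xi+G_j\mid\zeta_j)>0$ — a choice made at no energy cost because $W(\cdot\mid\zeta)=W(\cdot\mid-\zeta)$ — and having arranged, by a generic perturbation of $\theta$ (whose gradient jumps are rank-one), that the in-plane interpolants stay of rank $2$, consecutive configurations can be joined through matrices of positive determinant and bounded norm, on which $W$ is bounded by continuity. The same mechanism underlies the easier boundary-value matching used to pass from affine to piecewise-affine targets, and is where Ben Belgacem's approximation ideas enter.
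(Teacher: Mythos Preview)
Your first step (growth of $W_0$, hence $\mathcal{Q}W_0=\Z W_0$ and representation of $I_{\rm mem}$) is correct and matches the paper. The liminf inequality is also essentially right, although the pointwise inequality $\int_\Sigma\mathcal{Q}W_0(\nabla\pi_\eps(\phi_\eps))\,dx\le I_\eps(\phi_\eps)$ you write is not what Le~Dret--Raoult prove (averaging in $x_3$ would require \emph{convexity}, not quasiconvexity); what one actually gets is the inequality only after passing to the liminf, via slicing and Fatou as in the paper's Proposition~\ref{det>0Prop1}.

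The genuine gap is in the recovery step, and your ``Main obstacle'' paragraph correctly locates it but does not close it. You propose a periodized construction $\phi_\eps(x,x_3)\approx\bar\xi x+\eps\,\theta^{\rm per}(x/\eps)+x_3\zeta_{j(x/\eps)}$ with a piecewise-constant normal field, and then assert that the transition layers between $(\bar\xi+G_j\mid\zeta_j)$ and $(\bar\xi+G_{j'}\mid\zeta_{j'})$ can be built ``through matrices of positive determinant and bounded norm''. This is the whole problem, and nothing in your argument guarantees it: even with all $\det(\bar\xi+G_j\mid\zeta_j)>0$, a straight-line (or fold-type) interpolation between two such configurations can cross $\{\det=0\}$, and the energy there is $+\infty$ by \eqref{Cst1prime}. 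The ``generic perturbation of $\theta$'' and the appeal to rank-one jumps do not produce a path in $\{\det\ne0\}$; you would need an explicit construction, and none is given. The same difficulty reappears in your ``boundary-value matching'' when reducing from piecewise-affine to affine $\psi$.

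The paper avoids this entirely by \emph{decoupling} the two limits. It first proves only $\limsup_{\eps\to0}\mathcal{I}_\eps(\psi)\le\int_\Sigma W_0(\nabla\psi)\,dx$ for $\psi\in\Aff_*(\Sigma;\RR^3)$ (Proposition~\ref{det>0Prop2}), using the very simple ansatz $\phi(x,x_3)=\psi(x)+x_3\varphi_k(x)$ with a \emph{continuous} normal field $\varphi_k$ --- no periodization, no transition layers. The existence of a near-optimal continuous $\varphi$ is obtained from an interchange-of-inf-and-integral theorem (Theorem~\ref{InfIntTH2}) applied to a convex-valued multifunction $\Gamma^j_\psi$ that fixes, on each affinity piece, one sign of $\det(\nabla\psi\mid\cdot)$; the role of \eqref{AdditionalCond} is precisely to make this sign choice cost-free (equation~\eqref{equality1}), not to force all signs to be positive. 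The relaxation from $\int_\Sigma W_0(\nabla\psi)$ down to $\int_\Sigma\Z W_0(\nabla\psi)$ is then handled \emph{separately}, at the two-dimensional level, by Theorem~\ref{GeneralThRelax}-bis ($\overline{\mathcal{I}}=\overline{\mathcal{I}}_{\rm aff}$). Your attempt merges these two steps into a single periodic recovery, which is exactly what creates the unresolved interface problem.
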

 (Les fonctionnelles $I_\eps:W^{1,p}(\Sigma_\eps;\RR^3)\to[0,+\infty]$ et $I_{\rm mem}:W^{1,p}(\Sigma;\RR^3)\to[0,+\infty]$, avec $\Sigma_\eps:=\Sigma\times]-{\eps\over 2},{\eps\over 2}[\subset\RR^3$ o\`u  $\Sigma\subset\RR^2$ est un ouvert born\'e  et $\eps>0$, sont respectivement d\'efinies en (\ref{TriDimEnerMem}) et (\ref{BiDimEnerMem}). La fonction $W_0:\MM^{3\times 2}\to[0,+\infty]$ est donn\'ee par (\ref{W_0}).)
\begin{theorem}\label{ThMemb2}
Si $W$ est continue et v\'erifie les conditions {(\ref{Cst1})} et {(\ref{Hyp3Example:m=N})} alors $I_{\rm mem}=\Gamma(\pi)\hbox{\rm -}\lim_{\eps\to 0}I_\eps$ avec $W_{\rm mem}=\mathcal{Q} W_0=\Z W_0$.
\end{theorem}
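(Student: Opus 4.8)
\emph{Plan de d\'emonstration.} Le plan est de v\'erifier s\'epar\'ement les deux assertions de la d\'efinition \ref{DefGammaPiConvergence} avec $W_{\rm mem}=\mathcal{Q}W_0=\Z W_0$, apr\`es avoir d'abord \'etabli les propri\'et\'es structurelles de $W_0$ qui rendent l'\'enonc\'e pertinent. Pour $\xi=(\xi_1\mid\xi_2)\in\MM^{3\times 2}$ de rang $2$, le choix $\zeta:=(\xi_1\land\xi_2)/|\xi_1\land\xi_2|^2$ donne $\det(\xi\mid\zeta)=1$ et $|\zeta|=1/|\xi_1\land\xi_2|$, de sorte que {(\ref{Hyp3Example:m=N})} (avec $\delta=1$) fournit $W_0(\xi)\leq W(\xi\mid\zeta)\leq c_1(1+(|\xi|+1/|\xi_1\land\xi_2|)^p)$ ; ainsi $W_0$ satisfait une condition du type {(\ref{HypExample:m=N+1})}. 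Par le th\'eor\`eme \ref{ThExample:m=N+1} appliqu\'e \`a $W_0$, la fonction $\Z W_0$ satisfait alors la condition de croissance d'ordre $p$ {(\ref{Cond-CroissZW})} ; \'etant finie elle est continue par la proposition \ref{FonsecaProperties}(c) et $\mathcal{Q}W_0=\Z W_0$ par le th\'eor\`eme \ref{QuasiconvexificationFormulaTheorem}-bis. En particulier $W_{\rm mem}$ est continue, quasiconvexe et \`a croissance d'ordre $p$, donc, par le th\'eor\`eme \ref{AcerbiFuscoTheorem}, la fonctionnelle $\psi\mapsto I_{\rm mem}(\psi)=\int_\Sigma W_{\rm mem}(\nabla\psi(x))dx$ est finie et sfsci dans $W^{1,p}(\Sigma;\RR^3)$ ; de plus, par le th\'eor\`eme \ref{GeneralThRelax}-bis appliqu\'e \`a $W_0$, $I_{\rm mem}$ est exactement la r\'egularis\'ee sci pour la convergence faible de $W^{1,p}(\Sigma;\RR^3)$ de $\psi\mapsto\int_\Sigma W_0(\nabla\psi(x))dx$, r\'egularis\'ee atteinte le long de $\Aff(\Sigma;\RR^3)$.

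Pour la borne inf\'erieure, soit $\{\phi_\eps\}_\eps$ tel que $\pi_\eps(\phi_\eps)\wto\psi$ dans $W^{1,p}(\Sigma;\RR^3)$. On peut supposer $\liminf_{\eps\to 0}I_\eps(\phi_\eps)<+\infty$, extraire une sous-suite r\'ealisant cette limite inf\'erieure, et (sinon $I_\eps(\phi_\eps)=+\infty$) supposer $\det\nabla\phi_\eps>0$ p.p. dans $\Sigma_\eps$. En reparam\'etrant sur la tranche fixe $\Sigma\times]-{1\over 2},{1\over 2}[$, avec $\tilde\phi_\eps(x,x_3):=\phi_\eps(x,\eps x_3)$, on a l'in\'egalit\'e ponctuelle $W(\nabla'\tilde\phi_\eps\mid{1\over\eps}\partial_3\tilde\phi_\eps)\geq W_0(\nabla'\tilde\phi_\eps)\geq W_{\rm mem}(\nabla'\tilde\phi_\eps)$. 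En int\'egrant sur la tranche et en invoquant la semi-continuit\'e inf\'erieure faible de la fonctionnelle de membrane ainsi que l'argument de ``tranchage'' et de moyennisation en $x_3$ de Le Dret--Raoult (voir la preuve du th\'eor\`eme \ref{LeDretRaoult} dans \cite{ledret-raoult93,ledret-raoult95}, ou \cite{oah-jpm06}), on obtient $I_{\rm mem}(\psi)\leq\liminf_{\eps\to 0}I_\eps(\phi_\eps)$, soit la premi\`ere assertion de la d\'efinition \ref{DefGammaPiConvergence}. Cette partie n'utilise que $W\geq W_{\rm mem}$ (sur les deux premi\`eres colonnes) et les propri\'et\'es d\'ej\`a \'etablies de $W_{\rm mem}$ : c'est la direction ``facile''.

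La construction d'une suite de r\'ecup\'eration est le c\oe ur de la preuve ; elle se fait par trois r\'eductions successives. \textbf{\'Etape A (immersions lisses).} Si $v\in W^{1,p}(\Sigma;\RR^3)$ est une immersion lisse, i.e. $\partial_1 v\land\partial_2 v$ ne s'annule pas sur $\overline\Sigma$, alors pour tout $\zeta\in L^p(\Sigma;\RR^3)$ avec $\det(\nabla v(x)\mid\zeta(x))>0$ p.p. les applications $\phi_\eps(x,x_3):=v(x)+x_3\zeta(x)$ v\'erifient $\pi_\eps(\phi_\eps)=v$ et $\nabla\phi_\eps=(\nabla v+x_3\nabla\zeta\mid\zeta)\to(\nabla v\mid\zeta)$ uniform\'ement lorsque $\eps\to 0$, de sorte que $\det\nabla\phi_\eps>0$ pour $\eps$ petit et $I_\eps(\phi_\eps)\to\int_\Sigma W(\nabla v(x)\mid\zeta(x))dx$ ; en optimisant le choix de $\zeta$ gr\^ace au th\'eor\`eme de permutation de l'infimum et de l'int\'egrale (\S 5), qui permet de remplacer l'infimum ponctuel d\'efinissant $W_0$ par une s\'election mesurable tout en pr\'eservant la positivit\'e du d\'eterminant, on en d\'eduit $\Gamma(\pi)\hbox{\rm -}\limsup_{\eps\to 0}I_\eps(v)\leq\int_\Sigma W_0(\nabla v(x))dx$. \textbf{\'Etape B (applications affines par morceaux \`a gradient de rang $2$).} Une telle application s'approche fortement dans $W^{1,p}$ par des immersions lisses $v_k$ avec $\int_\Sigma W_0(\nabla v_k)dx\to\int_\Sigma W_0(\nabla\cdot)dx$ (continuit\'e de $W_0$ sur les matrices de rang $2$ et \'equi-int\'egrabilit\'e fournie par la croissance du premier paragraphe), gr\^ace aux th\'eor\`emes d'approximation des fonctions de Sobolev par des immersions lisses (\S 4) ; la semi-continuit\'e inf\'erieure de $\Gamma(\pi)\hbox{\rm -}\limsup I_\eps$ propage alors l'in\'egalit\'e de l'\'etape A. \textbf{\'Etape C ($\psi$ quelconque).} Par la proposition \ref{FonsecaProperties}(a) et la construction par recouvrement de Vitali sous-jacente au lemme \ref{FundamentalLemmma1} (combin\'ee au th\'eor\`eme \ref{GeneralThRelax}-bis appliqu\'e \`a $W_0$), on construit pour $\psi\in W^{1,p}(\Sigma;\RR^3)$ quelconque des applications affines par morceaux $\bar\psi_n\wto\psi$ dont les gradients sont de rang $2$ sur chaque cellule (sinon $\int W_0(\nabla\bar\psi_n)=+\infty$) et telles que $\int_\Sigma W_0(\nabla\bar\psi_n)dx\to\int_\Sigma\Z W_0(\nabla\psi)dx=I_{\rm mem}(\psi)$ ; en appliquant l'\'etape B \`a chaque $\bar\psi_n$ et en utilisant \`a nouveau la semi-continuit\'e inf\'erieure de $\Gamma(\pi)\hbox{\rm -}\limsup I_\eps$, on obtient $\Gamma(\pi)\hbox{\rm -}\limsup_{\eps\to 0}I_\eps(\psi)\leq I_{\rm mem}(\psi)$, soit la seconde assertion de la d\'efinition \ref{DefGammaPiConvergence}.

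Le principal obstacle se situe dans les \'etapes A et B r\'eunies : dans une construction na\"\i ve, la direction de rel\`evement serait choisie cellule par cellule comme un quasi-minimiseur $\zeta_i$ de $\zeta\mapsto W(F_i\mid\zeta)$, et le recollement des $x_3\zeta_i$ \`a travers les interfaces des cellules (et pr\`es de $\partial\Sigma$) impose de fines couches limites dans lesquelles $\nabla\phi_\eps$ n'est pas contr\^ol\'e ; comme $W$ vaut $+\infty$ d\`es que $\det\leq 0$, on ne peut se permettre que le d\'eterminant de $\nabla\phi_\eps$ devienne $\leq 0$ en un point de ces couches. C'est pr\'ecis\'ement ce que les deux th\'eor\`emes d'approximation (remplacement de $\psi$ par une immersion lisse \emph{globale}, de sorte que $\partial_1\cdot\land\partial_2\cdot$ reste born\'e inf\'erieurement et que les interfaces disparaissent) et le th\'eor\`eme de permutation (production d'un unique champ $\zeta$, quasi-optimal, mesurable --- continu apr\`es r\'egularisation --- avec $\det(\nabla v\mid\zeta)>0$ partout) sont con\c cus pour contourner ; c'est la raison pour laquelle ce cas est r\'eellement plus difficile que les th\'eor\`emes \ref{LeDretRaoult} et \ref{ThMemb1} et que la situation de {(\ref{Cst1prime})}, o\`u aucune contrainte de signe sur le d\'eterminant n'a \`a \^etre pr\'eserv\'ee. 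Une fois ces outils en place, on ach\`eve la preuve en faisant tendre vers $0$ successivement la largeur des couches limites, le param\`etre de quasi-optimalit\'e, puis les param\`etres d'approximation, et en diagonalisant.
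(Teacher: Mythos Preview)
Votre sch\'ema g\'en\'eral est correct (borne inf\'erieure par tranchage, \'etape A pour les immersions lisses via le th\'eor\`eme de permutation, r\'eductions successives exploitant la semi-continuit\'e de la $\Gamma$-limite sup\'erieure) et co\"\i ncide pour l'essentiel avec celui de l'article. Mais l'articulation entre vos \'etapes B et C contient une vraie lacune.

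Votre \'etape C produit, via la construction de Vitali du lemme \ref{FundamentalLemmma1} et le th\'eor\`eme \ref{GeneralThRelax}-bis, des applications affines par morceaux \`a gradients de rang $2$ ; mais ces applications ne sont \emph{pas} localement injectives en g\'en\'eral (un ``pli'' le long d'une ar\^ete, deux triangles adjacents envoy\'es dans un m\^eme plan avec renversement d'orientation, est parfaitement possible). Or le seul r\'esultat d'approximation de la section 4 qui fournisse des immersions lisses $v_k$ avec une borne inf\'erieure \emph{uniforme} $|\partial_1 v_k\land\partial_2 v_k|\geq\delta>0$ --- borne indispensable \`a l'\'equi-int\'egrabilit\'e de $W_0(\nabla v_k)$, puisque votre estimation du premier paragraphe donne $W_0(\xi)\leq c\big(1+(|\xi|+|\xi_1\land\xi_2|^{-1})^p\big)$ et \emph{non} une croissance d'ordre $p$ --- est le th\'eor\`eme \ref{BBBapproxTheo} de Ben Belgacem--Bennequin, qui exige $\psi\in\AffETli(\Sigma;\RR^3)$, donc l'injectivit\'e locale. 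Le th\'eor\`eme de Gromov--Eliashberg (th\'eor\`eme \ref{GEapproxTheo}-bis), lui, ne donne que la convergence $W^{1,p}$ sans aucun contr\^ole de $|\partial_1\land\partial_2|$. Votre \'etape B, telle qu'\'ecrite, ne peut donc pas s'appliquer aux sorties de votre \'etape C.

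L'article comble ce foss\'e par un d\'etour via l'enveloppe rang-1 convexe $\mathcal{R}W_0$ (th\'eor\`eme \ref{TheoEgalit�(37)}, lemmes \ref{BenBelgacemLemma} et \ref{BenBelgacemLemma2}). D'une part le lemme \ref{BenBelgacemLemma} de Ben Belgacem montre, par une construction laminante d\'elicate (\S 3.3.4--3.3.5) qui pr\'eserve l'injectivit\'e locale gr\^ace au choix $b_\ell\notin{\rm Im}\,\xi_j$, que $\overline{\mathcal{I}}_{\AffETli}(\psi)\leq\int_\Sigma\mathcal{R}W_0(\nabla\psi)\,dx$ pour tout $\psi\in\AffETli(\Sigma;\RR^3)$. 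D'autre part $\mathcal{R}W_0$, elle, est \`a croissance d'ordre $p$ (lemme \ref{BenBelgacemLemma2}, via le th\'eor\`eme \ref{RWaCCbyBB}), de sorte que la densit\'e de $\AffETli$ dans $W^{1,p}$ (th\'eor\`eme \ref{GEdensityTheo}) suffit \`a \'etendre l'in\'egalit\'e \`a tout $\psi$ et \`a obtenir $\overline{\mathcal{I}}_{\rm diff_*}\leq\overline{\mathcal{I}}_{\AffETli}=\overline{\mathcal{RI}}_{\AffETli}=\overline{\mathcal{RI}}\leq\overline{\mathcal{I}}$. C'est ce double passage --- par la classe $\AffETli$ plut\^ot que $\Aff_*$, et par $\mathcal{R}W_0$ plut\^ot que $W_0$ --- qui est l'id\'ee manquante dans votre plan.
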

 Ces th\'eor\`emes \'etendent le th\'eor\`eme \ref{LeDretRaoult}. Ils sont plus r\'ealistes du point de vue de l'hyper\'elasticit\'e : le th\'eor\`eme \ref{ThMemb1} (resp. \ref{ThMemb2}) est compatible avec (\ref{Cst1prime}) (resp. (\ref{Cst1})) et (\ref{Cst2}).

Dans ce qui suit nous donnons les preuves des th\'eor\`emes \ref{ThMemb1} et \ref{ThMemb2} (voir \S 3.1.1-3.1.3 pour la structure g\'en\'erale et \S 3.2-3.3 pour les d\'etails). Les d\'emonstrations des th\'eor\`emes {\rm\ref{ThMemb1}} et {\rm\ref{ThMemb2}} ont la m\^eme structure que l'on peut d\'ecomposer en trois points (voir \S 3.1.1-3.1.3).

\subsubsection{Pr\'eliminaires}

Pour chaque $\eps>0$, on consid\`ere  $\mathcal{I}_\eps:W^{1,p}(\Sigma;\RR^3)\to[0,+\infty]$ d\'efinie par 
$$
\mathcal{I}_\eps(\psi):=\inf\big\{I_\eps(\phi):\pi_\eps(\phi)=\psi\big\}.
$$
\begin{definition}\label{GammaCvgDef}
On dit que $\mathcal{I}_\eps$ $\Gamma$-converge vers $I_{\rm mem}$ et on \'ecrit
$$
I_{\rm mem}=\Gamma\hbox{-}\lim_{\eps\to 0}\mathcal{I}_\eps
$$
si 
$$\Gamma\hbox{-}\liminf_{\eps\to 0}\mathcal{I}_\eps=\Gamma\hbox{-}\limsup_{\eps\to 0}\mathcal{I}_\eps=I_{\rm mem}
$$  
avec $\displaystyle\Gamma\hbox{-}\liminf_{\eps\to 0}\mathcal{I}_\eps,\Gamma\hbox{-}\limsup_{\eps\to 0}\mathcal{I}_\eps:W^{1,p}(\Sigma;\RR^3)\to[0,+\infty]
$ d\'efinies par :
\begin{itemize}
\item[]  $\displaystyle\left(\Gamma\hbox{-}\liminf_{\eps\to 0}\mathcal{I}_\eps\right)(\psi):=\inf\left\{\liminf_{\eps\to 0}\mathcal{I}_\eps(\psi_\eps):W^{1,p}(\Sigma;\RR^3)\ni\psi_\eps\wto\psi\right\};$
\item[] $\displaystyle\left(\Gamma\hbox{-}\limsup_{\eps\to 0}\mathcal{I}_\eps\right)(\psi):=\inf\left\{\limsup_{\eps\to 0}\mathcal{I}_\eps(\psi_\eps):W^{1,p}(\Sigma;\RR^3)\ni\psi_\eps\wto\psi\right\}$.
\end{itemize}
\end{definition}
 La d\'efinition \ref{GammaCvgDef} est \'equivalente \`a la d\'efinition \ref{DefGammaPiConvergence} o\`u ``$\psi_\eps\wto\psi$" est substitu\'e par ``$\pi_\eps(\phi_\eps)\wto\psi$". Il est alors facile de voir que 
\begin{eqnarray}\label{EquivGammaPi-Gamma}
{I}_{\rm mem}=\Gamma(\pi)\hbox{\rm -}\lim_{\eps\to 0}{I}_\eps\hbox{ si et seulement si }{I}_{\rm mem}=\Gamma\hbox{\rm -}\lim_{\eps\to 0}\mathcal{I}_\eps.
\end{eqnarray}
Tout revient donc \`a ``calculer" la $\Gamma$-limite de $\mathcal{I}_\eps$ lorsque $\eps\to 0$. Pour cela, on \'etudie d'abord la $\Gamma$-convergence de $\mathcal{I}_\eps$ lorsque $\eps\to 0$ (voir \S 3.1.2) et on \'etablit ensuite une repr\'esentation int\'egrale de la $\Gamma$-limite (voir \S 3.1.3). (Cette proc\'edure g\'en\'erale a \'et\'e initi\'ee dans \cite{oah05}, voir aussi \cite{anzellotti-baldo-percivale94}). 

Le lemme suivant sera utilis\'e dans la suite (rappelons que $W$ est suppos\'ee coercive). 

\begin{lemma}\label{Lemme1Det>0}
Si $W$ est continue et v\'erifie {(\ref{Cst1prime})} et {(\ref{Hyp1Example:m=N})} ou {(\ref{Cst1})} et {(\ref{Hyp3Example:m=N})} alors $W_0$ satisfait les propri\'et\'es suivantes {:}
\begin{eqnarray}
&&W_0 \hbox{ est continue {(}et coercive{)}}\hbox{ ;}\nonumber\\
&&W_0(\xi_1\mid\xi_2)=+\infty\hbox{ si et seulement si }\xi_1\land\xi_2=0\hbox{ ;}\label{Adj-Infty1}\\
&&\hbox{pour tout }\delta>0\hbox{ il existe }c_\delta>0 \hbox{ tel que pour tout }\xi=(\xi_1\mid\xi_2)\in\MM^{3\times 2}\hbox{,}\label{Adj-Infty2}\\
&&\hbox{si }|\xi_1\land\xi_2|\geq\delta\hbox{ alors }W_0(\xi)\leq c_\delta(1+|\xi|^p).\nonumber
\end{eqnarray}
Plus pr\'ecis\'ement, $W_0$ est continue {(}et coercive{)} d\`es que $W$ est continue {(}et coercive{)}, {(\ref{Adj-Infty1})} a lieu d\`es que $W$ v\'erifie {(\ref{Cst1prime})} ou {(\ref{Cst1})} et {(\ref{Adj-Infty2})} a lieu d\`es que $W$ v\'erifie {(\ref{Hyp1Example:m=N})} ou {(\ref{Hyp3Example:m=N})}. 
\end{lemma}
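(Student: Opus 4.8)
The plan is to establish the four properties in increasing order of difficulty, continuity being the only one that is not essentially a one-line verification. \textbf{Coercivity} of $W_0$ is immediate: from the coercivity of $W$, say $W(G)\ge C|G|^p$, together with $|(\xi\mid\zeta)|^2=|\xi|^2+|\zeta|^2\ge|\xi|^2$, one gets $W(\xi\mid\zeta)\ge C|\xi|^p$ for every $\zeta\in\RR^3$, and taking the infimum over $\zeta$ in \eqref{W_0} yields $W_0(\xi)\ge C|\xi|^p$. The two ``determinant'' properties \eqref{Adj-Infty1} and \eqref{Adj-Infty2} both rest on the single algebraic remark that for $\xi=(\xi_1\mid\xi_2)$ and $\zeta\in\RR^3$ one has $\det(\xi_1\mid\xi_2\mid\zeta)=\langle\xi_1\land\xi_2,\zeta\rangle$.

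For \eqref{Adj-Infty1}: if $\xi_1\land\xi_2=0$ then $\det(\xi_1\mid\xi_2\mid\zeta)=0$ for \emph{every} $\zeta$, hence $W(\xi_1\mid\xi_2\mid\zeta)=+\infty$ for all $\zeta$ — directly by \eqref{Cst1prime}, and by \eqref{Cst1} because $0\le0$ — so $W_0(\xi_1\mid\xi_2)=+\infty$; conversely, if $\xi_1\land\xi_2\ne0$, the unit vector $\zeta:=\xi_1\land\xi_2/|\xi_1\land\xi_2|$ satisfies $\det(\xi\mid\zeta)=|\xi_1\land\xi_2|>0$, so $W(\xi\mid\zeta)<+\infty$ (this is exactly \eqref{Cst1prime}, resp. \eqref{Cst1}, read in the direction $\det>0\Rightarrow W$ finite), whence $W_0(\xi)<+\infty$. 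The same test vector $\zeta$ gives \eqref{Adj-Infty2}: when $|\xi_1\land\xi_2|\ge\delta$ we have $\det(\xi\mid\zeta)=|\xi_1\land\xi_2|\ge\delta$, so \eqref{Hyp1Example:m=N} (resp. \eqref{Hyp3Example:m=N}) yields $W(\xi\mid\zeta)\le c_\delta(1+|(\xi\mid\zeta)|^p)$, and since $|(\xi\mid\zeta)|^p=(1+|\xi|^2)^{p/2}\le 2^{p/2}(1+|\xi|^p)$ we obtain $W_0(\xi)\le c_\delta(1+2^{p/2})(1+|\xi|^p)$.

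The one point deserving care is the continuity of $W_0:\MM^{3\times2}\to[0,+\infty]$, and I expect the lower-semicontinuity half to be the (mild) obstacle. Upper semicontinuity is easy: given $\xi^{(n)}\to\xi$ with $W_0(\xi)<+\infty$ (the case $W_0(\xi)=+\infty$ being trivial) and $\eta>0$, pick $\zeta$ with $W(\xi\mid\zeta)\le W_0(\xi)+\eta<+\infty$; continuity of $W$ then gives $\limsup_n W_0(\xi^{(n)})\le\lim_n W(\xi^{(n)}\mid\zeta)=W(\xi\mid\zeta)\le W_0(\xi)+\eta$, and we let $\eta\to0$. For lower semicontinuity, put $L:=\liminf_n W_0(\xi^{(n)})$; assuming $L<+\infty$, pass to a subsequence with $W_0(\xi^{(n)})\to L$ and pick $\zeta^{(n)}$ with $W(\xi^{(n)}\mid\zeta^{(n)})\le W_0(\xi^{(n)})+\tfrac1n$. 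This is where coercivity of $W$ is used: it forces $|\zeta^{(n)}|^p\le(L+2)/C$ for $n$ large, so along a further subsequence $\zeta^{(n)}\to\zeta$, and lower semicontinuity of the continuous function $W$ gives $W_0(\xi)\le W(\xi\mid\zeta)\le\liminf_n W(\xi^{(n)}\mid\zeta^{(n)})\le L$. Combining the two halves gives continuity of $W_0$ as a map into $[0,+\infty]$ (in particular $W_0(\xi^{(n)})\to+\infty$ whenever $W_0(\xi)=+\infty$). Since each property above was derived using only its relevant hypothesis, the ``plus pr\'ecis\'ement'' statement at the end of the lemma follows at once.
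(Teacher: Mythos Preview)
Your proof is correct and follows essentially the same approach as the paper: coercivity is immediate, upper semicontinuity comes from $W_0$ being an infimum of continuous functions, lower semicontinuity uses the coercivity of $W$ to extract a convergent subsequence of near-minimizers $\zeta^{(n)}$, and both \eqref{Adj-Infty1} and \eqref{Adj-Infty2} are verified via the test vector $\zeta=(\xi_1\land\xi_2)/|\xi_1\land\xi_2|$. The only differences are cosmetic (the paper states upper semicontinuity abstractly and leaves the constant adjustment in \eqref{Adj-Infty2} implicit).
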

\begin{proof}
Comme $W$ est coercive, on a 
\begin{equation}\label{UsEfulLeMMaEq1}
W(\xi\mid\zeta)\geq C(|\xi|^p+|\zeta|^p)\hbox{ pour tout }(\xi,\zeta)\in\MM^{3\times 2}\times\RR^3
\end{equation}
avec $C>0$ (qui ne d\'epend que de $p$). Donc $W_0(\xi)\geq C|\xi|^p$ pour tout $\xi\in\MM^{3\times 2}$, i.e., $W_0$ est coercive. Puisque $W$ est continue, $W(\cdot\mid\zeta)$ est continue pour chaque $\zeta\in\RR^3$, d'o\`u $W_0$ est semi-continue sup\'erieurement. Donc, pour montrer que $W_0$ est continue, il suffit de prouver que $W_0$ est sci. Pour cela, consid\'erons $\xi\in\MM^{3\times 2}$ et $\{\xi_n\}_{n\geq 1}\subset\MM^{3\times 2}$ tels que $\xi_n\to\xi$, $\sup_{n\geq 1}W_0(\xi_n)<+\infty$ et $\lim_{n\to+\infty}W_0(\xi_n)=\liminf_{n\to+\infty}W_0(\xi_n)$. Pour chaque $n\geq 1$, il existe $\zeta_n\in\RR^3$ tel que 
\begin{equation}\label{UsEfulLeMMaEq2}
W_0(\xi_n)<W(\xi_n\mid\zeta_n)\leq W_0(\xi_n)+{1\over n}.
\end{equation}
Alors $\sup_{n\geq 1}W(\xi_n\mid\zeta_n)<+\infty$ (puisque $\sup_{n\geq 1}W_0(\xi_n)<+\infty$). Or, par (\ref{UsEfulLeMMaEq1}), $|\zeta_n|^p\leq{1\over C} W(\xi_n\mid\zeta_n)$ pour tout $n\geq 1$, donc $\sup_{n\geq 1}|\zeta_n|^p<+\infty$. Il suit qu'il existe $\zeta\in\RR^3 $ tel que   $\zeta_n\to\zeta$ (\`a une sous-suite pr\`es). D'o\`u $\lim_{n\to+\infty}W(\xi_n\mid\zeta_n)=W(\xi\mid\zeta)$ (puisque $W$ est continue). De \eqref{UsEfulLeMMaEq2} on d\'eduit que $\lim_{n\to+\infty}W_0(\xi_n)=W(\xi\mid\zeta)\geq W_0(\xi)$.

Montrons (\ref{Adj-Infty1}). \'Etant donn\'e $\xi=(\xi_1\mid\xi_2)$, si $W_0(\xi_1\mid\xi_2)<+\infty$ (resp. $W_0(\xi_1\mid\xi_2)=+\infty$) alors $W(\xi\mid\zeta)<+\infty$ (resp. $W(\xi\mid\zeta)=+\infty$) avec $\zeta\in\RR^3$ (resp. pour tout $\zeta\in\RR^3$), donc $\xi_1\land\xi_2\not=0$ (resp. $\xi_1\land\xi_2=0$) par {(\ref{Cst1prime})} ou  {(\ref{Cst1})}.

Montrons (\ref{Adj-Infty2}). Soient $\delta>0$ et $\xi=(\xi_1\mid\xi_2)$ tels que $|\xi_1\land\xi_2|\geq\delta$. Posant $\zeta:={\xi_1\land\xi_2\over|\xi_1\land\xi_2|}$ on a $\det (\xi\mid\zeta)\geq\delta$, et utilisant  {(\ref{Hyp1Example:m=N})} ou  {(\ref{Hyp3Example:m=N})} on d\'eduit qu'il existe $c_\delta>0$ (qui ne d\'epend pas de $\xi$) tel que $W_0(\xi)\leq c_\delta(1+|\xi|^p)$.
\end{proof}
\begin{remark}\label{QRW_0=QW_0}
Pour Ben Belgacem $W_{\rm mem}=\mathcal{Q}\R W_0$ (voir \cite{benbelgacem96,benbelgacem97,benbelgacem00}). En fait, lorsque $W$ satisfait (\ref{Hyp3Example:m=N}), on a $\mathcal{Q}\R W_0=\mathcal{Q}W_0=\Z W_0$. En effet,  du lemme \ref{Lemme1Det>0} on voit que $W_0$ satisfait {\rm(\ref{Adj-Infty2})} donc {\rm(\ref{HypExample:m=N+1})}. Du  th\'eor\`eme \ref{ThExample:m=N+1} on d\'eduit que $\Z W_0$ v\'erifie (\ref{Cond-CroissZW}) (donc $\Z W_0$ est finie), et le r\'esultat suit de la remarque \ref{QW-QRW-LiNk}.
\end{remark}

On d\'efinit $\mathcal{I}:W^{1,p}(\Sigma;\RR^3)\to[0,+\infty]$ par 
$$
\mathcal{I}(\psi):=\int_\Sigma W_0(\nabla\psi(x))dx,
$$
et on consid\`ere $\overline{\mathcal{I}},\overline{\mathcal{I}}_{\rm aff},\overline{\mathcal{I}}_{\rm diff_*}:W^{1,p}(\Sigma;\RR^3)\to[0,+\infty]$ donn\'ees par :
\begin{itemize}
\item[$\diamond$] $\displaystyle\overline{\mathcal{I}}(\psi):=\inf\left\{\liminf_{n\to+\infty}\mathcal{I}(\psi_n):W^{1,p}(\Sigma;\RR^3)\ni\psi_n\wto\psi\right\};$
\end{itemize}
\begin{itemize}
\item[$\diamond$] $\displaystyle\overline{\mathcal{I}}_{\rm aff}(\psi):=\inf\left\{\liminf_{n\to+\infty}\mathcal{I}(\psi_n):\Aff(\Sigma;\RR^3)\ni\psi_n\wto\psi\right\};$
\item[$\diamond$] $\displaystyle\overline{\mathcal{I}}_{\rm diff_*}(\psi):=\inf\left\{\liminf_{n\to+\infty}\mathcal{I}(\psi_n):C^1_*(\overline{\Sigma};\RR^3)\ni\psi_n\wto\psi\right\}$
\end{itemize}
avec $C^1_*(\overline{\Sigma};\RR^3):=\big\{\psi\in C^1(\overline{\Sigma};\RR^3):\partial_1\psi(x)\land\partial_2\psi(x)\not=0\hbox{ pour tout }x\in\overline{\Sigma}\big\}$ o\`u $\partial_1\psi(x)$ (resp. $\partial_2\psi(x)$) d\'esigne la d\'eriv\'ee partielle de $\psi$ en $x=(x_1,x_2)$ par rapport \`a $x_1$ (resp. $x_2$). (En fait, $C^1_*(\overline{\Sigma};\RR^3)$ est l'ensemble des $C^1$-immersions de $\overline{\Sigma}$ dans $\RR^3$.)
\begin{remark}\label{Aff=Aff_*LoRsQuEW_0veRifieAdj}
Lorsque $W_0$ v\'erifie (\ref{Adj-Infty1}) on a 
\begin{itemize}
\item[$\diamond$] $\displaystyle\overline{\mathcal{I}}_{\rm aff}(\psi):=\inf\left\{\liminf_{n\to+\infty}\mathcal{I}(\psi_n):\Aff_*(\Sigma;\RR^3)\ni\psi_n\wto\psi\right\}$
\end{itemize}
avec $\Aff_*(\Sigma;\RR^3):=\big\{\psi\in \Aff(\Sigma;\RR^3):\partial_1\psi(x)\land\partial_2\psi(x)\not=0\hbox{ p.p. dans  }\overline{\Sigma}\big\}$. 
\end{remark}

\subsubsection{Existence de la $\Gamma$-limite de $\mathcal{I}_\eps$.}

On d\'emontre les trois propositions suivantes.

\begin{proposition}\label{det>0Prop1}
$\Gamma$\hbox{-}$\liminf\limits_{\eps\to 0}\mathcal{I}_\eps\geq\overline{\mathcal{I}}$.
\end{proposition}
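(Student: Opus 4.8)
Proposition \ref{det>0Prop1} asserts a $\Gamma$-liminf inequality: for every $\psi \in W^{1,p}(\Sigma;\RR^3)$ and every sequence $\psi_\eps \wto \psi$ in $W^{1,p}(\Sigma;\RR^3)$, one has $\liminf_{\eps\to 0}\mathcal{I}_\eps(\psi_\eps)\geq \overline{\mathcal{I}}(\psi)$.

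The plan is to prove the inequality pointwise on $W^{1,p}(\Sigma;\RR^3)$: fix $\psi$ and a sequence $\psi_\eps\wto\psi$, and show $\liminf_{\eps\to0}\mathcal{I}_\eps(\psi_\eps)\geq\overline{\mathcal{I}}(\psi)$. Passing to a subsequence I may assume the $\liminf$ is a finite limit. For each $\eps$ I would pick $\phi_\eps\in W^{1,p}(\Sigma_\eps;\RR^3)$ with $\pi_\eps(\phi_\eps)=\psi_\eps$ and $I_\eps(\phi_\eps)\leq\mathcal{I}_\eps(\psi_\eps)+\eps$, and blow up the thin direction: on the fixed slab $\Sigma_1:=\Sigma\times]-\frac12,\frac12[$ set $u_\eps(x,y_3):=\phi_\eps(x,\eps y_3)$, so that
\[
I_\eps(\phi_\eps)=\int_{\Sigma_1}W\!\left(\nabla_x u_\eps\mid\frac{1}{\eps}\partial_3 u_\eps\right)dx\,dy_3 .
\]
Coercivity of $W$ makes $\nabla_x u_\eps$ bounded in $L^p(\Sigma_1;\MM^{3\times2})$ and $\frac1\eps\partial_3 u_\eps$ bounded in $L^p(\Sigma_1;\RR^3)$; hence $\partial_3 u_\eps\to0$ in $L^p$, and since $\int_{-1/2}^{1/2}u_\eps(\cdot,y_3)\,dy_3=\psi_\eps$, a one-dimensional Poincar\'e inequality in $y_3$ bounds $u_\eps$ in $W^{1,p}(\Sigma_1;\RR^3)$ and forces $u_\eps-\psi_\eps\to0$ in $L^p(\Sigma_1;\RR^3)$. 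Along a further subsequence $u_\eps\wto u$ in $W^{1,p}(\Sigma_1;\RR^3)$ with $\partial_3 u=0$ and $\int_{-1/2}^{1/2}u(\cdot,y_3)\,dy_3=\psi$, so $u$ is the $y_3$-independent extension of $\psi$.

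Next I would discard the transverse gradient using $W_0(\xi)=\inf_{\zeta\in\RR^3}W(\xi\mid\zeta)$:
\[
\mathcal{I}_\eps(\psi_\eps)+\eps\geq I_\eps(\phi_\eps)\geq\int_{\Sigma_1}W_0(\nabla_x u_\eps)\,dx\,dy_3\geq\int_{\Sigma_1}\mathcal{Q}W_0(\nabla_x u_\eps)\,dx\,dy_3 .
\]
By Lemma \ref{Lemme1Det>0}, $W_0$ satisfies (\ref{Adj-Infty2}), hence the hypothesis (\ref{HypExample:m=N+1}) of Theorem \ref{ThExample:m=N+1}; therefore $\Z W_0$ satisfies (\ref{Cond-CroissZW}), so $\mathcal{Q}W_0=\Z W_0$ is finite, continuous, quasiconvex and of $p$-growth, and Theorem \ref{GeneralThRelax} gives the representation $\overline{\mathcal{I}}(\psi)=\int_\Sigma\mathcal{Q}W_0(\nabla\psi)\,dx$. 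Thus it suffices to show that the functional $v\mapsto\int_{\Sigma_1}\mathcal{Q}W_0(\nabla_x v(x,y_3))\,dx\,dy_3$ is sequentially weakly lower semicontinuous on $W^{1,p}(\Sigma_1;\RR^3)$: evaluated along $u_\eps\wto\psi$ it yields $\liminf_\eps\mathcal{I}_\eps(\psi_\eps)\geq\int_{\Sigma_1}\mathcal{Q}W_0(\nabla_x\psi)\,dx\,dy_3=\int_\Sigma\mathcal{Q}W_0(\nabla\psi)\,dx=\overline{\mathcal{I}}(\psi)$, which is the proposition.

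For that semicontinuity the device is to introduce the $3\times3$ density $\widetilde W(F):=\mathcal{Q}W_0(F_1\mid F_2)$, depending only on the first two columns of $F$: it inherits continuity and $p$-growth from $\mathcal{Q}W_0$, and it is quasiconvex on $\MM^{3\times3}$ because for $\varphi\in W^{1,\infty}_0(]0,1[^3;\RR^3)$ the tangential part $(\partial_1\varphi\mid\partial_2\varphi)$ of $\nabla\varphi$ is, on almost every horizontal slice, a two-dimensional gradient, so the quasiconvexity inequality for $\mathcal{Q}W_0$ integrates up to the one for $\widetilde W$. Since $\int_{\Sigma_1}\widetilde W(\nabla v)\,dx\,dy_3=\int_{\Sigma_1}\mathcal{Q}W_0(\nabla_x v)\,dx\,dy_3$, Theorem \ref{AcerbiFuscoTheorem} applies verbatim and delivers the weak lower semicontinuity. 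The real obstacle is not any single estimate but the structural point that $W_0$ — and even its quasiconvexification — is evaluated only on, and is not coercive in, the tangential gradient: the argument works precisely because the blow-up compactness in the first step uses coercivity of $W$ in the full gradient, while the $p$-ampleness of $W_0$ furnished by Theorem \ref{ThExample:m=N+1} is exactly what makes both the integral representation of $\overline{\mathcal{I}}$ and the Acerbi--Fusco step available (without it $\mathcal{Q}W_0$ would carry no usable growth). An alternative to lifting quasiconvexity is a Fubini argument, choosing for each $\eps$ a slice level $y_3^\eps$ along which $\int_\Sigma\mathcal{Q}W_0(\nabla_x u_\eps(\cdot,y_3^\eps))\,dx$ is close to the slab average and $u_\eps(\cdot,y_3^\eps)$ still converges weakly to $\psi$ in $W^{1,p}(\Sigma;\RR^3)$, but that route is technically more demanding.
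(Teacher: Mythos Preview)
Your argument is internally sound, but it proves strictly less than the proposition as stated. Proposition~\ref{det>0Prop1} is formulated under the paper's standing assumption only, namely coercivity of $W$; no growth hypothesis such as (\ref{Hyp1Example:m=N}) or (\ref{Hyp3Example:m=N}) is assumed there. Yet your proof hinges on the $p$-ampleness of $W_0$: you invoke Lemma~\ref{Lemme1Det>0} to get (\ref{Adj-Infty2}), then Theorem~\ref{ThExample:m=N+1} to get (\ref{Cond-CroissZW}) for $\Z W_0$, and finally Theorem~\ref{GeneralThRelax} for the integral representation of $\overline{\mathcal{I}}$ and Theorem~\ref{AcerbiFuscoTheorem} for lower semicontinuity. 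All of this requires the extra hypotheses of Theorems~\ref{ThMemb1} or~\ref{ThMemb2}. Under mere coercivity, $\mathcal{Q}W_0$ need not have $p$-growth, so neither the Acerbi--Fusco step nor the representation $\overline{\mathcal{I}}(\psi)=\int_\Sigma\mathcal{Q}W_0(\nabla\psi)\,dx$ is available, and your proof collapses.

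The paper's proof is precisely the slicing route you dismiss at the end as ``technically more demanding'', and it is in fact both simpler and more general. It does not choose a single good slice nor does it need weak $W^{1,p}$ compactness on slices: from $\partial_3\hat\phi_\eps\to 0$ in $L^p$ and Poincar\'e--Wirtinger one gets $\hat\phi_\eps\to\psi$ \emph{strongly} in $L^p(\Sigma_1;\RR^3)$, hence along a subsequence $\hat\phi_\eps(\cdot,x_3)\to\psi$ in $L^p(\Sigma;\RR^3)$ for a.e.\ $x_3$. Fatou's lemma over $x_3$ then gives
\[
\liminf_{\eps\to 0}\mathcal{I}_\eps(\psi_\eps)\ \geq\ \int_{-1/2}^{1/2}\liminf_{\eps\to 0}\int_\Sigma W_0\big(\nabla_x\hat\phi_\eps(x,x_3)\big)\,dx\,dx_3\ \geq\ \overline{\mathcal{J}}(\psi),
\]
where $\overline{\mathcal{J}}$ is the relaxation of $\mathcal{I}$ with respect to strong $L^p$ convergence. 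Coercivity of $W_0$ (from Lemma~\ref{Lemme1Det>0}, which needs only coercivity of $W$) yields $\overline{\mathcal{J}}=\overline{\mathcal{I}}$, and the proof is complete without ever touching $\mathcal{Q}W_0$, Acerbi--Fusco, or the integral representation of $\overline{\mathcal{I}}$. Your lifted-quasiconvexity device for $\widetilde W(F)=\mathcal{Q}W_0(F_1\mid F_2)$ is correct and pleasant, but it buys nothing here: it trades an elementary argument valid under coercivity alone for a heavier one valid only under the stronger hypotheses.
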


\begin{proposition}\label{det>0Prop2}
Si $W$ est continue et satisfait {(\ref{Cst1prime})}, {(\ref{Hyp1Example:m=N})} et {(\ref{AdditionalCond})} alors 
$$
\Gamma\hbox{-}\limsup\limits_{\eps\to 0}\mathcal{I}_\eps\leq\overline{\mathcal{I}}_{\rm aff}.
$$
\end{proposition}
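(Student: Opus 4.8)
The plan is to pass, via the standard lower semicontinuity of the $\Gamma$-upper limit and a diagonal argument, to a recovery estimate on piecewise affine immersions, and then to build the recovery family explicitly; the genuinely new point will be to keep $\det\nabla\phi_\eps$ away from $0$ in the gluing layers. Since $\psi\mapsto\Gamma\hbox{-}\limsup_{\eps\to0}\mathcal{I}_\eps(\psi)$ is sequentially weakly lower semicontinuous on $W^{1,p}(\Sigma;\RR^3)$ (a general property of $\Gamma$-upper limits, its sublevel sets being metrizable for the weak topology because $W$ is coercive), and since $W$ satisfies {(\ref{Cst1prime})} and {(\ref{Hyp1Example:m=N})}, so that $W_0$ satisfies {(\ref{Adj-Infty1})} by Lemme \ref{Lemme1Det>0}, Remarque \ref{Aff=Aff_*LoRsQuEW_0veRifieAdj} lets us compute $\overline{\mathcal{I}}_{\rm aff}$ over $\Aff_*(\Sigma;\RR^3)$. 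It therefore suffices to show that
$$
\Gamma\hbox{-}\limsup_{\eps\to0}\mathcal{I}_\eps(\psi)\ \le\ \mathcal{I}(\psi)=\int_\Sigma W_0(\nabla\psi(x))\,dx\qquad\hbox{pour tout }\psi\in\Aff_*(\Sigma;\RR^3),
$$
because then, given $\psi$ and $\Aff_*(\Sigma;\RR^3)\ni\psi_n\wto\psi$ with $\liminf_n\mathcal{I}(\psi_n)$ arbitrarily close to $\overline{\mathcal{I}}_{\rm aff}(\psi)$, lower semicontinuity yields $\Gamma\hbox{-}\limsup_{\eps\to0}\mathcal{I}_\eps(\psi)\le\liminf_n\Gamma\hbox{-}\limsup_{\eps\to0}\mathcal{I}_\eps(\psi_n)\le\liminf_n\mathcal{I}(\psi_n)$.

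Fix $\psi\in\Aff_*(\Sigma;\RR^3)$, with finite partition $\{V_i\}_{i\in I}$, $\nabla\psi=\xi_i=(\xi_1^i\mid\xi_2^i)$ on $V_i$, $\xi_1^i\land\xi_2^i\neq0$, so $W_0(\xi_i)<+\infty$ by {(\ref{Adj-Infty1})}. Fix $\delta>0$ and, for each $i$, pick $\zeta_i\in\RR^3$ with $W(\xi_i\mid\zeta_i)\le W_0(\xi_i)+\delta$; then $\det(\xi_i\mid\zeta_i)\neq0$ by {(\ref{Cst1prime})}. Let $\theta_\eps:\,]-{\eps\over 2},{\eps\over 2}[\,\to\RR$ be a sawtooth profile with $|\theta'_\eps|=1$ p.p., $\int_{-\eps/2}^{\eps/2}\theta_\eps=0$, $\|\theta_\eps\|_{L^\infty}\le\eps$; put $S:=\cup_i(\partial V_i\cap\Sigma)$, so $|S|=0$, and $S_\eta:=\{x\in\Sigma:{\rm dist}(x,S)<\eta\}$, so $|S_\eta|\to0$ as $\eta\to0$. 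I would take a Lipschitz field $\zeta:\Sigma\to\RR^3$ equal to $\zeta_i$ on $V_i\setminus S_\eta$ and interpolating the neighbouring values inside $S_\eta$ with $|\nabla\zeta|\le C/\eta$ there, and set
$$
\phi_\eps(x,x_3):=\psi(x)+\theta_\eps(x_3)\,\zeta(x),\qquad (x,x_3)\in\Sigma_\eps .
$$
Then $\phi_\eps\in W^{1,p}(\Sigma_\eps;\RR^3)$, $\pi_\eps(\phi_\eps)=\psi$ (since $\int\theta_\eps=0$), hence $\mathcal{I}_\eps(\psi)\le I_\eps(\phi_\eps)$, and $\nabla\phi_\eps=(\nabla\psi+\theta_\eps\nabla\zeta\mid\theta'_\eps\zeta)$ with $|\theta_\eps\nabla\zeta|\le C\eps/\eta$. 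On the cores $V_i\setminus S_\eta$ this reads $\nabla\phi_\eps\in\{(\xi_i\mid\zeta_i),(\xi_i\mid-\zeta_i)\}$, and here {(\ref{AdditionalCond})} is exactly what makes $W(\nabla\phi_\eps)=W(\xi_i\mid\zeta_i)\le W_0(\xi_i)+\delta$ whatever the sign of the slope; the core part of ${1\over\eps}\int_{\Sigma_\eps}W(\nabla\phi_\eps)$ is thus at most $\sum_i|V_i|(W_0(\xi_i)+\delta)=\mathcal{I}(\psi)+\delta|\Sigma|$.

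The step I expect to be the main obstacle is the construction of $\zeta$ on the collar $S_\eta$ so that $|\det\nabla\phi_\eps|\ge\delta'>0$ there, with $\delta'$ depending only on $\psi$ and $\delta$: once this holds, {(\ref{Hyp1Example:m=N})} gives $W(\nabla\phi_\eps)\le c_{\delta'}(1+|\nabla\phi_\eps|^p)\le c_{\delta'}(1+M^p)$ on $S_\eta\times]-{\eps\over 2},{\eps\over 2}[$, where $M$ bounds $|\nabla\phi_\eps|$ there uniformly as long as $\eps/\eta$ stays bounded, so the collar part of the energy is $\le c_{\delta'}(1+M^p)|S_\eta|\to0$. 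Choosing $\eta=\eta(\eps)\to0$ with $\eps/\eta\to0$, then letting $\delta\to0$, would give $\Gamma\hbox{-}\limsup_{\eps\to0}\mathcal{I}_\eps(\psi)\le\mathcal{I}(\psi)$ and finish the proof. To produce such a $\zeta$ one argues interface by interface: across $\Gamma_{ij}:=\partial V_i\cap\partial V_j$ one lets $\zeta$ run from $\pm\zeta_i$ to $\pm\zeta_j$ through an intermediate value $\zeta_*$ chosen in the correct connected component of $\RR^3\setminus\{\zeta:\det(\xi_i\mid\zeta)=0\}$ and of $\RR^3\setminus\{\zeta:\det(\xi_j\mid\zeta)=0\}$, the admissible signs of $\zeta_i,\zeta_j$ being precisely those furnished by {(\ref{AdditionalCond})}. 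The two delicate sub-cases are the corners of the partition, where $\zeta$ has to reconcile several such choices at once but only on a set of measure $O(\eta^2)$, and the interfaces between pieces with $\xi_1^i\land\xi_2^i$ parallel to $\xi_1^j\land\xi_2^j$; the latter may force a preliminary refinement or small perturbation of $\psi$ inside $\Aff_*(\Sigma;\RR^3)$, which is legitimate since the statement has been reduced to all of $\Aff_*(\Sigma;\RR^3)$.
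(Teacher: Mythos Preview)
Your reduction to $\psi\in\Aff_*(\Sigma;\RR^3)$ and the recovery ansatz $\phi_\eps(x,x_3)=\psi(x)+(\hbox{profile in }x_3)\cdot\zeta(x)$ are exactly the paper's strategy. Two remarks, one cosmetic and one substantive.

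First, the sawtooth is unnecessary: the paper simply takes the linear profile $x_3$, i.e.\ $\phi(x,x_3)=\psi(x)+x_3\varphi(x)$ on $\Sigma_\eps$, so that $\pi_\eps(\phi)=\psi$ and $\nabla\phi=(\nabla\psi+x_3\nabla\varphi\mid\varphi)$ with $|x_3|\le\eps/2$. The hypothesis (\ref{AdditionalCond}) is not needed on the cores; it is needed only to choose the \emph{sign} of the near-optimal $\zeta_i$ on each piece (see below).

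Second, your collar construction has a genuine gap. Picking an interface-dependent intermediate value $\zeta_*$ does not close at corners where three or more pieces meet, and your fallback of ``refining or perturbing $\psi$'' does not make the obstruction go away. The fix is an elementary global observation, which is property (\ref{Detnot=0Prop5}) in the paper: since $\cup_{i\in I}\{\zeta:\det(\xi_i\mid\zeta)=0\}$ is a finite union of $2$-planes in $\RR^3$, one can choose a \emph{single} $\bar\zeta\in\RR^3$ and $j\ge1$ with $|\det(\xi_i\mid\bar\zeta)|\ge\tfrac1j$ for \emph{all} $i\in I$. Now use (\ref{AdditionalCond}) to pick $\sigma_i\in\{\pm1\}$ so that $\sigma_i\zeta_i$ and $\bar\zeta$ lie in the same (convex) half-space $\{\zeta:\pm\det(\xi_i\mid\zeta)\ge\tfrac1j\}$; then the segment $[\sigma_i\zeta_i,\bar\zeta]$ stays in that half-space, and a single Lipschitz $\zeta$ equal to $\bar\zeta$ on all of $S_\eta$ and to $\sigma_i\zeta_i$ on each core, with convex interpolation in between, keeps $|\det(\nabla\psi\mid\zeta)|\ge\tfrac1j$ everywhere. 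This dissolves both your ``corner'' and your ``parallel normals'' worries simultaneously. The paper packages this as Th\'eor\`eme \ref{IntermediateTheorDetNot} via the interchange theorem \ref{InfIntTH2} (producing a continuous near-optimal selection $\varphi$ directly), but the ``autre d\'emonstration'' of Lemme \ref{LemmeDeTnot=0} is exactly the explicit interpolation with the global $\bar\zeta$ just described; after smoothing $\varphi$ the paper then takes $\phi_k(x,x_3)=\psi(x)+x_3\varphi_k(x)$ and concludes by dominated convergence under (\ref{Hyp1Example:m=N}).
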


\begin{proposition}\label{det>0Prop3}
Si $W$ est continue et satisfait {(\ref{Cst1})} et {(\ref{Hyp3Example:m=N})} alors 
$$
\Gamma\hbox{-}\limsup\limits_{\eps\to 0}\mathcal{I}_\eps\leq\overline{\mathcal{I}}_{\rm diff_*}.
$$
\end{proposition}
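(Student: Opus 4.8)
L'objectif est de construire, pour un $\psi$ donné, une suite $\{\phi_\eps\}_\eps\subset W^{1,p}(\Sigma_\eps;\RR^3)$ telle que $\pi_\eps(\phi_\eps)\wto\psi$ et $\limsup_{\eps\to 0}\mathcal{I}_\eps(\phi_\eps)\leq\overline{\mathcal{I}}_{\rm diff_*}(\psi)$. Comme $\Gamma\hbox{-}\limsup_{\eps\to 0}\mathcal{I}_\eps$ est sfsci par rapport à la convergence faible de $W^{1,p}(\Sigma;\RR^3)$ (borne supérieure d'une famille de telles fonctionnelles), il suffit de majorer $\Gamma\hbox{-}\limsup_{\eps\to 0}\mathcal{I}_\eps(\psi)$ par $\mathcal{I}(\psi)=\int_\Sigma W_0(\nabla\psi(x))dx$ lorsque $\psi\in C^1_*(\overline\Sigma;\RR^3)$, puis de passer à l'enveloppe sci : pour $\psi$ arbitraire, on prend $\psi_n\to\psi$ dans $W^{1,p}$ avec $\psi_n\in C^1_*(\overline\Sigma;\RR^3)$ réalisant $\mathcal{I}(\psi_n)\to\overline{\mathcal{I}}_{\rm diff_*}(\psi)$, et la semi-continuité inférieure de $\Gamma\hbox{-}\limsup_{\eps\to 0}\mathcal{I}_\eps$ donne la conclusion par diagonalisation.

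Pour le cas $\psi\in C^1_*(\overline\Sigma;\RR^3)$, la stratégie est la suivante. Puisque $\psi$ est une $C^1$-immersion, le champ normal unitaire $\nu(x):=\frac{\partial_1\psi(x)\land\partial_2\psi(x)}{|\partial_1\psi(x)\land\partial_2\psi(x)|}$ est bien défini et continu sur $\overline\Sigma$. Fixons $\eta>0$ : par continuité uniforme de $\nabla\psi$ et de $W_0$ sur les compacts de $\MM^{3\times 2}$ où $W_0$ est fini, on peut approcher $W_0(\nabla\psi(x))$ à $\eta$ près par $W\big(\nabla\psi(x)\mid t(x)\nu(x)\big)$ pour un choix mesurable (et presque partout localement constant, quitte à découper $\Sigma$ en morceaux) de $t(x)\in\RR$ bien choisi, en utilisant la définition \eqref{W_0} de $W_0$. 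On définit alors le relèvement $\phi_\eps(x,x_3):=\psi(x)+x_3\,t(x)\nu(x)$ (éventuellement régularisé sur les bords des morceaux). On a $\pi_\eps(\phi_\eps)=\psi$ et $\nabla\phi_\eps(x,x_3)=\big(\partial_1\psi(x)+x_3\,\partial_1(t\nu)(x)\mid\partial_2\psi(x)+x_3\,\partial_2(t\nu)(x)\mid t(x)\nu(x)\big)$, qui converge uniformément vers $\big(\nabla\psi(x)\mid t(x)\nu(x)\big)$ lorsque $\eps\to 0$ puisque $|x_3|\leq\eps/2$. Comme $\det\big(\nabla\psi(x)\mid t(x)\nu(x)\big)=t(x)|\partial_1\psi\land\partial_2\psi|$ est de signe constant (on choisit $t(x)>0$), la contrainte \eqref{Cst1} n'intervient pas : sur un voisinage du compact atteint, $W$ est finie et continue, donc $\frac{1}{\eps}\int_{\Sigma_\eps}W(\nabla\phi_\eps)\to\int_\Sigma W\big(\nabla\psi(x)\mid t(x)\nu(x)\big)dx\leq\mathcal{I}(\psi)+\eta|\Sigma|$. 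Faisant $\eta\to 0$ (procédé diagonal), on obtient la majoration voulue.

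Le point délicat sera la gestion du caractère seulement Borel-mesurable (a priori) du choix de $t(x)$ et de la discontinuité que cela introduit dans $x\mapsto t(x)\nu(x)$, qui rend $\phi_\eps$ non lipschitzien : il faut découper $\Sigma$ en un nombre fini de morceaux sur lesquels $t$ est (presque) constant, construire $\phi_\eps$ morceau par morceau, puis recoller en insérant de fines couches de transition d'épaisseur $\to 0$ dont la contribution énergétique tend vers zéro — c'est ici qu'intervient de façon cruciale l'hypothèse de croissance d'ordre $p$ \eqref{Hyp3Example:m=N} loin de $\det F=0$, qui borne l'énergie des matrices de transition (dont les déterminants restent minorés si l'on prend soin de garder $\nu$ de direction contrôlée). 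C'est aussi pour assurer la mesurabilité/semicontinuité du choix de $\zeta$ réalisant l'infimum dans \eqref{W_0} que l'on a besoin de la continuité et de la coercivité de $W$ (voir le lemme \ref{Lemme1Det>0} : $W(\xi\mid\cdot)$ est coercive, donc l'infimum est atteint), ce qui permet une sélection mesurable puis une approximation par des fonctions étagées. Une alternative, probablement celle suivie dans le texte, consiste à invoquer directement un théorème d'approximation des immersions de Sobolev par des immersions lisses (cf. les travaux de Ben Belgacem mentionnés dans l'introduction) pour se ramener d'emblée à $\psi$ régulière et $t$ continu, évitant ainsi le recollage explicite.
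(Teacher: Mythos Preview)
Your overall architecture is right: reduce to $\psi\in C^1_*(\overline\Sigma;\RR^3)$ by the sequential weak lower semicontinuity of $\Gamma\hbox{-}\limsup_{\eps\to0}\mathcal I_\eps$, and build a recovery sequence of the form $\phi_\eps(x,x_3)=\psi(x)+x_3\,\varphi(x)$. But there is a genuine error in the ansatz $\varphi(x)=t(x)\nu(x)$: the infimum in $W_0(\xi)=\inf_{\zeta\in\RR^3}W(\xi\mid\zeta)$ runs over \emph{all} of $\RR^3$, and nothing in the hypotheses forces the (near-)minimiser to lie along the normal direction $\nu$. Only the determinant $\det(\xi\mid\zeta)=\langle\zeta,\xi_1\land\xi_2\rangle$ depends solely on the normal component of $\zeta$; the value $W(\xi\mid\zeta)$ typically depends on the tangential components as well. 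So in general you cannot approach $W_0(\nabla\psi(x))$ by $W(\nabla\psi(x)\mid t(x)\nu(x))$, and your construction does not give the right limit. You must allow a full vector-valued selection $\varphi(x)\in\RR^3$ with $\det(\nabla\psi(x)\mid\varphi(x))>0$.

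Once that is fixed, your programme of measurable selection followed by a piecewise-constant approximation and thin transition layers could in principle be made to work (convexity of $\{\zeta:\det(\nabla\psi(x)\mid\zeta)\geq\delta\}$ keeps the determinant bounded below along linear interpolations), but it is heavy and you leave the delicate bookkeeping unspecified. The paper avoids all of this by a single clean device: the interchange of infimum and integral. One introduces the multifunctions $\Lambda^j_\psi(x)=\{\zeta:\det(\nabla\psi(x)\mid\zeta)\geq 1/j\}$, which are lower semicontinuous with nonempty closed convex values, and applies Theorem~\ref{InfIntTH2} to obtain
\[
\inf_{\varphi\in C(\overline\Sigma;\Lambda^j_\psi)}\int_\Sigma W(\nabla\psi\mid\varphi)\,dx=\int_\Sigma\inf_{\zeta\in\Lambda^j_\psi(x)}W(\nabla\psi\mid\zeta)\,dx,
\]
which after letting $j\to\infty$ yields a \emph{continuous} near-optimal selection $\varphi$ (Theorem~\ref{IntermediateTheorDet>}). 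This $\varphi$ is then approximated uniformly by $\varphi_k\in C^\infty(\overline\Sigma;\RR^3)$; the uniform lower bound $\det(\nabla\psi\mid\varphi)\geq 1/j$ passes to $\varphi_k$, and since $\nabla\varphi_k$ is bounded, $\det(\nabla\psi+x_3\nabla\varphi_k\mid\varphi_k)$ stays bounded below for $|x_3|\leq\eps/2$ small, allowing dominated convergence via \eqref{Hyp3Example:m=N}. No cutting, no transition layers. Your closing remark about ``approximation des immersions de Sobolev par des immersions lisses'' is a red herring here: $\psi$ is already a $C^1$-immersion; the issue is the regularity of the \emph{selection} $\varphi$, and it is the interchange theorem that supplies it.
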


La proposition \ref{det>0Prop1} se d\'emontre assez facilement (voir ci-dessous). Les propositions \ref{det>0Prop2} et \ref{det>0Prop3} sont plus difficiles \`a prouver (voir \S 3.2 et \S 3.3).

\begin{proof}[D\'emonstration de la proposition \ref{det>0Prop1}]
Consid\'erons $\psi\in W^{1,p}(\Sigma;\RR^3)$ et $\{\psi_\eps\}_\eps\subset W^{1,p}(\Sigma;\RR^3)$ tels que $\psi_\eps\wto \psi$ dans $W^{1,p}(\Sigma;\RR^3)$. On doit prouver que 
\begin{equation}\label{main_ineq1}
\liminf_{\eps\to 0}\mathcal{I}_\eps(\psi_\eps)\geq \overline{\mathcal{I}}(v).
\end{equation}
Sans perdre de g\'en\'eralit\'e on peut supposer que $\sup_{\eps>0}\mathcal{I}_\eps(\psi_\eps)<+\infty$ and 
$\psi_{\eps}\to \psi$ dans $L^p(\Sigma;\RR^3)$. Pour chaque $\eps>0$, il existe $\phi_\eps\in\pi_{\eps}^{-1}(\psi_\eps)$ tel que 
\begin{equation}\label{main_ineq2}
\mathcal{I}_\eps(\psi_\eps)\geq I_\eps(\phi_\eps)-\eps.
\end{equation}
D\'efinissons $\hat \phi_\eps:\Sigma_1\to\RR^3$ par $\hat \phi_\eps(x,x_3):=\phi_\eps(x,\eps x_3)$ (avec $\Sigma_1=\Sigma\times]-{1\over 2},{1\over 2}[$). On a alors 
\begin{equation}\label{main_ineq3}
I_\eps(\phi_\eps)=\int_{\Sigma_1}W\left(\partial_{1} \hat \phi_\eps(x,x_3)\mid\partial_{2} \hat \phi_\eps(x,x_3)\mid{1\over\eps}\partial_3 \hat \phi_\eps(x,x_3)\right)dxdx_3.
\end{equation}
Utilisant la coercivit\'e de  $W$  on d\'eduit qu'il existe $c>0$ tel que $\|{\partial_3 \hat \phi_\eps}\|_{L^p(\Sigma_1;\RR^3)}\leq c\eps^{p}$ pour tout $\eps>0$, donc
$
\|\hat \phi_\eps-\psi_\eps\|_{L^p(\Sigma_1;\RR^3)}\leq c^\prime\eps^p
$
par l'in\'egalit\'e de  Poincar\'e-Wirtinger, o\`u $c^\prime>0$ est une constante ind\'ependante de $\eps$. Il suit que $\hat \phi_\eps\to \psi$ dans $L^p(\Sigma_1;\RR^3)$. Soit $\varphi_\eps^{x_3}\in W^{1,p}(\Sigma;\RR^3)$ d\'efinie par $\varphi_\eps^{x_3}(x):=\hat \phi_\eps(x,x_3)$ avec $x_3\in]-{1\over 2},{1\over 2}[$. Alors (\`a une sous-suite pr\`es) $\varphi_\eps^{x_3}\to \psi$ dans $L^p(\Sigma;\RR^3)$ pour presque tout $x_3\in ]-{1\over 2},{1\over 2}[$. Prenant en compte  (\ref{main_ineq2}) et (\ref{main_ineq3}) et utilisant le lemme de Fatou, on obtient 
$$
\liminf_{\eps\to 0}{\mathcal{I}}_\eps(\psi_\eps)\geq\int_{-{1\over 2}}^{1\over 2}\left(\liminf_{\eps\to 0}\int_\Sigma W_0\big(\nabla \varphi_\eps^{x_3}(x)\big)dx\right)dx_3,
$$
donc $\liminf_{\eps\to 0}{\mathcal{I}}_\eps(v_\eps)\geq\overline{\mathcal{J}}(\psi)$ avec $\overline{\mathcal{J}}:W^{1,p}(\Sigma;\RR^3)\to[0,+\infty]$ donn\'ee par 
$$
\overline{\mathcal{J}}(\varphi):=\inf\left\{\liminf_{n\to+\infty}\int_\Sigma W_0\big(\nabla \varphi_n(x)\big)dx:W^{1,p}(\Sigma;\RR^3)\ni \varphi_n\to \varphi\hbox{ dans }L^p(\Sigma;\RR^3)\right\}.
$$
Comme $W_0$ est coercive (voir le lemme \ref{Lemme1Det>0}) on a  $\overline{\mathcal{J}}=\overline{\mathcal{I}}$, et (\ref{main_ineq1}) suit.
\end{proof}

La proposition suivante est une cons\'equence imm\'ediate des propositions \ref{det>0Prop1}, \ref{det>0Prop2} et \ref{det>0Prop3}.
\begin{proposition}\label{det>0Prop4}
$\Gamma\hbox{-}\lim_{\eps\to 0}\mathcal{I}_\eps=\overline{\mathcal{I}}$ dans les deux cas suivants {:}
\begin{enumerate}
\item[(a)] $W$ est continue (coercive), satisfait {(\ref{Cst1prime})}, {(\ref{Hyp1Example:m=N})} et {(\ref{AdditionalCond})} et $\overline{\mathcal{I}}=\overline{\mathcal{I}}_{\rm aff}\hbox{ ;}$
\item[(b)] $W$ est continue (coercive), satisfait {(\ref{Cst1})} et {(\ref{Hyp3Example:m=N})} et $\overline{\mathcal{I}}=\overline{\mathcal{I}}_{\rm diff_*}.$
\end{enumerate}
\end{proposition}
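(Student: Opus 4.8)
The statement is a direct assembly of the three preceding propositions, so the proof is essentially bookkeeping; the only real content is the definition of $\Gamma$-convergence and the trivial ordering $\overline{\mathcal I}_{\rm aff}\ge\overline{\mathcal I}$ and $\overline{\mathcal I}_{\rm diff_*}\ge\overline{\mathcal I}$ coming from the inclusions $\Aff(\Sigma;\RR^3)\subset W^{1,p}(\Sigma;\RR^3)$ and $C^1_*(\overline\Sigma;\RR^3)\subset W^{1,p}(\Sigma;\RR^3)$ (the latter via the fact that $C^1$ functions on $\overline\Sigma$ lie in $W^{1,p}$). First I would recall that, by definition of the two regularized functionals taken over larger and over smaller classes of competitors, one always has $\overline{\mathcal I}\le\overline{\mathcal I}_{\rm aff}$ and $\overline{\mathcal I}\le\overline{\mathcal I}_{\rm diff_*}$, since shrinking the admissible set of approximating sequences can only increase the infimum in the definition of the relaxed functional.

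In case (a), the hypotheses of Proposition \ref{det>0Prop2} are exactly met, so $\Gamma\text{-}\limsup_{\eps\to0}\mathcal I_\eps\le\overline{\mathcal I}_{\rm aff}$. Combining this with Proposition \ref{det>0Prop1}, which gives $\Gamma\text{-}\liminf_{\eps\to0}\mathcal I_\eps\ge\overline{\mathcal I}$, and with the assumed identity $\overline{\mathcal I}=\overline{\mathcal I}_{\rm aff}$, I obtain
\[
\overline{\mathcal I}\le\Gamma\text{-}\liminf_{\eps\to0}\mathcal I_\eps\le\Gamma\text{-}\limsup_{\eps\to0}\mathcal I_\eps\le\overline{\mathcal I}_{\rm aff}=\overline{\mathcal I},
\]
so all four quantities coincide; since $\Gamma\text{-}\liminf$ and $\Gamma\text{-}\limsup$ agree, the $\Gamma$-limit exists and equals $\overline{\mathcal I}$. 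Case (b) is handled identically, replacing Proposition \ref{det>0Prop2} by Proposition \ref{det>0Prop3}, $\overline{\mathcal I}_{\rm aff}$ by $\overline{\mathcal I}_{\rm diff_*}$, and the assumption $\overline{\mathcal I}=\overline{\mathcal I}_{\rm aff}$ by $\overline{\mathcal I}=\overline{\mathcal I}_{\rm diff_*}$; the same chain of inequalities closes up.

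There is no genuine obstacle here: the heavy lifting — the liminf inequality of Proposition \ref{det>0Prop1} and, above all, the construction of recovery sequences in Propositions \ref{det>0Prop2} and \ref{det>0Prop3} — has already been done, and the integral representations $\overline{\mathcal I}=\overline{\mathcal I}_{\rm aff}$ resp. $\overline{\mathcal I}=\overline{\mathcal I}_{\rm diff_*}$ are taken as hypotheses of the two cases (they will be established in \S 3.1.3). The one point requiring a word of care is that in case (b) the recovery sequences produced by Proposition \ref{det>0Prop3} are built from $C^1_*$ immersions and one must know these are legitimate $W^{1,p}$ competitors for $\overline{\mathcal I}$; but $\overline\Sigma$ being compact, any $\psi\in C^1(\overline\Sigma;\RR^3)$ is in $W^{1,p}(\Sigma;\RR^3)$, so $C^1_*(\overline\Sigma;\RR^3)\subset W^{1,p}(\Sigma;\RR^3)$ and the inequality $\overline{\mathcal I}\le\overline{\mathcal I}_{\rm diff_*}$ holds unconditionally. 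Hence the proof is a two-line application of the squeeze argument in each of the two cases.
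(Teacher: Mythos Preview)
Your proposal is correct and matches the paper exactly: the paper simply states that the proposition ``est une cons\'equence imm\'ediate des propositions \ref{det>0Prop1}, \ref{det>0Prop2} et \ref{det>0Prop3}'' without writing out any further argument, and your squeeze chain $\overline{\mathcal I}\le\Gamma\text{-}\liminf\le\Gamma\text{-}\limsup\le\overline{\mathcal I}_{\rm aff}=\overline{\mathcal I}$ (resp.\ with $\overline{\mathcal I}_{\rm diff_*}$) is precisely that immediate consequence spelled out.
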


\subsubsection{Repr\'esentation int\'egrale de la $\Gamma$-limite de $\mathcal{I}_\eps$}

D'apr\`es les th\'eor\`emes \ref{GeneralThRelax} et \ref{ThExample:m=N+1}, si $W_0$ satisfait (\ref{Adj-Infty2}) (donc (\ref{HypExample:m=N+1})) alors $\overline{\mathcal{I}}=I_{\rm mem}$ avec $W_{\rm mem}=\mathcal{Q}W_0=\Z W_0$. 

Donc, prenant en compte (\ref{EquivGammaPi-Gamma}) et la proposition \ref{det>0Prop4}(a), le th\'eor\`eme \ref{ThMemb1} est d\'emon-tr\'e si on prouve que $\overline{\mathcal{I}}=\overline{\mathcal{I}}_{\rm aff}$ ce qui est vraie gr\^ace au th\'eor\`eme \ref{GeneralThRelax}-bis.

De m\^eme, prenant en compte (\ref{EquivGammaPi-Gamma}) et la proposition \ref{det>0Prop4}(b), le th\'eor\`eme \ref{ThMemb2} est d\'emontr\'e si on prouve que 
\begin{eqnarray}\label{BenBelgacemBennequinGromovEliashbergEgalitŽ}
\overline{\mathcal{I}}=\overline{\mathcal{I}}_{\rm diff_*}.
\end{eqnarray}
Cette derni\`ere \'egalit\'e  est plus difficile \`a d\'emontrer (voir le th\'eor\`eme \ref{TheoEgalitŽ(37)}). Sa preuve utilise deux th\'eor\`emes d'approximation par Ben Belgacem-Bennequin (voir le th\'eor\`eme \ref{BBBapproxTheo}) et Gromov-Eliashberg (voir le th\'eor\`eme \ref{GEdensityTheo}) ainsi que deux lemmes de Ben Belgacem (voir les lemmes \ref{BenBelgacemLemma} et \ref{BenBelgacemLemma2}). (Pour la d\'emonstration de (\ref{BenBelgacemBennequinGromovEliashbergEgalitŽ}) voir \S 3.3.3.)

\subsection{Contrainte d\'eterminant non nul}

Dans ce paragraphe on d\'emontre la proposition \ref{det>0Prop2}. 

\subsubsection{Pr\'eliminaires}

\'Etant donn\'e $\psi\in\Aff_*(\Sigma;\RR^3)$,  il existe une famille finie $\{V_i\}_{i\in I}$ de sous-ensembles ouverts et disjoints de $\Sigma$ telle que $|\Sigma\setminus\cup_{i\in I} V_i|=0$, $|\partial V_i|=0$ et $\nabla \psi(x)=\xi_i$ dans $V_i$ pour tout $i\in I$ avec $\xi_i=(\xi_{i,1}\mid\xi_{i,2})\in\MM^{3\times 2}$ et $\xi_{i,1}\land\xi_{i,2}\not=0$. Pour chaque $i\in I$ et chaque $j\geq 1$, on d\'efinit $U^-_{i,j},U^+_{i,j}\subset\RR^3$ par 
$$
U^-_{i,j}:=\left\{\zeta\in\RR^3:\det(\xi_i\mid\zeta)\leq -{1\over j}\right\}\hbox{ et }U^+_{i,j}:=\left\{\zeta\in\RR^3:\det(\xi_i\mid\zeta)\geq {1\over j}\right\}.
$$
Il est facile de voir que :
\begin{eqnarray}\label{Detnot=0Prop1}
&&U^-_{i,j}\hbox{ et }U^+_{i,j}\hbox{ sont non vides et convexes}\ ;\\
&&U^-_{i,j}\cup U^+_{i,j}=\left\{\zeta\in\RR^3:|\det(\xi\mid\zeta)|\geq{1\over j}\right\};\label{Detnot=0Prop2}\\
&&U^-_{i,1}\subset U^-_{i,2}\subset U^-_{i,3}\subset\cdots\subset \cupp\limits_{j\geq 1}U^-_{i,j}=\big\{\zeta\in\RR^3:\det(\xi\mid\zeta)<0\big\}\hskip0.3mm;\label{Detnot=0Prop3}\\
&&U^+_{i,1}\subset U^+_{i,2}\subset U^+_{i,3}\subset\cdots\subset \cupp\limits_{j\geq 1}U^+_{i,j}=\big\{\zeta\in\RR^3:\det(\xi\mid\zeta)>0\big\}.\label{Detnot=0Prop4}
\end{eqnarray}
De plus, on a 
\begin{eqnarray}\label{Detnot=0Prop5}
&&\hbox{il existe }j_\psi\geq 1\hbox{ et une partition }(I^-,I^+)\hbox{ de } I\hbox{ telle que }\\
 &&(\cap_{i\in I^-}U^-_{i,j})\cap(\cap_{i\in I^+}U^+_{i,j})\not=\emptyset\hbox{ pour tout }j\geq j_\psi.\nonumber
\end{eqnarray}
\begin{proof}[Preuve de (\ref{Detnot=0Prop5})]
Pour chaque $i\in I$, consid\'erons l'hyperplan $H_i\subset\RR^3$ d\'efini par
$
H_i:=\{\zeta\in\RR^3:\det(\xi_i\mid \zeta)=0\}.
$
Il est clair que $\cup_{i\in I}H_i\not=\RR^3$ et qu'il existe $\zeta\in\RR^3$ tel que
$
\det(\xi_i\mid \zeta)\not=0
$
pour tout $i\in I$. Utilisant (\ref{Detnot=0Prop2}) on d\'eduit qu'il existe $j_\psi\geq 1$ tel que $\zeta\in\cap_{i\in I}(U_{i,j_\psi}^-\cup U_{i,j_\psi}^+)$. On peut donc trouver une partition $(I^-, I^+)$ de $I$ telle que
$
(\cap_{i\in I^-}U_{i,j_\psi}^-)\cap(\cap_{i\in I^+}U_{i,j_\psi}^+)\not=\emptyset,
$
et (\ref{Detnot=0Prop5}) suit en utilisant  (\ref{Detnot=0Prop3}) et (\ref{Detnot=0Prop4}).
\end{proof}

On pose $V=\cup_{i\in I}V_i$ et, pour chaque $j\geq j_\psi$, on d\'efinit $\Gamma_\psi^j:\overline{\Sigma}\dto\RR^3$ par 
$$
\Gamma_\psi^j(x):=\left\{
\begin{array}{ll}
U_{i,j}^-&\hbox{ si }x\in V_i\hbox{ avec }i\in I^-\\
U_{i,j}^+&\hbox{ si }x\in V_i\hbox{ avec }i\in I^+\\
\Big(\capp\limits_{i\in I^-}U_{i,j}^-\Big)\cap\Big(\capp\limits_{i\in I^+}U_{i,j}^+\Big)&\hbox{ si }x\in \overline{\Sigma}\setminus V.
\end{array}
\right.
$$
Le lemme suivant est une cons\'equence du th\'eor\`eme \ref{InfIntTH2}. (Dans ce qui suit, \'etant donn\'e $\Gamma:\overline{\Sigma}\dto\RR^3$, on pose $C(\overline{\Sigma};\Gamma):=\big\{\varphi\in C(\overline{\Sigma};\RR^3):\varphi(x)\in\Gamma(x)\hbox{ p.p. dans }\overline{\Sigma}\big\}$ avec $C(\overline{\Sigma};\RR^3)$ d\'esignant l'espace des fonctions continues de $\overline{\Sigma}$ dans $\RR^3$.)
\begin{lemma}\label{LemmeDeTnot=0}
Soient $\psi\in\Aff_*(\Sigma;\RR^3)$ et $j\geq j_\psi$. Si $W$ est continue et satisfait {(\ref{Hyp1Example:m=N})} alors 
\begin{eqnarray}\label{EquALitYLemmeDeTnot=0}
\inf_{\varphi\in C(\overline{\Sigma};\Gamma_\psi^j)}\int_\Sigma W(\nabla\psi(x)\mid\varphi(x))dx=\int_\Sigma \inf_{\zeta\in\Gamma_\psi^j(x)}W(\nabla\psi(x)\mid\zeta)dx.
\end{eqnarray}
\end{lemma}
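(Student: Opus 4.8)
The statement is an instance of the classical ``measurable selection / interchange of infimum and integral'' principle, so the plan is to identify it with the abstract theorem referred to as Théorème \ref{InfIntTH2} and verify its hypotheses. The inequality $\geq$ in \eqref{EquALitYLemmeDeTnot=0} is trivial: for any admissible $\varphi\in C(\overline{\Sigma};\Gamma_\psi^j)$ we have $W(\nabla\psi(x)\mid\varphi(x))\geq \inf_{\zeta\in\Gamma_\psi^j(x)}W(\nabla\psi(x)\mid\zeta)$ pointwise a.e., and integrating then taking the infimum over $\varphi$ gives it. So the whole content is the nontrivial inequality $\leq$, i.e.\ the existence of (nearly) optimal \emph{continuous} selections. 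First I would record the structural facts about the multifunction $\Gamma_\psi^j$ already assembled above: on each $V_i$ it is the constant convex set $U_{i,j}^{\mp}$ (nonempty and convex by \eqref{Detnot=0Prop1}), on $\overline{\Sigma}\setminus V$ it is the constant convex set $\big(\bigcap_{i\in I^-}U_{i,j}^-\big)\cap\big(\bigcap_{i\in I^+}U_{i,j}^+\big)$, which is nonempty precisely because $j\geq j_\psi$ and \eqref{Detnot=0Prop5} holds. Thus $\Gamma_\psi^j$ has nonempty convex values and takes only finitely many distinct values, each a closed (or relatively open) convex subset of $\RR^3$; in particular it is ``almost lower semicontinuous'' in the sense needed, being locally constant on the open set $V$ of full measure.

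Next I would check the integrand. Set $L(x,\zeta):=W(\nabla\psi(x)\mid\zeta)$. Since $\psi\in\Aff_*(\Sigma;\RR^3)$, $\nabla\psi$ is a simple function with finitely many values $\xi_i$ satisfying $\xi_{i,1}\land\xi_{i,2}\neq 0$; and $W$ is continuous by hypothesis. Hence $L$ is a Carathéodory integrand (measurable in $x$, continuous in $\zeta$), and it is finite on $\Gamma_\psi^j$: indeed for $x\in V_i$ and $\zeta\in\Gamma_\psi^j(x)$ one has $|\det(\xi_i\mid\zeta)|\geq 1/j$, so by \eqref{Hyp1Example:m=N} (with $\delta=1/j$) $L(x,\zeta)=W(\xi_i\mid\zeta)\leq c_{1/j}(1+|(\xi_i\mid\zeta)|^p)$, which is locally bounded in $\zeta$; the same holds for $x\notin V$. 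This provides the integrability/finiteness of $x\mapsto\inf_{\zeta\in\Gamma_\psi^j(x)}L(x,\zeta)$ that Théorème \ref{InfIntTH2} requires, and it also guarantees the right-hand side of \eqref{EquALitYLemmeDeTnot=0} is finite. I would then invoke Théorème \ref{InfIntTH2} directly to conclude \eqref{EquALitYLemmeDeTnot=0}.

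The one genuine obstacle is producing \emph{continuous} selections that are $\eps$-optimal on each piece and match up across the interfaces $\partial V_i$. On the open set $V$ the multifunction is locally constant, so on each $V_i$ one first picks, by a measurable selection argument, a measurable $\eps$-optimal selection $\zeta_i(\cdot)\in U_{i,j}^{\mp}$ of $\inf_{\zeta}W(\xi_i\mid\zeta)$, and then approximates it by a continuous function valued in the \emph{open} convex set $\{\det(\xi_i\mid\cdot)>1/j\}$ (or $<-1/j$) using Lusin's theorem together with the continuity of $W$ and the local boundedness just noted, adjusting on a small set; this is where the convexity and openness of the ``interior'' versions of $U_{i,j}^{\mp}$ matter, as they give room to perturb. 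The matching across $\partial V_i$ is exactly the point handled inside Théorème \ref{InfIntTH2}: the common refinement set $\bigl(\bigcap_{i\in I^-}U_{i,j}^-\bigr)\cap\bigl(\bigcap_{i\in I^+}U_{i,j}^+\bigr)$, nonempty by \eqref{Detnot=0Prop5}, serves as a ``bridge'' value on a neighborhood of $\partial V$, whose measure can be made arbitrarily small because $|\partial V_i|=0$; since $W$ composed with such a bridging selection is bounded on that neighborhood (again by \eqref{Hyp1Example:m=N}), the contribution of the interface region to the integral is negligible. Letting $\eps\to 0$ and the interface neighborhoods shrink yields the inequality $\leq$, completing the proof. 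I would therefore present the lemma as an essentially immediate corollary of Théorème \ref{InfIntTH2}, with the verification of its hypotheses (Carathéodory integrand, finiteness on the graph via \eqref{Hyp1Example:m=N}, finitely many nonempty convex values with the compatibility \eqref{Detnot=0Prop5}) as the only thing to spell out.
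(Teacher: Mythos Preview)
Your overall plan---reduce the lemma to the abstract interchange result Th\'eor\`eme~\ref{InfIntTH2} and check its hypotheses---is exactly the paper's first proof. Two points should be tightened. First, do not hedge with ``almost lower semicontinuous'': $\Gamma_\psi^j$ is genuinely sci in the sense of D\'efinition~\ref{DefMultifonctionSCI}, and the reason is precisely that the value on $\overline\Sigma\setminus V$ is the \emph{intersection} of all the other values, so it is the smallest; hence if $x_n\to x\in\overline\Sigma\setminus V$ with $\Gamma_\psi^j(x_n)\subset F$, then $\Gamma_\psi^j(x)\subset\Gamma_\psi^j(x_n)\subset F$, while for $x\in V_i$ the multifunction is locally constant. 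The values are closed half-spaces (not ``relatively open''), and nonempty convex by \eqref{Detnot=0Prop1} and \eqref{Detnot=0Prop5}. Second, the precise hypothesis \eqref{ContInterHyp3} of Th\'eor\`eme~\ref{InfIntTH2} asks that for any two continuous selections $\varphi,\hat\varphi\in C(\overline\Sigma;\Gamma_\psi^j)$ the function $x\mapsto\max_{\alpha\in[0,1]}W(\nabla\psi(x)\mid\alpha\varphi(x)+(1-\alpha)\hat\varphi(x))$ be integrable; this follows because convexity of $\Gamma_\psi^j(x)$ keeps $|\det(\nabla\psi(x)\mid\cdot)|\ge 1/j$ along the segment, and then \eqref{Hyp1Example:m=N} with $\delta=1/j$ gives a uniform bound. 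That is the whole proof; once you invoke Th\'eor\`eme~\ref{InfIntTH2} you are done.

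The second half of your plan (Lusin, measurable $\eps$-optimal selections, bridging across $\partial V_i$) is therefore redundant. It is, however, close in spirit to the paper's \emph{second}, self-contained proof, which bypasses Th\'eor\`eme~\ref{InfIntTH2} entirely: fix $\bar\zeta$ in the common intersection (from \eqref{Detnot=0Prop5}), pick for each $i$ a minimizer $\zeta_i\in U_{i,j}^\mp$ of $W(\xi_i\mid\cdot)$ (existence by coercivity of $W$ and closedness of $U_{i,j}^\mp$), and set $\varphi_n(x)=(1-\alpha_n(x))\bar\zeta+\alpha_n(x)\zeta_i$ with $\alpha_n(x)=h(n\,\mathrm{dist}(x,\overline\Sigma\setminus V))$; then $\varphi_n\in C(\overline\Sigma;\Gamma_\psi^j)$ by convexity, dominated convergence (via \eqref{Hyp1Example:m=N}) gives the limit, and no measurable-selection or Lusin step is needed. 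If you wish to present a direct argument, that explicit construction is both shorter and cleaner than the Lusin route you sketch.
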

\begin{proof}
(On utilise la notation $\Det(\xi\mid\zeta)=|\det(\xi\mid\zeta)|$.) Puisque $W$ est continue, (\ref{ContInterHyp1}) a lieu avec $f(x,\zeta)=W(\nabla\psi(x)\mid\zeta)$. Prenant en compte (\ref{Detnot=0Prop1}) et (\ref{Detnot=0Prop5}), on voit que $\Gamma^j_\psi$ est une multifonction sci \`a valeurs convexes ferm\'ees non vides et par cons\'equent (\ref{ContInterHyp2}) a lieu avec $\Gamma=\Gamma^j_\psi$. \'Etant donn\'es $\varphi,\hat\varphi\in C(\overline{\Sigma};\Gamma^j_\psi)$, il est clair que $\Det(\nabla\psi(x)\mid\alpha\varphi(x)+(1-\alpha)\hat\varphi(x))\geq{1\over j}$ pour tout $\alpha\in[0,1]$ et presque tout $x\in\Sigma$. De (\ref{Hyp1Example:m=N}) on d\'eduit qu'il existe $c>0$ (d\'ependant seulement de $j,\psi,\varphi$ et $\hat\varphi$) tel que $W(\nabla\psi(x)\mid\alpha\varphi(x)+(1-\alpha)\hat\varphi(x))\leq c$  pour tout $\alpha\in[0,1]$ et presque tout $x\in\Sigma$. Ainsi (\ref{ContInterHyp3}) a lieu avec $f(x,\zeta)=W(\nabla\psi(x)\mid\zeta)$ et $\Gamma=\Gamma^j_\psi$. On applique le th\'eor\`eme \ref{InfIntTH2} et on obtient (\ref{EquALitYLemmeDeTnot=0}).
\end{proof}
\begin{proof}[Une autre d\'emonstration du lemme \ref{LemmeDeTnot=0}]
Il est clair que 
$$
\inf_{\varphi\in C(\overline{\Sigma};\Gamma_\psi^j)}\int_{\Sigma}W(\nabla \psi(x)\mid\varphi(x))dx\geq\int_\Sigma \inf_{\zeta\in\Gamma_\psi^j(x)}W(\nabla \psi(x)\mid\zeta)dx.
$$
Prouvons donc l'autre in\'egalit\'e. Par (\ref{Detnot=0Prop5}) on a $(\cap_{i\in I^-}U_{i,j}^-)\cap(\cap_{i\in I^+}U_{i,j}^+)\not=\emptyset$, donc il existe $\bar\zeta\in(\cap_{i\in I^-}U_{i,j}^-)\cap(\cap_{i\in I^+}U_{i,j}^+)$. Comme chaque $U^-_{i,j}$ (resp. $U^+_{i,j}$) est ferm\'e, chaque $W(\xi_i\mid\cdot)$ est continue et $W$ est coercive, pour tout $i\in I^-$ (resp. $i\in I^+$) il existe $\zeta_i\in U^-_{i,j}$ (resp. $\zeta_i\in U^+_{i,j}$) tel que 
\begin{eqnarray}\label{AddING}
&&W(\xi_i\mid\zeta_i)=\inf_{\zeta\in U^-_{i,j}}W(\xi\mid\zeta)\  (\hbox{resp. }W(\xi_i\mid\zeta_i)=\inf_{\zeta\in U^+_{i,j}}W(\xi\mid\zeta)).
\end{eqnarray}
Soit $n\geq 1$. Consid\'erons $\alpha_n:\overline{\Sigma}\to\RR$ donn\'ee par $\alpha_n(x):=h(n{\rm dist}(x,\overline{\Sigma}\setminus V))$, o\`u ${\rm dist}(x,\overline{\Sigma}\setminus V):=\inf\{|x-y|:y\in \overline{\Sigma}\setminus V\}$ et $h:[0,+\infty[\to[0,1]$ est une fonction continue telle que $h(0)=0$ et $h(t)=1$ pour tout $t\geq 1$. D\'efinissons $\varphi_n:\overline{\Sigma}\to\RR$ par 
$$
\varphi_n(x):=(1-\alpha_n(x))\bar\zeta+\alpha_n(x)\zeta_i.
$$
Il est clair que $\varphi_n$ est continue et puisque $\Gamma^j_\psi(x)$ est convexe, $\varphi_n(x)\in\Gamma^j_\psi(x)$ pour tout $x\in\overline{\Sigma}$, d'o\`u $\varphi_n\in C(\overline{\Sigma};\Gamma^j_\psi)$. Utilisant (\ref{Hyp1Example:m=N}) on d\'eduit que $\sup_{n\geq 1}W(\nabla \psi(\cdot)\mid\varphi_n(\cdot))\in L^1(\Sigma)$. Rappelant que  $W$ est continue et prenant en compte (\ref{AddING}), on voit que $\lim_{n\to+\infty}W(\nabla \psi(x)\mid\varphi_n(x))=\inf_{\zeta\in\Gamma^j_\psi(x)}W(\nabla \psi(x)\mid\zeta)$ pour presque tout $x\in\Sigma$. D'o\`u 
\begin{eqnarray*}
\inf_{\varphi\in C(\overline{\Sigma};\Gamma_\psi^j)}\int_{\Sigma}W(\nabla \psi(x)\mid\varphi(x))dx&\leq&\lim_{n\to+\infty}\int_\Sigma W(\nabla \psi(x)\mid\varphi_n(x))dx\\
&=&\int_\Sigma\inf_{\zeta\in\Gamma^j_\psi(x)}W(\nabla \psi(x)\mid\zeta)dx
\end{eqnarray*}
par le th\'eor\`eme de la convergence domin\'ee de Lebesgue.
\end{proof}

Pour chaque $j\geq j_\psi$, on d\'efinit $\hat\Gamma^j_\psi:\overline{\Sigma}\dto\RR^3$ (la multifonction ``sym\'etris\'ee" de $\Gamma^j_\psi$) par 
$$
\hat\Gamma^j_\psi(x):=\left\{
\begin{array}{ll}
U^-_{i,j}\cup U^+_{i,j}&\hbox{si }x\in V_i\\
\Gamma^j_\psi(x)&\hbox{si }x\in\overline{\Sigma}\setminus V.
\end{array}
\right.
$$
Noter que $\hat\Gamma^j_\psi$ n'est pas \`a valeurs convexes. C'est pour cela que l'on n'utilise pas cette multifonction dans le lemme \ref{LemmeDeTnot=0} et que l'on aura besoin de l'hypoth\`ese de ``sym\'etrie"(\ref{AdditionalCond}). Le th\'eor\`eme suivant donne une repr\'esentation ``non int\'egrale" de $\mathcal{I}$ sur $\Aff_*(\Sigma;\RR^3)$. 
\begin{theorem}\label{IntermediateTheorDetNot}
Si $W$ est continue et v\'erifie {(\ref{Cst1prime})}, {(\ref{Hyp1Example:m=N})} et {(\ref{AdditionalCond})} et si $\psi\in\Aff_*(\Sigma;\RR^3)$ alors
$$
\mathcal{I}(\psi)=\inf_{j\geq j_\psi}\inf_{\varphi\in C(\overline{\Sigma};\hat\Gamma^j_\psi)}\int_\Sigma W(\nabla\psi(x)\mid\varphi(x))dx.
$$
\end{theorem}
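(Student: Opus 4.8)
We want to show that for $\psi\in\Aff_*(\Sigma;\RR^3)$,
$$
\mathcal{I}(\psi)=\int_\Sigma W_0(\nabla\psi(x))dx=\inf_{j\geq j_\psi}\inf_{\varphi\in C(\overline{\Sigma};\hat\Gamma^j_\psi)}\int_\Sigma W(\nabla\psi(x)\mid\varphi(x))dx.
$$
The inequality ``$\leq$'' is straightforward: for any admissible $\varphi$ one has $W_0(\nabla\psi(x))=\inf_{\zeta}W(\nabla\psi(x)\mid\zeta)\leq W(\nabla\psi(x)\mid\varphi(x))$ pointwise, hence $\mathcal{I}(\psi)\leq\int_\Sigma W(\nabla\psi(x)\mid\varphi(x))dx$, and taking the double infimum preserves this. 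The substance is the reverse inequality ``$\geq$''.

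\emph{First step: reduce the symmetrized multifunction to the convex one.} On each piece $V_i$ we have $\hat\Gamma^j_\psi=U^-_{i,j}\cup U^+_{i,j}$, a union of two disjoint closed convex sets. Using the symmetry hypothesis (\ref{AdditionalCond}), $W(\xi_i\mid\zeta)=W(\xi_i\mid-\zeta)$, and the fact that $U^-_{i,j}=-U^+_{i,j}$ (since $\det(\xi_i\mid-\zeta)=-\det(\xi_i\mid\zeta)$), the infimum of $W(\xi_i\mid\cdot)$ over $U^-_{i,j}\cup U^+_{i,j}$ equals its infimum over $U^+_{i,j}$ alone. So pointwise, $\inf_{\zeta\in\hat\Gamma^j_\psi(x)}W(\nabla\psi(x)\mid\zeta)=\inf_{\zeta\in\Gamma^j_\psi(x)}W(\nabla\psi(x)\mid\zeta)$ for a.e.\ $x$. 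Therefore $\int_\Sigma\inf_{\zeta\in\hat\Gamma^j_\psi(x)}W(\nabla\psi(x)\mid\zeta)dx=\int_\Sigma\inf_{\zeta\in\Gamma^j_\psi(x)}W(\nabla\psi(x)\mid\zeta)dx$, and by Lemma \ref{LemmeDeTnot=0} this last quantity equals $\inf_{\varphi\in C(\overline{\Sigma};\Gamma^j_\psi)}\int_\Sigma W(\nabla\psi(x)\mid\varphi(x))dx$. Since $C(\overline{\Sigma};\Gamma^j_\psi)\subset C(\overline{\Sigma};\hat\Gamma^j_\psi)$, we get
$$
\inf_{\varphi\in C(\overline{\Sigma};\hat\Gamma^j_\psi)}\int_\Sigma W(\nabla\psi(x)\mid\varphi(x))dx=\int_\Sigma\inf_{\zeta\in\hat\Gamma^j_\psi(x)}W(\nabla\psi(x)\mid\zeta)dx.
$$
(The ``$\geq$'' direction here is trivial; the equality comes from the computation just given. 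Alternatively one runs a direct dominated-convergence argument as in the second proof of Lemma \ref{LemmeDeTnot=0}, using (\ref{Hyp1Example:m=N}) to get an $L^1$ bound.)

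\emph{Second step: pass to the limit $j\to\infty$.} It remains to show $\inf_{j\geq j_\psi}\int_\Sigma\inf_{\zeta\in\hat\Gamma^j_\psi(x)}W(\nabla\psi(x)\mid\zeta)dx=\int_\Sigma W_0(\nabla\psi(x))dx$. For each $i$ and a.e.\ $x\in V_i$, the sets $\hat\Gamma^j_\psi(x)=U^-_{i,j}\cup U^+_{i,j}=\{\zeta:|\det(\xi_i\mid\zeta)|\geq 1/j\}$ increase with $j$ by (\ref{Detnot=0Prop3})--(\ref{Detnot=0Prop4}) and exhaust $\{\zeta:\det(\xi_i\mid\zeta)\neq 0\}$. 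Since $W$ satisfies (\ref{Cst1prime}), $W(\xi_i\mid\zeta)=+\infty$ precisely when $\det(\xi_i\mid\zeta)=0$, so $W_0(\xi_i)=\inf_{\det(\xi_i\mid\zeta)\neq 0}W(\xi_i\mid\zeta)=\inf_{j}\inf_{\zeta\in\hat\Gamma^j_\psi(x)}W(\xi_i\mid\zeta)$. Thus $g_j(x):=\inf_{\zeta\in\hat\Gamma^j_\psi(x)}W(\nabla\psi(x)\mid\zeta)$ is a nonincreasing sequence of measurable functions on $\Sigma$ with pointwise limit $W_0(\nabla\psi(x))$ (the values on $\overline\Sigma\setminus V$ are irrelevant, that set being Lebesgue-null). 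Moreover each $g_j\leq g_{j_\psi}$, and $\int_\Sigma g_{j_\psi}<+\infty$: indeed picking a constant selection $\bar\zeta$ from $(\cap_{i\in I^-}U^-_{i,j_\psi})\cap(\cap_{i\in I^+}U^+_{i,j_\psi})\neq\emptyset$ (cf.\ (\ref{Detnot=0Prop5})) and using (\ref{Hyp1Example:m=N}) with $\delta=1/j_\psi$ bounds $g_{j_\psi}(x)\leq W(\nabla\psi(x)\mid\bar\zeta)\leq c_{1/j_\psi}(1+|\nabla\psi(x)|^p+|\bar\zeta|^p)\in L^1(\Sigma)$ since $\nabla\psi\in L^\infty$. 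The monotone (equivalently dominated) convergence theorem gives $\inf_j\int_\Sigma g_j\,dx=\int_\Sigma W_0(\nabla\psi(x))dx=\mathcal{I}(\psi)$, completing the proof.

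\emph{Expected main obstacle.} The delicate point is the interchange of infimum and integral packaged in Lemma \ref{LemmeDeTnot=0} — that is where hypothesis (\ref{Hyp1Example:m=N}) (local $p$-growth off the singular set, giving an $L^1$ majorant along convex combinations of selections) and the convexity/lower-semicontinuity of $\Gamma^j_\psi$ are essential — but that lemma is already available to us. Given it, the only genuinely new ingredient is the reduction from $\hat\Gamma^j_\psi$ to $\Gamma^j_\psi$ via the symmetry (\ref{AdditionalCond}); this is why that hypothesis appears in the statement, and it is exactly what makes the non-convex union $U^-_{i,j}\cup U^+_{i,j}$ harmless. Everything else is a routine monotone-convergence argument, the integrability of the dominating function coming for free from $\nabla\psi\in L^\infty(\Sigma)$ on the affine pieces.
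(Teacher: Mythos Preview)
Your proof is correct and follows essentially the same approach as the paper: use the symmetry hypothesis (\ref{AdditionalCond}) together with $U^-_{i,j}=-U^+_{i,j}$ to get the pointwise identity $\inf_{\zeta\in\hat\Gamma^j_\psi(x)}W(\nabla\psi(x)\mid\zeta)=\inf_{\zeta\in\Gamma^j_\psi(x)}W(\nabla\psi(x)\mid\zeta)$, invoke Lemma~\ref{LemmeDeTnot=0} and the inclusion $\Gamma^j_\psi\subset\hat\Gamma^j_\psi$, then pass to the limit in $j$ by monotone convergence using (\ref{Cst1prime}) and the $L^1$ bound from (\ref{Hyp1Example:m=N}). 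The paper records only the inequality (\ref{inequality1}) where you state a full equality, but your justification of that equality is sound and in any case only the inequality direction is needed.
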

\begin{proof}
 Il suffit de prouver que 
\begin{eqnarray}\label{ineQuality}
\mathcal{I}(\psi)\geq\inf_{j\geq j_\psi}\inf_{\varphi\in C(\overline{\Sigma};\hat\Gamma^j_\psi)}\int_\Sigma W(\nabla\psi(x)\mid\varphi(x))dx.
\end{eqnarray}
Par (\ref{AdditionalCond}) on a 
\begin{eqnarray}\label{equality1}
\inf_{\zeta\in\Gamma^j_\psi(x)}W(\nabla \psi(x)\mid\zeta)=\inf_{\zeta\in\hat\Gamma^j_\psi(x)}W(\nabla \psi(x)\mid\zeta)
\end{eqnarray}
pour tout $j\geq j_\psi$ and tout $x\in{\Sigma}$.  Utilisant (\ref{equality1}), le lemme \ref{LemmeDeTnot=0} et le fait que $\Gamma^j_\psi\subset\hat\Gamma^j_\psi$, on obtient 
\begin{equation}\label{inequality1}
\inf_{j\geq j_\psi}\int_{\Sigma}\inf_{\zeta\in\hat\Gamma^j_\psi(x)}W(\nabla \psi(x)\mid\zeta)dx\geq\inf_{j\geq j_\psi}\inf_{\varphi\in C(\overline{\Sigma};\hat\Gamma^j_\psi)}\int_\Sigma W(\nabla\psi(x)\mid\varphi(x))dx.
\end{equation}
D'autre part, on a :
\begin{enumerate}
\item[$\diamond$] $\displaystyle\inf_{\zeta\in\hat\Gamma^{j_\psi}_\psi(\cdot)}W(\nabla\psi(\cdot)\mid\zeta)\in L^1(\Sigma)$ par (\ref{Hyp1Example:m=N})\ ;\\
\item[$\diamond$] $\displaystyle\Bigg\{\inf_{\zeta\in\hat\Gamma^j_\psi(\cdot)}W(\nabla \psi(\cdot)\mid\zeta)\Bigg\}_{j\geq j_\psi}$ est une suite d\'ecroissante par (\ref{Detnot=0Prop3}) et (\ref{Detnot=0Prop4}).
\end{enumerate}
De plus, on a 
\begin{equation}\label{equality}
\inf_{j\geq j_\psi}\inf_{\zeta\in\hat\Gamma^j_\psi(\cdot)}W(\nabla \psi(\cdot)\mid\zeta)=\inf_{\zeta\in\cupp\limits_{j\geq j_\psi}\hat\Gamma^j_\psi(\cdot)}W(\nabla \psi(\cdot)\mid\zeta)=W_0(\nabla \psi(\cdot))
\end{equation}
gr\^ace \`a (\ref{Cst1prime})  et au fait que $\cup_{j\geq j_\psi}\hat\Gamma^j_\psi(\cdot)=\big\{\zeta\in\RR^3:\det(\nabla\psi(\cdot)\mid\zeta)\not=0\big\}$. D'o\`u (\ref{ineQuality}) suit de (\ref{inequality1}) et ({\ref{equality}}) en utilisant le th\'eor\`eme de la convergence monotone.
\end{proof}

\subsubsection{D\'emonstration de la proposition {\rm\ref{det>0Prop2}}}

(On utilise les notations $\Det(\xi\mid\zeta)=|\det(\xi\mid\zeta)|$ et $\mathcal{F}=\Aff_*(\Sigma;\RR^3)$. On peut travailler avec $\Aff_*(\Sigma;\RR^3)$ au lieu de $\Aff(\Sigma;\RR^3)$ gr\^ace \`a la remarque \ref{Aff=Aff_*LoRsQuEW_0veRifieAdj}.) Il suffit de prouver que 
\begin{equation}\label{limsupequality}
\limsup_{\eps\to 0}\mathcal{I}_\eps(\psi)\leq\mathcal{I}(\psi)
\end{equation}  
pour tout $\psi\in\mathcal{F}$ (puisque $\Gamma\hbox{-}\limsup_{\eps\to 0}\mathcal{I}_\eps$ est sfsci dans $W^{1,p}(\Sigma;\RR^3)$, voir \cite{dalmaso93}). \'Etant donn\'e $\psi\in\mathcal{F}$, consid\'erons  $j\geq j_\psi$ (avec $j_\psi$ donn\'e par (\ref{Detnot=0Prop5})) et $n\geq 1$. Utilisant le th\'eor\`eme \ref{IntermediateTheorDetNot} on obtient l'existence de $\varphi\in C(\overline{\Sigma};\hat\Gamma^j_\psi)$ tel que 
\begin{equation}\label{mediainequality}
\int_\Sigma W(\nabla \psi(x)\mid\varphi(x))dx\leq\mathcal{I}(\psi)+{1\over n}.
\end{equation}
Soit $\{\varphi_k\}_{k\geq 1}\subset C^\infty(\overline{\Sigma};\RR^3)$ tel que 
\begin{equation}\label{uniformityconvergence}
\varphi_k\to\varphi\hbox{ uniform\'ement.}
\end{equation}
On affirme que :
\begin{eqnarray}\label{CoNdItioN(c1)}
&&\Det(\nabla \psi(x)\mid\varphi_k(x))\geq{1\over 2j}\hbox{ pour tout }x\in V\hbox{ et tout }k\geq k_\psi\hbox{ avec }k_\psi\geq 1\ ;\\
&&\lim\limits_{k\to+\infty}\int_\Sigma W(\nabla\psi(x)\mid\varphi_k(x))dx=\int_\Sigma W(\nabla\psi(x)\mid\varphi(x))dx.\label{CoNdItioN(c2)}
\end{eqnarray}
En effet, posant $\mu_\psi:=\sup_{x\in V}|\partial_1\psi(x)\land\partial_2\psi(x)|=\max_{i\in I}|\xi_{i,1}\land\xi_{i,2}|$ ($\mu_\psi>0$) et utilisant (\ref{uniformityconvergence}), on d\'eduit qu'il existe $k_\psi\geq 1$ tel que 
\begin{equation}\label{supequality}
\sup_{x\in\overline{\Sigma}}|\varphi_k(x)-\varphi(x)|<{1\over 2j\mu_\psi}
\end{equation}
pour tout $k\geq k_\psi$. Soient $x\in V$ et $k\geq k_\psi$. Comme $\varphi\in C(\overline{\Sigma};\hat\Gamma^j_\psi)$ on a :
\begin{equation}\label{supequality1}
\Det(\nabla\psi(x)\mid\varphi_k(x))\geq{1\over j}-\Det(\nabla\psi(x)\mid\varphi_k(x)-\varphi(x)).
\end{equation}
Remarquant que $\Det(\nabla\psi(x)\mid\varphi_k(x)-\varphi(x))\leq|\partial_1\psi(x)\land\partial_2\psi(x)||\varphi_k(x)-\varphi(x)|$, de (\ref{supequality}) et (\ref{supequality1}) on d\'eduit que
$
\Det(\nabla\psi(x)\mid\varphi_k(x))\geq{1\over 2j}
$
et (\ref{CoNdItioN(c1)}) est prouv\'ee. Combinant (\ref{CoNdItioN(c1)}) et (\ref{Hyp1Example:m=N}) on voit que
$
\sup_{k\geq k_\psi}W(\nabla \psi(\cdot)\mid\varphi_k(\cdot))\in L^1(\Sigma).
$ 
Comme $W$ est continue on a
$
\lim_{k\to+\infty}W(\nabla \psi(x)\mid\varphi_k(x))=W(\nabla \psi(x)\mid\varphi(x))
$
pour tout $x\in V$ et (\ref{CoNdItioN(c2)}) suit par le th\'eor\`eme de la convergence domin\'ee de Lebesgue. 

Consid\'erons $k\geq k_\psi$ et d\'efinissons la fonction continue  $\theta:]-{1\over 2},{1\over 2}[\to\RR$ par 
$
\theta(x_3):=\min_{i\in I}\inf_{x\in \overline{V}_i}\Det(\xi_i+x_3\nabla\varphi_k(x)\mid\varphi_k(x)).
$
Par (\ref{CoNdItioN(c1)}) on a $\theta(0)\geq{1\over 2j}$ et par cons\'equent il existe $\eta_\psi\in]0,{1\over 2}[$ tel que $\theta(x_3)\geq{1\over 4j}$ pour tout $x_3\in]-\eta_\psi,\eta_\psi[$. Soit $\phi_k:\Sigma_1\to\RR$ donn\'ee par
$
\phi_k(x,x_3):=\psi(x)+x_3\varphi_k(x).
$
Il suit que 
\begin{eqnarray}\label{CoNdItioN(c3)}
&& \Det(\nabla \phi_k(x,\eps x_3))\geq{1\over 4j}\hbox{ pour tout }\eps\in]0,\eta_\psi[\hbox{ et tout }(x,x_3)\in V\times]-{1\over 2},{1\over 2}[.
\end{eqnarray}
De la m\^eme fa\c con que dans la preuve de (\ref{CoNdItioN(c2)}), combinant (\ref{CoNdItioN(c3)}) et (\ref{Hyp1Example:m=N}) et utilisant la continuit\'e de $W$, on obtient 
\begin{equation}\label{finalequality}
\lim_{\eps\to 0}I_\eps(\phi_k)=\lim_{\eps\to 0}\int_{\Sigma_1} W(\nabla \phi_k(x,\eps x_3))dxdx_3=\int_\Sigma W(\nabla \psi(x)\mid\varphi_k(x))dx.
\end{equation}

 Puisque $\pi_\eps(\phi_k)=\psi$ on a $\mathcal{I}_\eps(\psi)\leq I_\eps(\phi_k)$ pour tout $\eps>0$ et tout $k\geq k_\psi$. Utilisant (\ref{finalequality}), (\ref{CoNdItioN(c2)}) et (\ref{mediainequality}), on d\'eduit que
$
\limsup_{\eps\to 0}\mathcal{I}_\eps(\psi)\leq\mathcal{I}(\psi)+{1\over n},
$
et (\ref{limsupequality}) suit en faisant $n\to+\infty$.\hfill$\square$

\begin{remark}\label{ample-integrand}\'Etant donn\'e $\eps>0$, on consid\`ere $W_\eps:\MM^{m\times N}\to[0,+\infty]$ Borel mesurable  et on introduit la d\'efinition suivante.
\begin{definition}\label{DimensionReductionAmpleDefinition}
Soit $p>1$. On dit que $W_\eps:\MM^{m\times N}\to[0,+\infty]$ est $p$-ample si $\Z W_\eps(F)\leq c_\eps(1+|F|^p)$ pour tout $F\in\MM^{m\times N}$ avec $c_\eps>0$.\end{definition}
(Pour tout $F\in\MM^{m\times N}$, $\Z W_\eps(F):=\inf\{\int_Y W_\eps(F+\nabla\varphi(y))dy:\varphi\in\Aff_0(Y;\RR^m)\}$.) On d\'efinit $I_\eps, \Z I_\eps:W^{1,p}(\Sigma_\eps;\RR^m)\to[0,+\infty]$ par :
\begin{itemize}
\item[$\diamond$] $\displaystyle I_\eps(\phi):=\int_{\Sigma_\eps} W_\eps(\nabla\phi(y))dy$ ;
\item[$\diamond$] $\displaystyle \Z I_\eps(\phi):=\int_{\Sigma_\eps}\Z W_\eps(\nabla\phi_\eps(y))dy$,
\end{itemize}
o\`u $\Sigma_\eps\subset\RR^N$ est un ouvert born\'e, et on consid\`ere  $\pi=\{\pi_\eps\}_\eps$ une famille d'applica-tions de $W^{1,p}(\Sigma_\eps;\RR^m)$ dans $W^{1,p}(\Sigma;\RR^m)$ avec $\Sigma\subset\RR^k$ un ouvert born\'e. Le th\'eor\`eme suivant justifie la d\'efinition \ref{DimensionReductionAmpleDefinition}. La famille $\{W_\eps\}_\eps$ est suppos\'ee uniform\'ement coercive, i.e., $W_\eps(F)\geq C|F|^p$ pour tout $F\in\MM^{m\times N}$ et tout $\eps>0$ avec $C>0$.
\begin{theorem}\label{DimensionReductionAmpleTheorem}
Supposons que :
\begin{itemize}
\item[$\diamond$] pour chaque $\eps>0$, $W_\eps$ est $p$-ample ;
\item[$\diamond$] il existe $I_0:W^{1,p}(\Sigma;\RR^m)\to[0,+\infty]$ telle que $I_0=\Gamma(\pi)\hbox{-}\lim\limits_{\eps\to 0}\Z I_\eps$.
\end{itemize}
Alors $I_0=\Gamma(\pi)$-$\lim\limits_{\eps\to 0} I_\eps$.
\end{theorem}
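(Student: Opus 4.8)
The statement to prove is Theorem \ref{DimensionReductionAmpleTheorem}: given that each $W_\eps$ is $p$-ample and that $\Z I_\eps$ $\Gamma(\pi)$-converges to $I_0$, we conclude that $I_\eps$ itself $\Gamma(\pi)$-converges to $I_0$. The natural strategy is to compare $I_\eps$ and $\Z I_\eps$ at the level of $\Gamma(\pi)$-limits, showing that neither the $\liminf$ nor the $\limsup$ inequality is disturbed by passing from $W_\eps$ to $\Z W_\eps$. Since $\Z W_\eps\le W_\eps$ pointwise, one has $\Z I_\eps\le I_\eps$ trivially, which immediately gives $\Gamma(\pi)$-$\liminf_{\eps\to0}I_\eps\ge\Gamma(\pi)$-$\liminf_{\eps\to0}\Z I_\eps=I_0$; so the whole difficulty is in the $\limsup$ (recovery-sequence) inequality, i.e. showing $\Gamma(\pi)$-$\limsup_{\eps\to0}I_\eps\le I_0$.

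\medskip

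\textbf{Key steps.} First I would rephrase everything in the "sliced" form used already in the paper: introduce $\mathcal I_\eps(\psi):=\inf\{I_\eps(\phi):\pi_\eps(\phi)=\psi\}$ and likewise $\Z{\mathcal I}_\eps$, and recall (as in \eqref{EquivGammaPi-Gamma}) that $\Gamma(\pi)$-convergence of $I_\eps$ is equivalent to $\Gamma$-convergence of $\mathcal I_\eps$ with respect to weak convergence in $W^{1,p}(\Sigma;\RR^m)$. Second, since $\Z{\mathcal I}_\eps\le\mathcal I_\eps$, the $\liminf$ part is free. Third — the heart of the matter — I would establish, for each fixed $\eps$, a relaxation-type identity relating $\mathcal I_\eps$ and $\Z{\mathcal I}_\eps$ of the form: the weak $W^{1,p}$-lower semicontinuous envelope (on the fibre $\pi_\eps^{-1}(\psi)$) of $I_\eps$ coincides with that of $\Z I_\eps$. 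This is exactly where $p$-ampleness is used, via Theorem \ref{GeneralThRelax}-bis: because $\Z W_\eps$ satisfies the growth condition \eqref{Cond-CroissZW} with constant $c_\eps$, the functional $\Z I_\eps$ equals the relaxation $\overline{I}_\eps=\overline{(I_\eps)}_{\rm aff}=\int\Z W_\eps(\nabla\cdot)$, so $\Z I_\eps$ can be recovered from $I_\eps$ by piecewise-affine sequences that converge weakly. Fourth, I would combine this $\eps$-by-$\eps$ relaxation with the assumed $\Gamma(\pi)$-convergence $\Z I_\eps\to I_0$: given $\psi$, pick by hypothesis a recovery family $\psi_\eps\wto\psi$ with $\limsup_\eps\Z{\mathcal I}_\eps(\psi_\eps)\le I_0(\psi)$; then, for each $\eps$, use the relaxation identity to replace $\psi_\eps$ by a nearby $\tilde\psi_\eps$ (weakly close, hence still converging weakly to $\psi$ after a diagonal extraction) with $\mathcal I_\eps(\tilde\psi_\eps)\le\Z{\mathcal I}_\eps(\psi_\eps)+\eps$. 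A careful diagonal argument à la Attouch then yields $\limsup_\eps\mathcal I_\eps(\tilde\psi_\eps)\le I_0(\psi)$, which is the desired $\limsup$ inequality; with the $\liminf$ half this gives $I_0=\Gamma(\pi)$-$\lim_\eps I_\eps$.

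\medskip

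\textbf{Main obstacle.} The delicate point is the diagonalization: the relaxation identity for a fixed $\eps$ produces, for the approximation of a \emph{given} target $\psi_\eps$ by functions achieving the $\Z$-value, a sequence indexed by an auxiliary parameter $n$; one must extract $n=n(\eps)\to\infty$ slowly enough that $\tilde\psi_\eps:=\psi_{\eps,n(\eps)}$ still satisfies $\pi$-composed weak convergence to $\psi$ and $I_\eps(\tilde\psi_\eps)\to$ the right value, while fast enough that the error terms ($1/n$, and the distance $\|\tilde\psi_\eps-\psi_\eps\|$) vanish. This is routine in spirit but requires the relaxation identity to hold with quantitative control uniform in the target, which is why the clean statement $\overline{I}_\eps=\Z I_\eps$ from Theorem \ref{GeneralThRelax}-bis (rather than just $\overline{W}=\Z W$ pointwise) is exactly what is needed. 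A secondary technical nuisance is that $c_\eps$ is allowed to blow up as $\eps\to0$, so one must make sure that the relaxation step is applied at fixed $\eps$ before any limit is taken, and that the diagonal extraction only invokes, for each $\eps$, the finite constant $c_\eps$.
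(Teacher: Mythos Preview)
Your proposal is correct and rests on the same key ingredient as the paper: the relaxation identity $\overline{I}_\eps=\Z I_\eps$ from Theorem~\ref{GeneralThRelax} (valid for each fixed $\eps$ by $p$-ampleness), combined with the fact that $\Gamma$-limits are unchanged when each functional is replaced by its sequentially weakly lower semicontinuous envelope.

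The paper's proof compresses exactly this into three lines: it invokes the general fact $\Gamma(\pi)\text{-}\lim_{\eps\to0}I_\eps=\Gamma(\pi)\text{-}\lim_{\eps\to0}\overline{I}_\eps$ directly (this is standard, see e.g.\ Dal~Maso, and its proof is precisely the diagonal extraction you spell out), then substitutes $\overline{I}_\eps=\Z I_\eps$. Your version unpacks this into the explicit diagonalization, which is fine, but the detour through the sliced functionals $\mathcal I_\eps$ and $\Z\mathcal I_\eps$ is unnecessary here: the relaxation theorem already operates on $W^{1,p}(\Sigma_\eps;\RR^m)$, and the $\Gamma(\pi)$-framework lets you work directly with $I_\eps$ and $\overline{I}_\eps$ on the varying domains without ever passing to the infimum over fibers. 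Dropping that detour would make your argument as short as the paper's.
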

\begin{proof}
On a $\Gamma(\pi)$-$\lim_{\eps\to0}I_\eps=\Gamma(\pi)$-$\lim_{\eps\to 0}\overline{I}_\eps$, o\`u $\overline{I}_\eps:W^{1,p}(\Sigma_\eps;\RR^m)\to[0,+\infty]$ est la r\'egularis\'ee sci par rapport \`a la convergence faible de $W^{1,p}(\Sigma_\eps;\RR^m)$. Or, pour chaque $\eps>0$, $W_\eps$ est $p$-ample, donc, par le th\'eor\`eme \ref{GeneralThRelax},  $\overline{I}_\eps=\Z I_\eps$ pour tout $\eps>0$. D'o\`u $\Gamma(\pi)$-$\lim_{\eps\to0}I_\eps=\Gamma(\pi)$-$\lim_{\eps\to 0}\Z I_\eps$, et le r\'esultat suit.
\end{proof}
Le r\'esultat suivant est une cons\'equence du th\'eor\`eme \ref{DimensionReductionAmpleTheorem} (rappelons que la fonction $W$ est suppos\'ee coercive).
\begin{theorem}\label{3D-2DTheoremBis}
Si $W_\eps={1\over\eps} W$ pour tout $\eps>0$ et si $W:\MM^{3\times 3}\to[0,+\infty]$ satisfait {(\ref{Hyp1Example:m=N})} alors $I_{\rm mem}=\Gamma(\pi)\hbox{\rm -}\lim_{\eps\to 0}I_\eps$ avec $W_{\rm mem}=\mathcal{Q} W_0=\Z W_0$.
\end{theorem}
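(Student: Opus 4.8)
The plan is to deduce the statement from Theorem~\ref{DimensionReductionAmpleTheorem}, applied to the family $W_\eps:={1\over\eps}W$; for this I must check that each $W_\eps$ is $p$-ample and that the quasiconvexified energies $\Z I_\eps$ $\Gamma(\pi)$-converge to $I_{\rm mem}$. First I would settle $p$-ampleness. Since $W$ satisfies (\ref{Hyp1Example:m=N}), the argument already carried out inside the proof of Corollary~\ref{CorollaryExample:m=NBis} shows that $\Z W$ is finite and satisfies (\ref{Cond-CroissZW}). As $\Z$ commutes with multiplication by the positive constant ${1\over\eps}$ one has $\Z W_\eps={1\over\eps}\Z W$, hence $\Z W_\eps(F)\le{c\over\eps}(1+|F|^p)$ for every $F\in\MM^{3\times 3}$; so each $W_\eps$ is $p$-ample in the sense of Definition~\ref{DimensionReductionAmpleDefinition}, and the family $\{W_\eps\}_{0<\eps\le 1}$ is uniformly coercive because $W$ is coercive.

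Next I would compute $\Gamma(\pi)\hbox{\rm -}\lim_{\eps\to 0}\Z I_\eps$. By definition $\Z I_\eps(\phi)=\int_{\Sigma_\eps}\Z W_\eps(\nabla\phi)={1\over\eps}\int_{\Sigma_\eps}\Z W(\nabla\phi)$, that is, $\Z I_\eps$ is exactly the three-dimensional energy (\ref{TriDimEnerMem}) built from $\Z W$ in place of $W$. Now $\Z W$ is finite, hence continuous by Proposition~\ref{FonsecaProperties}(c), and it satisfies the $p$-growth condition (\ref{Cond-CroissZW}); so Theorem~\ref{LeDretRaoult} applied to $\Z W$ yields $\Gamma(\pi)\hbox{\rm -}\lim_{\eps\to 0}\Z I_\eps=\widetilde I$, where $\widetilde I(\psi)=\int_\Sigma\mathcal{Q}\big((\Z W)_0\big)(\nabla\psi)\,dx$ and $(\Z W)_0(\xi):=\inf_{\zeta\in\RR^3}\Z W(\xi\mid\zeta)$.

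The last step, which I expect to be the main obstacle, is to identify $\mathcal{Q}\big((\Z W)_0\big)$ with the expected density $\mathcal{Q}W_0=\Z W_0$. I would argue as follows. Proceeding as in the proof of Lemma~\ref{Lemme1Det>0} (the part producing (\ref{Adj-Infty2})), condition (\ref{Hyp1Example:m=N}) forces $W_0$ to satisfy (\ref{Adj-Infty2}), hence (\ref{HypExample:m=N+1}); by Theorem~\ref{ThExample:m=N+1}, $\Z W_0$ then satisfies (\ref{Cond-CroissZW}), so $\Z W_0$ is finite, and Theorem~\ref{QuasiconvexificationFormulaTheorem}-bis gives that $\Z W_0$ is continuous and quasiconvex and that $\mathcal{Q}W_0=\Z W_0$ (likewise $\mathcal{Q}W=\Z W$, since $\Z W$ is finite). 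From $\Z W\le W$ one gets $(\Z W)_0\le W_0$, whence $\mathcal{Q}\big((\Z W)_0\big)\le\mathcal{Q}W_0=\Z W_0$. For the reverse inequality I would invoke the standard fact (see e.g.\ \cite{dacorogna08}) that adjoining an unused column preserves quasiconvexity: the function $(\xi\mid\zeta)\mapsto\Z W_0(\xi)$ on $\MM^{3\times 3}$ is quasiconvex, and it is dominated by $W_0(\xi)\le W(\xi\mid\zeta)$, so being quasiconvex it lies below $\mathcal{Q}W=\Z W$; thus $\Z W_0(\xi)\le\Z W(\xi\mid\zeta)$ for every $\zeta$, i.e.\ $\Z W_0\le(\Z W)_0$, and since $\Z W_0$ is quasiconvex this gives $\Z W_0\le\mathcal{Q}\big((\Z W)_0\big)$. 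Hence $\mathcal{Q}\big((\Z W)_0\big)=\mathcal{Q}W_0=\Z W_0$.

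Putting these together, $\Gamma(\pi)\hbox{\rm -}\lim_{\eps\to 0}\Z I_\eps=I_{\rm mem}$ with $W_{\rm mem}=\mathcal{Q}W_0=\Z W_0$, so the hypotheses of Theorem~\ref{DimensionReductionAmpleTheorem} hold with $I_0=I_{\rm mem}$, and it follows that $I_{\rm mem}=\Gamma(\pi)\hbox{\rm -}\lim_{\eps\to 0}I_\eps$ with the same $W_{\rm mem}$. The delicate part is precisely the transfer of the membrane density from the auxiliary $p$-ample integrand $\Z W$ back to the density defined directly from $W$; it goes through because $\Z W_0$ is finite (so that $\mathcal{Q}W_0=\Z W_0$) and because of the dummy-column quasiconvexity lemma.
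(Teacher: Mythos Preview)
Your plan is correct and matches the paper's proof almost exactly: same reduction to Theorem~\ref{DimensionReductionAmpleTheorem}, same verification of $p$-ampleness via Corollary~\ref{CorollaryExample:m=NBis}, same application of Theorem~\ref{LeDretRaoult} to the continuous, $p$-growth integrand $\Z W$, and the same identification of the limit density with $\Z W_0=\mathcal{Q}W_0$.

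The only difference is in how you establish the key inequality $\Z W_0\le(\Z W)_0$ (equivalently $[\Z W]_0\ge\Z W_0$). You argue abstractly: the dummy-column extension $(\xi\mid\zeta)\mapsto\Z W_0(\xi)$ is quasiconvex and lies below $W$, hence below $\mathcal{Q}W=\Z W$, and then you take $\inf_\zeta$. The paper instead unpacks the definition of $\Z W(\xi\mid\zeta)$ directly: given a near-minimizer $\varphi\in\Aff_0(]0,1[^3;\RR^3)$, it slices in $x_3$, bounds $W(\xi+\nabla\varphi_{x_3}\mid\zeta+\partial_3\varphi)$ below by $W_0(\xi+\nabla\varphi_{x_3})$, and then by $\Z W_0(\xi)$. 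These are really the same argument in different wrappers --- the proof of your dummy-column lemma \emph{is} the paper's slicing computation --- so nothing is gained or lost either way; your version is slightly more conceptual, the paper's is self-contained.
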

\begin{proof}
Puisque $W$ est coercive, la famille $\{W_\eps\}_\eps$ est uniform\'ement coercive.  Ici $m=N=3$, $k=2$, $\Sigma_\eps=\Sigma\times]-{\eps\over 2},{\eps\over 2}[$ avec $\Sigma\subset\RR^2$ un ouvert born\'e et $\pi=\{\pi_\eps\}_\eps$ avec $\pi_\eps:W^{1,p}(\Sigma_\eps;\RR^3)\to W^{1,p}(\Sigma;\RR^3)$ d\'efinie par (\ref{PourAJOUTRmK}). Comme $W$ satisfait (\ref{Hyp1Example:m=N}), par le corollaire \ref{CorollaryExample:m=NBis}, on a $\Z W(F)\leq c(1+|F|^p)$ pour tout $F\in\MM^{3\times 3}$ avec $c>0$ (donc $\Z W$ est finie). Or pour chaque $\eps>0$, $\Z W_\eps={1\over \eps}\Z W$, donc $\Z W_\eps(F)\leq {c\over\eps}(1+|F|^p)$ pour tout $F\in\MM^{3\times 3}$ et tout $\eps>0$, i.e., pour chaque $\eps>0$, $W_\eps$ est $p$-ample. D'autre part, $\Z W$ est continue d'apr\`es la proposition \ref{FonsecaProperties}(c), donc, par le th\'eor\`eme \ref{LeDretRaoult}, on a $I_0=\Gamma(\pi)$-$\lim_{\eps\to 0} \Z I_\eps$ avec $I_0:W^{1,p}(\Sigma;\RR^3)\to[0,+\infty]$ d\'efinie par $I_0(\psi):=\int_\Sigma\Z[\Z W]_0(\nabla\psi(x))dx$, o\`u $[\Z W]_0:\MM^{3\times 2}\to[0,+\infty]$ est donn\'ee par $[\Z W]_0(\xi):=\inf\{\Z W(\xi\mid\zeta):\zeta\in\RR^3\}$, et le th\'eor\`eme suit du  th\'eor\`eme \ref{DimensionReductionAmpleTheorem} en montrant que 
\begin{equation}\label{DimenSIONalReDuCTIonEqProof}
\Z[\Z W]_0=\Z W_0.
\end{equation}
(Puisque  $\Z W$ est finie, $\Z W_0$ l'est aussi par (\ref{DimenSIONalReDuCTIonEqProof}), donc $\mathcal{Q} W_0=\Z W_0$ par le th\'eor\`eme \ref{QuasiconvexificationFormulaTheorem}-bis.) 

Prouvons (\ref{DimenSIONalReDuCTIonEqProof}). Pour chaque $\xi\in\MM^{3\times 2}$, on a $\Z[\Z W]_0(\xi)\leq[\Z W]_0(\xi)\leq\Z W(\xi\mid\zeta)\leq W(\xi\mid\zeta)$ pour tout $\zeta\in\RR^3$, donc $\Z[\Z W]_0(\xi)\leq W_0(\xi)$ pour tout $\xi\in\MM^{3\times 2}$, i.e., $\Z[\Z W]_0\leq\Z W_0$. Il suit que $\Z[\Z W]_0\leq\Z W_0$. D'autre part, \'etant donn\'es $\eps>0$ et $\xi\in\MM^{3\times 2}$, il existe $\zeta\in\RR^3$ et $\varphi\in \Aff_0(Y;\RR^3)$ (avec $Y:=]0,1[^3$) tels que 
\[
\left[\Z W\right]_0(\xi)+\eps\ge \int_YW\big(\xi+\nabla\varphi_{x_3}(x)\mid\zeta+\partial_3\varphi(x,x_3)\big)dxdx_3
\]
avec $\varphi_{x_3}\in \Aff_0(]0,1[^2;\RR^3)$ d\'efinie par $\varphi_{x_3}(x):=\varphi(x,x_3)$. Or 
\begin{eqnarray*}
\int_YW\big(\xi+\nabla\varphi_{x_3}(x)\mid\zeta+\partial_3\varphi(x,x_3)\big)dxdx_3&\ge& \int_{0}^{1}\int_{]0,1[^2} W_0(\xi+\nabla\varphi_{x_3}(x))dxdx_3\\
&\ge&\int_{0}^{1}\Z W_0(\xi)dx_3=\Z W_0(\xi),
\end{eqnarray*}
donc $[\Z W]_0(\xi)+\eps\ge\Z W_0(\xi)$, d'o\`u  $[\Z W]_0(\xi)\ge \Z W_0(\xi)$ en faisant $\eps\to 0$. Il suit que $[\Z W]_0\ge \Z W_0$ et par cons\'equent $\Z[\Z W]_0\ge \Z W_0$.
\end{proof}

Le th\'eor\`eme \ref{3D-2DTheoremBis} est ``meilleur" que le th\'eor\`eme \ref{ThMemb1} car on n'a besoin ni de la continuit\'e de $W$, ni de l'hypoth\`ese de ``sym\'etrie" \eqref{AdditionalCond}, ni m\^eme de la condition (\ref{Cst1prime}). Cependant, on ne peut pas appliquer le th\'eor\`eme \ref{DimensionReductionAmpleTheorem} pour traiter  la contrainte d\'eterminant strictement positif puisque lorsque $W$ satisfait {(\ref{Cst1})} et {(\ref{Hyp3Example:m=N})}, les $W_\eps={1\over\eps}W$ ne sont pas $p$-amples. 
\end{remark}
\subsection{Contrainte d\'eterminant strictement positif}

Dans ce paragraphe on d\'e-montre la proposition \ref{det>0Prop3}  et  l'\'egalit\'e {(\ref{BenBelgacemBennequinGromovEliashbergEgalitŽ})}.

\subsubsection{Pr\'eliminaires}

\'Etant donn\'es $\psi\in C^1_*(\overline{\Sigma};\RR^3)$ et $j\geq 1$, on d\'efinit $\Lambda_\psi^j:\overline{\Sigma}\dto\RR^3$ par 
$$
\Lambda^j_\psi(x):=\left\{\zeta\in\RR^3:\det(\nabla\psi(x)\mid\zeta)\geq{1\over j}\right\}.
$$
On peut voir que :
 \begin{eqnarray}\label{Det>0Prop1}
&&\Lambda^1_\psi(x)\subset\Lambda^2_\psi(x)\subset\cdots\subset\cupp\limits_{j\geq 1}\Lambda^j_\psi(x)=\big\{\zeta\in\RR^3:\det(\nabla\psi(x)\mid\zeta)>0\big\}\ ;\\
&&\Lambda^j_\psi\hbox{ est une multifonction  sci \`a valeurs convexes ferm\'ees non vides.}\label{Det>0Prop1BIs}
 \end{eqnarray}
(Ici, on ne peut pas prendre $\psi$ dans $\Aff_*(\Sigma;\RR^3)$. En effet, on serait amen\'e \`a consid\'erer seulement $U^+_{i,j}$  ce qui ne permettrait pas de reproduire la d\'emarche d\'evelopp\'ee en \S 3.2.1.) 

\begin{proof}[Preuve de (\ref{Det>0Prop1BIs})]
 Il est facile de voir que pour chaque  $x\in\overline{\Sigma}$, $\Lambda_\psi^j(x)$ est \`a valeurs convexes fermŽes non vides. Montrons que $\Lambda^j_\psi$ est sci (voir la d\'efinition \ref{DefMultifonctionSCI}). Soient $F$ un ferm\'e de $\RR^3$, $x\in\overline{\Sigma}$ et $\{x_n\}_{n\geq 1}\subset\overline{\Sigma}$ tels que $x_n\to x$ et $\Lambda^j_{\psi}(x_n)\subset F$ pour tout $n\geq 1$. Soient $\zeta\in \Lambda_\psi^j(x)$ et $\{\zeta_m\}_{m\geq 1}\subset\RR^3$ donn\'es par $\zeta_m:=\zeta+{1\over m}\zeta$. Alors 
\begin{equation}\label{DetEq}
\det\big(\nabla \psi(x)\mid\zeta_m\big)=\det\big(\nabla \psi(x)\mid\zeta\big)+{1\over m}\det\big(\nabla \psi(x)\mid\zeta\big)\geq{1\over j}+{1\over mj}
\end{equation}
pour tout $m\geq 1$. \'Etant donn\'e $m\geq 1$, puisque $\det(\nabla \psi(x_n)\mid\zeta_m)\to\det(\nabla \psi(x)\mid\zeta_m)$, utilisant (\ref{DetEq}) on voit qu'il existe $n_0\geq 1$ tel que $\det(\nabla \psi(x_{n_0})\mid\zeta_m)>{1\over j}$, donc $\zeta_m\in\Lambda_\psi^j(x_{n_0})$. D'o\`u $\zeta_m\in F$ pour tout $m\geq 1$. Comme $F$ est ferm\'e on a $\zeta=\lim_{m\to +\infty}\zeta_m\in F$.
\end{proof}

Le lemme suivant est une cons\'equence du th\'eor\`eme \ref{InfIntTH2}. (Il se prouve de la m\^eme fa\c con que le lemme \ref{LemmeDeTnot=0}.)
\begin{lemma}\label{LemmeDeT>0}
Soient $\psi\in C^1_*(\overline{\Sigma};\RR^3)$ et $j\geq 1$. Si $W$ est continue et satisfait {\rm(\ref{Hyp3Example:m=N})} alors
\begin{eqnarray}\label{EquALitYLemmeDeT>0}
\inf_{\varphi\in C(\overline{\Sigma};\Lambda_\psi^j)}\int_\Sigma W(\nabla\psi(x)\mid\varphi(x))dx=\int_\Sigma \inf_{\zeta\in\Lambda_\psi^j(x)}W(\nabla\psi(x)\mid\zeta)dx.
\end{eqnarray}
\end{lemma}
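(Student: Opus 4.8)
The plan is to follow the scheme of the proof of Lemma~\ref{LemmeDeTnot=0}, applying the infimum--integral permutation theorem~\ref{InfIntTH2} to the integrand $f(x,\zeta):=W(\nabla\psi(x)\mid\zeta)$ and the multifunction $\Gamma:=\Lambda^j_\psi$. As there, the inequality ``$\le$'' in (\ref{EquALitYLemmeDeT>0}) is the only one needing an argument (the reverse being immediate by bounding below, for every $\varphi$, the integrand by the pointwise infimum and then taking $\inf_\varphi$), and Theorem~\ref{InfIntTH2} delivers it --- in fact the full equality --- as soon as its three hypotheses (\ref{ContInterHyp1}), (\ref{ContInterHyp2}), (\ref{ContInterHyp3}) are verified.

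First, (\ref{ContInterHyp1}): since $\psi\in C^1_*(\overline{\Sigma};\RR^3)$ the map $x\mapsto\nabla\psi(x)=(\partial_1\psi(x)\mid\partial_2\psi(x))$ is continuous on $\overline{\Sigma}$, and $W$ is continuous by assumption, so $(x,\zeta)\mapsto W(\nabla\psi(x)\mid\zeta)$ is continuous on $\overline{\Sigma}\times\RR^3$. Next, (\ref{ContInterHyp2}) is exactly (\ref{Det>0Prop1BIs}), already proved above: $\Lambda^j_\psi$ is a lower semicontinuous multifunction with nonempty closed convex values. Finally, for (\ref{ContInterHyp3}) one invokes the growth condition (\ref{Hyp3Example:m=N}): given $\varphi,\hat\varphi\in C(\overline{\Sigma};\Lambda^j_\psi)$ and $\alpha\in[0,1]$, convexity of $\Lambda^j_\psi(x)$ yields $\alpha\varphi(x)+(1-\alpha)\hat\varphi(x)\in\Lambda^j_\psi(x)$, i.e. $\det\big(\nabla\psi(x)\mid\alpha\varphi(x)+(1-\alpha)\hat\varphi(x)\big)\ge\frac1j$ for every $x\in\overline{\Sigma}$; applying (\ref{Hyp3Example:m=N}) with $\delta=\frac1j$, and using that $\nabla\psi$, $\varphi$, $\hat\varphi$ are bounded on the compact set $\overline{\Sigma}$, one obtains a constant $c>0$ depending only on $j,\psi,\varphi,\hat\varphi$ with $W\big(\nabla\psi(x)\mid\alpha\varphi(x)+(1-\alpha)\hat\varphi(x)\big)\le c$ for all $\alpha\in[0,1]$ and all $x\in\overline{\Sigma}$; as $|\Sigma|<+\infty$, this is the $L^1$-bound uniform in $\alpha$ required by (\ref{ContInterHyp3}). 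Theorem~\ref{InfIntTH2} then gives (\ref{EquALitYLemmeDeT>0}).

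The only genuinely non-formal ingredient is the lower semicontinuity of $\Lambda^j_\psi$, and this is already settled in (\ref{Det>0Prop1BIs}) by the trick of perturbing $\zeta$ into $(1+\frac1m)\zeta$; so I do not anticipate a serious difficulty. What must be kept in mind is \emph{where} the hypotheses are used: the regularity $\psi\in C^1_*(\overline{\Sigma};\RR^3)$ --- rather than $\psi\in\Aff_*(\Sigma;\RR^3)$ --- is needed both for the joint continuity of $f$ and, as noted before (\ref{Det>0Prop1}), because for a piecewise-affine $\psi$ one would see only the half-spaces $U^+_{i,j}$, so that no analogue of (\ref{Detnot=0Prop5}) is available (indeed $\bigcap_{x\in\overline{\Sigma}}\Lambda^j_\psi(x)$ may well be empty when the normal $\partial_1\psi\land\partial_2\psi$ sweeps out too many directions). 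For the same reason the ``hands-on'' second proof of Lemma~\ref{LemmeDeTnot=0} --- choose a common near-minimiser $\bar\zeta$, interpolate toward pointwise near-minimisers through a distance cutoff while staying inside the convex values, and conclude by dominated convergence using (\ref{Hyp3Example:m=N}) --- does not transfer directly, and the passage through Theorem~\ref{InfIntTH2} is the route to take.
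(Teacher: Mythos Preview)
Your proof is correct and follows exactly the paper's approach: apply Theorem~\ref{InfIntTH2} after verifying (\ref{ContInterHyp1}) via the continuity of $W$ and of $\nabla\psi$, (\ref{ContInterHyp2}) via (\ref{Det>0Prop1BIs}), and (\ref{ContInterHyp3}) via convexity of $\Lambda^j_\psi(x)$ together with (\ref{Hyp3Example:m=N}) at level $\delta=\frac1j$. Your closing paragraph explaining why the direct ``hands-on'' argument from Lemma~\ref{LemmeDeTnot=0} does not carry over is a useful addition that the paper leaves implicit.
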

\begin{proof}
On r\'ecrit la preuve du lemme \ref{LemmeDeT>0} en prenant  $\Gamma^j_\psi=\Lambda^j_\psi$ ($\Lambda^j_\psi$ est une multifonction  sci \`a valeurs convexes ferm\'ees non vides), $\Det(\xi\mid\zeta)=\det(\xi\mid\zeta)$ et en utilisant (\ref{Hyp3Example:m=N}) \`a la place de {\rm(\ref{Hyp1Example:m=N})}. On peut ainsi appliquer le th\'eor\`eme \ref{InfIntTH2} avec $f(x,\zeta)=W(\nabla\psi(x)\mid\zeta)$ et $\Gamma=\Lambda^j_\psi$ et on obtient (\ref{EquALitYLemmeDeT>0}).
\end{proof}

 Le th\'eor\`eme suivant donne une repr\'esentation ``non int\'egrale" de $\mathcal{I}$ sur $C_*^1(\overline{\Sigma};\RR^3)$ (voir \cite[Theorem 3.4]{oah-jpm08b}). (Il se d\'emontre de la m\^eme fa\c con que le th\'eor\`eme \ref{IntermediateTheorDetNot}.)
\begin{theorem}\label{IntermediateTheorDet>}
Si $W$ est continue et v\'erifie {\rm(\ref{Cst1})} et {\rm(\ref{Hyp3Example:m=N})} et si $\psi\in C^1_*(\overline{\Sigma};\RR^3)$ alors
$$
\mathcal{I}(\psi)=\inf_{j\geq 1}\inf_{\varphi\in C(\overline{\Sigma};\Lambda^j_\psi)}\int_\Sigma W(\nabla\psi(x)\mid\varphi(x))dx.
$$
\end{theorem}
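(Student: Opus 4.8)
La preuve suit mot pour mot celle du théorème \ref{IntermediateTheorDetNot}, le rôle de $\hat\Gamma^j_\psi$ étant tenu ici par $\Lambda^j_\psi$ lui-même : comme on travaille directement avec le déterminant signé $\det(\nabla\psi(x)\mid\zeta)\geq{1\over j}$ (et non avec sa valeur absolue), la multifonction $\Lambda^j_\psi$ est déjà à valeurs convexes fermées non vides d'après \eqref{Det>0Prop1BIs}, de sorte que l'étape de ``symétrisation'' (et donc l'hypothèse \eqref{AdditionalCond}) devient superflue. Il suffit de prouver l'inégalité
$$
\mathcal{I}(\psi)\geq\inf_{j\geq 1}\inf_{\varphi\in C(\overline{\Sigma};\Lambda^j_\psi)}\int_\Sigma W(\nabla\psi(x)\mid\varphi(x))dx,
$$
l'inégalité opposée résultant de ce que, pour tout $j\geq 1$ et tout $\varphi\in C(\overline{\Sigma};\Lambda^j_\psi)$, on a $\varphi(x)\in\RR^3$ donc $W_0(\nabla\psi(x))\leq W(\nabla\psi(x)\mid\varphi(x))$ p.p. dans $\Sigma$, d'où $\mathcal{I}(\psi)=\int_\Sigma W_0(\nabla\psi(x))dx\leq\int_\Sigma W(\nabla\psi(x)\mid\varphi(x))dx$.

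Pour l'inégalité principale, on applique d'abord le lemme \ref{LemmeDeT>0} qui donne, pour chaque $j\geq 1$,
$$
\inf_{\varphi\in C(\overline{\Sigma};\Lambda^j_\psi)}\int_\Sigma W(\nabla\psi(x)\mid\varphi(x))dx=\int_\Sigma\inf_{\zeta\in\Lambda^j_\psi(x)}W(\nabla\psi(x)\mid\zeta)dx,
$$
puis on passe à l'infimum sur $j$. On observe ensuite que la suite de fonctions $\big\{\inf_{\zeta\in\Lambda^j_\psi(\cdot)}W(\nabla\psi(\cdot)\mid\zeta)\big\}_{j\geq 1}$ est décroissante, grâce à l'inclusion croissante $\Lambda^1_\psi(x)\subset\Lambda^2_\psi(x)\subset\cdots$ de \eqref{Det>0Prop1}, et qu'elle est dominée par une fonction $L^1$ : en effet $\inf_{\zeta\in\Lambda^1_\psi(\cdot)}W(\nabla\psi(\cdot)\mid\zeta)$ est finie p.p. (prendre $\zeta$ dans $\Lambda^1_\psi(x)$, qui est non vide, et utiliser \eqref{Hyp3Example:m=N} puisque $\det(\nabla\psi(x)\mid\zeta)\geq 1$) et majorée uniformément sur $\overline{\Sigma}$ car $\psi\in C^1_*(\overline{\Sigma};\RR^3)$ est à dérivées bornées. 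Enfin, en utilisant \eqref{Det>0Prop1} ainsi que la contrainte \eqref{Cst1} (qui assure que $W(\nabla\psi(x)\mid\zeta)=+\infty$ dès que $\det(\nabla\psi(x)\mid\zeta)\leq 0$, et que $W(\nabla\psi(x)\mid\zeta)<+\infty$ force $\det(\nabla\psi(x)\mid\zeta)>0$), on identifie la limite décroissante :
$$
\inf_{j\geq 1}\inf_{\zeta\in\Lambda^j_\psi(x)}W(\nabla\psi(x)\mid\zeta)=\inf_{\zeta\in\cupp\limits_{j\geq 1}\Lambda^j_\psi(x)}W(\nabla\psi(x)\mid\zeta)=\inf_{\zeta:\,\det(\nabla\psi(x)\mid\zeta)>0}W(\nabla\psi(x)\mid\zeta)=W_0(\nabla\psi(x)).
$$
Le théorème de la convergence monotone (appliqué à la suite décroissante et dominée ci-dessus) permet alors d'échanger $\inf_{j}$ et $\int_\Sigma$, ce qui donne $\inf_{j\geq 1}\int_\Sigma\inf_{\zeta\in\Lambda^j_\psi(x)}W(\nabla\psi(x)\mid\zeta)dx=\int_\Sigma W_0(\nabla\psi(x))dx=\mathcal{I}(\psi)$, et conclut.

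Le point délicat n'est pas le passage à la limite en $j$, qui est routinier, mais bien le lemme \ref{LemmeDeT>0} (donc in fine le théorème \ref{InfIntTH2} de permutation de l'infimum et de l'intégrale), dont la validité repose crucialement sur le fait que $\Lambda^j_\psi$ est une multifonction \emph{sci à valeurs convexes} — propriété \eqref{Det>0Prop1BIs} — et sur l'équi-intégrabilité locale fournie par \eqref{Hyp3Example:m=N} le long de tout segment $[\varphi(x),\hat\varphi(x)]$, laquelle provient de la convexité de $\Lambda^j_\psi(x)$ et de ce que $\psi$ est une $C^1$-immersion à gradient borné sur le compact $\overline{\Sigma}$. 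C'est précisément pour disposer de ces deux propriétés simultanément que l'on a imposé $\psi\in C^1_*(\overline{\Sigma};\RR^3)$ plutôt que $\psi\in\Aff_*(\Sigma;\RR^3)$, ce dernier espace ne fournissant (via les $U^+_{i,j}$) que la moitié de la structure nécessaire, comme l'indique la remarque placée après \eqref{Det>0Prop1BIs}.
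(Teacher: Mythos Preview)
Your proposal is correct and follows exactly the paper's own approach, which consists in rewriting the proof of Theorem~\ref{IntermediateTheorDetNot} with the substitutions $j_\psi=1$, $\Gamma^j_\psi=\hat\Gamma^j_\psi=\Lambda^j_\psi$, using Lemma~\ref{LemmeDeT>0} in place of Lemma~\ref{LemmeDeTnot=0}, \eqref{Hyp3Example:m=N} in place of \eqref{Hyp1Example:m=N}, \eqref{Det>0Prop1} in place of \eqref{Detnot=0Prop3}--\eqref{Detnot=0Prop4}, and \eqref{Cst1} in place of \eqref{Cst1prime}. You have in fact spelled out the argument in more detail than the paper, and correctly identified that the symmetrization step (and thus hypothesis \eqref{AdditionalCond}) becomes superfluous precisely because $\Lambda^j_\psi$ is already convex-valued.
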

\begin{proof}
On r\'ecrit la preuve du th\'eor\`eme \ref{IntermediateTheorDetNot} en prenant $j_\psi=1$, $\Gamma^j_\psi=\hat\Gamma_\psi^j=\Lambda^j_\psi$ (donc $\cup_{j\geq j_\psi}\hat\Gamma^j_\psi(\cdot)=\cup_{j\geq j_\psi}\Lambda^j_\psi(\cdot)=\{\zeta\in\RR^3:\det(\nabla\psi(\cdot)\mid\zeta)>0\}$ dans (\ref{equality})) et en utilisant le lemme \ref{LemmeDeT>0} \`a la place du lemme \ref{LemmeDeTnot=0}, (\ref{Hyp3Example:m=N}) \`a la place de (\ref{Hyp1Example:m=N}), (\ref{Det>0Prop1}) \`a la place de (\ref{Detnot=0Prop3})-(\ref{Detnot=0Prop4}) et (\ref{Cst1}) \`a la place de (\ref{Cst1prime}).
\end{proof}

\subsubsection{D\'emonstration de la proposition {\rm\ref{det>0Prop3}}}

On r\'ecrit la preuve de la proposition \ref{det>0Prop2} en prenant $\Det(\xi\mid\zeta)=\det(\xi\mid\zeta)$, $\mathcal{F}=C^1_*(\overline{\Sigma};\RR^3)$, $j_\psi=1$, $\hat\Gamma^j_\psi=\Lambda^j_\psi$, $V=V_i=\overline{\Sigma}$ et en utilisant (\ref{Hyp3Example:m=N}) \`a la place de (\ref{Hyp1Example:m=N}).\hfill$\square$

\subsubsection{Preuve de {(\ref{BenBelgacemBennequinGromovEliashbergEgalitŽ})}}
On termine la d\'emonstration du th\'eor\`eme \ref{ThMemb2} (voir \S 3.1.3) en utilisant le th\'eor\`eme suivant.

\begin{theorem}\label{TheoEgalitŽ(37)}
Si $W_0$ est continue {(}voir le lemme {\ref{Lemme1Det>0}}) et satisfait {(\ref{Adj-Infty2})} alors {(\ref{BenBelgacemBennequinGromovEliashbergEgalitŽ})} a lieu, i.e., $\overline{\mathcal{I}}=\overline{\mathcal{I}}_{\rm diff_*}
$.
\end{theorem}
\begin{proof}
Soient $\overline{\mathcal{I}}_{\AffETli}, \overline{\mathcal{R I}}_{\AffETli}, \overline{\mathcal{R I}}:W^{1,p}(\Sigma;\RR^3)\to[0,+\infty]$ d\'efinies par :
\begin{itemize}
\item[$\diamond$] $\displaystyle\overline{\mathcal{I}}_{\AffETli}(\psi):=\inf\left\{\liminf_{n\to+\infty}\int_\Sigma W_0(\nabla\psi_n(x))dx:\AffETli(\Sigma;\RR^3)\ni\psi_n\wto\psi\right\};$
\item[$\diamond$] $\displaystyle\overline{\mathcal{R I}}_{\AffETli}(\psi):=\inf\left\{\liminf_{n\to+\infty}\int_\Sigma \mathcal{R}W_0(\nabla\psi_n(x))dx:\AffETli(\Sigma;\RR^3)\ni\psi_n\wto\psi\right\};$
\item[$\diamond$] $\displaystyle\overline{\mathcal{R I}}(\psi):=\inf\left\{\liminf_{n\to+\infty}\int_\Sigma \mathcal{R}W_0(\nabla\psi_n(x))dx:W^{1,p}(\Sigma;\RR^3)\ni\psi_n\wto\psi\right\}$
\end{itemize}
avec $\AffETli(\Sigma;\RR^3)$ d\'efini en \S 4.1.4 et $\mathcal R W_0$  d\'esignant la rang-1 convexifi\'e de $W_0$ (la plus grande fonction rang-1 convexe qui est inf\'erieure \`a $W_0$). Il est clair que $\overline{\mathcal{R I}}_{\AffETli}\leq \overline{\mathcal{I}}_{\AffETli}$. De plus, d'apr\`es Ben Belgacem (voir \cite{benbelgacem96}) on a 
\begin{lemma}\label{BenBelgacemLemma}
$\displaystyle\overline{\mathcal{I}}_{\AffETli}(\psi)\leq\int_\Sigma\mathcal{R}W_0(\nabla\psi(x))dx$ pour tout $\psi\in\AffETli(\Sigma;\RR^3)$.
\end{lemma}
(Pour une preuve du lemme \ref{BenBelgacemLemma} voir \S 3.3.4-3.3.5.) Donc $\overline{\mathcal{I}}_{\AffETli}\leq \overline{\mathcal{R I}}_{\AffETli}$, d'o\`u $\overline{\mathcal{I}}_{\AffETli}=\overline{\mathcal{R I}}_{\AffETli}$. D'autre part, $\overline{\mathcal{I}}\leq\overline{\mathcal{I}}_{\rm diff_*}$ et $\overline{\mathcal{RI}}\leq \overline{\mathcal{I}}$. Donc, pour avoir (\ref{BenBelgacemBennequinGromovEliashbergEgalitŽ}) il suffit  de prouver les deux in\'egalit\'es suivantes :
\begin{eqnarray}\label{GEBBeQuAliTY1}
&&\overline{\mathcal{I}}_{\rm diff_*}\leq\overline{\mathcal{I}}_{\AffETli}\ ;\\
&&\overline{\mathcal{RI}}_{\AffETli}\leq\overline{\mathcal{RI}}.\label{GEBBeQuAliTY2}
\end{eqnarray}

\begin{proof}[Preuve de (\ref{GEBBeQuAliTY1})]
Il suffit de prouver que 
\begin{equation}\label{AddTheorEq2}
\overline{\mathcal{I}}_{\rm diff_*}(\psi)\leq\int_{\Sigma}W_0(\nabla \psi(x))dx
\end{equation}
pour tout $\psi\in \AffETli(\Sigma;\RR^3)$. Soit $\psi\in \AffETli(\Sigma;\RR^3)$. Par le th\'eor\`eme \ref{BBBapproxTheo} il existe $\{\psi_n\}_{n\geq 1}\subset C^1_*(\overline{\Sigma};\RR^3)$ tel que  (\ref{BB_1}) et (\ref{BB_2}) sont satisfaites et $\nabla \psi_n(x)\to\nabla \psi(x)$ p.p. dans $\Sigma$. Comme $W_0$ est continue  on a 
$$
\lim_{n\to +\infty}W_0\big(\nabla \psi_n(x)\big)=W_0\big(\nabla \psi(x)\big)\;\hbox{ p.p. dans }\Sigma.
$$
Utilisant (\ref{Adj-Infty2}) et (\ref{BB_2}) on d\'eduit qu'il existe $c>0$  tel que pour chaque $n\geq 1$ et chaque ensemble mesurable $A\subset\Sigma$,
$$
\int_A W_0\big(\nabla \psi_n(x)\big)dx\leq c\Big(|A|+\int_A|\nabla \psi_n(x)-\nabla \psi(x)|^pdx+\int_A|\nabla \psi(x)|^pdx\Big).
$$
Or $\nabla v_n\to\nabla v$ dans $L^p(\Sigma;\MM^{3\times 2})$ par (\ref{BB_1}), donc $\{W_0(\nabla \psi_n(\cdot))\}_{n\geq 1}$ est uniform\'ement absolument int\'egrable. Utilisant le th\'eor\`eme de Vitali on obtient 
$$
\lim_{n\to+\infty}\int_{\Sigma}W_0(\nabla \psi_n(x))dx=\int_{\Sigma}W_0(\nabla \psi(x))dx,
$$ 
et (\ref{AddTheorEq2}) suit.
\end{proof}

\begin{proof}[Preuve de (\ref{GEBBeQuAliTY2})]
Il suffit de prouver que 
\begin{equation}\label{AddTheorEq4}
\overline{\mathcal{RI}}_{\AffETli}(\psi)\leq\int_{\Sigma}\mathcal{R}W_0(\nabla \psi(x))dx
\end{equation} 
pour tout $\psi\in W^{1,p}(\Sigma;\RR^3)$. Soit $\psi\in W^{1,p}(\Sigma;\RR^3)$. Par le th\'eor\`eme \ref{GEdensityTheo} il existe $\{\psi_n\}_{n\geq 1}\subset\AffETli(\Sigma;\RR^3)$ tel que $\nabla \psi_n\to\nabla \psi$ dans $L^p(\Sigma;\RR^3)$ et $\nabla \psi_n(x)\to\nabla \psi(x)$ p.p. dans $\Sigma$. Or, d'apr\`es Ben Belgacem (voir \cite{benbelgacem96,benbelgacem00}) on a  
\begin{lemma}\label{BenBelgacemLemma2}
Si $W_0$ satisfait {(\ref{Adj-Infty2})} alors {:}
\begin{itemize}
\item[$\diamond$]  ${\mathcal R}W_0(\xi)\leq c(1+|\xi|^p)$ pour tout $\xi\in\MM^{3\times 2}$ avec $c>0$ ;
\item[$\diamond$] ${\mathcal R}W_0$ est continue.
\end{itemize}
\end{lemma}
(Pour une preuve du lemme \ref{BenBelgacemLemma2} voir \S 3.3.7). D'o\`u, utilisant le th\'eor\`eme de Vitali, on d\'eduit que 
$$
\lim_{n\to+\infty}\int_{\Sigma}\mathcal{R}W_0(\nabla \psi_n(x))dx=\int_{\Sigma}\mathcal{R}W_0(\nabla \psi(x))dx,
$$
et (\ref{AddTheorEq4}) suit.
\end{proof}
 Ce qui termine la d\'emonstration du th\'eor\`eme \ref{TheoEgalitŽ(37)}. \end{proof}

\subsubsection{Pr\'eliminaires pour la d\'emonstration du lemme \ref{BenBelgacemLemma}} 
 (Pour la d\'emonstration du lemme \ref{BenBelgacemLemma} voir \S4.3.5.) Soit la suite $\{\R_i W_0\}_{i\geq 0}$ par $\R_0 W_0=W_0$ et pour tout $i\geq 1$, $\R_{i+1}W_0$ est d\'efinie par (\ref{Kohn-StrangFormula}) (avec $W=W_0$, $m=3$ et $N=2$). Rappelons que $W_0$ est continue et coercive (voir le lemme \ref{Lemme1Det>0}) et d'apr\`es Kohn et Strang (voir \cite{kohn-strang86}) on a 
\begin{eqnarray}\label{KohnStrangBBLemma1}
\R_{i+1} W_0\leq \R_iW_0\hbox{ pour tout }i\geq 0\hbox{ et }\R W_0=\inf_{i\geq 0} \R_i W_0.
\end{eqnarray}
Soient $i\geq 0$ et $\psi\in\AffETli(\Sigma;\RR^3)$ (pour la d\'efinition de $\AffETli(\Sigma;\RR^3)$ voir \S 4.1.4).  Alors, il existe une famille finie de sous-ensembles ouverts et disjoints de $\Sigma$ telle que $|\Sigma\setminus\cup_{j\in J}V_j|=0$  et pour chaque $j\in J$, $|\partial V_j|=0$ et $\nabla \psi(x)=\xi_j$ dans $V_j$ avec $\xi_j\in\MM^{3\times 2}$. (Comme $\psi$ est localement injective on a ${\rm rang}(\xi_j)=2$ pour tout $j\in J$.) Soit $j\in J$. On peut montrer que :
\begin{eqnarray}
&&\R_i W_0\hbox{ est continue ;} \label{EquAtiOn1BBLemma1Continue}\\
&&\R_{i+1}W_0(\xi_j)=(1-t)\R_i W_0(\xi_j-t a\otimes b)+t\R_i W_0(\xi_j+(1-t)a\otimes b) \label{EquAtiOn1BBLemma1}\\
&&\hbox{avec }a\in\RR^2,\ b\in\RR^3\hbox{ et }t\in[0,1]\nonumber
\end{eqnarray}
avec $a\otimes b\in\RR^2\otimes\RR^3\subset\MM^{3\times 2}$ donn\'e par $(a\otimes b)x:=\langle a,x\rangle b$ pour tout $x\in\RR^2$, o\`u $\langle\cdot,\cdot\rangle$ d\'esigne le produit scalaire dans $\RR^2$. (Pour une preuve de \eqref{EquAtiOn1BBLemma1Continue} et \eqref{EquAtiOn1BBLemma1} voir \S 3.3.6.)

Sans perdre de g\'en\'eralit\'e on peut supposer que $a=(1,0)$. Pour chaque $n\geq 1$ et chaque $k\in\{0,\cdots,n-1\}$, consid\'erons $A^-_{k,n}$, $A^+_{k,n}$, $B_{k,n}$, $B^-_{k,n}$, $B^+_{k,n}$, $C_{k,n}$, $C^-_{k,n}$, $C^+_{k,n}\subset Y$ (voir la figure \ref{FiGUREbbbLeMMa}) donn\'es par :
\begin{itemize}
\item[]$A^-_{k,n}:=\big\{(x_1,x_2)\in Y:{k\over n}\leq x_1\leq{k\over n}+{1-t\over n}\hbox{ et }{1\over n}\leq x_2\leq 1-{1\over n}\big\}$ ;
\item[]$A^+_{k,n}:=\big\{(x_1,x_2)\in Y:{k\over n}+{1-t\over n}\leq x_1\leq {k+1\over n}\hbox{ et }{1\over n}\leq x_2\leq 1-{1\over n}\big\}$ ;
\item[]$B_{k,n}:=\big\{(x_1,x_2)\in Y:{k\over n}\leq x_1\leq{k+1\over n}\hbox{ et }0\leq x_2\leq -x_1+{k+1\over n}\big\}$ ;
\item[]$B^-_{k,n}:=\big\{(x_1,x_2)\in Y:-x_2+{k+1\over n}\leq x_1\leq-tx_2+{k+1\over n}\hbox{ et }0\leq x_2\leq {1\over n}\big\}$ ;
\item[]$B^+_{k,n}:=\big\{(x_1,x_2)\in Y:-tx_2+{k+1\over n}\leq x_1\leq {k+1\over n}\hbox{ et }0\leq x_2\leq {1\over n}\big\}$ ;
\item[]$C_{k,n}:=\big\{(x_1,x_2)\in Y:{k\over n}\leq x_1\leq{k+1\over n}\hbox{ et }x_1+1-{k+1\over n}\leq x_2\leq 1\big\}$ ;
\item[]$C^-_{k,n}:=\big\{(x_1,x_2)\in Y:x_2-1+{k+1\over n}\leq x_1\leq t(x_2-1)+{k+1\over n}\hbox{ et }{n-1\over n}\leq x_2\leq 1\big\}\ ;$ 
\item[]$C^+_{k,n}:=\big\{(x_1,x_2)\in Y:t(x_2-1)+{k+1\over n}\leq x_1\leq {k+1\over n}\hbox{ et }{n-1\over n}\leq x_2\leq 1\big\}$
\end{itemize}
et d\'efinissons  $\{\sigma_{n}\}_{n\geq 1}\subset\AffET_0(Y;\RR)$ par 
$$
\sigma_{n}(x_1,x_2):=\left\{
\begin{array}{ll}
-t(x_1-{k\over n})&\hbox{si }(x_1,x_2)\in A^-_{k,n}\\
(1-t)(x_1-{k+1\over n})&\hbox{si }(x_1,x_2)\in A^+_{k,n}\cup B^+_{k,n}\cup C^+_{k,n}\\
-t(x_1+x_2-{k+1\over n})&\hbox{si }(x_1,x_2)\in B^-_{k,n}\\
-t(x_1-x_2+1-{k+1\over n})&\hbox{si }(x_1,x_2)\in C^-_{k,n}\\
0&\hbox{si }(x_1,x_2)\in B_{k,n}\cup C_{k,n}.
\end{array}
\right.
$$

\begin{figure}[H]
\begin{center}
\begin{picture}(300,255)
\put(-5,40){\vector(1,0){210}}
\put(195,34){\SMALL$x_1$}
\put(-10,240){\SMALL$x_2$}
\put(0,39.9){\line(1,0){180}}
\put(0,40.1){\line(1,0){180}}
\put(0,40){\circle*{3}}
\put(-5,34){\SMALL$0$}
\put(20,40){\circle*{3}}
\put(12,31){\SMALL$1-t\over n$}
\put(30,40){\circle*{3}}
\put(26.5,31){\SMALL$1\over n$}
\put(50,40){\circle*{3}}
\put(42,31){\SMALL${2-t\over n}$}
\put(60,40){\circle*{3}}
\put(56.5,31){\SMALL$2\over n$}
\put(71,32){\SMALL$\cdots$}
\put(90,40){\circle*{3}}
\put(86.5,31){\SMALL$k\over n$}
\put(120,40){\circle*{3}}
\put(115,31){\SMALL$k+1\over n$}
\put(93,31){\Tiny$k+1-t\over n$}
\put(110,40){\circle*{3}}
\put(131.5,32){\SMALL$\cdots$}
\put(150,40){\circle*{3}}
\put(144,31){\SMALL$n-1\over n$}
\put(170,40){\circle*{3}}
\put(180,40){\circle*{3}}
\put(162,31){\SMALL$n-t\over n$}
\put(178,32){\SMALL$1$}
\put(0,35){\vector(0,1){210}}
\put(0,220){\circle*{3}}
\put(-5,218){\SMALL$1$}
\put(0,190){\circle*{3}}
\put(-17,188){\SMALL$n-1\over n$}
\put(0,70){\circle*{3}}
\put(-8,68){\SMALL$1\over n$}
\put(0,220){\line(1,0){180}}
\put(0,220.1){\line(1,0){180}}
\put(0,219.9){\line(1,0){180}}
\put(0,190){\line(1,0){20}}
\put(30,190){\line(1,0){20}}
\put(60,190){\line(1,0){20}}
\put(90,190){\line(1,0){20}}
\put(120,190){\line(1,0){20}}
\put(150,190){\line(1,0){20}}
\put(150,70){\line(1,0){20}}
\put(120,70){\line(1,0){20}}
\put(90,70){\line(1,0){20}}
\put(60,70){\line(1,0){20}}
\put(30,70){\line(1,0){20}}
\put(0,70){\line(1,0){20}}
\put(-0.1,40){\line(0,1){180}}
\put(0.1,40){\line(0,1){180}}
\put(180,40){\line(0,1){180}}
\put(180.1,40){\line(0,1){180}}
\put(179.9,40){\line(0,1){180}}
\put(150,40){\line(0,1){180}}
\put(150.1,40){\line(0,1){180}}
\put(149.9,40){\line(0,1){180}}
\put(120,40){\line(0,1){180}}
\put(120.1,40){\line(0,1){180}}
\put(119.9,40){\line(0,1){180}}
\put(90,40){\line(0,1){180}}
\put(90.1,40){\line(0,1){180}}
\put(89.9,40){\line(0,1){180}}
\put(60,40){\line(0,1){180}}
\put(60.1,40){\line(0,1){180}}
\put(59.9,40){\line(0,1){180}}
\put(30,40){\line(0,1){180}}
\put(30.1,40){\line(0,1){180}}
\put(29.9,40){\line(0,1){180}}
\put(170,70){\line(0,1){120}}
\put(140,70){\line(0,1){120}}
\put(110,70){\line(0,1){120}}
\put(80,70){\line(0,1){120}}
\put(50,70){\line(0,1){120}}
\put(20,70){\line(0,1){120}}
\put(0,70){\line(1,-1){30}}
\put(30,70){\line(1,-1){30}}
\put(60,70){\line(1,-1){30}}
\put(90,70){\line(1,-1){30}}
\put(120,70){\line(1,-1){30}}
\put(150,70){\line(1,-1){30}}
\put(0,190){\line(1,1){30}}
\put(30,190){\line(1,1){30}}
\put(60,190){\line(1,1){30}}
\put(90,190){\line(1,1){30}}
\put(120,190){\line(1,1){30}}
\put(150,190){\line(1,1){30}}
\put(20,70){\line(1,-3){10}}
\put(50,70){\line(1,-3){10}}
\put(80,70){\line(1,-3){10}}
\put(110,70){\line(1,-3){10}}
\put(140,70){\line(1,-3){10}}
\put(170,70){\line(1,-3){10}}
\put(20,190){\line(1,3){10}}
\put(50,190){\line(1,3){10}}
\put(80,190){\line(1,3){10}}
\put(110,190){\line(1,3){10}}
\put(140,190){\line(1,3){10}}
\put(170,190){\line(1,3){10}}
\put(260,40){\line(1,0){30}}
\put(260,40.1){\line(1,0){30}}
\put(260,39.9){\line(1,0){30}}
\put(235,52){\Tiny$B_{k,n}$}\put(252,52){\vector(1,0){17}}
\put(235,208){\Tiny$C_{k,n}$}\put(252,208){\vector(1,0){17}}
\put(235,128){\Tiny$A^-_{k,n}$}\put(252,128){\vector(1,0){20}}
\put(307,128){\Tiny$A^+_{k,n}$}\put(304,128){\vector(-1,0){20}}
\put(307.5,198){\Tiny$C^+_{k,n}$}\put(304.5,198){\vector(-1,0){20}}
\put(307.5,62){\Tiny$B^+_{k,n}$}\put(304.5,62){\vector(-1,0){20}}
\put(235,62){\Tiny$B^-_{k,n}$}\put(252,62){\vector(1,0){25}}
\put(235,198){\Tiny$C^-_{k,n}$}\put(252,198){\vector(1,0){25}}
\put(260,220){\line(1,0){30}}
\put(260,220.1){\line(1,0){30}}
\put(260,219.9){\line(1,0){30}}
\put(260,40){\circle*{3}}
\put(256.5,31){\SMALL$k\over n$}
\put(290,40){\circle*{3}}
\put(285,31){\SMALL$k+1\over n$}
\put(263,31){\Tiny$k+1-t\over n$}
\put(280,40){\circle*{3}}
\put(290,40){\line(0,1){180}}
\put(290.1,40){\line(0,1){180}}
\put(289.9,40){\line(0,1){180}}
\put(260,40){\line(0,1){180}}
\put(260.1,40){\line(0,1){180}}
\put(259.9,40){\line(0,1){180}}
\put(260,70){\line(1,-1){30}}
\put(280,70){\line(1,-3){10}}
\put(280,190){\line(1,3){10}}
\put(260,190){\line(1,1){30}}
\put(280,70){\line(0,1){120}}
\put(260,70){\line(1,0){30}}
\put(260,190){\line(1,0){30}}
\end{picture}
\end{center}
\caption{Les ensembles $\Delta^s_1$, $\Delta^s_2$, $\Delta^s_3$, $\Delta^s_4$, $\Delta^s_5$, $\Delta^s_6$, $\Delta^s_7$, $\Delta^s_8$.}
\label{FiGUREbbbLeMMa}
\end{figure}
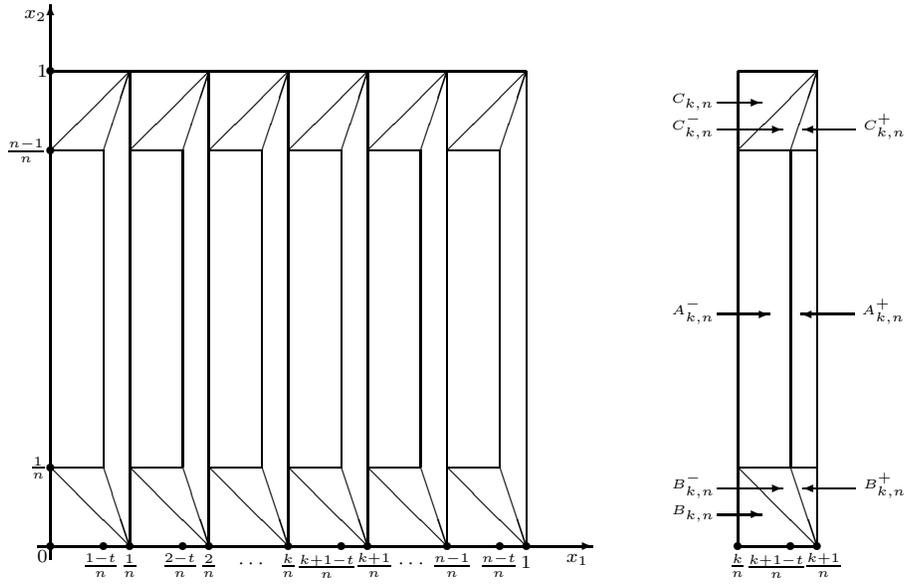

Posons 
$$
b_{\ell}:=\left\{
\begin{array}{ll}
b&\hbox{si }b\not\in{\rm Im}\xi_j\\
b+{1\over\ell}\nu&\hbox{si }b\in{\rm Im}\xi_j
\end{array}
\right.
$$
(avec ${\rm Im}\xi_j:=\{\xi_j\cdot x:x\in\RR^2\}\subset\RR^3$) o\`u $\ell\geq 1$ et $\nu\in\RR^3$ est orthogonal \`a  ${\rm Im}\xi_j$ et d\'efinissons $\{\theta_{n,\ell}\}_{n,\ell\geq 1}\subset\AffET_0(Y;\RR^3)$ d\'efinie par 
$$
\theta_{n,\ell}(x):=\sigma_{n}(x)b_{\ell}.
$$
Alors 
\begin{eqnarray}
&&\displaystyle\lim_{\ell\to+\infty}\lim_{n\to+\infty}\int_Y\R_iW_0(\xi_j+\nabla \theta_{n,\ell}(x))dx=\R_{i+1}W_0(\xi_j).\label{EquBBLeMMa2}
\end{eqnarray}

\begin{proof}[preuve de (\ref{EquBBLeMMa2})]
Il est facile de voir que 
$$
\xi_j+\nabla\theta_{n,\ell}(x):=\left\{
\begin{array}{ll}
\xi_j-ta\otimes b_\ell&\hbox{si }x\in {\rm int}(A^-_{k,n})\\
\xi_j+(1-t)a\otimes b_\ell&\hbox{si }x\in {\rm int}(A^+_{k,n}\cup B^+_{k,n}\cup C^+_{k,n})\\
\xi_j-t(a+a^\perp)\otimes b_\ell&\hbox{si }x\in {\rm int}(B^-_{k,n})\\
\xi_j-t(a-a^\perp)\otimes b_\ell&\hbox{si }x\in {\rm int}(C^-_{k,n})\\
\xi_j&\hbox{si }x\in {\rm int}(B_{k,n})\cup{\rm int}(C_{k,n})
\end{array}
\right.
$$
avec $a^\perp=(0,1)$ (et ${\rm int}(E)$ d\'esigne l'int\'erieur de l'ensemble $E$). De plus, on a :
\begin{itemize}
\item[]${\displaystyle\int_{\cup_{k=0}^{n-1} A^-_{k,n}}\R_iW_0(\xi_j-ta\otimes b_\ell)dx}=(1-t)(1-{2\over n})\R_iW_0(\xi_j-ta\otimes b_\ell)$\ ;
\item[]${\displaystyle\int_{\cup_{k=0}^{n-1} A^+_{k,n}}\R_i W_0(\xi_j+(1-t)a\otimes b_\ell)dx}=t(1-{2\over n})\R_i W_0(\xi_j+(1-t)a\otimes b_\ell)$\ ;
\item[]${\displaystyle\int_{\cup_{k=0}^{n-1} (B^+_{k,n}\cup C^+_{k,n})}\R_i W_0(\xi_j+(1-t)a\otimes b_\ell)dx}={t\over n}\R_i W_0(\xi_j+(1-t)a\otimes b_\ell)$\ ;
\item[]${\displaystyle\int_{\cup_{k=0}^{n-1} B^-_{k,n}}\R_i W_0(\xi_j-t(a+a^\perp)\otimes b_\ell)dx}={1-t\over 2n}\R_i W_0(\xi_j-t(a+a^\perp)\otimes b_\ell)$\ ;
\item[]${\displaystyle\int_{\cup_{k=0}^{n-1} C^-_{k,n}}\R_i W_0(\xi_j-t(a-a^\perp)\otimes b_\ell)dx}={1-t\over 2n}\R_i W_0(\xi_j-t(a-a^\perp)\otimes b_\ell)$\ ;
\item[]${\displaystyle\int_{\cup_{k=0}^{n-1} (B_{k,n}\cup C_{k,n})}\R_i W_0(\xi_j)dx}={1\over n}\R_i W_0(\xi_j)$.
\end{itemize}
Donc 
\begin{eqnarray*}
\int_Y\R_i W_0(\xi_j+\nabla\theta_{n,\ell}(x))dx&=&\left(1-{2\over n}\right)\Big[(1-t)\R_iW_0(\xi_j-ta\otimes b_\ell)+t\R_i W_0(\xi_j\\
&&+\ (1-t)a\otimes b_\ell)\Big]+{1\over n}\Big[t\R_i W_0(\xi_j+(1-t)a\otimes b_\ell)\\
&&+\ {1-t\over 2}\big(\R_i W_0(\xi_j-t(a+a^\perp)\otimes b_\ell)+\R_i W_0(\xi_j-\\
&&t(a-a^\perp)\otimes b_\ell)\big)+\R_i W_0(\xi_j)\Big]
\end{eqnarray*}
pour tout $n,\ell\geq 1$. Il suit que pour chaque $\ell\geq 1$ on a 
\begin{eqnarray*}
\lim_{n\to+\infty}\int_Y\R_i W_0(\xi_j+\nabla\theta_{n,\ell}(x))dx&=&(1-t)\R_iW_0(\xi_j-ta\otimes b_\ell)\\
&&+\ t\R_i W_0(\xi_j+(1-t)a\otimes b_\ell).
\end{eqnarray*}
Rappelant que  $\R_i W_0$ est continue (voir (\ref{EquAtiOn1BBLemma1Continue})) et notant que $b_\ell\to b$ on obtient 
\begin{eqnarray*}
\lim_{\ell\to+\infty}\lim_{n\to+\infty}\int_Y\R_i W_0(\xi_j+\nabla\theta_{n,\ell}(x))dx&=&(1-t)\R_iW_0(\xi_j-ta\otimes b)\\
&&+\ t\R_i W_0(\xi_j+(1-t)a\otimes b),
\end{eqnarray*}
et (\ref{EquBBLeMMa2}) suit de (\ref{EquAtiOn1BBLemma1}).
 \end{proof}
Consid\'erons $V^j_{q}\subset V_j$ donn\'e par $V^j_{q}:=\{x\in V_j:{\rm dist}(x,\partial V_j)>{1\over q}\}$ avec $q\geq 1$ assez grand. Alors, il existe une famille finie  $\{r_{m}+\rho_{m}Y\}_{m\in M}$ de sous-ensembles disjoints de $V^j_{q}$, o\`u $r_{m}\in\RR^2$ et $\rho_{m}\in]0,1[$, telle que
$
|V^j_{q}\setminus\cup_{m\in M}(r_{m}+\rho_{m}Y)|\leq{1\over q}.
$ 

D\'efinissons $\{\phi_{n,\ell,q}\}_{n,\ell,q\geq 1}\subset\AffET_0(V_j;\RR^3)$ par 
$$
\phi_{n,\ell,q}(x):=\left\{
\begin{array}{ll}
\displaystyle\rho_{m}\theta_{n,\ell}\left({x-r_{m}\over \rho_{m}}\right)&\hbox{si }x\in r_{m}+\rho_{m}Y\subset V^j_{q}\\
0&\hbox{si }x\in V_j\setminus V^j_{q}
\end{array}
\right.
$$
et posons 
\begin{eqnarray}\label{BBFunct2}
\Phi^j_{n,\ell,q}(x):=\psi(x)+\phi_{n,\ell,q}(x).
\end{eqnarray}
Alors $\{\Phi^j_{n,\ell,q}\}_{n,\ell,q\geq 1}\subset\AffET(V_j;\RR^3)$ et  :
\begin{eqnarray}
&&\hbox{pour chaque }n,\ell,q\geq 1,\  \Phi^j_{n,\ell,q}\hbox{ est localement injective ;}\label{EQUaT(1)BBLemmA1}\\
&&\hbox{pour chaque } \ell,q\geq 1,\ \Phi^j_{n,\ell,q}\wto \psi\hbox{ dans }W^{1,p}(V_j;\RR^3)\ ;\label{EQUaT(2)BBLemmA1}\\
&&\displaystyle\lim_{q\to+\infty}\lim_{\ell\to+\infty}\lim_{n\to+\infty}\int_{V_j}\R_iW_0(\nabla \Phi^j_{n,\ell,q}(x))dx=|V_j|\R_{i+1}W_0(\xi_j).\label{EQUaT(3)BBLemmA1}
\end{eqnarray}

\begin{proof}[Preuve de (\ref{EQUaT(1)BBLemmA1})]
Soient $x\in V_j$ et $W\subset V_j$ une composante connexe de $V_j$ contenant $x$ (Comme $V_j$ est ouvert, $W$ l'est aussi). Puisque $\nabla \psi=\xi_j$ dans $W$, il existe $c\in\RR^3$ tel que $\psi(x^\prime)=\xi_j\cdot x^\prime+c$ pour tout $x^\prime\in W$.   On affirme que ${\Phi^j_{n,\ell,q}}{\lfloor_{W}}$ est injective. En effet, soit $x^\prime\in W$ tel que $\Phi^j_{n,\ell,q}(x)=\Phi^j_{n,\ell,q}(x^\prime)$. Alors, l'une des trois possibilit\'es suivantes a lieu :
\begin{eqnarray}
&&\left\{\begin{array}{l}\Phi^j_{n,\ell,q}(x)=\xi_j\cdot x+c+\rho_m\sigma_{n}\left({x-r_m\over\rho_m}\right)b_\ell\\
\Phi^j_{n,\ell,q}(x^\prime)=\xi_j\cdot x^\prime+c+\rho_{m^\prime}\sigma_{n}\left({x^\prime-r_{m^\prime}\over\rho_{m^\prime}}\right)b_\ell\ ;\end{array}\right.\label{EQUaT(4)BBLemmA1}\\
&&\left\{\begin{array}{l}\Phi^j_{n,\ell,q}(x)=\xi_j\cdot x+c+\rho_m\sigma_{n}\left({x-r_m\over\rho_m}\right)b_\ell\\
\Phi^j_{n,\ell,q}(x^\prime)=\xi_j\cdot x^\prime+c\ ;
\end{array}\right.\label{EQUaT(5)BBLemmA1}\\
&&\left\{\begin{array}{l}\Phi^j_{n,\ell,q}(x)=\xi_j\cdot x+c\\
\Phi^j_{n,\ell,q}(x^\prime)=\xi_j\cdot x^\prime+c.
\end{array}\right.\label{EQUaT(6)BBLemmA1}
\end{eqnarray}
Posons $\alpha:=\rho_m\sigma_n({x-r_m\over\rho_m})-\rho_{m^\prime}\sigma_n({x^\prime-r_{m^\prime}\over \rho_{m^\prime}})$ et $\beta:=\rho_m\sigma_n({x-r_m\over\rho_m})$. On a alors : 
\begin{itemize}
\item[$\diamond$]$\left\{\begin{array}{ll}
\xi_j(x^\prime-x)=0&\hbox{si }\alpha=0\\
b_\ell={1\over \alpha}\xi_j(x^\prime-x)&\hbox{si }\alpha\not=0
\end{array}
\right.
\hbox{ lorsque (\ref{EQUaT(4)BBLemmA1}) a lieu ;}$
\item[$\diamond$]$\left\{
\begin{array}{ll}
\xi_j(x^\prime-x)=0&\hbox{si }\beta=0\\
b_\ell={1\over \beta}\xi_j(x^\prime-x)&\hbox{si }\beta\not=0
\end{array}
\right.
\hbox{ lorsque (\ref{EQUaT(5)BBLemmA1}) a lieu ;}$
\item[$\diamond$]\ $\xi_j(x^\prime-x)=0\hbox{ lorsque (\ref{EQUaT(6)BBLemmA1}) a lieu.}$
\end{itemize}
Il suit que si $x\not=x^\prime$ alors ${\rm rank}(\xi_j)<2$ ou $b_\ell\in{\rm Im}\xi_j$ ce qui est  est impossible. Donc $x=x^\prime$. 
\end{proof}

\begin{proof}[Preuve de (\ref{EQUaT(2)BBLemmA1})]
\'Etant donn\'es $\ell,q\geq 1$, on a $\|\phi_{n,\ell,q}\|_{L^\infty(V_j;\RR^3)}\leq \|\theta_{n,\ell}\|_{L^\infty(Y;\RR^3)}=|b_l|\|\sigma_n\|_{L^\infty(Y;\RR)}$.  D'autre part, pour chaque $k\in\{0,\cdots,n-1\}$, il est clair que $|\sigma_n(x)|\leq {t(1-t)\over n}$ pour tout $x\in]{k\over n},{k+1\over n}[\times]0,1[$, donc $\sigma_n\to 0$ dans $L^\infty(Y;\RR)$, et le r\'esultat suit. donc $\phi_{n,\ell,q}\to 0$ dans $L^\infty(V_j;\RR^3)$, et le r\'esultat suit.
\end{proof}

\begin{proof}[Preuve de (\ref{EQUaT(3)BBLemmA1})]
Rappelant que $\phi_{n,\ell,q}=0$ dans $V_j\setminus\hat{V}^j_q$ et $\sum_{m\in M}\rho_m^2=|\hat{V}^j_q|$ on voit que 
\begin{eqnarray*}
\int_{V_j}\R_iW_0(\nabla \Phi^j_{n,\ell,q}(x))dx\hskip-2mm&=&\hskip-2mm\int_{V_j}\R_iW_0(\xi_j+\nabla \phi_{n,\ell,q}(x))dx\\
&=&\hskip-2mm\int_{\hat{V}^j_q}\R_iW_0(\xi_j+\nabla \phi_{n,\ell,q}(x))dx+|V_j\setminus \hat{V}^j_q|\R_iW_0(\xi_j)\\
&=&\hskip-2mm|\hat{V}^j_q|\int_Y\R_iW_0(\xi_j+\nabla\theta_{n,\ell}(x))dx+|V_j\setminus \hat{V}^j_q|\R_iW_0(\xi_j).
\end{eqnarray*}
Utilisant (\ref{EquBBLeMMa2}) on d\'eduit que 
$$
\lim_{\ell\to+\infty}\lim_{n\to+\infty}\int_{V_j}\R_iW_0(\nabla \Phi^j_{n,\ell,q}(x))dx=|\hat{V}^j_q|\R_{i+1}W_0(\xi_j)+|V_j\setminus \hat{V}^j_q|\R_iW_0(\xi_j)
$$
pour tout $q\geq 1$, et (\ref{EQUaT(3)BBLemmA1}) suit puisque $|\hat{V}^j_q|=|V^j_q|-|V^j_q\setminus \hat{V}^j_q|\to|V_j|$ (car $|V^j_q|\to|V_j|$ et ${1\over q}\geq|V^j_q\setminus \hat{V}^j_q|\to 0$) et $|V_j\setminus \hat{V}^j_q|=|V_j\setminus V^j_q|+|V^j_q\setminus\hat{V}^j_q|\to 0$ (car $|V_j\setminus V^j_q|\to 0$).
\end{proof}

\subsubsection{D\'emonstration du lemme \ref{BenBelgacemLemma}} Prenant en compte (\ref{KohnStrangBBLemma1}) on voit qu'il est suffisant de prouver que 
$$
\overline{\mathcal{I}}_{\AffETli}(\psi)\leq \int_\Sigma\R_iW_0(\nabla \psi(x))dx\hbox{ pour tout }\psi\in\AffETli(\Sigma;\RR^3)\leqno (P_i)
$$
pour tout $i\geq 0$. Pour cela, on raisonne par r\'ecurrence sur $i$. Comme $R_0 W_0=W_0$ on a $(P_0)$.  Supposons $(P_i)$ et montrons $(P_{i+1})$. Soit $\psi\in\AffETli(\Sigma;\RR^3)$. Alors, il existe une famille finie $\{V_j\}_{j\in J}$ de sous-ensemble ouverts et disjoints de $\Sigma$ telle que $|\Sigma\setminus\cup_{j\in J}V_j|=0$ et pour chaque $j\in J$, $|\partial V_j|=0$ et $\nabla \psi(x)=\xi_j$ dans $V_j$ avec $\xi_j\in\MM^{3\times 2}$. D\'efinissons $\{\Psi_{n,\ell,q}\}_{n,\ell,q\geq 1}\subset\AffET(\Sigma;\RR^3)$ par 
$$
\Psi_{n,\ell,q}(x):=\Phi^j_{n,\ell,q}(x)\hbox{ si }x\in V_j
$$
avec $\Phi^j_{n,\ell,q}$ donn\'ee par (\ref{BBFunct2}). Utilisant (\ref{EQUaT(1)BBLemmA1}) (et rappelant que $\psi$ est localement injective) il est facile de voir que $\Psi_{n,\ell,q}$ est localement injective.  Par $(P_i)$ on a 
$$
\overline{\mathcal{I}}_{\AffETli}(\Psi_{n,\ell,q})\leq\int_{\Sigma}\R_iW_0(\nabla\Psi_{n,\ell,q}(x))dx
$$
pour tout $n,\ell,q\geq 1$. Utilisant (\ref{EQUaT(2)BBLemmA1}) on voit que  pour chaque $\ell,q\geq 1$, $\Psi_{n,l,q}\wto \psi$ dans $W^{1,p}(\Sigma;\RR^3)$. Il suit que 
$$
\overline{\mathcal{I}}_{\AffETli}(\psi)\leq \lim_{n\to+\infty}\overline{\mathcal{I}}_{\AffETli}(\Psi_{n,\ell,q})\leq\lim_{n\to+\infty}\int_{\Sigma}\R_iW_0(\nabla\Psi_{n,\ell,q}(x))dx
$$
pour tout $\ell,q\geq 1$. De plus, par (\ref{EQUaT(3)BBLemmA1}) on a 
$$
\lim_{q\to+\infty}\lim_{\ell\to+\infty}\lim_{n\to+\infty}\int_{\Sigma}\R_iW_0(\nabla\Psi_{n,\ell,q}(x))dx=\int_\Sigma\R_{i+1}W_0(\nabla \psi(x))dx,
$$
d'o\`u  
$$
\overline{\mathcal{I}}_{\AffETli}(\psi)\leq \int_\Sigma\R_{i+1}W_0(\nabla \psi(x))dx,
$$
et ($P_{i+1}$) suit. \hfill$\square$

\subsubsection{Preuve de (\ref{EquAtiOn1BBLemma1Continue}) et (\ref{EquAtiOn1BBLemma1})} On commence par prouver le lemme suivant.

\begin{lemma}\label{ProduitTensorielEstFermŽDanslesMatrices}
$\RR^2\otimes\RR^3$ est un ferm\'e de $\MM^{3\times 2}$. 
\end{lemma}
\begin{proof}
Soient $\{a_n\otimes b_n\}_{n\geq 1}\subset\RR^2\otimes\RR^3$ et $\xi\in\MM^{3\times 2}$ tels que $a_n\otimes b_n\to \xi$. Pour chaque $n\geq 1$, $a_n\otimes b_n=u_n\otimes v_n$ avec $u_n={a_n\over|a_n|}\in\SS^1$ et $v_n=|a_n| b_n$, o\`u $\SS^1$ est la sph\`ere unit\'e dans $\RR^2$. Comme $\SS^1$ est compact, il existe $u\in\SS^1$ tel que $u_n\to u$ (\`a une sous-suite pr\`es). Soit $u_0\in\RR^2$ tel que $\langle u,u_0\rangle\not=0$, alors $\langle u_n, u_0\rangle\not=0$ pour $n\geq n_0$ avec $n_0\geq 1$ assez grand. Pour chaque $n\geq n_0$, $v_n={1\over \langle u_n,u_0\rangle}(u_n\otimes v_n)u_0$, donc $v_n\to{1\over \langle u,u_0\rangle}\xi u_0=:v\in\RR^3$. Il suit que $a_n\otimes b_n\to u\otimes v$, d'o\`u $\xi=u\otimes v$.
\end{proof}

Soit $\mathcal{H}:\MM^{3\times 2}\to[0,+\infty]$ d\'efinie par 
$$
\mathcal{H}(\xi):=\inf\Big\{H(\xi,t,a\otimes b):(t,a\otimes b)\in[0,1]\times\RR^2\otimes\RR^3\Big\},
$$
o\`u $H:\MM^{3\times 2}\times[0,1]\times\RR^2\otimes\RR^3\to[0,+\infty]$ est donn\'ee par 
$$
H(\xi,t,a\otimes b):=(1-t)h(\xi-ta\otimes b)+th(\xi+(1-t)a\otimes b)
$$
avec $h:\MM^{3\times 2}\to[0,+\infty]$ continue et coercive (la fonction $H$ est donc continue). (Noter que si $h=\R_q W_0$ avec $q\geq 0$ alors $\mathcal{H}=\R_{q+1}W_0$.) Pour prouver (\ref{EquAtiOn1BBLemma1Continue}) et (\ref{EquAtiOn1BBLemma1}) on aura besoin des deux lemmes suivants (voir les lemmes \ref{KhonStrangFormulaProofContinuitŽetInfAtteint} et \ref{KhonStrangFormulaProofContinuitŽetInfAtteint2}).
\begin{lemma}\label{KhonStrangFormulaProofContinuitŽetInfAtteint}
Soit $\xi\in\MM^{3\times 2}$ tel que $\mathcal{H}(\xi)<+\infty$. Alors, il existe $(t,a\otimes b)\in[0,1]\times\RR^2\otimes\RR^3$ tel que $\mathcal{H}(\xi)=H(\xi,t,a\otimes b)$.
\end{lemma}
\begin{proof}
Soit $\{(t_n,a_n\otimes b_n)\}_{n\geq 1}\subset[0,1]\times\RR^2\otimes\RR^3$ une suite minimisante pour $\mathcal{H}(\xi)$ telle que $t_n\to t\in[0,1]$. Posons $F_n:=\xi-t_na_n\otimes b_n$ et $G_n:=\xi+(1-t_n)a_n\otimes b_n$ (alors $(1-t_n)F_n+t_nG_n=\xi$ et $G_n-F_n=a_n\otimes b_n$ pour tout $n\geq 1$). Par la coercivit\'e de $h$ on a 
\begin{eqnarray}\label{KhonStrangFormulaProofContinuitŽetInfAtteintCond1}
(1-t_n)|F_n|^p+t_n|G_n|^p\leq c\hbox{ pour tout }n\geq 1\hbox{ avec } c>0.
\end{eqnarray}
L'une des deux possibilit\'es suivantes a lieu :
\begin{itemize}
\item[$\diamond$] $t\in]0,1[$ ;
\item[$\diamond$] $t=0$ ou $t=1$.
\end{itemize}
\subsubsection*{Cas $t\in]0,1[$} On a $1-t_n\geq\alpha_1>0$ et $t_n\geq\alpha_2>0$ pour tout $n\geq 1$. Utilisant (\ref{KhonStrangFormulaProofContinuitŽetInfAtteintCond1}) on d\'eduit qu'il existe  $F,G\in\MM^{3\times 2}$ tels que $F_n\to F$ et $G_n\to G$ (\`a une sous-suite pr\`es). Par cons\'equent,  $G_n-F_n=a_n\otimes b_n\to G-F$. Or $\RR^2\otimes\RR^3$ est un ferm\'e de $\MM^{3\times 2}$ d'apr\`es le lemme  \ref{ProduitTensorielEstFermŽDanslesMatrices}, donc $G-F=a\otimes b$ avec $a\in\RR^2$ et $b\in\RR^3$. Comme $H(\xi,\cdot,\cdot)$ est continue, il suit que $\mathcal{H}(\xi)=\lim_{n\to+\infty}H(\xi,t_n,a_n\otimes b_n)=H(\xi,t,a\otimes b)$.
\subsubsection*{Cas $t=0$ ou $t=1$} Supposons que $t=0$ (le cas $t=1$ se traite de la m\^eme fa\c con). On a alors $1-t_n\geq\alpha>0$ pour tout $n\geq 1$. Utilisant (\ref{KhonStrangFormulaProofContinuitŽetInfAtteintCond1}) on d\'eduit que $F_n\to F$ avec $F\in\MM^{3\times 2}$ et $t_n G_n\to 0$ (car $p>1$ et  $t_n\to 0$). Comme $(1-t_n)F_n+t_nG_n=\xi$ pour tout $n\geq 1$, il suit que $F=\xi$, d'o\`u $\lim_{n\to+\infty}(1-t_n)h(F_n)=h(\xi)$ puisque $h$ est continue. Or $t_n h(G_n)=H(\xi,t_n,a_n\otimes b_n)-(1-t_n)h(F_n)$ pour tout $n\geq 1$, donc $\lim_{n\to+\infty}t_n h(G_n)=\mathcal{H}(\xi)-h(\xi)\leq 0$ (car $\mathcal{H}(\xi)\leq h(\xi)$). D'autre part, utilisant la coercivit\'e de $h$, on voit que $t_nh(G_n)\geq Ct_n|G_n|^p$ pour tout $n\geq 1$ avec $C>0$, donc $\lim_{n\to+\infty}t_nh(G_n)\geq C\lim_{n\to+\infty}t_n|G_n|^p=0$. Ainsi, $\lim_{n\to+\infty}t_nh(G_n)=0$ et par cons\'equent $\mathcal{H}(\xi)=h(\xi)=H(\xi,0,a\otimes b)$ o\`u $a\otimes b$ est un \'el\'ement quelconque de $\RR^2\otimes\RR^3$. 
\end{proof}
\begin{lemma}\label{KhonStrangFormulaProofContinuitŽetInfAtteint2}
La fonction $\mathcal{H}$ est continue et coercive. 
\end{lemma}
\begin{proof}
Puisque $H(\cdot,t,a\otimes b)$ est continue pour tout $(t,a\otimes b)\in[0,1]\times \RR^2\otimes\RR^3$, $\mathcal{H}$ est semi-continue sup\'erieurement. Donc, pour montrer que $\mathcal{H}$ est continue, il suffit de prouver que $\mathcal{H}$ est sci. Pour cela, consid\'erons $\xi\in\MM^{3\times 2}$ et $\{\xi_n\}_{n\geq 1}\subset\MM^{3\times 2}$ tels que $\xi_n\to\xi$,  $\sup_{n\geq 1}\mathcal{H}(\xi_n)<+\infty$ et $\lim_{n\to+\infty}\mathcal{H}(\xi_n)=\liminf_{n\to+\infty}\mathcal{H}(\xi_n)$ et montrons que $\mathcal{H}(\xi)\leq\lim_{n\to+\infty}\mathcal{H}(\xi_n)$. Par le lemme \ref{KhonStrangFormulaProofContinuitŽetInfAtteint}, pour chaque $n\geq 1$, il existe $(t_n,a_n\otimes b_n)\in[0,1]\times\RR^2\otimes\RR^3$ tel que $\mathcal{H}(\xi_n)=H(\xi_n,t_n,a_n\otimes b_n)$. Sans perdre de g\'en\'eralit\'e on peut supposer que $t_n\to t\in[0,1]$. De la coercivit\'e de $h$ on d\'eduit que (\ref{KhonStrangFormulaProofContinuitŽetInfAtteintCond1}) a lieu avec $F_n:=\xi_n-t_na_n\otimes b_n$ et $G_n:=\xi_n+(1-t_n)a_n\otimes b_n$. Comme dans la preuve du lemme \ref{KhonStrangFormulaProofContinuitŽetInfAtteint}, on distingue deux cas.
\subsubsection*{Cas $t\in]0,1[$} En utilisant les m\^emes arguments que dans la preuve du lemme \ref{KhonStrangFormulaProofContinuitŽetInfAtteint}, on obtient $G_n-F_n=a_n\otimes b_n\to a\otimes b$ avec $a\in\RR^2$ et $b\in\RR^3$, d'o\`u, comme $H$ est continue, $\lim_{n\to+\infty}\mathcal{H}(\xi_n)=\lim_{n\to+\infty}H(\xi_n,t_n,a_n\otimes b_n)=H(\xi,t,a\otimes b)\geq \mathcal{H}(\xi)$.

\subsubsection*{Cas $t=0$ ou $t=1$} Supposons que $t=1$ (le cas $t=0$ se traite de la m\^eme fa\c con). On a alors $t_n\geq\beta>0$ pour tout $n\geq 1$. Par (\ref{KhonStrangFormulaProofContinuitŽetInfAtteintCond1}) on a $G_n\to G$ avec $G\in\MM^{3\times 2}$ et $(1-t_n)F_n\to 0$ (car $p>1$ et $t_n\to 1$). Comme $(1-t_n)F_n+t_nG_n=\xi_n$ pour tout $n\geq 1$, il suit que $G=\lim_{n\to+\infty}(1-t_n)F_n+t_nG_n=\lim_{n\to+\infty}\xi_n=\xi$, d'o\`u $\lim_{n\to+\infty}t_nh(G_n)=h(\xi)$ puisque $h$ est continue. Or $(1-t_n)h(F_n)=H(\xi_n,t_n,a_n\otimes b_n)-t_nh(G_n)$ pour tout $n\geq 1$, donc $\lim_{n\to+\infty}(1-t_n)h(F_n)=\lim_{n\to+\infty}\mathcal{H}(\xi_n)-h(\xi)\leq 0$ (car $\lim_{n\to+\infty}\mathcal{H}(\xi_n)\leq h(\xi)$ puisque $\mathcal{H}(\xi_n)\leq h(\xi_n)$ pour tout $n\geq 1$). D'autre part, utilisant la coercivit\'e de $h$, on voit que $(1-t_n)h(F_n)\geq C(1-t_n)|F_n|^p$ pour tout $n\geq 1$ avec $C>0$, donc $\lim_{n\to+\infty}(1-t_n)h(F_n)\geq C\lim_{n\to+\infty}(1-t_n)|F_n|^p=0$. Ainsi, $\lim_{n\to+\infty}(1-t_n)h(F_n)=0$ et par cons\'equent $\lim_{n\to+\infty}\mathcal{H}(\xi_n)=h(\xi)\geq \mathcal{H}(\xi)$. 

\medskip

Montrons que $\mathcal{H}$ est coercive. Par la coercivit\'e de $h$ on a  $\mathcal{H}(\xi)\geq C\inf\{(1-t)|\xi-ta\otimes b|^p+t|\xi+(1-t)a\otimes b|^p:(t,a\otimes b)\in[0,1]\times\RR^2\otimes\RR^3\}$ pour tout $\xi\in\MM^{3\times 2}$ avec $C>0$. Or $(1-t)|\xi-ta\otimes b|^p+t|\xi+(1-t)a\otimes b|^p\geq|(1-t)(\xi-ta\otimes b)+t(\xi+(1-t)a\otimes b)|^p=|\xi|^p$, d'o\`u $\mathcal{H}(\xi)\geq C|\xi|^p$ pour tout $\xi\in\MM^{3\times 2}$.
\end{proof}
Par le lemme \ref{Lemme1Det>0}, $W_0=\mathcal{R}_0W_0$ est continue et coercive, et si $\R_q W_0$ est continue et coercive alors, par le lemme \ref{KhonStrangFormulaProofContinuitŽetInfAtteint2} avec $h=\R_{q}W_0$, $\R_{q+1}W_0$ l'est aussi. D'o\`u $\R_q W_0$ est continue et coercive pour tout $q\geq 0$ et, en particulier, (\ref{EquAtiOn1BBLemma1Continue}) a lieu.

Comme  $\rank(\xi_j)=2$, par (\ref{Adj-Infty1}) on a $W_0(\xi_j)<+\infty$, d'o\`u $\R_{i+1}W_0(\xi_j)<+\infty$ puisque $\R_{i+1}W_0\leq W_0$, et (\ref{EquAtiOn1BBLemma1}) suit en utilisant le lemme \ref{KhonStrangFormulaProofContinuitŽetInfAtteint} avec $h=\R_iW_0$.
\hfill{$\square$}

\subsubsection{D\'emonstration du lemme \ref{BenBelgacemLemma2}} Puisqu'une fonction rang-1 convexe finie est continue, il suffit de prouver le premier point du lemme \ref{BenBelgacemLemma2}. Pour cela, on va d\'emontrer le th\'eor\`eme plus g\'en\'eral suivant d\^u \`a Ben Belgacem (voir \cite{benbelgacem96}). (Dans ce qui suit $N\leq m$ et, \'etant donn\'e $F\in\MM^{m\times N}$, on note $0\leq v_1(F)\leq\cdots\leq v_N(F)$ les valeurs singuli\`eres de $F$, i.e., les valeurs propres de $\sqrt{F^{\rm T}F}\in\MM^{N\times N}$, et on pose $v(F):=\prod_{i=1}^Nv_i(F)$.)

\begin{theorem}\label{RWaCCbyBB}
Si $W:\MM^{m\times N}\to[0,+\infty]$ satisfait la condition suivante 
\begin{eqnarray}\label{RWaCCbyBBCondition}
&&\hbox{il existe }\alpha,\beta>0\hbox{ tels que pour tout }F\in\MM^{m\times N}\hbox{,}\\
&&\hbox{si }v(F)\geq \alpha\hbox{ alors }W(F)\leq\beta(1+|F|^p)\hbox{,}\nonumber
\end{eqnarray}
alors $\R W$ satisfait (\ref{RWGrowth-Cond}).
\end{theorem}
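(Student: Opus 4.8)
The plan is to exhibit, for every $F\in\MM^{m\times N}$, a finite-order laminate (of depth $N$) centred at $F$ all of whose vertices lie in the ``good'' set $\{G\in\MM^{m\times N}:v(G)\geq\alpha\}$ and have norm controlled by $|F|$. Since $\R W=\inf_{i\geq 0}\R_i W$ (see \cite{kohn-strang86}), it suffices to produce a constant $c>0$, depending only on $\alpha,\beta,p$ and $N$, such that $\R_N W(F)\leq c(1+|F|^p)$ for all $F$; combining this with \eqref{RWaCCbyBBCondition} will then give \eqref{RWGrowth-Cond}.

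First I would fix $F$ and take a singular value decomposition $F=\sum_{\ell=1}^N\sigma_\ell\,q_\ell\otimes p_\ell$, where $\sigma_1,\dots,\sigma_N\geq 0$, $\{q_1,\dots,q_N\}$ is an orthonormal basis of $\RR^N$, and $p_1,\dots,p_N\in\RR^m$ are orthonormal (possible since $N\leq m$). The elementary observation driving everything is that for any reals $\tau_1,\dots,\tau_N$ the matrix $\sum_{\ell=1}^N\tau_\ell\,q_\ell\otimes p_\ell$ has singular values $|\tau_1|,\dots,|\tau_N|$, hence $v\big(\sum_\ell\tau_\ell\,q_\ell\otimes p_\ell\big)=\prod_{\ell=1}^N|\tau_\ell|$, and that adding a multiple of $q_i\otimes p_i$ only modifies the coefficient $\tau_i$, leaving the orthonormal frame $(q_\ell,p_\ell)_\ell$ intact.

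Next I would build a binary lamination tree of depth $N$. Setting $s_i:=\alpha^{1/N}+\sigma_i$, at step $i$ I split, at each current node $\sum_\ell\tau_\ell\,q_\ell\otimes p_\ell$, the coefficient $\tau_i$ into $\tau_i+s_i$ and $\tau_i-s_i$ with equal weights $\tfrac12$; formally this is \eqref{Kohn-StrangFormula} applied with $t=\tfrac12$ and $a\otimes b=2s_i\,q_i\otimes p_i$, which yields $\R_{k+1}W(M)\leq\tfrac12\R_kW(M-s_i\,q_i\otimes p_i)+\tfrac12\R_kW(M+s_i\,q_i\otimes p_i)$. Iterating this from $k=N-1$ down to $k=0$ gives $\R_NW(F)\leq 2^{-N}\sum_GW(G)$, the sum running over the $2^N$ leaves, each of which has the form $G=\sum_\ell\tau_\ell(G)\,q_\ell\otimes p_\ell$ with $\tau_\ell(G)\in\{-\alpha^{1/N},\,2\sigma_\ell+\alpha^{1/N}\}$, so that $\alpha^{1/N}\leq|\tau_\ell(G)|\leq 2\sigma_\ell+\alpha^{1/N}$ for every $\ell$.

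From this I would conclude: on one hand $v(G)=\prod_\ell|\tau_\ell(G)|\geq\alpha$, so $W(G)\leq\beta(1+|G|^p)$ by \eqref{RWaCCbyBBCondition}; on the other hand $|G|^2=\sum_\ell|\tau_\ell(G)|^2\leq\sum_\ell(2\sigma_\ell+\alpha^{1/N})^2\leq 8|F|^2+2N\alpha^{2/N}$, whence $|G|\leq\kappa(1+|F|)$ with $\kappa$ depending only on $N$ and $\alpha$. Combining, $\R W(F)\leq\R_NW(F)\leq 2^{-N}\sum_G\beta(1+|G|^p)\leq\beta\big(1+\kappa^p(1+|F|)^p\big)\leq c(1+|F|^p)$, which is \eqref{RWGrowth-Cond}. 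The only steps requiring genuine care are verifying that the singular frame $(q_\ell,p_\ell)_\ell$ is preserved at every node of the tree (so that the coefficients $\tau_\ell$ evolve independently and the identity $v=\prod_\ell|\tau_\ell|$ remains valid throughout) and the bookkeeping of the lamination inequality $\R_NW(F)\leq 2^{-N}\sum_GW(G)$; since both are straightforward, I do not expect a serious obstacle here.
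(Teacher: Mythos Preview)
Your proof is correct and follows essentially the same strategy as the paper: use the singular value decomposition and laminate $F$ along the rank-one directions $q_\ell\otimes p_\ell$ until every leaf has all singular values bounded below, then invoke \eqref{RWaCCbyBBCondition} on the leaves. The only differences are cosmetic: you laminate in all $N$ directions with equal weights targeting $|\tau_\ell|\geq\alpha^{1/N}$ and track the Kohn--Strang iterates $\R_kW$, while the paper assumes $\alpha\geq 1$, laminates only in the directions where $v_i(F)<\alpha$, replaces those coordinates by $\pm\alpha$, and uses rank-one convexity of $\R W$ directly.
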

\begin{proof}
Sans perdre de g\'en\'eralit\'e on peut supposer que $\alpha\geq 1$. Il est clair que $\R W(F)\leq\beta(1+|F|^p)$ pour tout $F\in\MM^{m\times N}$ tel que $v(F)\geq \alpha$. Consid\'erons donc $F\in\MM^{m\times N}$ tel que $v(F)<\alpha$.  Soient $P\in\OO(m)$ tel que $F=PJU$, o\`u $U:=\sqrt{F^{\rm T}F}$ et $J=(J_{ij})\in\MM^{m\times N}$ avec $J_{ij}=0$ si $i\not=j$ et $J_{ii}=1$, et $Q\in\SO(N)$ tel que $U=Q^{\rm T}\diag(v_1(F),\cdots,v_N(F)) Q$ (voir par exemple \cite[\S 7.3 p. 411]{horn-johnson90} pour plus de d\'etails sur une telle d\'ecomposition de $F$). Alors :
\begin{itemize}
\item[$\diamond$] $F=PJQ^{\rm T}\diag(v_1(F),\cdots,v_N(F)) Q$ ;
\item[$\diamond$] $|F|^2=\sum_{i=1}^Nv_i^2(F)$.
\end{itemize}
Comme $v(F)<\alpha$, il existe $1\leq i_1\leq\cdots\leq i_k\leq N$ avec $k\in\{1,\cdots,N\}$ tels que $v_{i_1}(F)<\alpha,\cdots,v_{i_k}(F)<\alpha$ (et $v_i(F)\geq\alpha$ pour tout $i\not\in\{i_1,\cdots,i_k\}$). Pour chaque $j\in\{1,\cdots,k\}$, consid\'erons $t_j\in]0,1[$ tel que 
$
v_{i_j}(F)=(1-t_j)(-\alpha)+t_j\alpha.
$
Alors 
\begin{eqnarray*}
\diag(v_1(F),\cdots,v_{i_1}(F),\cdots,v_N(F))&=&(1-t_1)\diag(v_1(F),\cdots,-\alpha,\cdots,v_N(F))\\ &&+\ t_1\diag(v_1(F),\cdots,\alpha,\cdots,v_N(F)),
\end{eqnarray*}
donc $F=(1-t_1)F_{1}^-+t_1F^+_{1}$ avec :
\begin{itemize}
\item[$\diamond$] $F^-_1:=PJQ^{\rm T}\diag(v_1(F),\cdots,-\alpha,\cdots,v_N(F)) Q$ ;
\item[$\diamond$] $F^+_1:=PJQ^{\rm T}\diag(v_1(F),\cdots,\alpha,\cdots,v_N(F)) Q$ ;
\item[$\diamond$] $\rank(F^-_1-F^+_1)=1$.
\end{itemize}
On a aussi 
\begin{eqnarray*}
&&\diag(v_1(F),\cdots,-\alpha,\cdots,v_{i_2}(F),\cdots,v_N(F))=
(1-t_2)\diag(v_1(F),\cdots,-\alpha,\\ &&\cdots,-\alpha,\cdots,v_N(F))
+t_2\diag(v_1(F),\cdots,-\alpha,\cdots,\alpha,\cdots,v_N(F)),
\end{eqnarray*}
donc $F^-_1=(1-t_2)F^{-,-}_2+t_2F^{-,+}_2$ avec :
\begin{itemize}
\item[$\diamond$] $F^{-,-}_2:=PJQ^{\rm T}\diag(v_1(F),\cdots,-\alpha,\cdots,-\alpha,\cdots,v_N(F)) Q$ ;
\item[$\diamond$] $F^{-,+}_2:=PJQ^{\rm T}\diag(v_1(F),\cdots,-\alpha,\cdots,\alpha,\cdots,v_N(F)) Q$ ;
\item[$\diamond$] $\rank(F^{-,-}_2-F^{-,+}_2)=1$.
\end{itemize}
De la m\^eme fa\c con on a $F^+_1=(1-t_2)F^{+,-}_2+t_2F^{+,+}_2$ avec :
\begin{itemize}
\item[$\diamond$] $F^{+,-}_2:=PJQ^{\rm T}\diag(v_1(F),\cdots,\alpha,\cdots,-\alpha,\cdots,v_N(F)) Q$ ;
\item[$\diamond$] $F^{+,+}_2:=PJQ^{\rm T}\diag(v_1(F),\cdots,\alpha,\cdots,\alpha,\cdots,v_N(F)) Q$ ;
\item[$\diamond$] $\rank(F^{+,-}_2-F^{+,+}_2)=1$.
\end{itemize}
Continuant ainsi on obtient une suite finie $\{F_j^{\sigma}\}_{j\in\{1,\cdots,k\}}^{\sigma\in\mathfrak{S}_j}\subset\MM^{m\times N}$, o\`u $\mathfrak{S}_j$ d\'esigne la classe de toutes les applications  $\sigma:\{1,\cdots,j\}\to\{-,+\}$, telle que :
\begin{itemize}
\item[$\diamond$] $F_j^\sigma=PJQ^{\rm T}\diag(v_1(F),\cdots,\sigma(1)\alpha,\cdots,\sigma(j)\alpha,\cdots,v_N(F))$ pour tout $j\in\{1,\cdots,k\}$ et tout $\sigma\in\mathfrak{S}_j$ ;
\item[$\diamond$] pour chaque $j\in\{1,\cdots,k\}$ et chaque $\sigma,\sigma^\prime\in\mathfrak{S}_j$, si $\sigma(j)\not=\sigma^\prime(j)$ et $\sigma(\ell)=\sigma^\prime(\ell)$ pour tout $\ell\in\{1,\cdots,j-1\}$ alors $\rank(F_j^\sigma-F_j^{\sigma^\prime})=1$ ;
\item[$\diamond$] $F=(1-t_1)F_1^\sigma+t_1F^{\sigma^\prime}_1$ pour tout $\sigma,\sigma^\prime\in\mathfrak{S}_1$ tels que $\sigma(1)\not=\sigma^\prime(1)$ ;
\item[$\diamond$] pour chaque $j\in\{1,\cdots,k\}$ et chaque $\sigma\in\mathfrak{S}_j$, $F_j^\sigma=(1-t_{j+1})F_{j+1}^{\sigma^\prime}+t_{j+1}F_{j+1}^{\sigma^{\prime\prime}}$ pour tout $\sigma^\prime,\sigma^{\prime\prime}\in\mathfrak{S}_{j+1}$ tels que $\sigma^\prime(l)=\sigma^{\prime\prime}(l)=\sigma(l)$ pour tout $l\in\{1,\cdots,j\}$ et $\sigma^\prime(j+1)\not=\sigma^{\prime\prime}(j+1)$.
\end{itemize}
Il suit que :
\begin{itemize}
\item[$\diamond$] $\R W(F)\leq\R W(F^\sigma_1)+\R W(F^{\sigma^\prime}_1)$ pour tout $\sigma,\sigma^\prime\in\mathfrak{S}_1$ tels que $\sigma(1)\not=\sigma^\prime(1)\ ;$
\item[$\diamond$] pour chaque $j\in\{1,\cdots,k\}$ et chaque $\sigma\in\mathfrak{S}_j$, $\R W(F^\sigma_j)\leq\R W(F_{j+1}^{\sigma^\prime})+\R W(F_{j+1}^{\sigma^{\prime\prime}})$ pour tout $\sigma^\prime,\sigma^{\prime\prime}\in\mathfrak{S}_{j+1}$ tels que $\sigma^\prime(l)=\sigma^{\prime\prime}(l)=\sigma(l)$ pour tout $l\in\{1,\cdots,j\}$ et $\sigma^\prime(j+1)\not=\sigma^{\prime\prime}(j+1)$.
\end{itemize}
D'o\`u 
$$
\R W(F)\leq\sum_{\sigma\in\mathfrak{S}_k}\R W(F^\sigma_k).
$$
De plus, on a 
$
v(F^\sigma_k)=|\det(\diag(v_1(F),\cdots,\sigma(1)\alpha,\cdots,\sigma(k)\alpha,\cdots,v_N(F)))|=\alpha^k\prod_{i\not\in\{i_1,\cdots i_k\}}v_i(F)
$
pour tout $\sigma\in\mathfrak{S}_k$. Donc, pour chaque $\sigma\in\mathfrak{S}_k$, $v(F^\sigma_k)\geq \alpha^N\geq\alpha$. Utilisant (\ref{RWaCCbyBBCondition}) on d\'eduit que 
$$
\R W(F)\leq\sum_{\sigma\in\mathfrak{S}_k}\beta(1+|F^\sigma_k|^p).
$$
Or, pour tout $\sigma\in\mathfrak{S}_k$,
$|F^\sigma_k|^2=|\diag(v_1(F),\cdots,\sigma(1)\alpha,\cdots,\sigma(k)\alpha,\cdots,v_N(F))|^2=k\alpha^2+\sum_{i\not\in\{i_1,\cdots,i_k\}}v_i^2(F)\leq N\alpha^2+|F|^2$,  d'o\`u 
$$
\R W(F)\leq \sum_{\sigma\in\mathfrak{S}_k}\beta\big(1+2^{p\over 2}(N^{p\over 2}\alpha^p+|F|^p)\big)\leq c(1+|F|^p)
$$
avec $c=2^N\beta(1+2^{p\over 2}N^{p\over 2}\alpha^p)$, ce qui donne le r\'esultat.
\end{proof}
Utilisant le th\'eor\`eme \ref{RWaCCbyBB} avec $W=W_0$, $m=3$ et $N=2$ et remarquant que $v(\xi)=|\xi_1\land \xi_2|$ pour tout $\xi=(\xi_1\mid \xi_2)\in\MM^{3\times 2}$, on obtient le premier point du lemme \ref{BenBelgacemLemma2}. \hfill$\square$

\begin{remark}
On peut g\'en\'eraliser le corollaire \ref{CorollaryExample:m=NBis} comme suit.
\begin{corollary}
Si $W:\MM^{m\times N}\to[0,+\infty]$ satisfait la condition suivante 
\begin{eqnarray}\label{SingUlarValuECondition}
&&\hbox{pour tout }\delta>0\hbox{ il existe }c_\delta>0\hbox{ tel que pour tout }F\in\MM^{m\times N}\hbox{,}\\
&&\hbox{si }v(F)\geq \delta\hbox{ alors }W(F)\leq c_\delta(1+|F|^p)\hbox{,}\nonumber
\end{eqnarray}
alors {(\ref{FunctRelax})} a lieu avec $\overline{W}=\mathcal{Q}W=\Z W$. 
\end{corollary}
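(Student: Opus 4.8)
The plan is to run exactly the argument used for Corollary \ref{CorollaryExample:m=NBis}, keeping its three-step skeleton and upgrading only the one ingredient that was proved there by a device special to $m=N$: the finiteness of $\Z W$. Concretely I would prove (i) that $\Z W$ is finite, (ii) that $\Z W\le\R W$, and (iii) that $\R W$ satisfies the growth condition (\ref{RWGrowth-Cond}); from (i)--(iii) it follows that $\Z W$ satisfies (\ref{Cond-CroissZW}), and Theorem \ref{GeneralThRelax} then yields (\ref{FunctRelax}) with $\overline W=\mathcal{Q}W=\Z W$.

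Steps (ii) and (iii) are immediate from results already in the paper. For (iii): applying (\ref{SingUlarValuECondition}) with the fixed value $\delta=1$ shows that $W$ satisfies (\ref{RWaCCbyBBCondition}) with $\alpha=1$ and $\beta=c_1$, so Theorem \ref{RWaCCbyBB} gives that $\R W$ satisfies (\ref{RWGrowth-Cond}). For (ii): once $\Z W$ is known to be finite, Proposition \ref{FonsecaProperties}(b) says it is rang-1 convexe; since moreover $\Z W\le W$ (take $\varphi=0$ in (\ref{DefOfZW})), maximality of $\R W$ forces $\Z W\le\R W$, whence $\Z W(F)\le\R W(F)\le c(1+|F|^p)$ for every $F\in\MM^{m\times N}$.

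It remains to prove (i), and this is where the work lies. If $v(F)>0$ then $\Z W(F)\le W(F)\le c_{v(F)}(1+|F|^p)<+\infty$ by (\ref{SingUlarValuECondition}), so the only issue is $F$ with $v(F)=0$, i.e.\ $\rank F<N$. I would argue by decreasing induction on $N-\rank F$, mimicking Steps 1.1--1.3 of the proof of Theorem \ref{ThExample:m=N}. Base case $\rank F=N-1$: the column space of $F$ is an $(N-1)$-plane in $\RR^m$, so (using $m\ge N$) one can pick $\nu\ne 0$ orthogonal to it; on a suitable bounded polytope $D\subset\RR^N$ analogous to the octahedral domain of Theorem \ref{ThExample:m=N} and with the $\RR^N$-analogue of the piecewise-affine function $\psi_s$ of that proof, the field $\varphi:=\psi_s\nu\in\Aff_0(D;\RR^m)$ has the property that $F+\nabla\varphi$ takes only finitely many values, each obtained from $F$ by translating some of its columns by $\pm s\nu$ or $\pm\nu$; choosing $s$ outside a finite, explicitly computable exceptional set forces every such value to have $N$-volume $v(F+\nabla\varphi(x))\ge\delta>0$ for a.e.\ $x\in D$, so Proposition \ref{FonsecaProperties}(a) and (\ref{SingUlarValuECondition}) give $\Z W(F)\le c_\delta+\frac{c_\delta}{|D|}\|F+\nabla\varphi\|_{L^p(D;\MM^{m\times N})}^p<+\infty$. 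Inductive step $\rank F=r<N-1$: the same construction, with $\nu$ orthogonal to the column space of $F$ and $s$ generic, produces finitely many matrices $F+\nabla\varphi(x)$ all of rank $r+1$, and Proposition \ref{FonsecaProperties}(d) together with the inductive hypothesis yields $\Z W(F)<+\infty$; the case $F=0$ follows from the rank-one case. This establishes (i), and the corollary follows.

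The main obstacle is precisely this last step: building, in arbitrary dimension $N$ (with $m\ge N$), the analogue of the domain $D$ and of the function $\psi_s$ of Theorem \ref{ThExample:m=N} so that $F+\nabla\varphi$ takes only finitely many values --- with no ``transition layers'' carrying intermediate gradients on which $W$ could be $+\infty$ --- and then carrying out the (linear-algebraic) verification that for $s$ avoiding a finite set all these values have the required strictly positive, resp.\ higher-rank, $N$-volume $v$. Everything else reduces to Proposition \ref{FonsecaProperties}, Theorem \ref{RWaCCbyBB} and Theorem \ref{GeneralThRelax}. In particular the case $m=N$ recovers Corollary \ref{CorollaryExample:m=NBis} and the case $N=2$, $m=3$ recovers the product-vectoriel situation, since then $v(F)=|F_1\land F_2|$.
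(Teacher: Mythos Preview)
Your proposal is correct and follows exactly the strategy of the paper's proof: show $\Z W$ is finite by adapting \'Etape~1 of the proof of Th\'eor\`eme~\ref{ThExample:m=N} to general $N\le m$, then reproduce verbatim the argument of Corollaire~\ref{CorollaryExample:m=NBis} (finiteness $\Rightarrow$ rang-1 convexity of $\Z W$ $\Rightarrow$ $\Z W\le\R W$, combined with Th\'eor\`eme~\ref{RWaCCbyBB} via (\ref{SingUlarValuECondition}) $\Rightarrow$ (\ref{RWaCCbyBBCondition})) to obtain (\ref{Cond-CroissZW}) and conclude by Th\'eor\`eme~\ref{GeneralThRelax}. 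In fact you give more detail on step~(i) --- the inductive rank argument using the $\RR^N$-analogue of the octahedral domain and of $\psi_s$ --- than the paper itself, which merely points to \'Etape~1 of Th\'eor\`eme~\ref{ThExample:m=N} as the model.
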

\begin{proof}
La seule difficult\'e est de prouver que si $W$ satisfait (\ref{SingUlarValuECondition}) alors $\Z W$ est finie. (Pour cela, on pourra s'inspirer de l'\'etape 1 de la d\'emonstration du th\'eor\`eme \ref{ThExample:m=N}.) On fait ensuite le m\^eme raisonnement que dans la preuve du corollaire \ref{CorollaryExample:m=NBis} en remarquant que (\ref{SingUlarValuECondition}) implique (\ref{RWaCCbyBBCondition}). 
\end{proof}
\end{remark}

\section{Deux th\'eor\`emes d'approximation}

\subsection{Th\'eor\`eme de Gromov-Eliashberg}

 En 1971, Gromov et Eliashberg  ont d\'emontr\'e le th\'eor\`eme suivant (voir \cite[Theorem 1.3.4B]{gromov-eliashberg71}, voir aussi \cite[Theorem B$^\prime_1$ p. 20]{gromov86}).
\begin{theorem}\label{GEapproxTheo}
Soient $m>N\geq 1$ et $M$ une vari\'et\'e compacte de dimension $N$ qui peut \^etre immerg\'ee dans $\RR^m$. Alors, pour chaque fonction $C^1$-diff\'erentiable $\psi$ de $M$ dans $\RR^m$ il existe une suite  $\{\psi_n\}_n$ de $C^1$-immersions de $M$ dans $\RR^m$ telle que $\psi_n\to \psi$ dans $W^{1,p}(M;\RR^m)$.
\end{theorem}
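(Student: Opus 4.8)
The plan is to deduce the statement from the $h$-principle for immersions, the point being to upgrade the usual $C^0$-density of immersions to $W^{1,p}$-density. First I would reduce to $\psi$ smooth: $C^1(M;\RR^m)$ embeds continuously into $W^{1,p}(M;\RR^m)$ and $C^\infty(M;\RR^m)$ is $W^{1,p}$-dense in it (mollify $\psi$ in finitely many charts and glue with a partition of unity), so it suffices to $W^{1,p}$-approximate a smooth $\psi$ by $C^1$ immersions. After a further arbitrarily $C^1$-small (hence $W^{1,p}$-small) perturbation, Thom transversality applied to the jet map of $\psi$ lets me assume that the singular locus $\Sigma:=\{x\in M:\rank\,d\psi(x)<N\}$ is a finite union of $C^1$ submanifolds of dimension $\le 2N-m-1$; here one uses $m>N$, which makes $\{\rank\le N-1\}\subset\MM^{m\times N}$ of codimension $m-N+1\ge 2$. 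Thus $\Sigma$ is a compact subset of $M$ of codimension $\ge 2$ and $\psi$ is already an immersion on the open set $M\setminus\Sigma$.

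Next I would produce the formal data and invoke the $h$-principle relatively. Because $M$ immerses in $\RR^m$, there is a $C^1$ bundle monomorphism $TM\hookrightarrow\underline{\RR^m}$; combined with $\psi$ this gives a formal solution of the differential relation $\mathcal{R}_{\mathrm{imm}}\subset J^1(M,\RR^m)$ cutting out immersions, a relation that is open and, since $m>N$, ample. Fix a neighbourhood $U\supset\Sigma$. Over $M\setminus U$ the section $d\psi$ is already a monomorphism, and the Gromov--Eliashberg theorem ``removal of singularities'' (equivalently, convex integration for the ample relation $\mathcal{R}_{\mathrm{imm}}$, carried out relative to $M\setminus U$) then modifies $\psi$ only inside $U$ into a genuine $C^1$ immersion $\psi_U$ with $\psi_U=\psi$ on $M\setminus U$ and $\|\psi_U-\psi\|_{C^0}$ as small as we wish. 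The obstructions to performing this over $U$ vanish: the immersibility hypothesis on $M$ supplies the global formal data, and any residual low-dimensional obstruction (of Whitney cross-cap type when $m<2N$) is killed by a preliminary pairwise cancellation of singularities, itself made possible by the extra codimension. This is precisely the content of the cited theorem.

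The crux — and the only genuinely delicate point — is the $W^{1,p}$ estimate. Since $\psi_U$ and $\psi$ agree off $U$,
$$
\|\psi_U-\psi\|_{W^{1,p}(M;\RR^m)}^{p}=\int_U\big(|\psi_U-\psi|^{p}+|d\psi_U-d\psi|^{p}\big)\,dx.
$$
The essential feature of convex integration is that the modification proceeds by many high-frequency oscillations of small amplitude: the primitive $\psi_U-\psi$ is $C^0$-small while the derivative $d\psi_U$ stays bounded on $U$ by a constant $C$ depending only on $\|d\psi\|_{C^0}$ and on the chosen monomorphism $TM\hookrightarrow\underline{\RR^m}$ — crucially \emph{not} on $|U|$. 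Hence $\|\psi_U-\psi\|_{W^{1,p}}^{p}\le|U|\,\big(\|\psi_U-\psi\|_{C^0}^{p}+(2C)^{p}\big)\to 0$ as $|U|\downarrow 0$, and taking a sequence $U_n$ of neighbourhoods of $\Sigma$ with $|U_n|\to 0$ gives $\psi_n:=\psi_{U_n}\to\psi$ in $W^{1,p}(M;\RR^m)$, each $\psi_n$ a $C^1$ immersion. The main obstacle is thus to extract simultaneously, from the $h$-principle machinery, (i) the relative construction confined to a set of arbitrarily small measure, with its topological obstructions neutralised, and (ii) a bound on the derivative of the modification that is independent of that measure; the reconciliation of ``arbitrarily small support'' with ``enough room to kill obstructions'' is exactly what the two hypotheses $m>N$ and ``$M$ immerses in $\RR^m$'' provide. (When $M$ has non-empty boundary, as in the application to $M=\overline{\Sigma}$, the underlying $h$-principle is the easier Hirsch--Phillips immersion theorem for open manifolds, but the scheme above is unchanged.)
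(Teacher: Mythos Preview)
Your strategy is a legitimate route to the result, but it is genuinely different from the paper's proof, and a couple of steps are more delicate than your sketch suggests.

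The paper only proves the special case $N=2$, $m=3$, $M=\overline{\Sigma}$ (Th\'eor\`eme 4.1-bis), following the original 1971 argument of Gromov--Eliashberg rather than the later convex-integration viewpoint. Concretely: one does \emph{not} study the singular set of $\psi:\overline{\Sigma}\to\RR^3$ itself. Instead one projects to $\RR^2$ and replaces two of the three components by a Whitney-generic (``typical'') map $h^k:\overline{\Sigma}\to\RR^2$, whose singular set $S_k$ is a compact curve made of folds and isolated cusps. A continuous kernel field $L^k$ along $S_k$ is then transversal to $S_k$, and one modifies the remaining scalar component $f$ so that $\nabla f\cdot L^k>0$ on $S_k$; this kills all singularities of the assembled map. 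The $W^{1,p}$-smallness of the modification is \emph{not} obtained by shrinking a neighbourhood with uniformly bounded gradient: $S_k$ has codimension~$1$, so that would fail. It comes instead from the spectral synthesis theorem of Hedberg--Wolff (Lemma~4.11 in the paper): if $\alpha$ vanishes on a compact $K$, there exist cutoffs $\omega_n\equiv1$ near $K$ with $\|\omega_n\alpha\|_{W^{1,p}}\to0$. One then iterates the procedure on the other components and diagonalises.

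Your approach trades this capacity argument for Thom transversality on $\psi$ itself (singular set of codimension $m-N+1\ge2$) plus the relative $h$-principle, recovering the $W^{1,p}$ estimate from $|U|\to0$ with bounded gradient. Two points deserve more care. First, the uniform bound on $d\psi_U$ independent of $|U|$ is not a formal consequence of the $h$-principle statement; it requires going into the convex-integration construction and observing that the holonomic derivative stays in a fixed compact neighbourhood of the chosen formal monomorphism. Second, the obstruction you allude to is real: for $N=2$, $m=3$ each generic singular point is a cross-cap contributing the nontrivial element of $\pi_1(V_2(\RR^3))\cong\ZZ/2$, so one cannot remove them one at a time---the pairwise cancellation you invoke needs the total count to be even, which is exactly where the hypothesis ``$M$ immerses in $\RR^m$'' enters, and this should be made explicit. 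Modulo these two clarifications your scheme works and has the advantage of applying in all dimensions; the paper's argument is more elementary and fully explicit but is tailored to $m=3$, $N=2$.
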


Dans notre contexte ($m=3, N=2$ et $M=\overline{\Sigma}$) on a 

\newtheorem*{GEapproxTheo}{\bf Th\'eor\`eme \ref{GEapproxTheo}-bis}

\begin{GEapproxTheo}
Pour chaque $\psi\in C^1(\overline{\Sigma};\RR^3)$ il existe $\{\psi_n\}_{n\geq 1}\subset C^1_*(\overline{\Sigma};\RR^3)$ tel que $\psi_n\to \psi$ dans $W^{1,p}(\Sigma;\RR^3)$.
\end{GEapproxTheo}

(Pour une preuve du th\'eor\`eme \ref{GEapproxTheo}-bis voir \S 4.1.1-4.1.3.) Le r\'esultat suivant a \'et\'e \'enonc\'e par Ben Belgacem (voir \cite[p. 137-138]{benbelgacem96}, voir aussi \cite[Proposition 3.1.7 p. 100]{trabelsi04}). Pour la d\'efinition de  $\Aff^{\rm reg}_{\rm c}(\RR^2;\RR^3)$, $\Aff^{\rm reg}(\Sigma;\RR^3)$ et $\Aff_{\rm li}^{\rm reg}(\Sigma;\RR^3)$ voir \S 4.1.4.
\begin{proposition}\label{ElementsFinisProp}
Pour chaque $\psi\in C^1_*(\overline{\Sigma};\RR^3)$ il existe $\{\psi_n\}_{n\geq 1}\subset\Aff_{\rm li}^{\rm reg}(\Sigma;\RR^3)$ tel que  $\psi_n\to \psi$ dans $W^{1,\infty}(\Sigma;\RR^3)$.
\end{proposition}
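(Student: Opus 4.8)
The plan is to approximate a $C^1$-immersion $\psi$ of $\overline{\Sigma}$ into $\RR^3$ by piecewise-affine maps that are themselves locally injective and whose gradient takes values in the ``regular'' set (rank $2$, so that $\partial_1\cdot\land\partial_2\cdot\neq 0$). The natural device is a fine triangulation (or simplicial subdivision) of $\overline{\Sigma}$ together with the Lagrange (nodal) interpolation operator. So first I would fix a sequence of triangulations $\{\mathcal{T}_n\}_{n\geq 1}$ of $\overline{\Sigma}$ with mesh size $h_n\to 0$, and let $\psi_n$ be the continuous piecewise-affine function that agrees with $\psi$ at every vertex of $\mathcal{T}_n$. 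Standard finite-element interpolation estimates give $\psi_n\to\psi$ in $W^{1,\infty}(\Sigma;\RR^3)$: indeed, on each triangle $T$ the affine piece $\psi_n\lfloor_T$ interpolates $\psi$ at the three vertices, so $\|\psi_n-\psi\|_{L^\infty(T)}\leq C h_n\,\omega(\nabla\psi;h_n)$ and $\|\nabla\psi_n-\nabla\psi\|_{L^\infty(T)}\leq C\,\omega(\nabla\psi;h_n)$, where $\omega(\nabla\psi;\cdot)$ is the (uniform) modulus of continuity of $\nabla\psi$ on the compact set $\overline{\Sigma}$; since $\psi\in C^1(\overline{\Sigma};\RR^3)$, $\omega(\nabla\psi;h_n)\to0$. (One must keep the triangulations shape-regular so that the constants $C$ are independent of $n$.)

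\textbf{Next} I would check that for $n$ large the interpolant $\psi_n$ lies in $\Aff^{\rm reg}(\Sigma;\RR^3)$, i.e. that $\nabla\psi_n$ has rank $2$ on each triangle, equivalently $\partial_1\psi_n\land\partial_2\psi_n\neq0$. Set $\delta:=\min_{x\in\overline{\Sigma}}|\partial_1\psi(x)\land\partial_2\psi(x)|>0$ (this is positive precisely because $\psi\in C^1_*(\overline{\Sigma};\RR^3)$ and $\overline{\Sigma}$ is compact). Since $\|\nabla\psi_n-\nabla\psi\|_{L^\infty}\to0$ and the map $\xi=(\xi_1\mid\xi_2)\mapsto\xi_1\land\xi_2$ is (locally Lipschitz, hence) uniformly continuous on the bounded set where the gradients live, for $n\geq n_0$ we get $|\partial_1\psi_n(x)\land\partial_2\psi_n(x)|\geq\delta/2$ for a.e. $x\in\Sigma$; in particular $\psi_n\in\Aff^{\rm reg}(\Sigma;\RR^3)$, and moreover, on each affine piece, the tangent plane $\mathrm{Im}\,\nabla\psi_n$ is close to $\mathrm{Im}\,\nabla\psi$.

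\textbf{The main obstacle} is upgrading $\psi_n\in\Aff^{\rm reg}$ to $\psi_n\in\Aff^{\rm reg}_{\rm li}$, i.e.\ obtaining \emph{local injectivity}. A piecewise-affine map with everywhere-rank-$2$ gradient need not be locally injective at an interior vertex where several triangles meet: the images of the cells around a vertex can ``fold over'' even though each cell is immersed. The fix is to work locally around each vertex $v$ of $\mathcal{T}_n$: on the (small) star of $v$, $\psi$ is $C^1$ with $\nabla\psi$ nearly constant, equal to some $\xi$ of rank $2$; composing with the projection onto $\mathrm{Im}\,\xi\cong\RR^2$ makes $\psi$ a $C^1$ map of a planar neighborhood of $v$ into $\RR^2$ whose differential is close to the identity, hence a diffeomorphism onto its image for $n$ large (inverse function theorem, with uniform control since $\overline{\Sigma}$ is compact). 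Then the nodal interpolant of this planar map on the star of $v$ is again injective once the mesh is fine enough — this is the classical fact that piecewise-linear interpolation of a bi-Lipschitz (or $C^1$-diffeomorphic) planar map is injective on each element-star for small mesh — and injectivity transfers back through the (fixed, injective on the relevant range) projection. Patching these local statements: for $n\geq n_1$, every vertex-star of $\psi_n$ is injective, which is exactly local injectivity of $\psi_n$; hence $\psi_n\in\Aff^{\rm reg}_{\rm li}(\Sigma;\RR^3)$.

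\textbf{Finally}, combining the three points, for $n\geq\max\{n_0,n_1\}$ we have $\psi_n\in\Aff^{\rm reg}_{\rm li}(\Sigma;\RR^3)$ and $\psi_n\to\psi$ in $W^{1,\infty}(\Sigma;\RR^3)$, which proves the proposition. The one place requiring care is making all the ``for $n$ large'' thresholds uniform: this is guaranteed by compactness of $\overline{\Sigma}$, shape-regularity of the triangulations, and uniform continuity of $\nabla\psi$ and of $\xi\mapsto\xi_1\land\xi_2$ on bounded sets. (For a slick alternative one can instead invoke the Gromov–Eliashberg density theorem \ref{GEapproxTheo}-bis to reduce to $C^1_*$ maps and then only needs the finite-element interpolation step plus the local-injectivity argument above — but the direct route sketched here already gives the result and is what §4.1.4 will make precise.)
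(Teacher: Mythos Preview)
Your overall strategy is correct and matches the paper's: take a sequence $\psi_n\in\Aff^{\rm reg}(\Sigma;\RR^3)$ with $\psi_n\to\psi$ in $W^{1,\infty}$ (the paper invokes Theorem~\ref{Ekeland-TemamDensityTheorem}, which is exactly the interpolation construction you describe), and then verify local injectivity for $n$ large. Your rank-$2$ argument is fine and essentially the same as the paper's.

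The difference is in how local injectivity is established. Your vertex-star / projection argument works, but it is more elaborate than necessary: you are exploiting the combinatorial structure of the triangulation when you do not need to. The paper gives a cleaner argument that uses only the $W^{1,\infty}$-closeness and nothing about the piecewise-affine structure. Fix $x_0\in\overline{\Sigma}$; since $\psi$ is an immersion there is $c_0>0$ with $|\nabla\psi(x_0)v|\ge c_0|v|$ for all $v\in\RR^2$. By uniform continuity of $\nabla\psi$, on a ball $B_\delta(x_0)$ one has $|\nabla\psi(z)-\nabla\psi(x_0)|\le\frac{1}{2k}$, so the mean-value inequality gives $|\psi(x)-\psi(y)-\nabla\psi(x_0)(x-y)|\le\frac{1}{2k}|x-y|$, hence $|\psi(x)-\psi(y)|\ge(c_0-\frac{1}{2k})|x-y|$. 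Then, once $\|\nabla\psi_n-\nabla\psi\|_{L^\infty}\le\frac{1}{2k}$, the difference $\psi_n-\psi$ is $\frac{1}{2k}$-Lipschitz, so
\[
|\psi_n(x)-\psi_n(y)|\ \ge\ |\psi(x)-\psi(y)|-\tfrac{1}{2k}|x-y|\ \ge\ \big(c_0-\tfrac{1}{k}\big)|x-y|\ \ge\ \tfrac{c_0}{2}|x-y|
\]
for $k>2/c_0$. This gives local injectivity at $x_0$ directly, with no projection step and no reference to stars of vertices; compactness of $\overline{\Sigma}$ makes $c_0$, $\delta$, and the rank threshold uniform in $x_0$. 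The payoff of the paper's route is that it applies verbatim to \emph{any} Lipschitz approximants $\psi_n$, not just nodal interpolants, and avoids the ``piecewise-linear interpolation of a bi-Lipschitz planar map is injective on element-stars'' lemma that you would otherwise have to justify carefully.
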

\begin{proof} 
Soit $\psi\in C^1_*(\overline{\Sigma};\RR^3)$ avec $\psi=\hat\psi\lfloor_{\overline{\Sigma}}$ o\`u $\hat\psi\in C^1_*(\RR^2;\RR^3)$. Par le th\'eor\`eme \ref{Ekeland-TemamDensityTheorem} il existe $\{\psi_n\}_{n\ge 1}\subset \Aff^{\rm reg}(\Sigma;\RR^3)$ tel que $\psi_n\to \psi\mbox{ dans }W^{1,\infty}(\Sigma;\RR^3)$. Par d\'efinition, pour chaque $n\geq 1$, $\psi_n=\hat\psi_n\lfloor_{\Sigma}$ avec $\hat\psi_n\in\Aff^{\rm reg}_{\rm c}(\RR^2;\RR^3)$. 

Fixons $x_0\in\overline{\Sigma}$ et montrons que $\hat\psi_n$ est localement injective en $x_0$ \`a partir d'un certain rang. D'abord, puisque $\hat\psi\in C^\infty_*(\RR^2;\RR^3)$ il existe $c_0>0$ tel que 
\begin{equation}\label{inegalite derive}
\big\vert \nabla\hat\psi(x_0)v\big\vert \ge c_0\vert v\vert\hbox{ pour tout }v\in\RR^2.
\end{equation}
Consid\'erons deux entiers $k>\frac{2}{c_0}$ et $n\geq n_k$ avec $n_k\in\NN$ tel que 
\begin{equation}\label{inegalite approximation}
\big\| \nabla\hat\psi_n-\nabla\hat\psi\big\|_{L^\infty(\Sigma;\MM^{3\times 2})}\le \frac{1}{2k}.
\end{equation}
 Comme $\nabla\hat\psi$ est continue en $x_0$ il existe $\delta>0$ tel que 
\begin{equation}\label{continuite de la derivee}
\big\vert \nabla\hat\psi(z)-\nabla\hat\psi(x_0)\big\vert\le \frac{1}{2k}\hbox{ pour tout }z\in B_\delta(x_0).
\end{equation}
\'Etant donn\'es $x,y\in B_\delta(x_0)$, du th\'eor\`eme de la moyenne  on d\'eduit que 
\[
\big\vert \hat\psi(x)-\hat\psi(y)-\nabla\hat\psi(x_0)(x-y)\big\vert\le \vert x-y\vert\sup_{z\in [x,y]}\big\vert \nabla\hat\psi(z)-\nabla\hat\psi(x_0)\big\vert
\]
(voir \cite[(8.6.2) p. 164]{dieudonne81}), donc 
\begin{equation}\label{accroiss inegalite}
\big\vert \hat\psi(x)-\hat\psi(y)-\nabla\hat\psi(x_0)(x-y)\big\vert\le \frac{1}{2k}\vert x-y\vert
\end{equation}
par \eqref{continuite de la derivee}. De \eqref{inegalite approximation} on d\'eduit que $\hat\psi_n-\hat\psi$ est Lipschitzienne de rapport ${1\over 2k}$ sur $\Sigma$ (voir \cite[Corollaire 2.4 p. 288]{ekeland-temam74}), donc  
\[
\big\vert \hat\psi_n(x)-\hat\psi_n(y)\big\vert
\ge\big\vert \hat\psi(x)-\hat\psi(y)\big\vert-\frac{1}{2k}\vert x-y\vert.
\]
Mais par \eqref{inegalite derive} et \eqref{accroiss inegalite} on a 
\begin{eqnarray*}
\big\vert \hat\psi(x)-\hat\psi(y)\big\vert
&\ge& \big\vert \nabla\hat\psi(x_0)(x-y)\big\vert-\big\vert \hat\psi(x)-\hat\psi(y)-\nabla\hat\psi(x_0)(x-y)\big\vert\\
&\ge& c_0\vert x-y\vert-\frac{1}{2k}\vert x-y\vert,
\end{eqnarray*}
d'o\`u $\vert \hat\psi_n(x)-\hat\psi_n(y)\vert\ge (c_0-\frac{1}{k})\vert x-y\vert\ge \frac{c_0}{2}\vert x-y\vert$,
ce qui montre que $\hat\psi_n$ est localement injective en $x_0$ d\`es que  $n\geq n_k$.
\end{proof}

Le r\'esultat suivant est une cons\'equence du th\'eor\`eme \ref{GEapproxTheo}-bis.
\begin{theorem}\label{GEdensityTheo}
$\Aff_{\rm li}^{\rm reg}(\Sigma;\RR^3)$ est fortement dense dans $W^{1,p}(\Sigma;\RR^3)$.
\end{theorem}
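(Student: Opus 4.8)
The plan is to derive Theorem \ref{GEdensityTheo} by composing three successive approximations: given $\psi\in W^{1,p}(\Sigma;\RR^3)$ and $\eps>0$, I would produce an element of $\AffETli(\Sigma;\RR^3)$ lying within $\eps$ of $\psi$ for the $W^{1,p}$-norm. First I would approximate $\psi$ by some $\psi^1\in C^1(\overline{\Sigma};\RR^3)$ with $\|\psi-\psi^1\|_{W^{1,p}(\Sigma;\RR^3)}<\eps/3$; this uses only the classical fact that $C^1(\overline{\Sigma};\RR^3)$ --- indeed $C^\infty(\overline{\Sigma};\RR^3)$ --- is strongly dense in $W^{1,p}(\Sigma;\RR^3)$ for the bounded open set $\Sigma$ (alternatively one may start from the density of $\Aff(\Sigma;\RR^3)$ recorded in Remark \ref{Ekeland-TemamDensityRemark} and mollify). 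Then I would apply Theorem \ref{GEapproxTheo}-bis --- the Gromov-Eliashberg approximation in the present setting $m=3$, $N=2$, $M=\overline{\Sigma}$ --- to $\psi^1$, obtaining a $C^1$-immersion $\psi^2\in C^1_*(\overline{\Sigma};\RR^3)$ with $\|\psi^1-\psi^2\|_{W^{1,p}(\Sigma;\RR^3)}<\eps/3$. Finally I would apply Proposition \ref{ElementsFinisProp} to $\psi^2$ to obtain $\psi^3\in\AffETli(\Sigma;\RR^3)$ with $\|\psi^2-\psi^3\|_{W^{1,\infty}(\Sigma;\RR^3)}$ as small as I please.

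The only routine computation is to transfer this last estimate from $W^{1,\infty}$ to $W^{1,p}$: since $\Sigma$ is bounded one has $\|f\|_{W^{1,p}(\Sigma;\RR^3)}\leq|\Sigma|^{1/p}\|f\|_{W^{1,\infty}(\Sigma;\RR^3)}$, so imposing $\|\psi^2-\psi^3\|_{W^{1,\infty}(\Sigma;\RR^3)}<\eps/(3(1+|\Sigma|)^{1/p})$ forces $\|\psi^2-\psi^3\|_{W^{1,p}(\Sigma;\RR^3)}<\eps/3$. The triangle inequality then yields $\|\psi-\psi^3\|_{W^{1,p}(\Sigma;\RR^3)}<\eps$ with $\psi^3\in\AffETli(\Sigma;\RR^3)$, and since $\psi$ and $\eps$ were arbitrary this is exactly the claimed density of $\AffETli(\Sigma;\RR^3)$ in $W^{1,p}(\Sigma;\RR^3)$.

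I do not expect a genuine obstacle here: all the hard work has been pushed into Theorem \ref{GEapproxTheo}-bis (which rests on the Gromov-Eliashberg immersion theorem) and into Proposition \ref{ElementsFinisProp} (whose proof already carries out the needed local injectivity estimate), while the remaining ingredients are standard. The one mild point to watch is that the chain mixes two topologies --- the first two approximations live in $W^{1,p}$ and the third in the strictly finer space $W^{1,\infty}$ --- but on the bounded domain $\Sigma$ the continuous embedding $W^{1,\infty}(\Sigma;\RR^3)\hookrightarrow W^{1,p}(\Sigma;\RR^3)$ makes the composition harmless, so the argument closes cleanly.
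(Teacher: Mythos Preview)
Your proposal is correct and follows exactly the paper's own argument: the paper's proof consists of the single sentence that one combines the strong density of $C^1(\overline{\Sigma};\RR^3)$ in $W^{1,p}(\Sigma;\RR^3)$ with Th\'eor\`eme~\ref{GEapproxTheo}-bis and Proposition~\ref{ElementsFinisProp}. Your three-step chain $W^{1,p}\to C^1\to C^1_*\to\AffETli$ is precisely this, with the routine $W^{1,\infty}\hookrightarrow W^{1,p}$ embedding spelled out.
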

\begin{proof}
Il suffit d'utiliser le fait que $C^1(\overline{\Sigma};\RR^3)$ est fortement dense dans $W^{1,p}(\Sigma;\RR^3)$ avec le th\'eor\`eme \ref{GEapproxTheo}-bis et la proposition \ref{ElementsFinisProp}.
\end{proof}

\subsubsection{Pr\'eliminaires pour la d\'emonstration du th\'eor\`eme \ref{GEapproxTheo}-bis} (Pour la preuve du th\'eor\`eme \ref{GEapproxTheo}-bis voir \S 4.1.2.) La d\'efinition suivante est d\^u \`a Whitney \cite{whitney55} (voir aussi \cite{gromov-eliashberg71}).
\begin{definition}\label{DefTypicAlMaPs}
On dit que $h\in C^\infty(\RR^2;\RR^2)$ est typique (ou stable ou encore g\'en\'erique) si les conditions suivantes sont satisfaites :
\begin{itemize}
\item[$\diamond$] $\{x\in\RR^2: \hbox{dim Ker}\nabla h(x)=2\}=\emptyset$ ;
\item[$\diamond$] l'ensemble $S:=\{x\in\RR^2: \hbox{dim Ker}\nabla h(x)=1\}$ est une sous-vari\'et\'e ferm\'ee de dimension $1$ telle que $S=S^{0}\cup S^{1}$, o\`u $S^{0}$ est un sous-ensemble discret et $S^{1}$ est une sous-vari\'et\'e de dimension $1$ donn\'es par 
$$
S^0:=\big\{x\in\RR^2: \hbox{Ker}\nabla h(x)=T_xS\big\}\mbox{ et }S^1:=\big\{x\in\RR^2: \hbox{Ker}\nabla h(x)+T_xS=\RR^2\big\},
$$
o\`u $T_xS$ est l'espace tangent \`a $S$ en $x$.
\end{itemize}
On note $C^\infty_{\rm typ}(\RR^2;\RR^2)$ l'ensemble des applications typiques.
\end{definition}
\begin{remark}
En fait, $S^{0}$ correspond aux singularit\'es de type {\em fronce} et $S^{1}$ aux singularit\'es de type {\em pli} (voir \cite{whitney55}).
\end{remark}
Le lemme suivant est un cas particulier du th\'eor\`eme de densit\'e de Whitney (voir \cite[Th\'eor\`eme IV.8.4 p. 128]{demazure89}, voir aussi\cite[Chapter IV \textsection 2 p. 145]{golubitsky-guillemin73})
\begin{lemma}\label{ThomBordman}
L'ensemble $C^\infty_{\rm typ}(\RR^2;\RR^2)$ est dense dans $C^\infty(\RR^2;\RR^2)$ pour la topologie de  la convergence uniforme de toutes les d\'eriv\'ees.
\end{lemma}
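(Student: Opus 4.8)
The plan is to obtain the lemma as a direct consequence of Thom's jet transversality theorem applied to the Thom--Boardman stratification of the jet space $J^k(\RR^2;\RR^2)$; this is exactly the route taken in the references cited for the statement (see also \cite{golubitsky-guillemin73}). Recall the transversality theorem: for any submanifold $Z\subset J^k(\RR^2;\RR^2)$, the set of $h\in C^\infty(\RR^2;\RR^2)$ such that the $k$-jet extension $j^kh$ is transverse to $Z$ is residual, hence dense by the Baire category theorem, for the topology of uniform convergence of all derivatives on compact sets. So the whole point is to identify ``typical'' with a finite list of such transversality conditions.

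First I would recall the relevant Boardman submanifolds. In $J^1(\RR^2;\RR^2)$ write $\Sigma^r$ for the locus of $1$-jets whose linear part has corank $r$; then $\Sigma^1$ is a submanifold of codimension $1$ and $\Sigma^2$ of codimension $4$. In $J^2(\RR^2;\RR^2)$ the second Boardman symbols give the open dense fold stratum $\Sigma^{1,0}$ of the corank-$1$ locus together with a submanifold $\Sigma^{1,1}$ of codimension $2$, while $\Sigma^{1,1,1}$ lives in $J^3$ with codimension $3$.

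Next I would run transversality simultaneously against this finite family (finite intersections of residual sets are residual, hence dense), producing a dense set of maps $h$ for which $j^1h$ is transverse to $\Sigma^1$ and to $\Sigma^2$ and $j^2h$ is transverse to $\Sigma^{1,1}$; take $h_n\to h$ inside this set. Since $\dim\RR^2=2$, transversality to $\Sigma^2$ (codimension $4$) forces $(j^1h)^{-1}(\Sigma^2)=\emptyset$, i.e. $\{x:\dim\mathrm{Ker}\,\nabla h(x)=2\}=\emptyset$, the first bullet of Definition \ref{DefTypicAlMaPs}; transversality to $\Sigma^1$ makes $S:=(j^1h)^{-1}(\Sigma^1)$ a closed $1$-dimensional submanifold; transversality to $\Sigma^{1,1}$ makes its preimage $S^0\subset S$ a $0$-dimensional, hence discrete, subset, and $\Sigma^{1,1,1}$ being of codimension $3$ is missed altogether. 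Finally one reads off from the Boardman symbols that $x\in S^0$ exactly when $\mathrm{Ker}\,\nabla h(x)\subset T_xS$ (equivalently $\mathrm{Ker}\,\nabla h(x)=T_xS$, both being lines), and that on $S\setminus S^0=S^1$ one has $\mathrm{Ker}\,\nabla h(x)+T_xS=\RR^2$; thus $S=S^0\cup S^1$ with the required structure and $h$ is typical.

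The main obstacle is the dictionary between the intrinsic Boardman conditions and the elementary description in Definition \ref{DefTypicAlMaPs}: one has to check that $\Sigma^{1,1}$ is cut out by the vanishing of the restriction of the intrinsic second derivative to $\mathrm{Ker}\,\nabla h$, that this coincides with the condition $\mathrm{Ker}\,\nabla h(x)=T_xS$, and that transversality there really produces a discrete $S^0$ rather than isolated higher-order tangencies. Unwinding the Boardman construction is the only bookkeeping-heavy part; granting it, the lemma is a purely formal consequence of Thom transversality and Baire. Alternatively one can follow Whitney's original argument, perturbing $h$ near $S$ so as to put it locally into the fold and cusp normal forms directly.
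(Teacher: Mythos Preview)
Your outline is correct and follows the standard route via Thom transversality and the Boardman stratification. Note, however, that the paper does not give its own proof of this lemma: it is stated as a special case of Whitney's density theorem and simply referred to \cite{demazure89} and \cite{golubitsky-guillemin73}, which carry out exactly the argument you sketch.
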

On pose $C^\infty_{\rm typ}(\overline{\Sigma};\RR^2)$ l'ensemble des restrictions \`a $\overline{\Sigma}$ des applications de $C^\infty_{\rm typ}(\RR^2;\RR^2)$, i.e.,
\[
C^\infty_{\rm typ}(\overline{\Sigma};\RR^2):=\Big\{h\lfloor_{\overline{\Sigma}}: h\in C^\infty_{\rm typ}(\RR^2;\RR^2)\Big\}.
\]
Le r\'esultat suivant (qui sera utilis\'e dans la d\'emonstration du th\'eor\`eme \ref{GEapproxTheo}-bis) est une cons\'equence imm\'ediate du lemme \ref{ThomBordman}.
\begin{corollary}\label{ThomBordmanBis}
 L'ensemble $C^\infty_{\rm typ}(\overline{\Sigma};\RR^2)$ est dense dans $C^\infty(\overline{\Sigma};\RR^2)$. \end{corollary}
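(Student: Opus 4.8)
The plan is to reduce the statement to Lemma \ref{ThomBordman} by a routine extension--restriction argument, using the compactness of $\overline{\Sigma}$. First I would observe that, since $\Sigma\subset\RR^2$ is a bounded open set, $\overline{\Sigma}$ is compact, and that (exactly as in the treatment of $C^1_*(\overline{\Sigma};\RR^3)$ above, where one writes $\psi=\hat\psi\lfloor_{\overline{\Sigma}}$ with $\hat\psi$ defined on all of $\RR^2$) every $\psi\in C^\infty(\overline{\Sigma};\RR^2)$ is the restriction to $\overline{\Sigma}$ of some $\hat\psi\in C^\infty(\RR^2;\RR^2)$; if one starts instead from an intrinsic notion of smoothness on the closed set $\overline{\Sigma}$, such an extension still exists by the Whitney extension theorem. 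I would fix such an extension $\hat\psi$.

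Next I would apply Lemma \ref{ThomBordman} to $\hat\psi$: there is a sequence $\{h_n\}_{n\geq 1}\subset C^\infty_{\rm typ}(\RR^2;\RR^2)$ with $h_n\to\hat\psi$ in the topology of uniform convergence of all partial derivatives, hence in particular $\partial^\alpha h_n\to\partial^\alpha\hat\psi$ uniformly on the compact set $\overline{\Sigma}$ for every multi-index $\alpha$. Setting $\psi_n:=h_n\lfloor_{\overline{\Sigma}}$, we have $\psi_n\in C^\infty_{\rm typ}(\overline{\Sigma};\RR^2)$ directly from the definition of $C^\infty_{\rm typ}(\overline{\Sigma};\RR^2)$ as the set of restrictions to $\overline{\Sigma}$ of maps in $C^\infty_{\rm typ}(\RR^2;\RR^2)$, and $\psi_n\to\psi$ in $C^\infty(\overline{\Sigma};\RR^2)$ since $\partial^\alpha\psi_n=(\partial^\alpha h_n)\lfloor_{\overline{\Sigma}}\to(\partial^\alpha\hat\psi)\lfloor_{\overline{\Sigma}}=\partial^\alpha\psi$ uniformly on $\overline{\Sigma}$. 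This gives the asserted density.

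There is no genuine obstacle here: the two things to handle with a little care are (i) the compatibility between the topology carried by $C^\infty(\overline{\Sigma};\RR^2)$ in the statement and the $C^\infty$ topology on $C^\infty(\RR^2;\RR^2)$ appearing in Lemma \ref{ThomBordman}, which is settled by the compactness of $\overline{\Sigma}$ together with the continuity of the restriction map, and (ii) the existence of a smooth extension of $\psi$ to $\RR^2$, which is immediate when $\psi$ is itself given as a restriction and otherwise follows from a standard extension theorem. Consequently I expect the actual write-up to occupy only a few lines.
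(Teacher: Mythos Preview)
Your proposal is correct and is exactly the argument the paper has in mind: the paper does not write out a proof at all, merely stating that the corollary is an ``cons\'equence imm\'ediate'' of Lemma~\ref{ThomBordman}, and your extension--restriction argument is the natural (and essentially unique) way to unpack that implication. Your handling of the two minor points (existence of the smooth extension, compatibility of topologies via compactness of $\overline{\Sigma}$) is appropriate and matches the paper's conventions elsewhere.
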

Dans ce qui suit, on aura besoin de la d\'efinition suivante (pour plus de d\'etails sur le concept de transversalit\'e voir \cite{golubitsky-guillemin73}).
\begin{definition} Soient $S\subset\overline{\Sigma}$ une sous-vari\'et\'e et $L\in C(S;\RR^2)$. On dit que $L$ est transversal \`a $S$ et on note $L \trans S$ si pour chaque $x\in S$,
\[
L(x)\notin T_xS \hbox{, i.e., } L(x)\RR+T_xS=\RR^2.
\]
\end{definition}
Pour $g\in C^\infty(\overline{\Sigma})$ et $h=(h_1,h_2)\in C^\infty(\overline{\Sigma};\RR^2)$, on note $g\oplus h\in C^\infty(\overline{\Sigma};\RR^3)$ un assemblage quelconque de $g$ et $h$, i.e., $g\oplus h=(g,h)$ ou $g\oplus h=(h_1,g,h_2)$ ou encore $g\oplus h=(h,g)$. Les deux lemmes suivants ainsi que la proposition \ref{approxchampsvecteurs} ci-dessous seront utilis\'es dans la d\'emonstration du th\'eor\`eme \ref{GEapproxTheo}-bis.
\begin{lemma}\label{selectionchampsvecteurs} Soient $K\subset \overline{\Sigma}$ un compact, $g\in C^\infty(\overline{\Sigma})$ et $h\in C^\infty(\overline{\Sigma};\RR^2)$ tels que 
\begin{equation}\label{selectionchampsvecteursHyp}
K\subset\Big\{x\in\RR^2:\hbox{dimKer}\nabla h(x)\geq1\Big\} \hbox{ et }g\oplus h\in C^\infty_*(\overline{\Sigma};\RR^3).
\end{equation}
Alors, il existe $L\in C(K;\RR^2)$ tel que 
\[
\nabla h(x)L(x)=0\hbox{ et }\nabla g(x)L(x)=1\hbox{ pour tout }x\in K.
\]
\end{lemma}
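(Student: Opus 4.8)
The plan is to construct $L$ pointwise and check that the pointwise construction can be made continuous. Fix $x\in K$. By hypothesis \eqref{selectionchampsvecteursHyp}, $\dim\ker\nabla h(x)\geq 1$, so there is a nonzero $v\in\ker\nabla h(x)\subset\RR^2$; since $\nabla h(x)$ cannot vanish identically (otherwise, when $g\oplus h=(h_1,g,h_2)$ say, the $3\times 2$ matrix $\nabla(g\oplus h)(x)$ would have rank $\le 1$, contradicting $g\oplus h\in C^\infty_*(\overline\Sigma;\RR^3)$), this kernel is exactly one–dimensional, $\ker\nabla h(x)=\RR v$. The first equation forces $L(x)\in\RR v$, so $L(x)=\lambda(x)v$ for some scalar, and the second equation $\nabla g(x)L(x)=1$ then reads $\lambda(x)\,\langle\nabla g(x),v\rangle=1$. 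This is solvable precisely when $\langle\nabla g(x),v\rangle\neq 0$, i.e.\ when $v\notin\ker\nabla g(x)$; but if we had $\ker\nabla h(x)=\RR v\subset\ker\nabla g(x)$, then every column operation would show $\nabla(g\oplus h)(x)$ kills $v$, again contradicting $g\oplus h\in C^\infty_*(\overline\Sigma;\RR^3)$. Hence the defining equations have a unique solution $L(x)$ at each $x\in K$, namely $L(x)=v/\langle\nabla g(x),v\rangle$ for any choice of $v$ spanning $\ker\nabla h(x)$ (the value is independent of this choice up to the normalisation).

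The substance of the proof is to promote this pointwise recipe to a genuine $L\in C(K;\RR^2)$. The natural approach is to work locally: around a fixed $x_0\in K$, pick a smooth unit vector field $\nu$ valued in $\RR^2$ that spans $\ker\nabla h$ on a neighbourhood $U$ of $x_0$ in $K$. To produce such a $\nu$, note $\nabla h(x)$ has rank $1$ on $U$ (by the argument above), so locally one of its two rows, say $\nabla h_1(x)=(h_{1,x_1}(x),h_{1,x_2}(x))$, is nonzero; then $\nu(x):=(-h_{1,x_2}(x),h_{1,x_1}(x))/|\nabla h_1(x)|$ is smooth, unit, and orthogonal to $\nabla h_1(x)$, hence lies in $\ker\nabla h(x)$ as soon as that kernel is one–dimensional (which it is). On $U$ set $L(x):=\nu(x)/\langle\nabla g(x),\nu(x)\rangle$; the denominator is continuous and nonvanishing on $U$ by the second contradiction argument, so $L$ is continuous on $U$. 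Finally, observe that on overlaps the formula is forced: any $\tilde L$ satisfying $\nabla h\,\tilde L=0$, $\nabla g\,\tilde L=1$ must equal $L$ by uniqueness of the solution, so the local pieces agree and glue to a well–defined continuous $L$ on all of $K$. (Alternatively, once one knows the solution is unique everywhere, one can invoke Cramer's rule applied to the $3\times3$ matrix obtained by formally adjoining the row $\nabla g$ is inconsistent in size, so the row–picking/normal–vector construction is the cleanest route.)

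The main obstacle is the gluing/continuity step: the pointwise solution $L(x)$ is canonical, but exhibiting it as a continuous function requires choosing the spanning kernel vector $\nu(x)$ continuously, and a priori there is a sign ambiguity in $\nu$. The remedy is exactly that the normalisation by $\langle\nabla g(x),\nu(x)\rangle$ absorbs the sign: replacing $\nu$ by $-\nu$ replaces the denominator by its negative, so $L(x)$ is unchanged; equivalently, uniqueness of the solution to the linear system $(\nabla h(x)\mid\nabla g(x))^{\mathrm T}L(x)=(0,0,1)^{\mathrm T}$ (a $3\times 2$ over-determined but consistent system, consistent because $g\oplus h\in C^1_*$) makes $L$ intrinsically defined, and continuity then follows from continuity of the data together with the fact that $(x,w)\mapsto$ (the unique solution of a consistent linear system with continuously varying coefficients of constant rank $2$) is continuous. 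I would therefore organise the write-up as: (i) rank analysis of $\nabla h$ and $(\nabla h\mid\nabla g)$ on $K$ using $g\oplus h\in C^\infty_*$; (ii) pointwise existence and uniqueness of $L(x)$; (iii) local continuous construction via the explicit orthogonal-complement formula; (iv) uniqueness forces agreement on overlaps, yielding $L\in C(K;\RR^2)$.
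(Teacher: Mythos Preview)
Your proof is correct and shares the paper's pointwise definition $L(x)=v/\langle\nabla g(x),v\rangle$ with $v$ spanning $\ker\nabla h(x)$, together with the two rank arguments (that $\dim\ker\nabla h(x)=1$ exactly, and that $\nabla g(x)$ does not annihilate this kernel, both forced by $g\oplus h\in C^\infty_*$). Where you diverge is in the continuity step. The paper argues sequentially: it observes that $\{v/(\nabla g(x)v):(x,v)\in K\times\SS^1\}$ is compact and that the value $v/(\nabla g(x)v)$ is independent of the choice of $v\in\ker\nabla h(x)\cap\SS^1$; hence for any $x_n\to x$ in $K$, every cluster point of $\{L(x_n)\}$ equals $L(x)$, and compactness finishes. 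Your route instead produces an explicit continuous local selector $\nu(x)=(-h_{1,x_2},h_{1,x_1})/|\nabla h_1|$ of the kernel line (using that on $K$ the rows of $\nabla h$ are proportional, so orthogonality to one nonzero row suffices) and glues the resulting local $L$'s by uniqueness. Both are sound; the paper's compactness argument avoids covers and the choice of a nonvanishing row, while your construction is more explicit and makes the well-definedness of $L$ (independence of the sign/choice of $\nu$) transparent via uniqueness of the linear system.
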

\begin{proof}Pour chaque $x\in K$, on consid\`ere $v_x\in{\rm Ker}\nabla h(x)\cap \SS^1$ (avec $\SS^1:=\{v\in\RR^2:|v|=1\}$) et on d\'efinit $L:K\to\RR^2$ par 
\[
L(x):={v_x\over \nabla g(x)v_x}.
\]
(Noter que $\nabla g(x)v_x\not=0$ pour tout $x\in K$ gr\^ace \`a \eqref{selectionchampsvecteursHyp}.) Il est facile de voir que $\nabla h(x)L(x)=0$ et $\nabla g(x)L(x)=1$ pour tout $x\in K$. D'autre part, il est clair que 
\begin{itemize}
\item[$\diamond$] $\Big\{{v\over\nabla g(x) v}:(x,v)\in K\times \SS^1\Big\}$ est compact
\end{itemize}
et, par \eqref{selectionchampsvecteursHyp}, on a 
\begin{itemize}
\item[$\diamond$] ${v\over\nabla g(x) v}={v_x\over\nabla g(x) v_x}$ pour tout  $v\in {\rm Ker}\nabla h(x)\cap \SS^1$ et tout $x\in K$.
\end{itemize}
Donc, \'etant donn\'es $x\in K$ et $\{x_n\}_{n\geq 1}\subset K$ tels que $x_n\to x$, il est ais\'e de voir que $L(x)$ est l'unique valeur d'adh\'erence de $\{L(x_n)\}_{n\geq 1}$, d'o\`u $L$ est continue.
\end{proof}
\begin{lemma}\label{conservimm}Soient $g\in C^\infty(\overline{\Sigma})$, $\{h^n\}_{n\ge 1}\subset C^\infty(\overline{\Sigma};\RR^2)$ et $h\in C^\infty(\overline{\Sigma};\RR^2)$ tels que 
\[
\nabla h^n\to\nabla h\hbox{ uniform\'ement dans }\overline{\Sigma}\hbox{ et }g\oplus h\in C^\infty_\ast(\overline{\Sigma};\RR^3).
\]
Alors, il existe $n_0\ge 1$ tel que $g\oplus h^n\in C^\infty_\ast(\overline{\Sigma};\RR^3)$ pour tout $n\ge n_0$.
\end{lemma}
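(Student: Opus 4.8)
The plan is to reduce the statement to the elementary fact that ``being an immersion'' is an open condition for the uniform $C^1$-topology, exploiting the compactness of $\overline{\Sigma}$. Set $\psi:=g\oplus h$ and $\psi_n:=g\oplus h^n$. First I would record that, since $\psi\in C^\infty_\ast(\overline{\Sigma};\RR^3)$, the function $x\mapsto|\partial_1\psi(x)\land\partial_2\psi(x)|$ is continuous and strictly positive on the compact set $\overline{\Sigma}$, hence bounded below by some constant $2\delta>0$.

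Next I would observe that, whatever the chosen assemblage, the three components of $\psi_n$ (resp. $\psi$) are a fixed permutation of $g,h^n_1,h^n_2$ (resp. $g,h_1,h_2$); consequently the rows of $\nabla\psi_n$ and of $\nabla\psi$ indexed by the $h$-part differ by $\nabla h^n-\nabla h$, while the row coming from $g$ is common, so $\nabla\psi_n\to\nabla\psi$ uniformly on $\overline{\Sigma}$ because $\nabla h^n\to\nabla h$ uniformly. Since the map $(\xi_1\mid\xi_2)\mapsto\xi_1\land\xi_2$ from $\MM^{3\times 2}$ to $\RR^3$ is bilinear, hence locally Lipschitz and in particular continuous, and $|\cdot|$ is continuous, the functions $x\mapsto|\partial_1\psi_n(x)\land\partial_2\psi_n(x)|$ converge to $x\mapsto|\partial_1\psi(x)\land\partial_2\psi(x)|$ uniformly on $\overline{\Sigma}$.

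Finally I would choose $n_0\ge 1$ so large that $\sup_{x\in\overline{\Sigma}}\bigl|\,|\partial_1\psi_n(x)\land\partial_2\psi_n(x)|-|\partial_1\psi(x)\land\partial_2\psi(x)|\,\bigr|<\delta$ for every $n\ge n_0$; then, for such $n$ and all $x\in\overline{\Sigma}$, one gets $|\partial_1\psi_n(x)\land\partial_2\psi_n(x)|>\delta>0$, and since $\psi_n\in C^\infty(\overline{\Sigma};\RR^3)$ automatically (as $g$ and $h^n$ are $C^\infty$), this means precisely $g\oplus h^n=\psi_n\in C^\infty_\ast(\overline{\Sigma};\RR^3)$. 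There is essentially no real obstacle in this argument; the only point worth a line of care is that the ambiguity in the assemblage amounts to permuting the three components of $\psi$, i.e. to an orthogonal change of coordinates on $\RR^3$, which leaves $|\xi_1\land\xi_2|$ unchanged, so the reasoning is insensitive to which assemblage $\oplus$ denotes.
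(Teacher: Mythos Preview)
Your argument is correct. Both you and the paper exploit compactness of $\overline{\Sigma}$ together with uniform convergence of $\nabla(g\oplus h^n)$ to $\nabla(g\oplus h)$, so the underlying idea is the same: being an immersion is open for the uniform $C^1$-topology. The packaging differs slightly. You work directly with the cross product $\partial_1\psi\land\partial_2\psi$ (which is exactly how $C^1_*$ is defined in the paper), bound it below by $2\delta$ on $\overline{\Sigma}$ by compactness, and pass to the limit using continuity of the bilinear map $(\xi_1\mid\xi_2)\mapsto\xi_1\land\xi_2$. The paper instead argues by contradiction via the kernel criterion: assuming each $\nabla(g\oplus h^n)$ has a unit kernel vector $L_n$ at some point $x_n$, it extracts limits $x\in\overline{\Sigma}$, $L\in\SS^1$ by compactness and obtains $\nabla(g\oplus h)(x)L=0$, contradicting the immersion hypothesis. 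Your direct route is marginally more economical here since it avoids the contradiction and the auxiliary compactness of $\SS^1$, and it matches the paper's own definition of $C^1_*$; the paper's formulation has the minor advantage of not needing any remark about the assemblage, since it works with $\nabla(g\oplus h)$ as a linear map rather than with the cross product of its columns.
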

\begin{proof} En effet, supposons qu'il existe $\{x_n\}_{n\ge 1}\subset \overline{\Sigma}$ tel que $\mbox{Ker}\nabla (g\oplus h^n)(x_n)\not=\{0\}$ pour tout $n\ge 1$. Pour chaque $n\ge 1$, on consid\`ere $L_n\in \mbox{Ker}\nabla (g\oplus h^n)(x_n)\cap\SS^1$. Alors, pour chaque $n\geq 1$,
\begin{eqnarray*}
\vert \nabla (g\oplus h)(x_{n})L_n\vert&\le& \vert \nabla (g\oplus h^{n})(x_{n})-\nabla (g\oplus h)(x_{n})\vert\vert L_n\vert+\vert \nabla (g\oplus h^{n})(x_n) L_n\vert\\
&\le&\Vert \nabla (g\oplus h^{n})-\nabla (g\oplus h)\Vert_\infty\le \Vert \nabla h^{n}-\nabla h\Vert_\infty.
\end{eqnarray*}
Comme $\SS^1$ est compact et $\nabla (g\oplus h)$ est continue on en d\'eduit qu'il existe $x\in\overline{\Sigma}$ et $L\in\SS^1$ tels que $\nabla (g\oplus h)(x)L=0$ ce qui impossible puisque $g\oplus h\in C^\infty_\ast(\overline{\Sigma};\RR^3)$. 
\end{proof}

\begin{proposition}\label{approxchampsvecteurs} 
Soient $f\in C^\infty(\overline{\Sigma})$, $S\subset\overline{\Sigma}$ une sous-vari\'et\'e compacte de dimension $1$ avec $S=S^{0}\cup S^{1}$, o\`u $S^{0}$ est un sous-ensemble discret et $S^{1}$ est une sous-vari\'et\'e de dimension $1$, et $L\in C(S;\RR^2)$ tels que $L\trans S^{0}$ et $L\trans S^{1}$. Alors, il existe 
$\{f^n\}_{n\ge 1}\subset C^\infty(\overline{\Sigma})$ tel que 
\[
\begin{cases}
\lim\limits_{n\to+\infty}\|f^{n}-f\|_{W^{1,p}(\Sigma)}=0\\
\nabla f^{n}(x)L(x)>0 \hbox{ pour tout }x\in S\hbox{ et tout } n\ge 1.
\end{cases}
\]
\end{proposition}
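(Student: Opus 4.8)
The goal is, given $f\in C^\infty(\overline\Sigma)$, a compact $1$-dimensional submanifold $S=S^0\cup S^1$ and a continuous field $L\in C(S;\RR^2)$ transverse to both $S^0$ and $S^1$, to produce smooth approximations $f^n\to f$ in $W^{1,p}$ with the directional derivative $\nabla f^n(x)L(x)$ strictly positive along all of $S$. The natural idea is to leave $f$ essentially unchanged away from $S$ and to perturb it in a small tubular neighbourhood of $S$ so as to force the right sign of $\partial_L f^n$ on $S$, the perturbation being large in $C^0$-oscillation but supported on a set of vanishing measure (so the $W^{1,p}$ cost goes to zero). Concretely, I would first deal with the two strata separately, exploiting that $S^0$ is discrete (hence finite, since $S$ is compact) and $S^1$ is a closed $1$-manifold, then glue.

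\textbf{Step 1: normal form near $S^1$.} Because $L\trans S^1$, at each $x\in S^1$ the vector $L(x)$ is transverse to $T_xS^1$, so one can choose, on a neighbourhood $U$ of (a compact piece of) $S^1$, smooth coordinates $(s,\tau)$ in which $S^1=\{\tau=0\}$ and $L$ points in the $+\partial_\tau$ direction (up to a positive continuous factor; continuity of $L$ suffices here since we only need the sign). In these coordinates $\partial_L f = c(x)\,\partial_\tau f$ with $c>0$, so the requirement becomes $\partial_\tau f^n>0$ on $\{\tau=0\}$. Fix a small $\delta_n\to 0$ and a smooth cutoff $\chi_n$ supported in $\{|\tau|<\delta_n\}$ with $\chi_n\equiv 1$ near $\tau=0$; set
\[
f^n := f + \eta_n\,\beta\!\left(\tfrac{\tau}{\delta_n}\right)\chi_n(s,\tau),
\]
where $\beta$ is a fixed smooth function with $\beta'(0)=1$ and $\eta_n\to 0$ is chosen (depending on $\delta_n$) large enough that $\eta_n\delta_n^{-1}$ dominates $\sup_{S^1}|\partial_\tau f|$ but still $\eta_n\delta_n^{-1}\cdot\delta_n^{1/p}=\eta_n\delta_n^{1/p-1}\to 0$ — possible because $p>1$. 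Then on $\{\tau=0\}$ one has $\partial_\tau f^n = \partial_\tau f + \eta_n\delta_n^{-1}>0$, while $\|f^n-f\|_{W^{1,p}}\le C(\eta_n + \eta_n\delta_n^{-1}|\,{\rm supp}\,\chi_n|^{1/p})\to 0$ since the support has measure $O(\delta_n)$. One must also keep $\partial_L f^n>0$ not only at $\tau=0$ but on all of $S$: choosing $\chi_n\equiv 1$ on a neighbourhood of $S^1$ of width shrinking slower than $\delta_n$ handles the part of $S^1$ inside $U$, and compactness lets us cover $S^1$ by finitely many such charts.

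\textbf{Step 2: the fronce points $S^0$ and gluing.} Since $S^0$ is discrete and closed in the compact $\overline\Sigma$, it is finite: $S^0=\{x_1,\dots,x_N\}$. Near each $x_i$, transversality $L\trans S^0$ is a pointwise condition ($L(x_i)\RR+T_{x_i}S=\RR^2$), which by continuity of $L$ persists on a small ball $B_i$; shrink the $B_i$ to be disjoint and disjoint from the $S^1$-charts outside a controlled overlap. On $B_i$ one repeats the construction of Step 1 but now perturbing in the single direction $L(x_i)$: choose coordinates centred at $x_i$ with $L(x_i)=\partial_\tau$, take a ball-shaped cutoff $\chi_n^i$ of radius $\delta_n$, and add $\eta_n\,\beta(\langle\text{coordinate along }L(x_i)\rangle/\delta_n)\chi_n^i$; the measure of the support is now $O(\delta_n^2)$, even better for the $W^{1,p}$ estimate. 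The delicate point is the overlap between the $S^1$-tube and the $B_i$'s near a fronce point, where both constructions act: one should use a single partition of unity $\{\rho_k\}$ subordinate to the finite cover $\{$$S^1$-charts$\}\cup\{B_i\}$ and define $f^n = f + \sum_k \rho_k\,g_k^n$ with $g_k^n$ the local perturbation in the chart $k$; since every $g_k^n$ has the \emph{same sign} of directional derivative $\partial_L g_k^n>0$ at every point of $S$ in its chart (this is the key structural fact, which is why we needed $L$ to be genuinely transverse, not just nonzero), the convex combination $\sum_k\rho_k\partial_L g_k^n + \partial_L(\sum_k(\nabla\rho_k)g_k^n)$ is still positive on $S$ once $\eta_n\delta_n^{-1}$ beats both $\sup_S|\partial_L f|$ and the cutoff-derivative terms $\sup|\nabla\rho_k|\cdot\eta_n = o(\eta_n\delta_n^{-1})$.

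\textbf{Main obstacle.} The routine part is the $W^{1,p}$ bookkeeping (small-measure support times controlled gradient, with $p>1$ giving the needed slack $\delta_n^{1/p}\cdot\delta_n^{-1}\to0$); the genuinely delicate part is ensuring the \emph{uniform} positivity of $\nabla f^n L$ along the whole of $S$ simultaneously — in particular controlling the interaction between the transverse perturbation along $S^1$ and the isolated perturbations at the fronce points $S^0$, and making sure the partition-of-unity error terms (derivatives hitting the cutoffs $\rho_k$ and $\chi_n$) are lower order. This is exactly where the transversality hypotheses $L\trans S^0$ and $L\trans S^1$ are indispensable: transversality guarantees that in each local chart $L$ has a fixed, nonvanishing component in the perturbation direction with a definite sign, so that the local fixes are mutually compatible rather than competing; I would therefore spend most of the care on choosing the cutoff scales $\delta_n$ and amplitudes $\eta_n$ (and the widths of the plateaus where $\chi_n\equiv1$) so that a single choice works on every chart of the finite cover at once.
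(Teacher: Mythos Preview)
Your Step~1 is essentially correct and in fact already does all the work: since $S$ itself is a compact $1$-submanifold and the two transversality hypotheses together just say $L(x)\notin T_xS$ for every $x\in S$, you can cover all of $S$ (not merely $S^1$) by finitely many tubular charts $(s,\tau)$ of $S$ in which $L$ has a definite positive $\partial_\tau$-component, and run your Step~1 construction uniformly. The partition-of-unity bookkeeping then goes through: on $S$ each normal cutoff satisfies $\chi_n^k\equiv 1$, so its gradient contributes nothing there, and the $\nabla\rho_k$ cross-terms are $O(\eta_n)=o(\eta_n/\delta_n)$ as you say.

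Step~2 as written, however, would fail. The ball-shaped cutoff $\chi_n^i$ of radius $\delta_n$ around $x_i\in S^0$ has $|\nabla\chi_n^i|\sim\delta_n^{-1}$, so the term $\eta_n\beta(\cdot)\langle\nabla\chi_n^i,L\rangle$ appearing in $\partial_L g_n^i$ is of the \emph{same} order $\eta_n/\delta_n$ as the positive main term and has no definite sign (this is a $\nabla\chi_n$ term, not a $\nabla\rho_k$ term, and your closing order-of-magnitude remark does not cover it). Worse, for $x\in S^1\cap B_i$ with $|x-x_i|>\delta_n$ the $x_i$-perturbation vanishes identically, while the partition-of-unity weight $\rho_i(x)$, being supported in the \emph{fixed} ball $B_i$, can equal $1$ there; in that region $f^n=f$ and positivity fails outright. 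Both problems disappear if you drop Step~2 and treat the fronce points via tubular charts of $S$ like any other point.

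The paper takes a structurally different route. It first builds a \emph{single fixed} $\tilde f\in C^\infty(\overline\Sigma)$ with $\tilde f=f$ on $S$ and $\nabla\tilde f\cdot L>0$ on $S$, using local corrections of the form $k_x\phi_2^x$ where $\phi_2^x$ is a normal coordinate to $S$ (hence vanishes on $S$; this vanishing is exactly what kills the partition-of-unity cross-terms there). The $W^{1,p}$-smallness then comes from a separate localization lemma: since $\alpha:=\tilde f-f$ vanishes on the compact set $S$, a spectral-synthesis result of Hedberg--Wolff furnishes cutoffs $\omega_n\equiv 1$ near $S$ with $\|\omega_n\alpha\|_{W^{1,p}}\to 0$, and one sets $f^n:=f+\omega_n(\tilde f-f)$; on $S$ one has $\nabla f^n=\nabla\tilde f$ exactly, so positivity is inherited without any balancing of error terms. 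Your direct construction (once Step~2 is removed) is more elementary --- it replaces the capacity-theoretic lemma by the explicit estimate $(\eta_n/\delta_n)\,\delta_n^{1/p}\to 0$ --- but the paper's decoupling (first fix the derivative with a correction vanishing on $S$, then localize) makes the positivity on $S$ automatic rather than a matter of competing scales.
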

(Pour une preuve de la proposition \ref{approxchampsvecteurs} voir \S 4.1.3.)

\subsubsection{D\'emonstration du th\'eor\`eme \ref{GEapproxTheo}-bis} 
On suit la preuve de \cite{gromov-eliashberg71} (avec les simplifications qui s'imposent dans notre cas) qui consiste \`a \'eliminer les singularit\'es en exploitant la sym\'etrie de la relation (diff\'erentielle) d'immersion sous certaines transformations de l'espace d'arriv\'ee. Pour un expos\'e de la m\'ethode d'\'elimination des singularit\'es voir \cite[Part 2]{gromov86}.

Comme $C^\infty(\overline{\Sigma};\RR^3)$ est dense dans $C^1(\overline{\Sigma};\RR^3)$ pour la topologie de la convergence uniforme de toutes les  d\'eriv\'ees, on peut supposer que $\psi=(\psi_1,\psi_2,\psi_3)\in C^\infty(\overline{\Sigma};\RR^3)$. 

\subsubsection*{\'Etape 1} Soit $h\in C^\infty(\overline{\Sigma};\RR^2)$ d\'efinie par $h(x):=(h_2(x),h_3(x))=(x_2,\psi_3(x))$. Par le corollaire  \ref{ThomBordmanBis} il existe $\{h^k\}_{k\ge 1}\subset C^\infty_{\rm typ}(\overline{\Sigma};\RR^2)$ avec $h^k=(h^k_2,h^k_3)$ tel que 
\begin{equation}\label{convergenceGE}
h^k\to h\mbox{ et }\nabla h^k\to\nabla h\hbox{ uniform\'ement dans }\overline{\Sigma}.
\end{equation}

\subsubsection*{\'Etape 2} On pose  $\psi^k:=(x_1,h^k)=x_1\oplus h^k$. On a $x_1\oplus h\in C^\infty_*(\overline{\Sigma};\RR^3)$, donc, en tenant compte de \eqref{convergenceGE}, le lemme \ref{conservimm} implique qu'il existe $k_0\ge 1$ tel que $\psi^k\in C^\infty_*(\overline{\Sigma};\RR^3)$ pour tout $k\ge k_0$. 

Soit $k\ge k_0$. On note $S_k$ la sous-vari\'et\'e compacte de dimension $1$ associ\'e \`a $h^k$ (voir la d\'efinition \ref{DefTypicAlMaPs}). Par le lemme \ref{selectionchampsvecteurs} il existe $L^k\in C(S_k;\RR^2)$ tel que 
\begin{equation*}\label{positive champs de vecteurs}
\hbox{Ker}\nabla h^k(x)=L^k(x)\RR\hbox{ et }L^k(x)\not=0\hbox{ pour tout }x\in S_k.
\end{equation*}
Donc $L^k\trans S^0_k$ et $L^k\trans S^1_k$.

\subsubsection*{\'Etape 3} Par la proposition \ref{approxchampsvecteurs} il existe $\{h_1^{k,l}\}_{l\ge 1}\subset C^\infty(\overline{\Sigma})$ tel que :
\begin{eqnarray}
&& \lim_{l\to +\infty}\Vert h_1^{k,l}-\psi_1\Vert_{W^{1,p}(\Sigma)}=0\hbox{ pour tout }k\ge k_0\ ;\label{convergenceW}\\
&& \nabla h_1^{k,l}(x)L^k(x)>0 \hbox{ pour tout }x\in S_k\hbox{ et tout } l\ge 1.\label{convergenceW-bis}
\end{eqnarray}
On pose $\psi^{k,l}:=(h_1^{k,l},h^k)$. Par d\'efinition de $S_k$, on a  $\mbox{Ker}\nabla h^{k}(x)=\{0\}$ pour tout $x\in\overline{\Sigma}\setminus S_k$. D'autre part, \'etant donn\'es $x\in S_k$ et $v\in\mbox{Ker}\nabla\psi^{k,l}(x)=\mbox{Ker}\nabla h_1^{k,l}(x)\cap\mbox{Ker}\nabla h^k(x)$, si $v\not=0$ alors $v=\lambda_x^k L^k(x)$ pour un certain $\lambda_x^k\in\RR^*$, ce qui contredit \eqref{convergenceW-bis}. Ainsi, on a 
\begin{equation}\label{psikl_imm}
\psi^{k,l}\in C^\infty_*(\overline{\Sigma};\RR^3) \hbox{ pour tout $l\ge 1$ et tout $k\ge k_0$.}
\end{equation}

Maintenant on r\'ep\`ete les trois \'etapes pr\'ec\'edentes en prenant $h^{k,l}$ \`a la place de $x_2$ et $h_3^k$ \`a la place de $\psi_3$. Fixons $k\ge k_0$ et $l\ge 1$. 

\subsubsection*{\'Etape 3.1} Soit $h^{k,l}\in C^\infty(\overline{\Sigma};\RR^2)$ d\'efinie par $h^{k,l}(x):=(h_1^{k,l}(x),h_3^{k}(x))$. Par le corollaire  \ref{ThomBordmanBis} il existe $\{h^{k,l,m}\}_{m\ge 1}\subset C^\infty_{\rm typ}(\overline{\Sigma};\RR^2)$ avec $h^{k,l,m}=(h^{k,l,m}_1,h^{k,m}_3)$ tel que 
\begin{equation}\label{convergenceGE2}
h^{k,l,m}\to h^{k,l}\mbox{ et } \nabla h^{k,l,m}\to \nabla h^{k,l}\hbox{ uniform\'ement dans }\overline{\Sigma}.
\end{equation}

\subsubsection*{\'Etape 3.2} On pose $\psi^{k,l,m}:=(h^{k,l,m}_1,h^k_2,h^{k,m}_3)=h_2^{k}\oplus h^{k,l,m}$. Prenant en compte \eqref{convergenceGE2} et \eqref{psikl_imm}, du lemme \ref{conservimm} on d\'eduit qu'il existe $m_{k,l}\ge 1$ tel que $\psi^{k,l,m}\in C^\infty_*(\overline{\Sigma};\RR^3)$ pour tout $m\ge m_{k,l}$. 

Soit $m\ge m_{k,l}$. Comme dans l'\'etape 2, on note $S_{k,l,m}$ la sous-vari\'et\'e compacte de dimension $1$ associ\'ee \`a $h^{k,l,m}$ et, de la m\^eme fa\c con, par le lemme \ref{selectionchampsvecteurs} il existe $L^{k,l,m}\in C(S_{k,l,m};\RR^2)$ tel que $L^{k,l,m}\trans S^0_{k,l,m}$ et $L^{k,l,m}\trans S^1_{k,l,m}$.

\subsubsection*{\'Etape 3.3} Par la proposition \ref{approxchampsvecteurs} il existe $\{h_2^{k,l,m,n}\}_{n\ge 1}\subset C^\infty(\overline{\Sigma})$ tel que :
\begin{eqnarray}
&& \lim_{n\to +\infty}\Vert h_2^{k,l,m,n}-\psi_2\Vert_{W^{1,p}(\Sigma)}=0\ ;\label{convergenceW2}\\
&& \nabla h_2^{k,l,m,n}(x)L^{k,l,m}(x)>0 \hbox{ pour tout }x\in S_{k,l,m}\hbox{ et tout } n\ge 1.\label{convergenceW2-bis}
\end{eqnarray}
On pose $\psi^{k,l,m,n}:=(h^{k,l,m}_1, h_2^{k,l,m,n}, h^{k,m}_3)$. Comme dans l'\'etape 3, gr\^ace \`a \eqref{convergenceW2-bis} on peut montrer que 
\[
\psi^{k,l,m,n}\in C^\infty_*(\overline{\Sigma};\RR^3)\hbox{ pour tous $k\ge k_0,l\ge 1,m\ge m_{k,l}\hbox{ et }n\ge 1$.}
\]

\subsubsection*{\'Etape 4}  Pour chaque $k\ge k_0,l\ge 1,m\ge m_{k,l}\hbox{ et }n\ge 1$, on a 
\begin{eqnarray*}
\Vert \psi^{k,l,m,n}-\psi \Vert_{W^{1,p}(\Sigma;\RR^3)}&\le& \Vert \psi^{k,l,m,n}-(h_1^{k,l,m},\psi_2,h_3^{k,m}) \Vert_{W^{1,p}(\Sigma;\RR^3)}\\
&&+\Vert (h_1^{k,l,m},\psi_2,h_3^{k,m})-(h_1^{k,l},\psi_2,h_3^k) \Vert_{W^{1,p}(\Sigma;\RR^3)}\\
&&+\Vert (h_1^{k,l},\psi_2,h_3^k)-(\psi_1,\psi_2,h_3^k) \Vert_{W^{1,p}(\Sigma;\RR^3)}\\
&&+\Vert (\psi_1,\psi_2,h_3^k)-\psi \Vert_{W^{1,p}(\Sigma;\RR^3)}.
\end{eqnarray*}
En utilisant successivement \eqref{convergenceW2}, \eqref{convergenceGE2}, \eqref{convergenceW} et \eqref{convergenceGE}, i.e., en faisant successivement $n\to +\infty$, $m\to +\infty$, $l\to +\infty$ et $k\to +\infty$, on obtient 
$$
\lim_{k\to +\infty}\lim_{l\to +\infty}\lim_{m\to +\infty}\lim_{n\to +\infty}\Vert \psi^{k,l,m,n}-\psi \Vert_{W^{1,p}(\Sigma;\RR^3)}=0,
$$
et le r\'esultat suit par diagonalisation. \hfill{$\square$}

\subsubsection{D\'emonstration de la proposition \ref{approxchampsvecteurs}} On commence par prouver le lemme suivant.

\begin{lemma}\label{champ de vecteurs} Soient $f\in C^\infty(\overline{\Sigma})$, $S\subset\overline{\Sigma}$ et  $L\in C(S;\RR^2)$ comme dans la proposition \ref{approxchampsvecteurs}.
\begin{itemize}
\item[(a)] Il existe $f^{0}\in C^\infty(\overline{\Sigma})$ et un voisinage ouvert ${\mathcal V}(S^0)\subset S$ de $S^0$ tels que 
\[
\begin{cases}
f^{0}-f=0\hbox{ sur }S^{0}\\
\nabla f^{0}(x)L(x)>0\hbox{ pour tout }x\in {\mathcal V}(S^0).
\end{cases}
\]
\item[(b)] S'il existe un voisinage ouvert ${\mathcal V}(S^0)\subset S$ de $S^0$ tel que  $\nabla f(\cdot)L(\cdot)\lfloor_{{\mathcal V}(S^0)}>0$, alors il existe $f^{1}\in C^\infty(\overline{\Sigma})$ tel que 
\[
\begin{cases}
f^{1}-f=0\hbox{ sur }S\\
\nabla f^{1}(x)L(x)>0\hbox{ pour tout }x\in S.
\end{cases}
\]
\end{itemize}
\end{lemma}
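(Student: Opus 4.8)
The statement asks us to construct, near the discrete part $S^0$ (part (a)) and then on all of $S$ given a favourable sign near $S^0$ (part (b)), a smooth function $f^0$ (resp. $f^1$) that agrees with $f$ on $S^0$ (resp. on $S$) and whose directional derivative along the transversal field $L$ is strictly positive. The general strategy is the standard one for building sections of an open differential relation: first solve the problem infinitesimally at each point of the relevant submanifold, then patch the local solutions together with a partition of unity, keeping strict inequalities on the (compact) submanifold throughout.

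For part (a), I would first use the transversality hypothesis $L\trans S^0$: since $S^0$ is discrete and $L(x)\notin T_xS$ for $x\in S^0$ — in fact here $L(x)\RR+T_xS=\RR^2$ — for each point $p\in S^0$ the value $L(p)$ is a direction that is ``outward'' relative to $S$ near $p$. The plan is, for each isolated point $p\in S^0$, to choose a local smooth function $g_p$ near $p$ with $g_p(p)=f(p)$ and $\nabla g_p(p)L(p)=1>0$ (e.g. an affine function in a chart, built from $\nabla f(p)$ corrected by a multiple of a covector dual to $L(p)$); by continuity of $\nabla g_p(\cdot)L(\cdot)$ there is a small ball $B_p$ on which $\nabla g_p(x)L(x)>0$ for all $x\in B_p\cap S$. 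Since $S^0$ is discrete and $S$ is compact, $S^0$ is finite, so finitely many such balls suffice; shrink them to be pairwise disjoint, take a smooth cutoff $\chi_p$ equal to $1$ near $p$ and supported in $B_p$, and set $f^0:=f+\sum_{p\in S^0}\chi_p(g_p-f)$. Then $f^0-f$ vanishes on $S^0$ (actually near each $p$ it equals $g_p-f$, which vanishes at $p$), and on the neighbourhood $\mathcal V(S^0):=\bigcup_p (B_p\cap S)$ — after possibly further shrinking the $B_p$ so that the perturbation $\chi_p(g_p-f)$ keeps $\nabla(\cdot)L(\cdot)$ positive — one has $\nabla f^0(x)L(x)>0$. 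The one subtlety is that on the overlap of the ``bad'' region with the part of $B_p$ where $\chi_p$ is not locally constant, the sign could be spoiled; this is handled by choosing each $B_p$ small enough that $|\nabla(g_p-f)(x)L(x)|$ is dominated by the positive quantity $\nabla g_p(x)L(x)$ times the supremum of $|\chi_p|+|\nabla\chi_p|\cdot|g_p-f|$, which is possible because $g_p(p)-f(p)=0$ forces $g_p-f$ and hence its gradient contribution to be small near $p$.

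For part (b), the hypothesis already gives a neighbourhood $\mathcal V(S^0)$ of $S^0$ in $S$ on which $\nabla f(x)L(x)>0$, so the work is concentrated on the compact complementary piece $K:=S\setminus\mathcal V(S^0)$, which is a compact subset of the $1$-dimensional submanifold $S^1$ (here $L\trans S^1$). On $S^1$, $L$ is transversal to a $1$-manifold in $\RR^2$, so $\{L(x),\,e(x)\}$ is a continuous moving frame along $S^1$ where $e(x)$ spans $T_xS^1$; the plan is again to solve locally: around each $x_0\in K$ pick a chart adapted to $S^1$ and an affine $g_{x_0}$ with $g_{x_0}=f$ along $S^1$ near $x_0$ and $\nabla g_{x_0}L>0$ there (this is a one-dimensional jet extension problem, solvable because $L(x_0)\notin T_{x_0}S^1$ gives us freedom to prescribe the $L$-derivative independently of the tangential data coming from $f|_{S^1}$). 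Cover $K$ by finitely many such charts, and on the part of $\mathcal V(S^0)$ use $f$ itself; glue with a partition of unity subordinate to this cover of a neighbourhood of $S$ in $\overline\Sigma$, arranging as in part (a) that the cutoffs are so localized that the strict inequality $\nabla f^1(x)L(x)>0$ survives on all of $S$, while $f^1-f$ vanishes on $S$ because every local piece agrees with $f$ on $S$ there. The main obstacle — and the place where transversality is really used — is precisely this last patching: ensuring that combining locally-positive directional derivatives via a convex partition of unity yields a globally positive one. This works because at each $x\in S$ the value $\nabla f^1(x)L(x)=\sum_i\rho_i(x)\,\nabla g_i(x)L(x)$ is a convex combination of strictly positive numbers (the error terms from $\nabla\rho_i$ cancel since all $g_i-f$ vanish on $S$, so $\sum_i\nabla\rho_i\,(g_i-f)=\nabla\bigl(\sum_i\rho_i(g_i-f)\bigr)-\sum_i\rho_i\nabla(g_i-f)$ and the first term's $L$-component is controlled because $\sum_i\rho_i(g_i-f)\equiv0$ on $S$), hence positive. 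The $W^{1,p}$-smallness in Proposition \ref{approxchampsvecteurs} is then obtained by a separate, routine step: the construction of $f^0$ and $f^1$ can be performed with perturbations supported in arbitrarily thin tubular neighbourhoods of $S$ (of measure $\to0$), and since $\overline\Sigma$ has finite measure and the perturbations are uniformly $C^1$-bounded, $\|f^n-f\|_{W^{1,p}(\Sigma)}\to0$ as the tube shrinks; combining part (a) then part (b) (the output of (a) feeds the hypothesis of (b)) and letting the tube radius go to $0$ produces the desired sequence $\{f^n\}_{n\ge1}$.
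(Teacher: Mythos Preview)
Your plan follows the paper's approach closely: build local corrections of $f$ that vanish on $S^0$ (resp.\ on $S$) and have positive $L$-derivative, then patch with a partition of unity, using the cancellation of the cross-terms $\sum_i(\nabla\rho_i\cdot L)(g_i-f)$ on $S$. Two points deserve correction.

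In part~(a) your worry about the region where $\chi_p$ is not constant is unnecessary: the statement lets you \emph{choose} the neighbourhood $\mathcal V(S^0)$, so simply take it inside the set where each $\chi_p\equiv1$; there $f^0=g_p$ locally and the sign follows from continuity of $\nabla g_p(\cdot)L(\cdot)$ alone. This is exactly what the paper does (it adds $k_i\varphi_i(x)(x_j-a^i_j)$ with $\varphi_i\equiv1$ near $a^i$, then shrinks to a ball where the sign persists).

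In part~(b) the phrase ``an affine $g_{x_0}$ with $g_{x_0}=f$ along $S^1$'' is wrong as written: an affine function cannot agree with $f$ along a curve unless $f|_{S^1}$ is itself affine. What your parenthetical jet-extension remark correctly describes, and what the paper actually does, is to take a chart $\phi^{x_0}=(\phi^{x_0}_1,\phi^{x_0}_2)$ straightening $S^1$ to $\{y_2=0\}$ and set $g_{x_0}:=f+k_{x_0}\,\phi^{x_0}_2$; this vanishes on $S^1$ (so $g_{x_0}=f$ there) and $\nabla g_{x_0}\cdot L=\nabla f\cdot L+k_{x_0}\,\nabla\phi^{x_0}_2\cdot L$ can be made $\ge1$ on a compact piece of $S^1$ because $\nabla\phi^{x_0}_2\cdot L\neq0$ by transversality. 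With this fix, your partition-of-unity argument goes through exactly as you wrote.

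Your last paragraph on $W^{1,p}$-smallness is not part of the present lemma; it is the subsequent step toward the proposition. Your shrinking-tube idea is correct for the smooth $1$-manifold $S$ (since $f^1-f=0$ on $S$ gives $|f^1-f|\lesssim\mathrm{dist}(\cdot,S)$, compensating the cutoff-gradient blow-up), and this is essentially the elementary case of the capacity lemma the paper invokes; the paper cites the Hedberg--Wolff spectral synthesis theorem to cover a general compact $K$, but for a $1$-submanifold your argument suffices.
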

\begin{proof}
Soit $\widehat{f}\in C^\infty(\RR^2)$ tel que $\widehat{f}\lfloor_{\overline{\Sigma}}=f$. 

(a) Comme $S^0\subset S$ est discret, on a 
\[
S^0=\{a^0,a^1,\cdots\}=\cupp_{i\in \NN}\{a^i\},
\]
o\`u les $a^i$ sont des points isol\'es. Soit une famille $\{B_i\}_{i\in \NN}$ de boules de centre $a^i$ deux \`a deux disjointes. Pour chaque $i\in\NN$, on consid\`ere  une boule $B_i^\prime\subset B_i$ de m\^eme centre et on d\'efinit $\{\varphi_i\}_{i\in \NN}\subset C^\infty_c(\RR^2;[0,1])$ par 
\[
\varphi_i(x):=\begin{cases}
1&\hbox{ si }x\in B_i^\prime\\
0&\hbox{ si }x\in \RR^2\setminus B_i.
\end{cases}
\]
Comme $L\trans S^0$ on a $L(a^i)=(L_1(a^i),L_2(a^i))\not=0$ pour tout $i\in\NN$. Soient $I_1:=\{i\in\NN:L_1(a^i)\not=0\}$, $I_2:=\NN\setminus I_1$ et $\widehat{f^0}\in C^\infty(\RR^2)$ donn\'ee par 
\[
\widehat{f^0}(x):=\begin{cases}
\widehat{f}(x)+\sum_{i\in I_1} k_i\varphi_i(x)(x_1-a^i_1)&\hbox{ si }x\in \cup_{i\in I_1}B_i\\
\widehat{f}(x)+\sum_{i\in I_2} k_i\varphi_i(x)(x_2-a^i_2)&\hbox{ si }x\in \cup_{i\in I_2}B_i\\
\widehat{f}(x)&\hbox{ sinon}
\end{cases}
\] 
avec 
\[
k_i:=\begin{cases}
\left\vert\frac{1-\nabla\widehat{f}(a^i)L(a^i)}{L_1(a^i)}\right\vert &\hbox{ si }L_1(a^i)>0\hbox{ et }i\in I_1\\
-\left\vert\frac{1-\nabla\widehat{f}(a^i)L(a^i)}{L_1(a^i)}\right\vert &\hbox{ si }L_1(a^i)<0\hbox{ et }i\in I_1\\
\left\vert\frac{1-\nabla\widehat{f}(a^i)L(a^i)}{L_2(a^i)}\right\vert &\hbox{ si }L_2(a^i)>0\hbox{ et }i\in I_2\\
-\left\vert\frac{1-\nabla\widehat{f}(a^i)L(a^i)}{L_2(a^i)}\right\vert &\hbox{ si }L_2(a^i)<0\hbox{ et }i\in I_2.\\
\end{cases}
\]
Alors, 
$
\widehat{f^0}(a_i)=\widehat{f}(a_i) \hbox{ et }\nabla\widehat{f^0}(a_i)L(a_i)\ge 1$ pour tout $i\in \NN$. Comme $\nabla\widehat{f^0}(\cdot)L(\cdot)$ est continue, il suit que pour chaque $i\in\NN$, il existe $\rho_i>0$ tel que $\nabla\widehat{f^0}(x)L(x)>0$ pour tout $x\in B_{\rho_i}(a^i)\cap S$, o\`u $B_{\rho_i}(a^i)$ est la boule ouverte de centre $a^i$ et de rayon $\rho_i$. On obtient (a) en posant $f^0:=\widehat{f^0}\lfloor_{\overline{\Sigma}}\in C^\infty(\overline{\Sigma})$ et ${\mathcal V}(S^0):=\cup_{i\in\NN}B_{\rho_i}(a_i)\cap S$. 

\smallskip

(b) Pour chaque $x\in S^1$, il existe une boule ouverte $B_x\subset \RR^2$ de centre $x$ et un diff\'eomorphisme $\phi^x=(\phi_1^x,\phi_2^x):B_x\to \RR^2$ tel que $\phi^x(B_x\cap S^1)=\phi^x(B_x)\cap(\RR\times\{0\})$ et on fait en sorte que $B_x\cap S^1$ soit connexe. 

Comme $\phi^x_2\lfloor_{B_x\cap S^1}=0$, $L\trans S^1$ et $\phi^x$ est un diff\'eomorphisme, on a $\nabla\phi^x_2(y)L(y)\not=0$ pour tout $y\in {B_x}\cap S^1$. En effet, on peut remarquer que $\nabla\phi^x_2(y)v=0$ pour tout $v\in T_yS^1$ et tout $y\in {B_x}\cap S^1$. Soit $y\in {B_x}\cap S^1$, si $\nabla\phi^x_2(y)L(y)=0$ alors on aurait $\hbox{Ker}\nabla\phi^x_2(y)=L(y)\RR+T_yS^1$ qui serait de dimension $2$ puisque $L\trans S^1$ ce qui contredirait le fait que $\phi^x$ est un diff\'eomorphisme. Maintenant puisque $B_x\cap S^1$ est connexe et que $B_x\cap S^1\ni y\mapsto \nabla\phi^x_2(y)L(y)$ est continue, alors $\nabla\phi^x_2(\cdot)L(\cdot)\lfloor_{B_x\cap S^1}$ est de signe constant.

Soit une boule ouverte $Q_x$ de centre $x$ tel que $\overline{Q}_x\subset B_x$. On pose 
\begin{equation}\label{def de kU}
k_{x}:=\begin{cases}
\sup_{y\in \overline{Q}_x\cap S^1} \frac{\vert 1-\nabla\widehat{f}(y)L(y)\vert}{\vert \nabla\phi^x_2(y)L(y)\vert}&\hbox{ si }\nabla\phi^x_2(\cdot)L(\cdot)\lfloor_{B_x\cap S^1}>0\\
-\sup_{y\in \overline{Q}_x\cap S^1} \frac{\vert 1-\nabla\widehat{f}(y)L(y)\vert}{\vert \nabla\phi^x_2(y)L(y)\vert}&\hbox{ si }\nabla\phi^x_2(\cdot)L(\cdot)\lfloor_{B_x\cap S^1}<0,
\end{cases}
\end{equation} 
et on d\'efinit $\widehat{f^1_x}:=\widehat{f}+k_{x}\phi_2^x$. Par \eqref{def de kU} on a 
\begin{equation}\label{proprieties local U}
\widehat{f^1_x}(y)=\widehat{f}(y)\mbox{ et }\nabla\widehat{f^1_x}(y) L(y)=\nabla\widehat{f}(y)L(y)+k_{x}\nabla\phi_2^x(y)L(y)\ge 1
\end{equation}
pour tout $y\in Q_x\cap S^1$. La famille $\{Q_x\}_{x\in S^1\setminus {\mathcal V}(S^0)}$ est un recouvrement d'ouverts de $S^1\setminus {\mathcal V}(S^0)=S\setminus {\mathcal V}(S^0) $ qui est compact. On consid\`ere un sous-recouvrement fini $\{Q_{x_i}\}_{i\in\{1,\cdots,s\}}$  et un voisinage ouvert ${\mathcal U}(S^0)\supset {\mathcal V}(S^0)$ dans  $\overline{\Sigma}$, i.e.,
\[
S\subset \cupp_{i=1}^{s}Q_{x_i}\cup {\mathcal U}(S^0).
\]
Soient $\varphi_0,\varphi_1,\dots,\varphi_s,\varphi_{s+1}\in C^\infty(\RR^2;[0,1])$ telles que :
\begin{itemize}
\item[$\diamond$] $\varphi_0(x)=0$ dans un voisinage ${\mathcal V}(S)\subset \overline{\Sigma}$ de $S$  ;
\item[$\diamond$] $\varphi_i(x)=0$ dans $\RR^2\setminus Q_{x_i}$ pour tout $i\in\{1,\dots,s\}$ ;
\item[$\diamond$] $\varphi_{s+1}(x)=0$ dans $\RR^2\setminus {\mathcal U}(S^0)$ ;
\item[$\diamond$] $\sum_{i=0}^{s+1}\varphi_i(x)=1$ dans $\RR^2$.
\end{itemize}
On d\'efinit 
$
\widehat{f^1}:=\varphi_0\widehat{f}+\sum_{i=1}^s\varphi_i\widehat{f_{x_i}^1}+\varphi_{s+1}\widehat{f}\in C^\infty(\RR^2)
$
qui satisfait $\widehat{f^1}=\widehat{f}$ sur $S$. Alors, en utilisant \eqref{proprieties local U}, on obtient 
\begin{eqnarray*}
\nabla\widehat{f^1}(x)L(x) \ge \begin{cases}
\displaystyle\sum_{i=1}^s\varphi_i(x)=1>0&\hbox{ si }x\in S\setminus {\mathcal U}(S^0)\\
\min\big\{1,\nabla\widehat{f}(x)L(x)\big\}>0&\hbox{ si }x\in {\mathcal V}(S^0),
\end{cases}
\end{eqnarray*}
et (b) suit en posant $f^1:=\widehat{f^1}\lfloor_{\overline{\Sigma}}\in C^\infty(\overline{\Sigma})$. 
\end{proof}

Le lemme suivant est une cons\'equence du th\'eor\`eme de synth\`ese spectral \cite[Theorem $5$]{hedberg-wolff83} (voir aussi \cite{hedberg81}). Pour une preuve dans le cas o\`u $K$ est une sous-vari\'et\'e compacte de dimension $1$ voir \cite{gromov-eliashberg71}.

 \begin{lemma}\label{capacite} Soient $K\subset \overline{\Sigma}$ compact et $\alpha\in  C^\infty(\overline{\Sigma})$ tels que $\alpha{\lfloor_K}=0$. Alors, il existe une suite $\{\omega_n\}_{n\ge 1}\subset C^\infty_c(\overline{\Sigma})$ et une suite de voisinages $\{V_n\}_{n\ge 1}$ de $K$ telles que pour chaque $n\ge 1$,
\[
\begin{cases}
\omega_n=1\mbox{ sur }V_n\\
\Vert\omega_n\alpha\Vert_{W^{1,p}(\Sigma)}<\frac{1}{n}.
\end{cases}
\]
\end{lemma}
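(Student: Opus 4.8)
The plan is to build the cutoffs $\omega_n$ by hand from a smoothing of $x\mapsto{\rm dist}(x,K)$, the required smallness being a soft consequence of the fact that $\alpha$ is Lipschitz and vanishes on $K$. This is exactly the spectral synthesis theorem of Hedberg--Wolff \cite[Theorem 5]{hedberg-wolff83} restricted to a smooth datum, but since $\alpha\in C^\infty$ here no nonlinear potential theory is really needed: elementary measure theory suffices. (For $K$ a compact $1$-submanifold, as in \cite{gromov-eliashberg71}, the argument is even more transparent, since the tubular collars of $K$ then have volume $O(r)$; the work below is only needed to handle an arbitrary compact $K$.)

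First I would extend $\alpha$ to $\hat\alpha\in C^\infty_c(\RR^2)$ with $\hat\alpha\lfloor_{K}=0$: extend $\alpha$ off $\overline{\Sigma}$ to $\RR^2$ and multiply by a plateau function equal to $1$ near $\overline{\Sigma}$; since $K\subset\overline{\Sigma}$ the extension still vanishes on $K$. Put $d(x):={\rm dist}(x,K)$; as $\hat\alpha$ is Lipschitz, with constant $L$ say, and vanishes on $K$, one has $|\hat\alpha(x)|\le L\,d(x)$ for all $x$. For $r>0$ let $\omega_r\in C^\infty_c(\RR^2)$ be a cutoff with $\omega_r\equiv1$ on a neighbourhood $V_r$ of $K$ with $V_r\subset\{d<r\}$, with ${\rm supp}\,\omega_r\subset\{d<2r\}$, and with $|\nabla\omega_r|\le C_0/r$ and ${\rm supp}\,\nabla\omega_r\subset A_r:=\{r/2<d<2r\}$ (obtained e.g.\ by mollifying $\chi(d(\cdot)/r)$ at scale $r/8$, with $\chi\in C^\infty([0,\infty);[0,1])$ equal to $1$ on $[0,1]$ and to $0$ on $[2,\infty)$, or from a Whitney regularized distance to $K$).

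Then, from $\nabla(\omega_r\hat\alpha)=\omega_r\nabla\hat\alpha+\hat\alpha\nabla\omega_r$,
\[
\|\omega_r\hat\alpha\|_{W^{1,p}(\RR^2)}^p\ \le\ 2^{p}\Big(\int_{\{d<2r\}}\big(|\hat\alpha|^p+|\nabla\hat\alpha|^p\big)\,dx\ +\ \int_{A_r}|\nabla\omega_r|^p|\hat\alpha|^p\,dx\Big).
\]
On $A_r$ one has $|\hat\alpha|^p|\nabla\omega_r|^p\le(2Lr)^p(C_0/r)^p=(2LC_0)^p$, so the second integral is $\le(2LC_0)^p|A_r|$, and $|A_r|=|\{d<2r\}|-|\{d\le r/2\}|\to|K|-|K|=0$ as $r\to0$ because both $s\mapsto|\{d<s\}|$ and $s\mapsto|\{d\le s\}|$ decrease to $|K|$. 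The first integral tends to $\int_K(|\hat\alpha|^p+|\nabla\hat\alpha|^p)\,dx$ by dominated convergence ($\{d<2r\}\downarrow K$), and this vanishes: $\int_K|\hat\alpha|^p\,dx=0$ since $\hat\alpha\lfloor_{K}=0$, and $\int_K|\nabla\hat\alpha|^p\,dx=0$ (trivially if $|K|=0$; otherwise because the $C^1$ function $\hat\alpha$ has vanishing gradient at a.e.\ point of its zero set). Hence $\|\omega_r\hat\alpha\|_{W^{1,p}(\RR^2)}\to0$ as $r\to0$; restricting to $\Sigma$ (which only decreases the norm) and using $\omega_r\hat\alpha\lfloor_{\overline{\Sigma}}=\omega_r\alpha$, one picks $r_n\downarrow0$ with $\|\omega_{r_n}\alpha\|_{W^{1,p}(\Sigma)}<1/n$ and sets $\omega_n:=\omega_{r_n}\lfloor_{\overline{\Sigma}}$, $V_n:=V_{r_n}\cap\overline{\Sigma}$ (if ${\rm supp}\,\omega_n\subset\Sigma$ is wanted, take $n$ large enough that $\{d<2r_n\}\subset\Sigma$, possible when $K\subset\Sigma$). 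The main obstacle is precisely the collar estimate $|A_r|\to0$ for an arbitrary compact $K$ together with the a.e.\ vanishing of $\nabla\hat\alpha$ on $K$; both are elementary, and if one prefers to bypass them it suffices to quote \cite[Theorem 5]{hedberg-wolff83} directly.
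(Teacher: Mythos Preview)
Your argument is correct. The paper, by contrast, does not prove this lemma at all: it simply invokes the spectral synthesis theorem of Hedberg--Wolff \cite[Theorem~5]{hedberg-wolff83}, with a pointer to \cite{gromov-eliashberg71} for the special case where $K$ is a compact $1$-submanifold. The Hedberg--Wolff result is considerably deeper---it covers an arbitrary $\alpha\in W^{1,p}$ vanishing on $K$ in the appropriate capacitary sense, and its proof rests on nonlinear potential theory. Your key observation is that here $\alpha\in C^\infty$, hence Lipschitz, so the pointwise bound $|\hat\alpha|\le L\,d(\cdot,K)$ is available and reduces the collar term $\int_{A_r}|\nabla\omega_r|^p|\hat\alpha|^p$ to a constant multiple of $|A_r|$. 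The remaining ingredients---$|A_r|\to0$ and $\nabla\hat\alpha=0$ a.e.\ on $K$---are elementary (continuity of measure, and the implicit function theorem together with Lebesgue density). In the actual application of the lemma in this paper one has $K=S$, a compact $1$-submanifold, so $|K|=0$ and the density argument is not even needed; this is exactly the case the paper isolates by referring to \cite{gromov-eliashberg71}. Your direct construction is therefore more elementary and fully adequate for the paper's purposes; the only cost is that it does not extend to non-smooth $\alpha$, which is irrelevant here. (One cosmetic point: with your specific mollification the support of $\nabla\omega_r$ lies in an annulus $\{c_1r<d<c_2r\}$ with slightly different constants than the ones you state, but this changes nothing in the estimates.)
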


Le r\'esultat suivant se d\'eduit du lemme \ref{champ de 
vecteurs} et du lemme \ref{capacite}.

\begin{lemma}\label{consequencecvc} Soient $f\in C^\infty(\overline{\Sigma}),S\subset\overline{\Sigma}$ et $L\in C(\overline{\Sigma};\RR^2)$ comme dans la proposition \ref{approxchampsvecteurs}. 
\begin{itemize}
\item[(a)] Il existe un voisinage ouvert ${\mathcal V}(S^0)\subset S$ de $S^0$ et $\{f^{m,0}\}_{m\ge 1}\subset C^\infty(\overline{\Sigma})$ tels que 
\[
\begin{cases}
\lim\limits_{m\to+\infty}\|f^{m,0}-f\|_{W^{1,p}(\Sigma)}=0\\
\nabla f^{m,0}(x)L(x)>0 \hbox{ pour tout }x\in {\mathcal V}(S^0)\hbox{ et tout } m\ge 1.
\end{cases}
\]
\item[(b)] S'il existe un voisinage ouvert ${\mathcal V}(S^0)\subset S$ de $S^0$ tel que  $\nabla f(\cdot)L(\cdot)\lfloor_{{\mathcal V}(S^0)}>0$, alors il existe  $\{f^{n,1}\}_{n\ge 1}\subset C^\infty(\overline{\Sigma})$ tel que 
\[
\begin{cases}
\lim\limits_{n\to+\infty}\|f^{n,1}-f\|_{W^{1,p}(\Sigma)}=0\\
\nabla f^{n,1}(x)L(x)>0 \hbox{ pour tout }x\in S\hbox{ et tout } n\ge 1.
\end{cases}
\]
\end{itemize}
\end{lemma}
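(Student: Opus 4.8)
The plan is to deduce both assertions from Lemma~\ref{champ de vecteurs} — which already produces a \emph{single} smooth function having the correct sign of $\nabla(\cdot)L(\cdot)$ near $S^{0}$, resp.\ on all of $S$ — by a cut-off argument governed by Lemma~\ref{capacite} (spectral synthesis), exactly as in the elimination-of-singularities scheme: one "freezes" the good function on a (possibly shrinking) neighbourhood of the relevant set and, away from it, pulls the function back towards $f$ in the $W^{1,p}$-norm.

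For (a), first I apply Lemma~\ref{champ de vecteurs}(a) to obtain $f^{0}\in C^{\infty}(\overline{\Sigma})$ and an open neighbourhood $\mathcal V(S^{0})\subset S$ of $S^{0}$ with $f^{0}=f$ on $S^{0}$ and $\nabla f^{0}(x)L(x)>0$ for every $x\in\mathcal V(S^{0})$. I set $\alpha:=f^{0}-f\in C^{\infty}(\overline{\Sigma})$, which vanishes on $S^{0}$; since $S^{0}$ is a closed discrete subset of the compact $1$-manifold $S$ it is finite, in particular compact, so Lemma~\ref{capacite} with $K=S^{0}$ gives $\{\omega_{m}\}_{m\ge1}\subset C^{\infty}_{c}(\overline{\Sigma})$ and neighbourhoods $\{V_{m}\}_{m\ge1}$ of $S^{0}$ with $\omega_{m}\equiv1$ on $V_{m}$ and $\|\omega_{m}\alpha\|_{W^{1,p}(\Sigma)}<1/m$; shrinking the $V_{m}$ I may assume $V_{m}\subset\mathcal V(S^{0})$. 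I then put
\[
f^{m,0}:=f+\omega_{m}\alpha=\omega_{m}f^{0}+(1-\omega_{m})f\in C^{\infty}(\overline{\Sigma}).
\]
Then $\|f^{m,0}-f\|_{W^{1,p}(\Sigma)}=\|\omega_{m}\alpha\|_{W^{1,p}(\Sigma)}<1/m\to0$, while on the open set $V_{m}$ one has $\omega_{m}\equiv1$, hence $\nabla\omega_{m}\equiv0$ and $\nabla f^{m,0}=\nabla f^{0}$ there, so $\nabla f^{m,0}(x)L(x)=\nabla f^{0}(x)L(x)>0$ for every $x\in V_{m}$. This proves (a), the neighbourhood being $V_{m}$.

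For (b), the standing hypothesis is exactly that of Lemma~\ref{champ de vecteurs}(b), which I invoke to get $f^{1}\in C^{\infty}(\overline{\Sigma})$ with $f^{1}=f$ on $S$ and $\nabla f^{1}(x)L(x)>0$ for all $x\in S$. Now $\alpha:=f^{1}-f$ vanishes on the compact set $S$, so Lemma~\ref{capacite} with $K=S$ provides $\{\omega_{n}\}_{n\ge1}\subset C^{\infty}_{c}(\overline{\Sigma})$ and neighbourhoods $\{V_{n}\}$ of $S$ (so $V_{n}\supset S$) with $\omega_{n}\equiv1$ on $V_{n}$ and $\|\omega_{n}\alpha\|_{W^{1,p}(\Sigma)}<1/n$. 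Setting $f^{n,1}:=f+\omega_{n}\alpha$ I obtain $\|f^{n,1}-f\|_{W^{1,p}(\Sigma)}<1/n\to0$, and on $V_{n}\supset S$ one has $\omega_{n}\equiv1$, hence $\nabla f^{n,1}=\nabla f^{1}$ there, so $\nabla f^{n,1}(x)L(x)=\nabla f^{1}(x)L(x)>0$ for every $x\in S$; this is (b).

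The real content is Lemma~\ref{capacite}, not the cut-off bookkeeping: passing from ``$\alpha$ vanishes on $K$'' to ``$\alpha$ is $W^{1,p}$-approximable by functions vanishing near $K$'' needs the full strength of the Hedberg--Wolff spectral synthesis theorem, because for $p\ge2$ a $1$-dimensional compact set (indeed even a point) has positive $W^{1,p}$-capacity, so one cannot excise a \emph{fixed} neighbourhood. Consequently in (a) the neighbourhood on which the sign is controlled is forced to depend on $m$ (it is $V_{m}$, not a set independent of $m$); this causes no problem for the use in Proposition~\ref{approxchampsvecteurs}, where for each fixed $m$ the function $f^{m,0}$ is handed to part~(b), whose hypothesis only requires the existence of \emph{some} neighbourhood of $S^{0}$. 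The only routine points to check are the finiteness and compactness of $S^{0}$ and the elementary fact that $\omega_{m}\equiv1$ on an open set annihilates $\nabla\omega_{m}$ there.
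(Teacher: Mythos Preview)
Your proof is correct and follows exactly the paper's approach: invoke Lemma~\ref{champ de vecteurs} to produce $f^{0}$ (resp.\ $f^{1}$), then interpolate via the cut-offs $\omega_{m}$ (resp.\ $\omega_{n}$) supplied by Lemma~\ref{capacite}, setting $f^{m,0}=f+\omega_{m}(f^{0}-f)$ and $f^{n,1}=f+\omega_{n}(f^{1}-f)$. Your choice $K=S^{0}$ in (a) is the right one (the paper writes ``$K=S$'', but since $f^{0}-f$ vanishes only on $S^{0}$ that is a slip); your remark that the resulting neighbourhood then depends on $m$ is accurate and, as you note, harmless for the use in Proposition~\ref{approxchampsvecteurs}.
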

\begin{proof}
En utilisant le lemme \ref{capacite} avec $K=S$ et le lemme \ref{champ de vecteurs} on pose $f^{m,0}:=f+\omega_m(f^0-f)$ pour tout $m\ge 1$ qui satisfait (a). En utilisant les lemmes \ref{capacite} et \ref{champ de vecteurs} on pose $f^{n,1}:=f+\omega_n(f^1-f)$ pour tout $n\ge 1$ qui satisfait (b).
\end{proof}

En combinant (a) et (b) du lemme \ref{consequencecvc} on obtient $\{f^{n,m,0,1}\}_{n,m\ge 1}\subset C^\infty(\overline{\Sigma})$ tel que 
\[
\begin{cases}
\lim\limits_{m\to+\infty}\lim\limits_{n\to+\infty}\|f^{n,m,0,1}-f\|_{W^{1,p}(\Sigma)}=0\\
\nabla f^{n,m,0,1}(x)L(x)>0 \hbox{ pour tout }x\in S\hbox{ et tous } n,m\ge 1,
\end{cases}
\]
et la proposition \ref{approxchampsvecteurs} suit par diagonalisation.\hfill$\square$

\subsubsection{Fonctions continues et affines par morceaux}

Un maillage r\'egulier (dans $\RR^N$) est une famille finie $\{V_i\}_{i\in I}\subset\RR^N$ de simplexes (triangles si $N=2$) ouverts et disjoints telle que pour chaque $i,j\in I$ avec $i\not= j$, l'intersection $\overline{V_i}\cap \overline{V_j}$ est soit vide, soit r\'eduite \`a un sommet commun, soit r\'eduite \`a une face (ar\^ete si $N=2$) commune.  

On dit que $\psi:V\to\RR^m$, avec $V$ un ouvert de $\RR^N$, est affine si c'est la restriction \`a $V$ d'une fonction affine de $\RR^N$ dans $\RR^m$. On note $\AffET_{\rm c}(\RR^N;\RR^m)$ l'ensemble des fonctions continues $\psi:\RR^N\to\RR^m$ pour lesquelles il existe un maillage r\'egulier $\{V_i\}_{i\in I}$ tel que pour chaque $i\in I$, $\psi\lfloor_{{V_i}}$ est affine et $\psi=0$ dans $\RR^N\setminus\cup_{i\in I}\overline{V}_i$. \'Etant donn\'e un ouvert born\'e $\Omega\subset\RR^N$, on d\'efinit :
\begin{itemize}
\item[] $\AffET(\Omega;\RR^m):=\Big\{\psi\lfloor_{{\Omega}}:\psi\in\AffET_{\rm c}(\RR^N;\RR^m)\Big\}$ ;
\item[] $\AffET_0(\Omega;\RR^m):=\Big\{\psi\in\AffET(\Omega;\RR^m):\psi=0\hbox{ sur }\partial\Omega\Big\}$.
\end{itemize}
On dit que $\psi:\RR^N\to\RR^m$ est localement injective en $x\in\RR^N$ s'il existe $\rho>0$ tel que $\psi\lfloor_{{B_\rho(x)}}$ est injective, o\`u $B_\rho(x)$ d\'esigne la boule ouverte de centre $x$ et rayon $\rho$. Si pour tout $x\in E\subset\RR^N$, $\psi$ est localement injective en $x$,  on dit $\psi$ est localement injective sur $E$.  On pose 
$$
\AffETli(\Omega;\RR^m):=\Big\{\psi\lfloor_{{\Omega}}:\AffET_{\rm c}(\RR^N;\RR^m)\ni\psi\hbox{ est localement injective sur }\overline{\Omega}\Big\}.
$$

\begin{theorem}\label{Ekeland-TemamDensityTheorem}
Pour tout $p\in[1,\infty]$, $\AffET(\Omega;\RR^m)$ est fortement dense dans $W^{1,p}(\Omega;\RR^m)$. 
\end{theorem}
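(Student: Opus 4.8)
The plan is to use the classical finite-element construction: replace $u$ by a function that is smooth on a neighbourhood of $\overline{\Omega}$, and then approximate that function by its continuous piecewise-affine interpolant on a regular mesh of size tending to $0$. \textbf{Step 1: reduction to smooth data.} Fix $u\in W^{1,p}(\Omega;\RR^m)$ and $\eps>0$. First I would produce a function $v$, of class $C^\infty$ on an open neighbourhood of $\overline{\Omega}$, with $\|u-v\|_{W^{1,p}(\Omega;\RR^m)}<\eps$ when $1\le p<\infty$; this is the only step requiring a mild regularity of $\Omega$: for every bounded open set with the $W^{1,p}$-extension property — in particular for the cube $]0,1[^{N}$ and the Lipschitz sets $\Sigma,\Sigma_\eps$ occurring in this paper — one extends $u$ to $W^{1,p}(\RR^{N};\RR^m)$ and mollifies, so that the restrictions to $\Omega$ of functions in $C^\infty(\RR^N;\RR^m)$ are strongly dense in $W^{1,p}(\Omega;\RR^m)$. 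Then, fixing an open cube $Q$ with $\overline{\Omega}\Subset Q$ and replacing $v$ by $\chi v$, where $\chi\in C^\infty_{\rm c}(Q;[0,1])$ with $\chi\equiv 1$ on a neighbourhood of $\overline{\Omega}$, I may assume in addition that $v\in C^\infty(\RR^N;\RR^m)$, that $v$ vanishes near $\partial Q$, and that $v$ coincides with the previous approximant on a neighbourhood of $\overline{\Omega}$.

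\textbf{Step 2: interpolation on a regular mesh.} For $h>0$ choose a regular mesh $\{V_i^{h}\}_{i\in I_h}$ of $Q$ of size at most $h$ and of shape regularity bounded uniformly in $h$, and let $\Pi_h v$ be the continuous piecewise-affine interpolant of $v$: affine on each $V_i^{h}$, equal to $v$ at every vertex of the mesh, and extended by $0$ on $\RR^N\setminus\overline{Q}$ (admissible since, $v$ vanishing near $\partial Q$, $\Pi_h v$ vanishes on the simplices meeting $\partial Q$ once $h$ is small). Then $\Pi_h v\in\AffET_{\rm c}(\RR^N;\RR^m)$, hence $\Pi_h v\lfloor_{\Omega}\in\AffET(\Omega;\RR^m)$. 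The elementwise estimates $\|v-\Pi_h v\|_{L^\infty(V_i^{h};\RR^m)}\le C h^{2}\|D^{2}v\|_{L^\infty(Q)}$ and $\|\nabla v-\nabla\Pi_h v\|_{L^\infty(V_i^{h};\MM^{m\times N})}\le C h\|D^{2}v\|_{L^\infty(Q)}$, with $C$ depending only on $N$ and the shape-regularity constant, summed over $i\in I_h$ and combined with $|Q|<+\infty$, give $\|v-\Pi_h v\|_{W^{1,p}(Q;\RR^m)}\le C' h$. Since $v$ agrees with the approximant of Step~1 near $\overline{\Omega}$, restricting to $\Omega$ and choosing $h$ small enough yields $\|u-\Pi_h v\lfloor_{\Omega}\|_{W^{1,p}(\Omega;\RR^m)}<2\eps$. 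This proves the statement for $1\le p<\infty$; the companion density of $\AffET_0(\Omega;\RR^m)$ in $W^{1,p}_0(\Omega;\RR^m)$ follows identically, interpolating a smooth approximant that vanishes on $\partial\Omega$.

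\textbf{Step 3: the case $p=\infty$.} Here one omits the mollification. Given $u\in W^{1,\infty}(\Omega;\RR^m)$, extend it to a Lipschitz function $v$ on $\RR^N$ — possible for the quasiconvex (in particular Lipschitz) domains at hand — cut $v$ off far from $\overline{\Omega}$ as in Step~1, and form the nodal interpolants $\Pi_{h_k}v$ along a fixed sequence $h_k\downarrow 0$. Shape regularity gives ${\rm Lip}(\Pi_{h_k}v)\le C\,{\rm Lip}(v)$, so $\sup_k\|\nabla\Pi_{h_k}v\|_{L^\infty}<+\infty$; uniform continuity of $v$ gives $\Pi_{h_k}v\to v$ uniformly; and at every point of differentiability of $v$ that lies, for all large $k$, in the interior of its containing simplex — that is, at almost every point — one checks directly that $\nabla\Pi_{h_k}v\to\nabla v$. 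Thus for $p=\infty$ the assertion holds in the weak-$*$ sense, the approximating sequence being takeable with $\psi_n\to u$ uniformly, $\sup_n\|\nabla\psi_n\|_{L^\infty}<+\infty$ and $\nabla\psi_n\to\nabla u$ a.e.: this is exactly the mode of convergence recorded in Remark~\ref{Ekeland-TemamDensityRemark} and used in the proof of Theorem~\ref{QuasiconvexificationFormulaTheorem}. When $u$ is itself of class $C^1$ near $\overline{\Omega}$, the first estimate of Step~2 upgrades this to $\Pi_{h_k}u\to u$ in $W^{1,\infty}$, which is the form in which the theorem is invoked in the proof of Proposition~\ref{ElementsFinisProp}.

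\textbf{Main obstacle.} The substance is concentrated in Step~2, and the only genuinely delicate point there is the boundary: fitting a \emph{finite} simplicial mesh around a general bounded open $\Omega$, which I resolve by meshing a cube $Q$ with $\overline{\Omega}\Subset Q$ together with the cut-off $\chi$, relying on the extension property of the domains that actually occur. The secondary point is that for $p=\infty$ there is no genuine norm density, the weak statement of Step~3 being the correct and sufficient substitute. The interpolation estimates and the existence of shape-regular meshes of arbitrarily small size are routine and can be quoted from the standard finite-element literature.
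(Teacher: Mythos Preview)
Your proposal is correct and follows essentially the same route as the paper: reduce to a smooth function with compact support in a cube $Q\supset\overline{\Omega}$, then take its continuous piecewise-affine nodal interpolant on a regular mesh of $Q$ (the paper simply cites Ekeland--Temam for the interpolation step that you spell out in Step~2). Your handling of $p=\infty$ is in fact more careful than the paper's own proof, which invokes the strong density of $C^\infty(\overline{\Omega};\RR^m)$ in $W^{1,p}$ --- valid only for $p<\infty$ --- whereas you rightly note that for general $W^{1,\infty}$ data one only obtains the weak-$*$/a.e.\ version, which is precisely the mode of convergence the paper actually uses.
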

\begin{proof}
Soit $\psi\in W^{1,p}(\Omega;\RR^m)$. Comme $C^\infty(\overline{\Omega};\RR^m)$ est fortement dense dans $W^{1,p}(\Omega;\RR^m)$, on peut supposer que $\psi\in C^\infty(\overline{\Omega};\RR^m)$. Par d\'efinition, il existe $\hat\psi\in C^\infty_{\rm c}(\RR^N;\RR^m)$ (l'espace des fonctions $C^\infty$-diff\'erentiables de $\RR^N$ dans $\RR^m$ \`a support compact) tel que $\psi=\hat\psi\lfloor_{\overline{\Omega}}$. Soit $Q\subset\RR^N$ un cube ouvert  tel que $Q\supset\overline{\Omega}$ et $\hat\psi=0$ dans $\RR^N\setminus Q$. Par interpolation, on peut construire une suite $\{\hat\psi_n\}_n\subset\AffET_{\rm c}(\RR^N;\RR^m)$ telle que  $\hat\psi_n=0$ dans $\RR^N\setminus Q$ pour tout $n\geq 1$ et $\hat\psi_n\to\hat\psi$ dans $W^{1,p}(\RR^N;\RR^m)$ (voir \cite[\S 2.1-2.4 p. 285-297]{ekeland-temam74}). Posons $\psi_n:=\hat\psi_n\lfloor{_{\Omega}}$ pour tout $n\geq 1$. Alors, $\{\psi_n\}_{n\geq 1}\subset\AffET(\Omega;\RR^m)$ et $\psi_n\to\psi$ dans $W^{1,p}(\Omega;\RR^m)$. 
\end{proof}

\begin{remark}\label{Ekeland-TemamDensityRemark}
On a $\AffET(\Omega;\RR^m)\subset\Aff(\Omega;\RR^m)$ avec $\Aff(\Omega;\RR^m)$ d\'esignant l'en-semble des fonctions continues et affines par morceaux de $\Omega$ dans $\RR^m$, i.e., $\psi\in\Aff(\Omega;\RR^m)$ si et seulement si $\psi$ est continue et il existe une famille finie $\{V_i\}_{i\in I}$ de sous-ensembles ouverts et disjoints de $\Omega$ telle que $|\Omega\setminus\cup_{i\in I} V_i|=0$ et pour chaque $i\in I$, $|\partial V_i|=0$ et $\nabla\psi(x)=\xi_i$ dans $V_i$  avec $\xi_i\in\MM^{m\times N}$.  Du th\'eor\`eme \ref{Ekeland-TemamDensityTheorem} il suit que  $\Aff(\Omega;\RR^m)$ est aussi fortement dense dans $W^{1,p}(\Omega;\RR^m)$ pour tout $p\in[1,\infty]$.
\end{remark}

\subsection{Th\'eor\`eme de Ben Belgacem-Bennequin}

En 1996, Ben Belgacem et Bennequin  ont d\'emontr\'e le th\'eor\`eme suivant (voir  \cite[Lemma 8 p. 114]{benbelgacem96}).
\begin{theorem}\label{BBBapproxTheo}
Pour chaque $\psi\in\AffETli(\Sigma;\RR^3)$ il existe $\{\psi_n\}_{n\geq 1}\subset C^1(\overline{\Sigma};\RR^3)$ tel que {:} 
\begin{eqnarray}\label{BB_1}
&&\psi_n\to \psi\hbox{ dans }W^{1,p}(\Sigma;\RR^3)\hbox{ ;}\\
&&|\partial_1\psi_n(x)\land\partial_2\psi_n(x)|\geq\delta\hbox{ pour tout }x\in\overline{\Sigma}\hbox{ et tout }n\geq 1\hbox{ avec }\delta>0.\label{BB_2}
\end{eqnarray}
\end{theorem}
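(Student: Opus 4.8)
The plan is to build $\{\psi_n\}$ by mollification and to reduce the uniform immersion bound \eqref{BB_2} to a purely algebraic statement about convex combinations of the finitely many values of $\nabla\psi$. First I fix an extension $\hat\psi\in\AffET_{\rm c}(\RR^2;\RR^3)$ of $\psi$ which is locally injective on $\overline{\Sigma}$ (this is what membership in $\AffETli(\Sigma;\RR^3)$ provides), together with a regular mesh $\{V_i\}_{i\in I}$ adapted to it, so that $\nabla\hat\psi$ is piecewise constant: it equals some $\xi_i\in\MM^{3\times 2}$ on $V_i$ and $0$ outside $\cup_{i\in I}\overline{V_i}$. Since $\overline{\Sigma}$ is contained in the set where $\hat\psi$ is locally injective (so $\hat\psi$ is not locally constant there), $\nabla\hat\psi$ is of rank $2$ at a.e.\ point of a neighbourhood of $\overline{\Sigma}$. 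With $\rho$ a standard mollifier, $\rho_n:=n^2\rho(n\,\cdot)$ and $\psi_n:=(\hat\psi*\rho_n)\lfloor_{\overline{\Sigma}}$, we get $\psi_n\in C^\infty(\overline{\Sigma};\RR^3)\subset C^1(\overline{\Sigma};\RR^3)$; and since $\hat\psi$ is Lipschitz with compact support, $\psi_n\to\psi$ in $W^{1,p}(\Sigma;\RR^3)$ and $\nabla\psi_n(x)\to\nabla\hat\psi(x)$ at every Lebesgue point of $\nabla\hat\psi$, hence a.e.\ in $\Sigma$. This gives \eqref{BB_1} and the a.e.\ convergence used in the proof of \eqref{GEBBeQuAliTY1}. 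It remains to prove \eqref{BB_2} with $\delta>0$ independent of $n$.

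For this, write $\nabla\psi_n(x)=\int_{\RR^2}\nabla\hat\psi(x-y)\rho_n(y)\,dy=\sum_{V}w_V(x)\,\xi_V$, a convex combination over the cells $V$ whose closure meets $B_{1/n}(x)$, with weights $w_V(x)$ equal to the $\rho_n$‑mass of $B_{1/n}(x)\cap V$. Because the mesh is finite, there is $n_0$ such that for $n\ge n_0$ and every $x\in\overline{\Sigma}$ this ball meets only cells whose closure contains $x$. Dropping the cells with $\xi_V=0$, we obtain $\nabla\psi_n(x)=s_n(x)\,\eta_n(x)$, where $\eta_n(x)$ is a convex combination of $\Xi(x):=\{\nabla\hat\psi\lfloor_{V}:V\text{ an open cell with }x\in\overline V,\ \nabla\hat\psi\lfloor_V\neq0\}$ and $s_n(x)=\sum_{V:\xi_V\neq0}w_V(x)$. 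A geometric bound on the mesh (fraction of a small ball inside the triangulated region: at least $1/2$ near an edge, at least an angular fraction at a vertex) gives $s_n(x)\ge s_0>0$ for all $x\in\overline{\Sigma}$ and $n\ge n_0$, with $s_0$ depending only on the mesh. Since $\Xi(x)$ realizes only finitely many combinatorial types as $x$ ranges over $\overline{\Sigma}$, and each of its elements has rank $2$, it suffices to prove the local claim: \emph{for every $x_0\in\overline{\Sigma}$, every $\eta\in\mathrm{conv}\,\Xi(x_0)$ has rank $2$}; granting this, $\mathrm{conv}\,\Xi(x_0)$ is a compact set of rank‑$2$ matrices on which $\eta\mapsto|\eta_1\land\eta_2|$ is continuous and strictly positive, and one takes $\delta:=s_0^2\cdot\min$ over the finitely many types of $\inf_{\eta\in\mathrm{conv}\,\Xi(x_0)}|\eta_1\land\eta_2|>0$.

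To prove the local claim, translate so that $x_0=0$ and $\psi(0)=0$. Near $0$ the map $\hat\psi$ is positively $1$‑homogeneous and is described by angular sectors $S_1,\dots,S_k$ at $0$ on which $\nabla\hat\psi$ takes the consecutive values $\xi^{(1)},\dots,\xi^{(k)}$ of $\Xi(0)$ (interspersed possibly with ``outside'' sectors carrying value $0$, which have already been discarded); continuity of $\hat\psi$ forces $\xi^{(j)}-\xi^{(j+1)}$ to annihilate the common edge direction whenever $S_j,S_{j+1}$ share an edge, so $\mathrm{rank}(\xi^{(j)}-\xi^{(j+1)})\le 1$ there. The easy sub‑case is that of an edge point ($k=2$): a direct computation — the one expressing, across a crease of a locally injective piecewise‑affine map, that the two affine pieces are transverse, or coplanar and on opposite sides — shows that along $[\xi^{(1)},\xi^{(2)}]$ one has $|\eta_1\land\eta_2|\ge c>0$, the positive factor $c$ being supplied precisely by local injectivity.

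The genuine difficulty, which I expect to be the main obstacle, is the vertex sub‑case: one must show that local injectivity of $\hat\psi$ at $0$ — equivalently, that the cone over $\hat\psi\lfloor_{\partial B_\rho(0)}$ is embedded in $\RR^3$ — prevents any convex combination $\sum_j t_j\xi^{(j)}$ from dropping rank. The approach I would take is to exploit that embeddedness of this cone confines the unit normal field of the sectors to an open hemisphere of $\SS^2$, and then to combine this with the rank‑$1$ chain structure of the consecutive differences $\xi^{(j)}-\xi^{(j+1)}$ to rule out degeneracy of the average; the quadratic nature of $\eta\mapsto\eta_1\land\eta_2$ in the $\xi^{(j)}$ is what makes this step delicate, and it is here that the topological content of Ben~Belgacem--Bennequin's argument (elimination of the fold/cusp singularities, in the spirit of the proof of Theorem~\ref{GEapproxTheo}-bis) is really needed. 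Once the local claim is secured, \eqref{BB_2} and hence the theorem follow as explained above.
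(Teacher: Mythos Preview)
Your mollification strategy is genuinely different from the paper's construction, and the easy part (convergence in $W^{1,p}$, and the edge sub-case of the rank-$2$ claim) is fine. The gap is exactly where you locate it: the vertex sub-case. You do not prove it, and the mechanism you propose --- ``embeddedness of the cone confines the unit normal field of the sectors to an open hemisphere of $\SS^2$'' --- is false. Take three equal sectors in $\RR^2$ mapped to the three faces of a thin triangular spike in $\RR^3$ (apex at the origin, edges $(\eps,0,1)$, $(-\eps/2,\eps\sqrt3/2,1)$, $(-\eps/2,-\eps\sqrt3/2,1)$ with small $\eps$): this cone is embedded, yet the three face normals are, up to $O(\eps)$, at angles $0$, $2\pi/3$, $4\pi/3$ around the axis and therefore not contained in any open hemisphere. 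So hemisphere containment cannot be the reason your convex combinations stay rank~$2$; whatever argument closes the vertex case must be of a different nature, and you have not supplied it. Without it, \eqref{BB_2} is unproven.

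The paper avoids this difficulty entirely by not mollifying. It proceeds in two explicit steps. First (``lissage des ar\^etes''), along each edge $\overline V_i\cap\overline V_j$ it replaces $\varphi$ on a shrinking tubular neighbourhood $U^{i,j}_n$ by an explicit $C^1$ map built from the profile $g(t)=\exp\!\big(2(t-1)/(t+1)\big)$; the formula \eqref{B-B-BMainFunct} interpolates between the two affine pieces, uses local injectivity only through the fact that $\xi_j^2\notin{\rm vect}(\xi_i^1,\xi_i^2)$, and a direct computation gives a uniform lower bound on $|\partial_1\varphi^{i,j}_n\wedge\partial_2\varphi^{i,j}_n|$. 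This produces $\hat\varphi_n\in C^1$ away from the vertices, converging to $\varphi$ in $W^{1,p}$, and --- crucially --- homogeneous and locally injective near each vertex. Second (``lissage des sommets''), near each vertex $s_k$ the embedded cone $\hat\varphi_1(O^k_1)$ is cut by a suitable plane $P$ so that it becomes a $C^1$ graph over $P$; multiplying the height function by a cutoff yields a genuine $C^1$-immersed cap $\hat\varphi_1^k$, and rescaling by homotheties (property \eqref{EquaTioN(k)PoofBBLemma1}) transplants this cap to all scales $n$. The uniform $\delta$ in \eqref{BB_2} then comes from the finitely many edge constants $\alpha_{i,j}$ and vertex constants $\beta_k$, all independent of $n$. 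Nothing in this argument asserts that arbitrary convex combinations of the $\xi^{(j)}$ around a vertex have rank $2$.
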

Ben Belgacem et Bennequin ont donn\'e une d\'emonstration tr\`es g\'eom\'etrique de leur th\'eor\`eme (voir \cite{benbelgacem96} pour plus de d\'etails). Nous donnons ici une preuve plus analytique.
\begin{proof}
Soit  $\psi\in\AffETli(\Sigma;\RR^3)$. Par d\'efinition, il existe $\varphi\in\AffET_{\rm c}(\RR^N;\RR^m)$ tel que $\psi=\varphi\lfloor_{\Sigma}$ et $\varphi$ est localement injective sur $\overline{\Sigma}$. On a donc un maillage r\'egulier $\{V_i\}_{i\in I}$ dans $\RR^2$ (pour la d\'efinition voir \S 4.1.4) avec $\overline{\Sigma}\cap\overline{V}_i\not=\emptyset$ pour tout $i\in I$ tel que $\overline{\Sigma}\subset{\rm int}(\cup_{i\in I}\overline{V}_i)$, et pour chaque $i\in I$, $\varphi\lfloor_{V_i}$ est affine. Il suit qu'il existe $\delta_1>0$ tel que 
\begin{eqnarray}\label{PoofBBLemma1EqUat0}
&&|\partial_1\varphi(x)\land\partial_2\varphi(x)|\geq\delta_1\hbox{ pour tout }x\in \cup_{i\in I}V_i.
\end{eqnarray}
On d\'efinit deux ensembles finis $A\subset I\times I$ et $S\subset \overline{\Sigma}$ par :
\begin{itemize}
\item[] $A:=\big\{(i,j)\in I\times I:\overline{V}_i\cap \overline{V}_j\hbox{ est r\'eduit \`a une ar\^ete commune}\big\}$ ;
\item[] $S:=\big\{x\in \overline{\Sigma}:\hbox{il existe }i,j\in I\hbox{ avec }i\not=j\hbox{ tel que }\overline{V}_i\cap \overline{V}_j=\{x\}\big\}$.
\end{itemize}
On num\'erote les \'el\' ements de $S$, i.e., $S=\{s_1,\cdots,s_q\}$. On peut montrer le lemme suivant.

\begin{lemma}[lissage des ar\^etes]\label{PoofBBLemma1} 
Il existe $\{U^{i,j}_n\}_{n\geq 1}^{(i,j)\in A}$ et $\{O^k_n\}^{k\in\{1,\cdots,q\}}_{n\geq 1}$ deux familles de parties de $\RR^2$ telles que :  
\begin{eqnarray}
&& \hbox{pour chaque }n\geq 1\hbox{,} \hbox{ les }U^{i,j}_n\hbox{ sont deux \`a deux disjoints ;}\label{EquaTioN(a)PoofBBLemma1}\\
&& \hbox{pour chaque }n\geq 1\hbox{,} \hbox{ les }O^k_n\hbox{ sont deux \`a deux disjoints ;}\label{EquaTioN(b)PoofBBLemma1}\\
&&\hbox{pour chaque }(i,j)\in A\hbox{ et chaque }n\geq 1\hbox{, } U^{i,j}_n\subset{\rm int}(\overline{V}_i\cup \overline{V}_j)\hbox{ est un voisi-}\label{EquaTioN(c)PoofBBLemma1}\\
&&\hbox{nage ouvert de }{\rm int}(\overline{V}_i\cap \overline{V}_j)\hbox{ ;}\nonumber\\
&&\hbox{pour chaque }k\in\{1,\cdots,q\}\hbox{ et chaque }n\geq 1\hbox{, }O^k_n\hbox{ est un voisinage ouvert}\label{EquaTioN(d)PoofBBLemma1}\\ &&\hbox{de }s_k\hbox{ ;}\nonumber\\
&&\hbox{pour chaque }(i,j)\in A\hbox{, }U^{i,j}_1\supset\cdots\supset U^{i,j}_n\supset\cdots\hbox{ et }\lim\limits_{n\to+\infty}|U^{i,j}_n|=0\hbox{ ;}\label{EquaTioN(e)PoofBBLemma1}\\
&&\hbox{pour chaque }k\in\{1,\cdots,q\}\hbox{, }O^k_1\supset\cdots\supset O^k_n\supset\cdots\hbox{ et }\lim\limits_{n\to+\infty}|O^k_n|=0\hbox{,}\label{EquaTioN(f)PoofBBLemma1}
\end{eqnarray}
et $\{\varphi^{i,j}_n\}^{(i,j)\in A}_{n\geq 1}$ une famille de fonctions de $\RR^2$ dans $\RR^3$ telle que :
\begin{eqnarray}
&&\hbox{pour chaque }(i,j)\in A\hbox{ et chaque }n\geq 1\hbox{, }\varphi^{i,j}_n=\varphi\hbox{ dans }(\overline{V}_i\cup\overline{V}_j)\setminus U^{i,j}_n\label{EquaTioN(g)PoofBBLemma1}\\
&&\hbox{et }\varphi^{i,j}_n\in C(\overline{V}_i\cup\overline{V}_j;\RR^3)\cap C^1({\rm int}(\overline{V}_i\cup\overline{V}_j);\RR^3)\hbox{ ;}\nonumber\\
&&\hbox{pour chaque }(i,j)\in A\hbox{, }\sup\limits_{n\geq 1}\sup\limits_{x\in U^{i,j}_n}(|\varphi^{i,j}_n(x)|+|\nabla\varphi^{i,j}_n(x)|)<+\infty\hbox{ ;}\label{EquaTioN(h)PoofBBLemma1}\\
&&\hbox{pour chaque }(i,j)\in A\hbox{, il existe }\alpha_{i,j}>0\hbox{ tel que pour tout }n\geq 1\hbox{ et tout }\label{EquaTioN(i)PoofBBLemma1}\\
&&x\in U^{i,j}_n\hbox{, }|\partial_1\varphi^{i,j}_n(x)\land\partial_2\varphi^{i,j}_n(x)|\geq\alpha_{i,j}\hbox{ ;}\nonumber\\
&&\hbox{pour chaque }k\in\{1,\cdots,q\}\hbox{ et chaque }n\geq 1\hbox{, }\hat\varphi_n=\hat\varphi_1\hbox{ dans }O^k_n\hbox{ avec }\label{EquaTioN(l)PoofBBLemma1}\\
&&\hat\varphi_n:{\rm int}(\cup_{i\in I}\overline{V}_i)\to\RR^3\hbox{ d\'efinie par }\nonumber\\
&&
\hat\varphi_n(x):=\left\{
\begin{array}{ll}
\displaystyle\varphi^{i,j}_n(x)&\displaystyle\hbox{si }x\in U^{i,j}_n\\
\displaystyle\varphi(x)&\displaystyle\hbox{si }x\in {\rm int}(\cup_{i\in I}\overline{V}_i)\setminus \cup_{(i,j)\in A}U^{i,j}_n
\end{array}
\right.
\nonumber\\
&&\hbox{(les fonctions }\hat\varphi_n\hbox{ sont bien d\'efinies gr\^ace \`a (\ref{EquaTioN(a)PoofBBLemma1}) et (\ref{EquaTioN(g)PoofBBLemma1})) ;}\nonumber\\
&&\hbox{pour chaque }k\in\{1,\cdots,q\}\hbox{ et chaque }\lambda>0\hbox{, }\hat\varphi_1(h^k_\lambda(x))=H^k_\lambda(\hat\varphi_1(x))\label{EquaTioN(k)PoofBBLemma1}\\
&&\hbox{pour tout }x\in O^k_1\hbox{ tel que }h^k_\lambda(x)\in O^k_1\hbox{ avec }h^k_\lambda:\RR^2\to\RR^2\hbox{\ (resp. }\nonumber\\
&&H^k_\lambda:\RR^3\to\RR^3\hbox{)\ }\hbox{d\'esignant l'homoth\'etie de centre }s_k\hbox{\ (resp.\ }\hat\varphi_1(s_k)\hbox{)\ et}\nonumber\\
&&\hbox{de rapport }\lambda\hbox{ ;}\nonumber\\
&&\hbox{pour chaque }k\in\{1,\cdots,q\}\hbox{, }\hat\varphi_1\hbox{ est localement injective sur }O^k_1.\label{EquaTioN(j)PoofBBLemma1}
\end{eqnarray}
 \end{lemma}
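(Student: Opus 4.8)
L'id\'ee est de proc\'eder en deux volets~: le lissage des ar\^etes proprement dit, qui fournit les tubes $U^{i,j}_n$ et les applications $\varphi^{i,j}_n$, puis un choix des voisinages de sommets $O^k_n$ sur lesquels $\hat\varphi_n$ est rendu ind\'ependant de $n$. Fixons $(i,j)\in A$ et choisissons des coordonn\'ees orthonorm\'ees de $\RR^2$ dans lesquelles l'ar\^ete $\overline V_i\cap\overline V_j$ est port\'ee par $\{t=0\}$, avec tangente unitaire $\tau$ et normale unitaire $\eta$ pointant de $V_i$ vers $V_j$~; on \'ecrit un point proche de l'ar\^ete sous la forme $p+t\eta$, $p$ sur la droite portant l'ar\^ete. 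Comme $\varphi$ est affine sur $V_i$ et sur $V_j$ et continue \`a travers l'ar\^ete, sa d\'eriv\'ee tangentielle $a:=\partial_\tau\varphi$ co\"\i ncide des deux c\^ot\'es, alors que les d\'eriv\'ees normales $b_i,b_j\in\RR^3$, d\'efinies par $\varphi(p+t\eta)=\varphi(p)+tb_i$ pour $t<0$ et $\varphi(p+t\eta)=\varphi(p)+tb_j$ pour $t>0$, peuvent diff\'erer. Par \eqref{PoofBBLemma1EqUat0} on a $|a\land b_i|\ge\delta_1$ et $|a\land b_j|\ge\delta_1$, et l'injectivit\'e locale de $\varphi$ sur $\overline\Sigma$ interdit \`a $a\land b_i$ et $a\land b_j$ d'\^etre n\'egativement proportionnels, i.e. $0\notin[a\land b_i,a\land b_j]$~: en effet, $a\land b_j=-\mu\,a\land b_i$ avec $\mu>0$ \'equivaut \`a $b_j=-\mu b_i+\lambda a$ pour un certain $\lambda\in\RR$, et un calcul direct montre qu'alors les deux demi-disques d'un petit disque transverse \`a un point int\'erieur de l'ar\^ete ont la m\^eme image par $\varphi$, ce qui contredit l'injectivit\'e. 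On pose $\alpha_{i,j}:={\rm dist}(0,[a\land b_i,a\land b_j])>0$.

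\textbf{Lissage des ar\^etes.} On fixe une fois pour toutes $\sigma\in C^\infty([-1,1];[0,1])$ croissante, plate \`a tout ordre en $\pm1$, \'egale \`a $0$ pr\`es de $-1$ et \`a $1$ pr\`es de $1$, et telle que $\sigma(u)+\sigma(-u)=1$ (donc $\int_{-1}^1\sigma=1$). Pour $\delta>0$, soit $R_\delta:[-\delta,\delta]\to\RR^3$ la courbe $C^\infty$ telle que $R_\delta(-\delta)=-\delta b_i$ et $R_\delta'(t)=b_i+\sigma(t/\delta)(b_j-b_i)$~; les normalisations de $\sigma$ donnent $R_\delta(t)=tb_i$ pr\`es de $-\delta$, $R_\delta(t)=tb_j$ pr\`es de $\delta$, et $R_\delta(t)=\delta\,\rho(t/\delta)$ pour un profil $\rho$ fixe. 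On d\'efinit $U^{i,j}_n$ comme l'ensemble des $p+t\eta\in{\rm int}(\overline V_i\cup\overline V_j)$ v\'erifiant $|t|<\delta_n(p):=\min\{\varepsilon_n,\,c\,{\rm dist}(p,\partial(\overline V_i\cap\overline V_j))\}$ (le $\min$ \'etant liss\'e pr\`es du point anguleux), et $\varphi^{i,j}_n(p+t\eta):=\varphi(p)+R_{\delta_n(p)}(t)$ sur $U^{i,j}_n$, $\varphi^{i,j}_n:=\varphi$ ailleurs sur ${\rm int}(\overline V_i\cup\overline V_j)$. Alors $\varphi^{i,j}_n$ co\"\i ncide avec $\varphi$ au voisinage de $\partial U^{i,j}_n$ (car $R_{\delta_n(p)}(t)=tb_i$ ou $tb_j$ pr\`es du bord), elle est donc continue sur $\overline V_i\cup\overline V_j$ et $C^1$ sur ${\rm int}(\overline V_i\cup\overline V_j)$, v\'erifie $\sup_n\sup_{U^{i,j}_n}(|\varphi^{i,j}_n|+|\nabla\varphi^{i,j}_n|)<+\infty$, et $\partial_1\varphi^{i,j}_n\land\partial_2\varphi^{i,j}_n=(1-\sigma)\,a\land b_i+\sigma\,a\land b_j$ dans $U^{i,j}_n$, d'o\`u $|\partial_1\varphi^{i,j}_n\land\partial_2\varphi^{i,j}_n|\ge\alpha_{i,j}$~: cela fournit \eqref{EquaTioN(g)PoofBBLemma1}, \eqref{EquaTioN(h)PoofBBLemma1}, \eqref{EquaTioN(i)PoofBBLemma1}. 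En prenant $\varepsilon_n\downarrow0$ et $c>0$ fix\'e, inf\'erieur \`a une constante ne d\'ependant que du maillage (pour que les tubes coniques issus d'un m\^eme sommet soient disjoints), les $U^{i,j}_n$ sont des voisinages ouverts de ${\rm int}(\overline V_i\cap\overline V_j)$ dans ${\rm int}(\overline V_i\cup\overline V_j)$, d\'ecroissants en $n$, de mesure tendant vers $0$ (convergence domin\'ee) et deux \`a deux disjoints pour chaque $n$~; c'est \eqref{EquaTioN(a)PoofBBLemma1}, \eqref{EquaTioN(c)PoofBBLemma1}, \eqref{EquaTioN(e)PoofBBLemma1}, et $\hat\varphi_n$ est bien d\'efini.

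\textbf{Sommets.} Au voisinage d'un sommet $s_k$, $\varphi-\varphi(s_k)$ est positivement homog\`ene de degr\'e $1$, car sur chaque triangle $V$ de l'\'etoile de $s_k$ on a $\varphi|_V(s_k+\lambda(x-s_k))-\varphi(s_k)=\lambda(\varphi|_V(x)-\varphi(s_k))$ puisque $\varphi|_V(s_k)=\varphi(s_k)$~; la forme conique des $U^{i,j}_n$ pr\`es de $s_k$ (demi-largeur $c\,{\rm dist}(\cdot,s_k)$), jointe \`a l'\'egalit\'e $R_\delta(t)=\delta\rho(t/\delta)$, rend $\varphi^{i,j}_n-\varphi(s_k)$ positivement homog\`ene de degr\'e $1$ pr\`es de $s_k$, de sorte que $\hat\varphi_1-\varphi(s_k)$ l'est sur une boule autour de $s_k$~: c'est \eqref{EquaTioN(k)PoofBBLemma1}. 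De plus, sur $\{{\rm dist}(\cdot,s_k)<\varepsilon_n/c\}$ la partie conique de $U^{i,j}_n$ et la formule d\'efinissant $\varphi^{i,j}_n$ ne d\'ependent pas de $n$~; en posant $O^k_n:=B_{r_n}(s_k)$ avec $r_n\downarrow0$ choisi $\le\varepsilon_n/c$ et assez petit pour s\'eparer les sommets, on a donc $\hat\varphi_n=\hat\varphi_1$ dans $O^k_n$, ce qui \'etablit \eqref{EquaTioN(b)PoofBBLemma1}, \eqref{EquaTioN(d)PoofBBLemma1}, \eqref{EquaTioN(f)PoofBBLemma1}, \eqref{EquaTioN(l)PoofBBLemma1}. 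Enfin, par homog\'en\'eit\'e, $\hat\varphi_1-\varphi(s_k)$ est sur $O^k_1$ le c\^one au-dessus de sa restriction $\tilde c_k$ \`a un petit cercle, et $\tilde c_k$ est la courbe lien affine par morceaux $c_k$ de $\varphi$ en $s_k$, modifi\'ee pr\`es de chacun de ses points anguleux en nombre fini par l'interpolation lin\'eaire de la d\'eriv\'ee normale d\'ecrite plus haut~; $c_k$ est une courbe ferm\'ee simple engendrant un c\^one plong\'e (c'est l\`a le contenu de l'injectivit\'e locale de $\varphi$ en $s_k$), et comme $\tilde c_k$ est aussi $C^0$-proche de $c_k$ que l'on veut en diminuant $c$ tandis que les $\alpha_{i,j}$ demeurent $>0$, $\tilde c_k$ engendre encore un c\^one plong\'e sur une boule $O^k_1$ \'eventuellement plus petite~; \'etant une immersion hors de $s_k$ et injective, $\hat\varphi_1$ est localement injective sur $O^k_1$, soit \eqref{EquaTioN(j)PoofBBLemma1}.

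\textbf{Point d\'elicat.} Le lissage des ar\^etes et la v\'erification des propri\'et\'es de disjonction et de r\'etraction des $U^{i,j}_n$ et $O^k_n$ sont de routine. La partie d\'elicate est l'analyse aux sommets~: il faut organiser le lissage conique des diverses ar\^etes issues de $s_k$ de sorte que l'homog\'en\'eit\'e \eqref{EquaTioN(k)PoofBBLemma1}, l'ind\'ependance en $n$ \eqref{EquaTioN(l)PoofBBLemma1} et l'injectivit\'e locale \eqref{EquaTioN(j)PoofBBLemma1} aient lieu simultan\'ement, cette derni\`ere se ramenant \`a la stabilit\'e, sous une petite perturbation $C^0$ de la courbe lien, de la propri\'et\'e d'engendrer un c\^one plong\'e. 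C'est pr\'ecis\'ement ici --- et dans la non-d\'eg\'en\'erescence des $\alpha_{i,j}$ le long des ar\^etes --- qu'intervient la pleine force de l'hypoth\`ese ``$\varphi$ localement injective sur $\overline\Sigma$'', et non la seule \eqref{PoofBBLemma1EqUat0}.
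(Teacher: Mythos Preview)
Your construction is a genuinely different route from the paper's and the overall architecture (conic tubes, profile independent of $n$ near the vertices, homogeneity) is sound. The paper does not interpolate the \emph{normal derivative} linearly as you do; it uses the explicit profile $g(t)=\exp\!\big(\tfrac{2(t-1)}{t+1}\big)$ and writes
\[
\phi^{i,j}_n(x)=x_1\xi_i^1+f_n(x_1)\,g\!\Big(\tfrac{x_2}{f_n(x_1)}\Big)\xi_i^2
-f_n(x_1)\,g\!\Big(\tfrac{-x_2}{f_n(x_1)}\Big)\xi_j^2+b^{i,j},
\]
and exploits the \emph{invertibility} of $M_{i,j}=(\xi_i^1\mid\xi_i^2\mid\xi_j^2)$ (i.e.\ $b_j\notin\mathrm{span}(a,b_i)$, a strictly stronger consequence of local injectivity than your ``$0\notin[a\land b_i,a\land b_j]$''): the immersion bound then follows from $h(t)=4g(t)/(t+1)^2\ge e^{-2}$ on $[0,1]$ after applying $M_{i,j}^{-1}$, and local injectivity near a vertex follows from the injectivity of $g$. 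No tuning of any small parameter is needed.

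There is, however, a concrete error in your sketch. The identity
\[
\partial_1\varphi^{i,j}_n\land\partial_2\varphi^{i,j}_n=(1-\sigma)\,a\land b_i+\sigma\,a\land b_j
\]
is \emph{false} in the conic part of the tube, because there $\delta_n(p)$ depends on $p$ and the tangential derivative picks up an extra term. With $u=t/\delta_n(p)$ and $\Sigma(u)=\int_{-1}^u\sigma$, one finds $\partial_\tau R_{\delta_n(p)}(t)=(\partial_\tau\delta_n)\,(b_j-b_i)\bigl(\Sigma(u)-u\sigma(u)\bigr)$, hence
\[
\partial_\tau\varphi^{i,j}_n\land\partial_\eta\varphi^{i,j}_n
=(1-\sigma)\,a\land b_i+\sigma\,a\land b_j+(\partial_\tau\delta_n)\bigl(\Sigma(u)-u\sigma(u)\bigr)\,b_j\land b_i,
\]
and $b_j\land b_i$ is in general \emph{not} in $a^\perp$ (its component along $a$ is $-\det(a,b_i,b_j)$). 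Your lower bound $\alpha_{i,j}=\mathrm{dist}\bigl(0,[a\land b_i,a\land b_j]\bigr)$ therefore does not apply as stated. The fix is to choose $c$ small enough (depending only on $a,b_i,b_j$ and the fixed profile $\sigma$) so that the extra term is $\le\alpha_{i,j}/2$, but then $\alpha_{i,j}$ in \eqref{EquaTioN(i)PoofBBLemma1} must be redefined accordingly and the order of choices (first $c$, then $\varepsilon_n$) made explicit. Similarly, your argument for \eqref{EquaTioN(j)PoofBBLemma1} via ``$C^0$-stability of embedded cones'' needs a quantitative statement: the radial projection of the original link $c_k$ onto $S^2$ is injective with a definite gap, and you must check that your perturbation preserves this after shrinking $c$; the paper avoids this by getting injectivity directly from the monotone profile $g$ and the fact that the image lives in a genuine $3$-frame $(\xi_i^1,\xi_i^2,\xi_j^2)$.
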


(Pour une preuve du lemme \ref{PoofBBLemma1} voir \S 4.2.1.) De (\ref{EquaTioN(c)PoofBBLemma1}) et (\ref{EquaTioN(g)PoofBBLemma1}) il suit que pour chaque $n\geq 1$, $\hat\varphi_n\in C({\rm int}(\cup_{i\in I}\overline{V}_i);\RR^3)\cap C^1({\rm int}(\cup_{i\in I}\overline{V}_i)\setminus S;\RR^3)$. De (\ref{PoofBBLemma1EqUat0}) et (\ref{EquaTioN(i)PoofBBLemma1}) on d\'eduit que 
\begin{eqnarray}
&& |\partial_1\hat\varphi_n(x)\land\partial_2\hat\varphi_n(x)|\geq\delta_2\hbox{ pour tout }n\geq 1\hbox{ et tout } x\in {\rm int}(\cup_{i\in I}\overline{V}_i)\setminus S\label{PoofBBLemma1EqUat6}
\end{eqnarray}
avec $\delta_2:=\min\{\delta_1,\min\{\alpha_{i,j}:(i,j)\in A\}\}$. De plus, par (\ref{EquaTioN(e)PoofBBLemma1}) et  (\ref{EquaTioN(h)PoofBBLemma1}) on voit facilement que 
\begin{equation}\label{CvgFirstProofBBBTheo}
\hat\varphi_n\to\varphi\hbox{ dans }W^{1,p}({\rm int}(\cup_{i\in I}\overline{V}_i);\RR^3).
\end{equation}
On peut prouver (en utilisant (\ref{EquaTioN(k)PoofBBLemma1}) et (\ref{EquaTioN(j)PoofBBLemma1})) le lemme suivant.
\begin{lemma}[lissage des sommets]\label{PoofBBLemma2}
 Pour tout $k\in\{1,\cdots,q\}$, il existe une $C^1$-immersion $\hat\varphi_1^k:O^k_1\to\RR^3$  telle que $\hat\varphi_1^k=\hat\varphi_1$ dans $O^k_1\setminus B^k_1$, o\`u $B^k_1$ est un voisinage ouvert de $s_k$ tel que $\overline{B}^k_1\subset O^k_1$.
\end{lemma}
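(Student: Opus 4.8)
\textbf{Plan of proof of Lemma \ref{PoofBBLemma2} (smoothing of the vertices).} Fix $k\in\{1,\dots,q\}$. The idea is to use the self-similarity property \eqref{EquaTioN(k)PoofBBLemma1} of $\hat\varphi_1$ near the vertex $s_k$: since $\hat\varphi_1\circ h^k_\lambda=H^k_\lambda\circ\hat\varphi_1$ on (the relevant part of) $O^k_1$, the map $\hat\varphi_1$ is, up to the dilation $H^k_\lambda$ in the target, ``constant along rays'' emanating from $s_k$. Hence one can replace $\hat\varphi_1$ in a small neighbourhood of $s_k$ by an interpolation between $\hat\varphi_1$ restricted to a sphere $\{|x-s_k|=r\}$ (where, by \eqref{EquaTioN(j)PoofBBLemma1}, $\hat\varphi_1$ is already a $C^1$-immersion and is locally injective) and a linear (or cone-like) model near $s_k$. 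Concretely, I would pick radii $0<r_1<r_2$ with $\overline{B_{r_2}(s_k)}\subset O^k_1$, set $B^k_1:=B_{r_2}(s_k)$, and define $\hat\varphi^k_1$ to agree with $\hat\varphi_1$ outside $B^k_1$, and inside to be obtained by a radial modification: in polar coordinates $x=s_k+\rho\,\omega$ with $\omega\in\SS^1$ and $\rho\in\,]0,r_2[$, put
\[
\hat\varphi^k_1(s_k+\rho\,\omega):=\hat\varphi_1(s_k)+\chi\!\left(\frac{\rho}{r_2}\right)\bigl(\hat\varphi_1(s_k+r_2\,\omega)-\hat\varphi_1(s_k)\bigr)
\]
for a suitable smooth cut-off profile $\chi:[0,1]\to[0,1]$ with $\chi\equiv 1$ near $1$ and $\chi(\rho)=\rho$ near $0$; the exact profile must be chosen so that the resulting map is $C^1$ across $\rho=r_2$ (matching $\hat\varphi_1$, which near that sphere already has the conical form dictated by \eqref{EquaTioN(k)PoofBBLemma1}) and so that it is $C^1$ at $s_k$ (where it becomes a genuine linear-type map).

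\textbf{Key steps, in order.} First, I would record the consequences of \eqref{EquaTioN(k)PoofBBLemma1}: letting $\lambda\to 0$ shows that on $O^k_1$ the map $\hat\varphi_1$ is determined by its values on any one sphere centered at $s_k$, and $\nabla\hat\varphi_1$ is $0$-homogeneous in $\rho$ (in the appropriate sense) — in particular $\hat\varphi_1$ extends continuously to $s_k$ with value $\hat\varphi_1(s_k)$ and has a well-defined conical structure. Second, I would verify that the restriction of $\hat\varphi_1$ to the sphere $\{|x-s_k|=r_2\}$ is an embedded $C^1$ curve whose ``cone'' over $s_k$ is an immersion away from $s_k$; this uses \eqref{EquaTioN(j)PoofBBLemma1} (local injectivity on $O^k_1$) together with \eqref{PoofBBLemma1EqUat6}, which already guarantees $|\partial_1\hat\varphi_1\land\partial_2\hat\varphi_1|\geq\delta_2$ on $O^k_1\setminus S$. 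Third, I would carry out the radial interpolation above and check the two immersion estimates: for $\rho$ bounded away from $0$ the Jacobian of $\hat\varphi^k_1$ is a small perturbation of that of $\hat\varphi_1$ (hence its $2\times 2$ exterior product stays bounded away from $0$ by continuity and compactness of $\overline{B_{r_2}(s_k)}\setminus B_{r_1}(s_k)\times\SS^1$); for $\rho$ near $0$ the map is, by construction, (affinely equivalent to) the linear map $\omega\mapsto$ (tangent data of $\hat\varphi_1$ at $s_k$), which is an immersion precisely because $\hat\varphi_1$ is locally injective at $s_k$ and its conical differential has rank $2$. Finally, smoothing the profile $\chi$ gives $\hat\varphi^k_1\in C^1(O^k_1;\RR^3)$, and the disjointness \eqref{EquaTioN(b)PoofBBLemma1} of the $O^k_n$ (and the fact that $B^k_1$ can be taken as small as needed, $\overline{B}^k_1\subset O^k_1$) ensures these local modifications near distinct vertices do not interfere.

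\textbf{Expected main obstacle.} The delicate point is the $C^1$-matching and the uniform immersion bound \emph{through} the vertex, i.e.\ ensuring that after interpolation the exterior product $\partial_1\hat\varphi^k_1\land\partial_2\hat\varphi^k_1$ does not degenerate on the ``transition annulus'' $r_1\le|x-s_k|\le r_2$, where both the radial derivative and the angular derivative are being deformed simultaneously. Here one must use the conical homogeneity coming from \eqref{EquaTioN(k)PoofBBLemma1} in an essential way: it forces the angular part of $\nabla\hat\varphi_1$ to be independent of $\rho$, so the interpolation only mixes a fixed pair of transverse vectors, and a convexity/positivity argument (as in the proofs of Lemmas \ref{selectionchampsvecteurs} and \ref{conservimm}) keeps the product nonzero. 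Everything else — continuity of the gluing with $\hat\varphi_1$ outside $B^k_1$, smoothness of $\chi$, compactness estimates — is routine. The point is that, unlike the edge-smoothing Lemma \ref{PoofBBLemma1}, here we do not need to control a $W^{1,p}$-convergence (there is only one index $k$ and one scale), so the whole statement is purely local and qualitative.
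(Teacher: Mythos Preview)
Your radial-interpolation formula cannot produce a $C^1$-immersion at $s_k$, and the obstruction is structural, not a matter of tuning $\chi$. For any profile $\chi$ with $\chi(0)=0$, the image of
\[
s_k+\rho\,\omega\ \longmapsto\ \hat\varphi_1(s_k)+\chi\!\Bigl(\tfrac{\rho}{r_2}\Bigr)\bigl(\hat\varphi_1(s_k+r_2\,\omega)-\hat\varphi_1(s_k)\bigr)
\]
is contained in the cone $\mathcal{C}:=\hat\varphi_1\bigl(\overline{B_{r_2}(s_k)}\bigr)$ and contains its apex $S_k=\hat\varphi_1(s_k)$: by \eqref{EquaTioN(k)PoofBBLemma1} you are only reparametrising $\hat\varphi_1$ along rays. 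But $\mathcal{C}$ is \emph{not} a $C^1$ submanifold of $\RR^3$ at $S_k$ (unless all the faces $\varphi(\overline{V}_i)$ meeting at $s_k$ are coplanar, which is exactly the degenerate situation one is trying to repair), while the image of any $C^1$-immersion is locally a $C^1$ submanifold. Hence no reparametrisation in the source can work; one must actually change the image. With your specific $\chi$ the failure is visible directly: near $\rho=r_2$ the condition $\chi\equiv1$ kills the radial derivative (rank drops to $1$, and the map does not match $\hat\varphi_1$ in $C^1$), and near $\rho=0$ the condition $\chi(t)=t$ gives back $\hat\varphi_1$ itself by \eqref{EquaTioN(k)PoofBBLemma1}, hence the original conical singularity. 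Your assertion that ``near $\rho=0$ the map is linear'' is false: a $1$-homogeneous map $\RR^2\to\RR^3$ is differentiable at the centre if and only if it is linear, and here the directional derivatives $\omega\mapsto\hat\varphi_1(s_k+\omega)-\hat\varphi_1(s_k)$ are only piecewise linear in $\omega$ (they jump from triangle to triangle).

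The paper's construction is genuinely different: it modifies the map in the \emph{target}. Using \eqref{EquaTioN(k)PoofBBLemma1} one recognises $\mathcal{C}$ as the truncated cone with apex $S_k$ over the $C^1$ embedded curve $\mathcal{L}=\hat\varphi_1(\partial \overline{D}_\rho(s_k))$; because $\varphi$ is piecewise affine and locally injective at $s_k$, one can choose a plane $P\subset\RR^3$ so that the orthogonal projection of $\mathcal{C}$ onto $P$ is one-to-one onto a region $U$, i.e.\ $\mathcal{C}$ is the graph $y_3=g(y_1,y_2)$ over $U$ in an orthonormal frame $(I;\eps_1,\eps_2,\eps_3)$ with $\eps_3\perp P$. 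Writing $\hat\varphi_1=(\hat\varphi_{1,1},\hat\varphi_{1,2},\hat\varphi_{1,3})$ in this frame, the first two components give a $C^1$-diffeomorphism $B^k_1\to U$, and one sets
\[
\hat\varphi^k_1:=\bigl(\hat\varphi_{1,1},\ \hat\varphi_{1,2},\ p(\hat\varphi_{1,1},\hat\varphi_{1,2})\,\hat\varphi_{1,3}\bigr)
\]
with a plateau $p\in C^\infty(\overline U;[0,1])$ equal to $0$ near the projected apex and to $1$ near $\partial U$. The first two components already have rank-$2$ differential, so the result is an immersion whatever the third component does; near $s_k$ the image is now a flat disc in $P$ rather than the cone. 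This ``flattening of the tip in the target'' is exactly the ingredient your proposal is missing.
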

(Pour une preuve du lemme \ref{PoofBBLemma2} voir \S 4.2.2.) On pose $B^k_\lambda:=h^k_\lambda(B^k_1)$ pour tout $k\in\{1,\cdots,q\}$ et tout $\lambda>0$. Prenant en compte (\ref{EquaTioN(d)PoofBBLemma1}) et (\ref{EquaTioN(f)PoofBBLemma1}), il est facile de voir que pour chaque $k\in\{1,\cdots,q\}$, il existe une suite $\{\lambda_n^k\}_{n\geq 1}\subset]0,1]$ telle que $\lambda^k_1=1$, $\lim_{n\to+\infty}\lambda_n^k=0$ et $\overline{B}^k_{\lambda_n^k}\subset O^k_n$ pour tout $n\geq 1$. Pour chaque $k\in\{1,\cdots,q\}$ et chaque $n\geq 1$, on d\'efinit $\hat\varphi_n^k\in C^1(O^k_n;\RR^3)$ par 
$$
\hat\varphi_n^k(x):=H^k_{\lambda^k_n}\big(\hat\varphi^k_1(h_{{1/\lambda^k_n}}^k(x))\big).
$$
Alors $\hat\varphi^k_n=\hat\varphi_n$ dans $O^k_n\setminus B^k_{\lambda^k_n}$ (puisque  si $x\in O^k_n\setminus B^k_{\lambda^k_n}$ alors $h^k_{{1/\lambda^k_n}}(x)\in O^k_1\setminus B^k_1$, donc $\hat\varphi^k_1(h_{{1/\lambda^k_n}}^k(x))=\hat\varphi_1(h_{{1/\lambda^k_n}}^k(x))$ d'apr\`es le lemme \ref{PoofBBLemma2}, d'o\`u $\hat\varphi^k_1(h_{{1/\lambda^k_n}}^k(x))=H^k_{1/\lambda^k_n}(\hat\varphi_1(x))$ par (\ref{EquaTioN(k)PoofBBLemma1}) et on obtient $\hat\varphi_n^k(x)=\hat\varphi_n(x)$ en utilisant (\ref{EquaTioN(l)PoofBBLemma1})). Pour chaque $n\geq 1$, on peut ainsi d\'efinir $\varphi_n\in C^1({\rm int}(\cup_{i\in I}\overline{V}_i);\RR^3)$ par  
$$
\varphi_n(x):=\left\{
\begin{array}{ll}
\hat\varphi_n^k(x)&\hbox{si }x\in B^k_{\lambda^k_n}\\
\hat\varphi_n(x)&\hbox{si }x\in {\rm int}(\cup_{i\in I}\overline{V}_i)\setminus\cup_{k=1}^qB^k_{\lambda^k_n}.
\end{array}
\right.
$$
(Les fonctions $\varphi_n$ sont bien d\'efinies gr\^ace \`a (\ref{EquaTioN(d)PoofBBLemma1}).) D'apr\`es lemme \ref{PoofBBLemma2}, $\hat\varphi^k_1$ est une $C^1$-immersion, donc pour chaque $k\in\{1,\cdots,q\}$, il existe $\beta_k>0$ tel que $|\partial_1\hat\varphi_1^k(y)\land\partial_2\hat\varphi_1^k(y)|\geq\beta_k$ pour tout $y\in\overline{B}^k_1$. De plus, on a 
\begin{eqnarray}
&&\hbox{pour chaque }k\in\{1,\cdots,q\}\hbox{, chaque }n\geq 1\hbox{ et chaque }x\in \overline{B}^k_{\lambda^k_n}\hbox{, }\label{EquaTioN(+++)PoofBBLemma1}\\
&&\nabla\hat\varphi^k_n(x)=\nabla\hat\varphi_1^k(y)\hbox{ avec }y=h^k_{1/\lambda^k_n}(x)\in\overline{B}^k_1,\nonumber
\end{eqnarray}
(donc $|\partial_1\hat\varphi_n^k(y)\land\partial_2\hat\varphi_n^k(y)|\geq\beta_k$ pour tout $n\geq 1$ et tout $x\in\overline{B}^k_{\lambda^k_n}$). Prenant en compte (\ref{PoofBBLemma1EqUat6}), il suit que 
\begin{equation}\label{BBBTheorPropII}
|\partial_1\varphi_n(x)\land\partial_2\varphi_n(x)|\geq\delta\hbox{ pour tout }n\geq 1\hbox{ et tout }x\in{\rm int}(\cup_{i\in I}\overline{V}_i) 
\end{equation}
avec $\delta:=\min\{\delta_2,\min\{\beta_k:k\in\{1,\cdots,q\}\}$ ($\delta$ ne d\'epend pas de $n$). D'autre part, pour chaque $k\in\{1,\cdots,q\}$, $\sup_{y\in \overline{B}^k_1}(|\hat\varphi^k_1(y)|+|\nabla\hat\varphi^k_1(y)|)<+\infty$, donc 
\begin{eqnarray}
&&\sup_{n\geq 1}\sup_{x\in \overline{B}^k_{\lambda_n^k}}(|\hat\varphi^k_n(x)|+|\nabla\hat\varphi^k_n(x)|)<+\infty\hbox{ pour tout }k\in\{1,\cdots,q\}\label{EquaTioN(++++)PoofBBLemma1}
\end{eqnarray}
gr\^ace \`a (\ref{EquaTioN(+++)PoofBBLemma1}). Utilisant  (\ref{EquaTioN(++++)PoofBBLemma1}), (\ref{EquaTioN(f)PoofBBLemma1}) et (\ref{CvgFirstProofBBBTheo}) on d\'eduit que 
\begin{equation}\label{BBBTheorPropI}
\varphi_n\to\varphi\hbox{ dans }W^{1,p}({\rm int}(\cup_{i\in I}\overline{V}_i);\RR^3).
\end{equation}
On d\'efinit $\{\psi_n\}_{n\geq 1}\subset C^1(\overline{\Sigma};\RR^3)$ par $\psi_n:=\varphi_n\lfloor_{\overline{\Sigma}}$. Alors la suite $\{\psi_n\}_{n\geq 1}$ satisfait (\ref{BB_1}) et (\ref{BB_2}) (puisque trivialement (\ref{BBBTheorPropI}) implique (\ref{BB_1}) et (\ref{BBBTheorPropII}) implique (\ref{BB_2})). 
\end{proof}

\subsubsection{D\'emonstration du lemme \ref{PoofBBLemma1}}

\'Etant donn\'e $(i,j)\in A$, on a 
$$
\varphi(x)=\left\{
\begin{array}{ll}
\xi_i x+a_i&\hbox{si }x\in\overline{V}_i\\
\xi_j x+a_j&\hbox{si }x\in\overline{V}_j.
\end{array}
\right.
$$
avec $\xi_i,\xi_j\in\MM^{3\times 2}$ ($\rank(\xi_i)=\rank(\xi_j)=2$) et $a_i,a_j\in\RR^2$. On note $\theta^{i,j}\in\RR^2$ le milieu de l'ar\^ete $\overline{V}_i\cap\overline{V}_j$.  Alors 
$$
\varphi(x)=\left\{
\begin{array}{ll}
\xi_i x^{i,j}+\xi_i \theta^{i,j}+a_i&\hbox{si }x\in\overline{V}_i\\
\xi_j x^{i,j}+\xi_j \theta^{i,j}+a_j&\hbox{si }x\in\overline{V}_j.
\end{array}
\right.
$$
avec $x^{i,j}:=x-\theta^{i,j}=(x_1^{i,j},x_2^{i,j})$, o\`u $(x^{i,j}_1,x^{i,j}_2)$ sont les coordonn\'ees de $x^{i,j}$ dans la base orthonorm\'ee $(e_1^{i,j},e_2^{i,j})$ avec $e^{i,j}_1\in {\rm vect}(\overline{V}_i\cap\overline{V}_j-\theta^{i,j})$. Comme $\varphi$ est continue, on a $\xi_i 0+\xi_i \theta^{i,j}+a_i=\xi_i \theta^{i,j}+a_i=\xi_j 0+\xi_j \theta^{i,j}+a_j=\xi_j \theta^{i,j}+a_j=:b^{i,j}$. Ainsi 
$$
\varphi(x)=\left\{
\begin{array}{ll}
\xi_i x^{i,j}+b^{i,j}&\hbox{si }x\in\overline{V}_i\\
\xi_j x^{i,j}+b^{i,j}&\hbox{si }x\in\overline{V}_j
\end{array}
\right.
=\left\{
\begin{array}{ll}
x^{i,j}_1\xi_i^1+x^{i,j}_2\xi_i^2+b^{i,j}&\hbox{si }x\in\overline{V}_i\\
x^{i,j}_1\xi_j^1+x^{i,j}_2\xi_j^2+b^{i,j}&\hbox{si }x\in\overline{V}_j
\end{array}
\right.
$$
avec $\xi_i=(\xi_i^1\mid\xi_i^2)$ et $\xi_j=(\xi_j^1\mid\xi_j^2)$. Puisque $\eps e^{i,j}_1+\theta^{i,j}\in\overline{V}_i\cap\overline{V}_j$ pour $\eps>0$ assez petit, utilisant \`a nouveau la continuit\'e de $\varphi$, on d\'eduit que $\eps\xi_i^1+0\xi_i^2+b^{i,j}=\xi_i^1+b^{i,j}=\eps\xi_j^1+0\xi_j^2+b^{i,j}=\eps\xi_j^1+b^{i,j}$, c'est \`a dire, $\xi_i^1=\xi_j^1$. Finalement, on a 
$$
\varphi(x)=\left\{
\begin{array}{ll}
x^{i,j}_1\xi_i^1+x^{i,j}_2\xi_i^2+b^{i,j}&\hbox{si }x\in\overline{V}_i\\
x^{i,j}_1\xi_i^1+x^{i,j}_2\xi_j^2+b^{i,j}&\hbox{si }x\in\overline{V}_j.
\end{array}
\right.
$$
Comme $\varphi$ est localement injective on a n\'ecessairement $\xi^2_j\not\in {\rm vect}(\xi^1_i,\xi^2_i)$. Notons $\underline{s}_{i,j},\overline{s}_{i,j}\in S$ les extr\'emit\'es de l'ar\^ete $\overline{V}_i\cap\overline{V}_j$ et posons $\underline{s}^{i,j}:=\underline{s}_{i,j}-\theta^{i,j}$ et $\overline{s}^{i,j}:=\overline{s}_{i,j}-\theta^{i,j}$ (comme $\underline{s}_{i,j},\overline{s}_{i,j}\in \overline{V}_i\cap\overline{V}_j$ on a $\underline{s}^{i,j}=\underline{s}^{i,j}_1e^{i,j}_1$ et $\overline{s}^{i,j}=\overline{s}^{i,j}_1e^{i,j}_1$). Soit $\{f^{i,j}_n\}_{n\geq 1}\subset C^\infty([\underline{s}^{i,j}_1,\overline{s}^{i,j}_1];\RR)$ satisfaisant les propri\'et\'es \hbox{suivantes :}
\begin{eqnarray}
&&\  \ \ \hbox{pour chaque }n\geq 1,\  f^{i,j}_n\leq f^{i,j}_{n+1},\ 0\leq f^{i,j}_n\leq\hbox{$1\over n$}\hbox{ et }f^{i,j}_n(\underline{s}^{i,j}_1)=f^{i,j}_n(\overline{s}^{i,j}_1)=0, \label{B-B-BLeMMa1}\\ 
&&\ \ \ \hbox{et pour chaque }x_1^{i,j}\in[\underline{s}^{i,j}_1,\overline{s}^{i,j}_1], \lim_{n\to+\infty}f^{i,j}_n(x_1^{i,j})=0\ ;\cr
&&\  \ \ \hbox{il existe }\{\underline{I}_{i,j,n}\}_{n\geq 1}\subset[\underline{s}^{i,j}_1,\overline{s}^{i,j}_1] \hbox{ (resp. }\{\overline{I}_{i,j,n}\}_{n\geq 1}\subset[\underline{s}^{i,j}_1,\overline{s}^{i,j}_1]\hbox{)}\hbox{ tel que :}\label{B-B-BLeMMa2}\\
&&\ \ \ \diamond\hbox{ pour chaque }n\geq 1,\ \underline{I}_{i,j,n}\hbox{ (resp. }\overline{I}_{i,j,n}\hbox{) est un voisinage ouvert de}\cr
&&\quad\ \ \hskip0.7mm \underline{s}^{i,j}_1\hbox{ (resp. }\overline{s}^{i,j}_1\hbox{)},\  \underline{I}_{i,j,1}\supset\cdots\supset\underline{I}_{i,j,n}\supset\cdots\hbox{ et }\lim_{n\to+\infty}|\underline{I}_{i,j,n}|=0\cr
&&\quad\ \ \hskip0.4mm \hbox{(resp. }\overline{I}_{i,j,1}\supset\cdots\supset\overline{I}_{i,j,n}\supset\cdots\hbox{ et }\lim_{n\to+\infty}|\overline{I}_{i,j,n}|=0\hbox{) ;}\cr
&&\ \ \ \diamond\  \hbox{pour chaque }n\geq 1,\  f^{i,j}_n=f^{i,j}_1\hbox{ sur } \underline{I}_{i,j,n}\hbox{ (resp. }\overline{I}_{i,j,n}\hbox{) ;}\cr
&&\ \ \ \diamond\  f^{i,j}_1\hbox{ est affine sur }\underline{I}_{i,j,n}\hbox{ (resp. }\overline{I}_{i,j,n}\hbox{) ;}\cr
&&\ \ \ \hbox{pour chaque }n\geq 1\hbox{ et chaque }x^{i,j}_1\in[\underline{s}^{i,j}_1,\overline{s}^{i,j}_1],\ \big|(f^{i,j}_n)^\prime(x^{i,j}_1)\big|\leq 1\ ; \label{DErivEEBorNEEeq}\\
&&\  \ \ \hbox{pour chaque }n\geq 1,\ U^{i,j}_n\subset{\rm int}(\overline{V}_i\cup\overline{V}_j) \hbox{ avec }\label{B-B-BLeMMa3}\\
&&\ \ \ U^{i,j}_n:=\Big\{(x_1^{i,j},x_2^{i,j})+\theta^{i,j}\in {\rm int}(\overline{V}_i\cap\overline{V}_j):-f^{i,j}_n(x^{i,j}_1)<x^{i,j}_2<f^{i,j}_n(x^{i,j}_1)\Big\}.\nonumber
\end{eqnarray}
De (\ref{B-B-BLeMMa1}) et (\ref{B-B-BLeMMa3}) on d\'eduit (\ref{EquaTioN(c)PoofBBLemma1}) et (\ref{EquaTioN(e)PoofBBLemma1}). Comme $A$ est fini et les $V_i$ sont des triangles, il est possible de choisir les $f^{i,j}_n$ de mani\`ere \`a avoir aussi (\ref{EquaTioN(a)PoofBBLemma1}). 

Consid\'erons $g\in C^{\infty}([-1,1];\RR)$ donn\'ee par 
$$
g(t):=\left\{
\begin{array}{ll}
\exp^{{2(t-1)\over t+1}}&\hbox{si }t\in]-1,1]\\
0&\hbox{si }t=-{1}
\end{array}
\right.
$$
($g\geq 0$ et $\sup_{t\in[-1,1]}g(t)\leq 1$) et d\'efinissons $\varphi^{i,j}_n:\overline{V}_i\cup\overline{V}_j\to\RR^3$ par 
\begin{equation}\label{B-B-BMainFunct}
\varphi^{i,j}_n(x):=\left\{
\begin{array}{ll}
\varphi(x)&\hbox{si }x\in \overline{V}_i\cup\overline{V}_j\setminus U^{i,j}_n\\
\phi^{i,j}_n(x)&\hbox{si }x\in U^{i,j}_n
\end{array}
\right.
\end{equation}
avec $\phi^{i,j}_n:U^{i,j}_n\to\RR^3$ donn\'ee par 
$$
\phi^{i,j}_n(x):=x^{i,j}_1\xi^1_i+f^{i,j}_n(x^{i,j}_1)g\left({x^{i,j}_2\over f^{i,j}_n(x^{i,j}_1)}\right)\xi^2_i-f^{i,j}_n(x^{i,j}_1)g\left({-x^{i,j}_2\over f^{i,j}_n(x^{i,j}_1)}\right)\xi^2_j+b^{i,j}.
$$
Posons $L:=\max\{L_{i,j}:(i,j)\in A\}$, o\`u $L_{i,j}$ est la longueur de l'ar\^ete $\overline{V}_i\cap\overline{V}_j$, $\beta:=\max\{|b^{i,j}|:(i,j)\in A\}$ et $\gamma:=\max\{|\xi_i|,|\xi_j|:(i,j)\in A\}$. Il est facile de voir que :
\begin{eqnarray}
&&\hbox{pour chaque }(i,j)\in A,\hbox{ chaque }n\geq 1 \hbox{ et chaque } \overline{x}\in\partial U^{i,j}_n,\label{FiNProoFEqUAtIoN1}\\
&& \lim_{x\to\overline{x} }\phi^{i,j}_n(x)=\varphi(\overline{x})\ ;\nonumber\\
&&\hbox{pour chaque }(i,j)\in A,\hbox{ chaque }n\geq 1 \hbox{ et chaque } x\in U^{i,j}_n,\label{FiNProoFEqUAtIoN2}\\
&&|\phi^{i,j}_n(x)|\leq (L+2+\beta)\gamma.\nonumber
\end{eqnarray}
De plus, on a :
\begin{eqnarray*}
\diamond\ \partial_1\phi^{i,j}_n(x)&=&\xi^1_i+(f_n^{i,j})^\prime(x_1^{i,j})\left[g\left({x_2^{i,j}\over f^{i,j}_n(x_1^{i,j})}\right)-{x_2^{i,j}\over f^{i,j}_n(x_1^{i,j})}h\left({x_2^{i,j}\over f^{i,j}_n(x_1^{i,j})}\right)\right]\xi^2_i\\
&&-(f_n^{i,j})^\prime(x_1^{i,j})\left[g\left({-x_2^{i,j}\over f^{i,j}_n(x_1^{i,j})}\right)+{x_2^{i,j}\over f^{i,j}_n(x_1^{i,j})}h\left({-x_2^{i,j}\over f^{i,j}_n(x_1^{i,j})}\right)\right]\xi^2_j\ ;\\
\diamond\ \partial_2\phi^{i,j}_n(x)&=& h\left({x_2^{i,j}\over f^{i,j}_n(x_1^{i,j})}\right)\xi^2_i+h\left({-x_2^{i,j}\over f^{i,j}_n(x_1^{i,j})}\right)\xi^2_j
\end{eqnarray*}
avec $h\in C^\infty([-1,1];\RR)$ donn\'ee par 
$$
h(t):=\left\{
\begin{array}{ll}
{4\over (t+1)^2}g(t)&\hbox{si }t\in]-1,1]\\
0&\hbox{si }t=-{1}
\end{array}
\right.
$$
($h\geq 0$ et $\sup_{t\in[-1,1]}h(t)\leq 1$). D'o\`u (utilisant (\ref{DErivEEBorNEEeq})) on voit que :
\begin{eqnarray}
&& \hbox{pour chaque }(i,j)\in A,\hbox{ chaque }n\geq 1 \hbox{ et chaque } \overline{x}\in\partial U^{i,j}_n\setminus\{\underline{s}_{i,j},\overline{s}_{i,j}\},\label{FiNProoFEqUAtIoN4}\\
&& \lim_{x\to\overline{x} }\partial_1\phi^{i,j}_n(x)=\partial_1\varphi(\overline{x})\hbox{ et }\lim_{x\to\overline{x} }\partial_2\phi^{i,j}_n(x)=\partial_2\varphi(\overline{x})\ ;\nonumber\\
&& \hbox{pour chaque }(i,j)\in A,\hbox{ chaque }n\geq 1 \hbox{ et chaque } x\in U^{i,j}_n,\label{FiNProoFEqUAtIoN3}\\
&& |\partial_1\phi^{i,j}_n(x)|\leq 5\gamma\hbox{ et } |\partial_2\phi^{i,j}_n(x)|\leq 2\gamma.\nonumber
\end{eqnarray}
De \eqref{FiNProoFEqUAtIoN1} et \eqref{FiNProoFEqUAtIoN4} (resp. \eqref{FiNProoFEqUAtIoN2} et \eqref{FiNProoFEqUAtIoN3}) on d\'eduit (\ref{EquaTioN(g)PoofBBLemma1}) (resp. \eqref{EquaTioN(h)PoofBBLemma1}).

Posons $M_{i,j}:=(\xi^1_i\mid\xi^2_i\mid\xi^2_j)\in\MM^{3\times 3}$ ($M_{i,j}$ est inversible puisque $\xi^2_j\not\in{\rm vect}(\xi^1_i,\xi^2_i)$). Soit $(e_1,e_2,e_3)$ la base canonique de $\RR^3$ (alors $M_{i,j}^{-1}\xi^1_i=e_1$, $M^{-1}_{i,j}\xi^2_i=e_2$ et  $M^{-1}_{i,j}\xi^2_j=e_3$). Il suit que 
\begin{eqnarray}
&&\hbox{pour chaque }(i,j)\in A,\hbox{ chaque }n\geq 1 \hbox{ et chaque } x\in U^{i,j}_n,\label{FiNProoFEqUAtIoN4}\\
&&\big|M^{-1}_{i,j}\partial_1\phi^{i,j}_n(x)\land M^{-1}_{i,j}\partial_2\phi^{i,j}_n(x)\big|^2\geq h^2\left({x_2^{i,j}\over f^{i,j}_n(x_1^{i,j})}\right)+h^2\left({-x_2^{i,j}\over f^{i,j}_n(x_1^{i,j})}\right).\nonumber
\end{eqnarray}
Posons $U^{i,j,+}_n:=\{x\in U^{i,j}_n:x^{i,j}_2\geq 0\}$ et $U^{i,j,-}_n:=\{x\in U^{i,j}_n:x^{i,j}_2\leq 0\}$ ($U^{i,j}=U^{i,j,+}\cup U^{i,j,-}$). Remarquant que $h(t)\geq h(0)=\exp^{-2}$ pour tout $t\in[0,1]$ et que ${x_2^{i,j}\over f^{i,j}_n(x_1^{i,j})}\in[0,1]$ (resp. ${-x_2^{i,j}\over f^{i,j}_n(x_1^{i,j})}\in[0,1]$) d\`es que $x\in U^{i,j,+}_n$ (resp. $x\in U^{i,j,-}_n$), de \eqref{FiNProoFEqUAtIoN4} on d\'eduit que pour chaque $(i,j)\in A$ et chaque $n\geq 1$, on a :
\begin{itemize}
\item[$\diamond$] $|M^{-1}_{i,j}\partial_1\phi^{i,j}_n(x)\land M^{-1}_{i,j}\partial_2\phi^{i,j}_n(x)|\geq \exp^{-2}$ pour tout $x\in U^{i,j,+}_n$ ;
\item[$\diamond$] $|M^{-1}_{i,j}\partial_1\phi^{i,j}_n(x)\land M^{-1}_{i,j}\partial_2\phi^{i,j}_n(x)|\geq \exp^{-2}$ pour tout $x\in U^{i,j,-}_n$,
\end{itemize}
donc $|M^{-1}_{i,j}\partial_1\phi^{i,j}_n(x)\land M^{-1}_{i,j}\partial_2\phi^{i,j}_n(x)|\geq \exp^{-2}$ pour tout $(i,j)\in A$, tout $n\geq 1$ et tout $x\in U^{i,j}_n$, et \eqref{EquaTioN(i)PoofBBLemma1} suit en utilisant (pour chaque $(i,j)\in A$ et chaque $n\geq 1$) le lemme suivant (en remarquant que $\{(\partial_1\phi^{i,j}_n(x),\partial_2\phi^{i,j}_n(x)):x\in U^{i,j}_n\}\subset K$ avec $M=M_{i,j}$, $c=5|M^{-1}_{i,j}|\gamma$ et $\eta=e^{-2}$).
\begin{lemma}\label{LeMMaFinPrOOfBBB}
Soient $M\in\MM^{3\times 3}$ une matrice inversible et $K\subset\RR^3\times\RR^3$ (le compact) donn\'e par 
$$
K:=\Big\{(u_1,u_2)\in\RR^3\times\RR^3:\max\{|M^{-1}u_1|,|M^{-1}u_2|\}\leq c\hbox{ et } |M^{-1}u_1\land M^{-1}u_2|\geq\eta\Big\}
$$ 
avec $c,\eta>0$. Alors, il existe $\delta>0$ tel que $|u_1\land u_2|\geq\delta$ pour tout $(u_1,u_2)\in K$.
\end{lemma}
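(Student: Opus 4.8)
The plan is to argue by elementary compactness. First I would note that $K$ is a compact subset of $\RR^3\times\RR^3$: it is closed, being the intersection of sublevel and superlevel sets of the continuous maps $(u_1,u_2)\mapsto\max\{|M^{-1}u_1|,|M^{-1}u_2|\}$ and $(u_1,u_2)\mapsto|M^{-1}u_1\land M^{-1}u_2|$; and it is bounded, since for $(u_1,u_2)\in K$ one has $|u_i|=|M(M^{-1}u_i)|\leq|M|\,|M^{-1}u_i|\leq|M|\,c$ for $i=1,2$. If $K=\emptyset$ the conclusion holds vacuously, so we may assume $K\neq\emptyset$.

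Next, I would consider the continuous function $f:\RR^3\times\RR^3\to[0,+\infty[$ defined by $f(u_1,u_2):=|u_1\land u_2|$ and show that $f>0$ on $K$. Indeed, if $f(u_1,u_2)=0$ for some $(u_1,u_2)\in K$, then $u_1$ and $u_2$ are linearly dependent; applying the linear map $M^{-1}$ preserves linear dependence, so $M^{-1}u_1$ and $M^{-1}u_2$ are linearly dependent as well, whence $|M^{-1}u_1\land M^{-1}u_2|=0$, contradicting the defining inequality $|M^{-1}u_1\land M^{-1}u_2|\geq\eta>0$. Hence $f$ is strictly positive at every point of $K$.

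Finally, since $f$ is continuous and strictly positive on the nonempty compact set $K$, it attains a positive minimum there; setting $\delta:=\min_{(u_1,u_2)\in K}f(u_1,u_2)>0$ yields $|u_1\land u_2|\geq\delta$ for every $(u_1,u_2)\in K$, which is the claim. There is essentially no serious obstacle in this argument: the only two points deserving (a word of) care are the boundedness of $K$, which uses the invertibility of $M$ only through the finiteness of the norm $|M|$, and the step ``linear dependence is preserved by $M^{-1}$'', which is simply linearity of $M^{-1}$.
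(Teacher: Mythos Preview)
Your proof is correct and follows essentially the same approach as the paper: both use compactness of $K$ together with the observation that $|u_1\land u_2|=0$ forces $u_1,u_2$ to be linearly dependent, hence $M^{-1}u_1\land M^{-1}u_2=0$, contradicting membership in $K$. The only cosmetic difference is that the paper phrases this as a sequential contradiction argument while you invoke directly that a continuous positive function attains a positive minimum on a compact set; you also spell out why $K$ is compact, which the paper simply asserts.
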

\begin{proof}[D\'emonstration du lemme \ref{LeMMaFinPrOOfBBB}]
Sinon, il existe une suite $\{(u^n_1,u^n_2)\}\subset K$ telle que $|u^n_1\land u^n_2|\leq{1\over n}$ pour tout $n\geq 1$. Comme $K$ est compact, on a (\`a une sous-suite pr\`es) $(u^n_1,u^n_2)\to(\overline{u}_1,\overline{u}_2)\in K$, donc $\lim_{n\to+\infty}|u^n_1\land u^n_2|=0=|\overline{u}_1\land\overline{u}_2|$, i.e., les vecteurs $\overline{u}_1$ et $\overline{u}_2$ sont colin\'eaires. Il suit que $|M^{-1}\overline{u}_1\land M^{-1}\overline{u}_2|=0$, ce qui est impossible. 
\end{proof}
Comme $\underline{s}_{i,j}$ et $\overline{s}_{i,j}$ sont des sommets quelconques, on peut noter $s_k$ \`a la place de $\underline{s}_{i,j}$ ou $\overline{s}_{i,j}$ et $I^k_{i,j,n}$ \`a la place de $\underline{I}_{i,j,n}$ ou $\overline{I}_{i,j,n}$. Prenant en compte le deuxi\`eme point de (\ref{B-B-BLeMMa2}) on voit que 
\begin{eqnarray}
&&\hbox{pour chaque }(i,j)\in A,\hbox{ chaque }k\in\{1,\cdots,q\}\hbox{ et chaque }n\geq 1,\label{3BAssERtION1}\\
&&\phi^{i,j}_n=\phi^{i,j}_1\hbox{ sur }{U}_{i,j,n}^k\hbox{avec ${U}_{i,j,n}^k:=\{x\in U^{i,j}_n:x^{i,j}_1\in{I}_{i,j,n}^k\}$.}\nonumber
\end{eqnarray}
De plus, gr\^ace au premier point de (\ref{B-B-BLeMMa2}), on a $U^k_{i,j,1}\supset\cdots\supset U^k_{i,j,n}\supset\cdots$ et $\lim_{n\to+\infty}|U^k_{i,j,n}|=0$. Il suit que l'on peut construire $\{O^k_n\}^{k\in\{1,\cdots,q\}}_{n\geq 1}$ satisfaisant (\ref{EquaTioN(b)PoofBBLemma1}), (\ref{EquaTioN(d)PoofBBLemma1}), (\ref{EquaTioN(f)PoofBBLemma1}) et la condition suivante 
\begin{eqnarray}
&&\hbox{pour chaque }(i,j)\in A,\hbox{ chaque }k\in\{1,\cdots,q\}\hbox{ et chaque }n\geq 1,\label{3BAssERtION2}\\
&&O^k_n\cap U^{i,j}_n=U^k_{i,j,n}.\nonumber 
 \end{eqnarray}
 De (\ref{3BAssERtION1}) et (\ref{3BAssERtION2}) on d\'eduit (\ref{EquaTioN(l)PoofBBLemma1}). Utilisant le troisi\`eme point de (\ref{B-B-BLeMMa2}) on obtient (\ref{EquaTioN(k)PoofBBLemma1}) et  (\ref{EquaTioN(j)PoofBBLemma1}) (puisque $g$ est injective). \hfill$\square$

\subsubsection{D\'emonstration du lemme \ref{PoofBBLemma2}} Soit $k\in\{1,\cdots,q\}$. De (\ref{EquaTioN(j)PoofBBLemma1}) on d\'eduit qu'il existe $\rho>0$ tel que $\overline{D}_\rho(s_k)\subset O^k_1$ et $\hat\varphi_1\lfloor_{\overline{D}_\rho(s_k)}$ est injective, o\`u $\overline{D}_\rho(s_k)$ d\'esigne le disque ferm\'e de centre $s_k$ et de rayon $\rho$. Ainsi $\mathcal{L}:=\hat\varphi_1(\partial \overline{D}_\rho(s_k))$ est une courbe ferm\'ee simple de classe $C^1$ (puisque $\hat\varphi_1$ est aussi une $C^1$-immersion sur $O^k_1\setminus\{s_k\}$). Posons $S_k:=\hat\varphi_1(s_k)$ ($S_k\not\in\mathcal{L}$ puisque $\hat\varphi\lfloor_{\overline{D}_\rho(s_k)}$ est injective) et $\mathcal{C}:=\hat\varphi_1(\overline{D}_\rho(s_k))$. Alors $\mathcal{C}$ est le tronc de c\^one de sommet $S_k$ s'appuyant sur la courbe $\mathcal{L}$, i.e., 
\begin{equation}\label{LiSSagESommETProof1}
\mathcal{C}=\{H^k_\lambda(v):\lambda\in[0,1]\hbox{ et }v\in\mathcal{L}\},
\end{equation}
o\`u $H^k_\lambda:\RR^3\to\RR^3$ d\'esigne l'homoth\'etie de centre $S_k$ et de rapport $\lambda$. (En effet, $y\in\mathcal{C}$ si et seulement si $y=\hat\varphi_1(x)$ avec $x\in\overline{D}_\rho(s_k)$. De plus, on peut \'ecrire $x$ sous la forme $x=h^k_\lambda(u)$ avec $\lambda\in[0,1]$ et $u\in \partial \overline{D}_\rho(s_k)$, o\`u $h^k_\lambda:\RR^2\to\RR^2$ d\'esigne l'homoth\'etie de centre $s_k$ et de rapport $\lambda$, donc $\hat\varphi_1(x)=\hat\varphi_1(h^k_\lambda(u))$. D'autre part, par (\ref{EquaTioN(k)PoofBBLemma1}) on a $\hat\varphi_1(h^k_\lambda(u))=H^k_\lambda(v)$ avec $v=\hat\varphi_1(u)\in\mathcal{L}$, d'o\`u (\ref{LiSSagESommETProof1}).)

Consid\'erons un plan $P$ de $\RR^3$ tel que $\hat{\mathcal{L}}:=P\cap\mathcal{C}$ est une courbe ferm\'ee simple de classe $C^1$ et $U\ni I:=D\cap P$, o\`u $U$ est l'ouvert born\'e du plan $P$ tel que $\partial U=\hat{\mathcal{L}}$ et $D$ est la droite orthogonale \`a $P$ passant par $S_k$. Un tel plan existe car, d'apr\`es (\ref{EquaTioN(l)PoofBBLemma1}), $\mathcal{C}=\varphi\big(\overline{D}_\rho(s_k)\setminus\cup_{(i,j)\in A_k}U^{i,j}_1\big)\cup\big(\cup_{(i,j)\in A_k}\varphi_1^{i,j}(U^{i,j}_1)\big)$ avec $A_k:=\{(i,j)\in A:s_k\hbox{ est une extr\'emit\'e de l'ar\^ete }\overline{V}_i\cap\overline{V}_j\}$. G\'eom\'etriquement, $\mathcal{C}$ est le tronc de c\^one $\varphi\big(\overline{D}_\rho(s_k)\big)$ auquel on a ``arrondi" les ar\^etes et pour lequel il est facile de construire un tel plan puisque $\varphi$ est continue, affine par morceaux et localement injective en $s_k$.  Consid\'erons  le rep\`ere orthonorm\'e  $(I,\eps_1,\eps_2,\eps_3)$ avec $\eps_3\in D$ et d\'efinissons $g:U\to[0,1]$ par 
\[
g(y_1\eps_1+y_2\eps_2)=g(y_1,y_2)=\langle M(y_1,y_2),\eps_3\rangle,
\]
o\`u $M(y_1,y_2)$ est l'unique point d'intersection entre le c\^one $\mathcal{C}$ et la droite parall\`ele \`a D passant par $(y_1,y_2,0)$. Alors $g$ est continue et $C^1$-diff\'erentiable sur $U\setminus\{(0,0)\}$. Posons $B^k_1:=\hat\varphi_1^{-1}({\rm Gr}(g))$ avec ${\rm Gr}(g)$ d\'esignant le graphe de $g$.   Alors $B^k_1$ est un voisinage ouvert de $s_k$ tel que $\overline{B}^k_1\subset \overline{D}_\rho(s_k)\subset O^k_1$. Soient $\hat\varphi_{1,1}$, $\hat\varphi_{1,2}$ et $\hat\varphi_{1,3}$ les composantes de $\hat\varphi_1$ dans la base orthonorm\'ee $(\eps_1,\eps_2,\eps_3)$, i.e., $\hat\varphi_1=\hat\varphi_{1,1}\eps_1+\hat\varphi_{1,2}\eps_2+\hat\varphi_{1,3}\eps_3$. Alors, $\hat\varphi_{1,3}(x)=g(\hat\varphi_{1,1}(x),\hat\varphi_{1,2}(x))$ pour tout $x\in B^k_1$ et $(\hat\varphi_{1,1},\hat\varphi_{1,2})$ est un $C^1$-diff\'eomorphisme de $B^k_1$ sur $U$. Consid\'erons la fonction plateau $p\in C^\infty(\overline{U};[0,1])$ donn\'ee par 
\[
p(y_1,y_2):=\left\{
\begin{array}{ll}
1&\hbox{si }g(y_1,y_2)\in[0,{1\over 2}]\\
0&\hbox{si }g(y_1,y_2)\in[{3\over 4},1]
\end{array}
\right.
\]
et d\'efinissons $\hat\varphi^k_1:O^k_1\to\RR^3$ par 
\[
\hat\varphi^k_1(x):=\left\{
\begin{array}{ll}
\hat\varphi_{1,1}(x)\eps_1+\hat\varphi_{1,2}(x)\eps_2+p(\hat\varphi_{1,1}(x),\hat\varphi_{1,2}(x))\hat\varphi_{1,3}(x)\eps_3&\hbox{si }x\in B^k_1\\
\hat\varphi_1(x)&\hbox{si }x\in O^k_1\setminus B^k_1.
\end{array}
\right.
\]
Alors $\hat\varphi^k_1$ est $C^1$-immersion. \hfill$\square$

\section{Permutation de l'infimum et de l'int\'egrale}

\subsection{Th\'eor\`eme g\'en\'eral}

Dans \cite{oah-jpm03} nous avons \'etudi\'e la permutation de l'infimum et de l'int\'egrale sous la forme g\'en\'erale suivante. Soient $\M$ un espace m\'etrique localement compact qui est $\sigma$-compact, $\X$ un espace de Banach r\'eel s\'eparable, $\mu$ une mesure de Radon positive sur $X$ et $f:\M\times\X\to\overline{\RR}$ une int\'egrande de Carath\'eodory, i.e., $f(x,\zeta)$ est mesurable en $x$ et continue en $\zeta$. Pour $\mathcal{H}\subset L^p_\mu(\M;\X)$, on cherche sous quelles conditions il existe une multifonction mesurable $\Gamma:\M\dto\X$ telle que 
\begin{eqnarray}\label{interchangeproblem}
\inf_{\varphi\in \mathcal{H}}\int_{\M}f(x,\varphi(x))d\mu(x)=\int_{\M}\ \inf_{\zeta\in\Gamma(x)}f(x,\zeta)\ d\mu(x).
\end{eqnarray}
 Posons $\domain:=\big\{\varphi:\M\to\X: \varphi\hbox{ est mesurable et }f(\cdot,\varphi(\cdot))\in L^1_\mu(\M)\big\}$. Nous avons montr\'e le th\'eor\`eme suivant (voir \cite[Theorem 1.1]{oah-jpm03}). 
\begin{theorem}\label{InfIntTH1}
Si ${\cal H}$ est normalement d\'ecomposable, i.e., pour tous $\varphi,\hat \varphi\in{\cal H}$, et tous $K,V\subset\M$ avec $K$ compact, $V$ ouvert et $K\subset V$, il existe une fonction continue $\theta:\M\to[0,1]$ telle que $\theta=0$ dans $\M\setminus V$, $\theta=1$ dans $K$ et $\theta\varphi+(1-\theta)\hat\varphi\in{\cal H}$, et si
\begin{eqnarray}
&&\hbox{il existe }\hat \varphi\in{\cal H}\cap\domain\hbox{ tel que }\func_\varphi\in L^1_{{\rm loc},\mu}(\M)\hbox{ pour tout }\varphi\in{\cal H}\hbox{ avec}\label{HypotHeSePerMut}\\
&& \func_\varphi:\M\to\overline{\RR}\hbox{ d\'efinie par }
\func_{\varphi}(x)=\max\limits_{\alpha\in[0,1]}f\left(x,\alpha \varphi(x)+(1-\alpha)\hat \varphi(x)\right)\hbox{,}\nonumber
\end{eqnarray}
alors {(\ref{interchangeproblem})} a lieu avec $\Gamma$ donn\'e par le $\mu$-essentiel supremum de ${\cal H}$, i.e., la plus petite de toutes les multifonctions mesurables \`a valeurs ferm\'ees $\Lambda:\M\dto\X$ telle que pour tout $\varphi\in{\cal H}$, $\varphi(x)\in\Lambda(x)$ $\mu$-p.p. dans $X$. 
\end{theorem}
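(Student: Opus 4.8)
The plan is to prove the two inequalities
$$
\inf_{\varphi\in\mathcal{H}}\int_{\M}f(x,\varphi(x))d\mu(x)\ \geq\ \int_{\M}\inf_{\zeta\in\Gamma(x)}f(x,\zeta)d\mu(x)
$$
and the reverse one separately, the first being immediate. Indeed, for every $\varphi\in\mathcal{H}\cap D(f)$ we have $\varphi(x)\in\Gamma(x)$ for $\mu$-almost every $x$, since $\Gamma$ is the $\mu$-essential supremum of $\mathcal{H}$, so $f(x,\varphi(x))\geq\inf_{\zeta\in\Gamma(x)}f(x,\zeta)$ pointwise a.e.\ and integration gives the inequality (with the usual convention on the right-hand integral; one checks it is well-defined using $\hat\varphi$). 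Note also that $\mathcal{H}\subset D(f)$ is not assumed, so strictly one works with $\mathcal{H}\cap D(f)$, and if this set is empty both sides are $+\infty$ by convention and there is nothing to prove; henceforth assume it is nonempty, and in particular $\hat\varphi\in\mathcal{H}\cap D(f)$ is available.

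\textbf{The reverse inequality.} The heart of the matter is to show
$$
\inf_{\varphi\in\mathcal{H}}\int_{\M}f(x,\varphi(x))d\mu(x)\ \leq\ \int_{\M}\inf_{\zeta\in\Gamma(x)}f(x,\zeta)d\mu(x).
$$
First I would reduce to the case where the right-hand side is finite. Set $g(x):=\inf_{\zeta\in\Gamma(x)}f(x,\zeta)$; one must check $g$ is $\mu$-measurable: $\Gamma$ is a measurable closed-valued multifunction and $f$ is Carathéodory, so by a standard measurable-selection/normal-integrand argument (Castaing--Valadir) $x\mapsto\inf_{\zeta\in\Gamma(x)}f(x,\zeta)$ is measurable. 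Next, given $\eps>0$, I would use a measurable selection theorem to produce a measurable $\sigma:\M\to\X$ with $\sigma(x)\in\Gamma(x)$ and $f(x,\sigma(x))\leq g(x)+\eps\,\rho(x)$ a.e., where $\rho\in L^1_\mu(\M)$ is a fixed strictly positive weight with $\int\rho\,d\mu$ controlled (using $\sigma$-compactness of $\M$ and local finiteness of $\mu$); one gets $f(\cdot,\sigma(\cdot))\in L^1_\mu(\M)$. The key remaining point is that $\sigma$, although measurable and valued in $\Gamma$, need not lie in $\mathcal{H}$; one must approximate it by elements of $\mathcal{H}$. This is where the normal decomposability of $\mathcal{H}$ together with hypothesis (\ref{HypotHeSePerMut}) enter.

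\textbf{The approximation step --- the main obstacle.} The technical core is: for a measurable selection $\sigma$ of $\Gamma$ with $f(\cdot,\sigma(\cdot))\in L^1_\mu$, construct a sequence $\{\varphi_n\}\subset\mathcal{H}$ with $\limsup_n\int f(x,\varphi_n(x))d\mu\leq\int f(x,\sigma(x))d\mu$. The idea is a Lusin/exhaustion argument: since $\Gamma$ is the $\mu$-essential supremum of $\mathcal{H}$, one can show that finite convex-type combinations of elements of $\mathcal{H}$, patched together over a measurable partition via the continuous cut-off functions $\theta$ furnished by normal decomposability, approximate $\sigma$ in measure (and then, upgrading, in the sense that the corresponding energies converge). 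Concretely: by inner regularity of $\mu$ choose compacts $K\subset V$ on which $\sigma$ is well-approximated by a single $\varphi\in\mathcal{H}$ (selecting, for each point, an element of $\mathcal{H}$ whose value is close to $\sigma(x)$ --- possible because $\Gamma(x)=\overline{\{\varphi(x):\varphi\in\mathcal{H}\}}$ up to a null set, essentially by definition of essential supremum), form $\theta\varphi+(1-\theta)\hat\varphi\in\mathcal{H}$, and control the error on $V\setminus K$ by the integrable majorant $\hat f_\varphi=\max_{\alpha\in[0,1]}f(x,\alpha\varphi(x)+(1-\alpha)\hat\varphi(x))$, which belongs to $L^1_{\mathrm{loc},\mu}(\M)$ by (\ref{HypotHeSePerMut}). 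Letting $|V\setminus K|\to 0$ and using dominated convergence (the domination coming precisely from $\hat f_\varphi\in L^1_{\mathrm{loc}}$, after a further exhaustion of $\M$ by compacts to pass from local to global integrability) kills the defect. The delicate part is organizing a countable covering so that all the local majorants can be dominated simultaneously --- this is the genuinely nontrivial measure-theoretic bookkeeping, and it is where the hypotheses $\sigma$-compact, separable Banach, Radon $\mu$, and the existence of the single global $\hat\varphi$ are all used. Finally, letting $\eps\to0$ through the selections $\sigma$ yields the reverse inequality, and combined with the easy direction this gives (\ref{interchangeproblem}).
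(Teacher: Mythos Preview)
Your approach is genuinely different from the paper's. The paper does not attempt a direct approximation of a measurable selection $\sigma$ by elements of $\mathcal{H}$. Instead it factors the problem through two theorems of Hiai and Umegaki: (i) it first shows, using hypothesis (\ref{HypotHeSePerMut}) for domination, that $\inf_{\varphi\in\mathcal{H}}\int f(x,\varphi)\,d\mu=\inf_{\varphi\in{\rm adh}_p(\mathcal{H})}\int f(x,\varphi)\,d\mu$, where ${\rm adh}_p(\mathcal{H})$ is the $L^p_\mu$-closure of $\mathcal{H}$; (ii) it shows that normal decomposability of $\mathcal{H}$ forces ${\rm adh}_p(\mathcal{H})$ to be $\mathcal{X}(\Omega)$-decomposable, i.e.\ closed under patching by characteristic functions; (iii) it invokes Hiai--Umegaki's characterisation of closed $\mathcal{X}(\Omega)$-decomposable subsets of $L^p_\mu$ as sets of the form $L^p_\mu(X;\Lambda)$ for some measurable closed-valued $\Lambda$, and checks that $\Lambda=\Gamma$; (iv) it invokes Hiai--Umegaki's interchange theorem for $L^p_\mu(X;\Gamma)$. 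This buys a clean separation of concerns: step (i) is where (\ref{HypotHeSePerMut}) is spent, step (ii) is where normal decomposability is spent, and the measure-theoretic bookkeeping is outsourced to known results.

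Your direct route is essentially the unfolding of (i)--(iv) into a single argument, and could in principle be made to work, but the part you flag as ``delicate bookkeeping'' is precisely where the whole proof lives, and you have not carried it out. One concrete gap: when you patch \emph{several} $\varphi_i\in\mathcal{H}$ (not just one $\varphi$ against $\hat\varphi$), iterated normal decomposability produces nested combinations $\theta_k\varphi_k+(1-\theta_k)\psi_{k-1}$, and on the transition region of $\theta_k$ the value lies on the segment $[\varphi_k,\psi_{k-1}]$, not on $[\varphi_k,\hat\varphi]$; the majorant $\hat f_{\varphi_k}$ from (\ref{HypotHeSePerMut}) controls only the latter. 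You are therefore forced to arrange the cut-offs with pairwise disjoint supports so that $\psi_{k-1}\equiv\hat\varphi$ on the support of each $\theta_k$, and then to make $\sum_i\int_{V_i\setminus K_i}\hat f_{\varphi_i}\,d\mu$ small with each $\hat f_{\varphi_i}$ only in $L^1_{{\rm loc},\mu}$. Organising this uniformly over a countable family, together with the passage from local to global integrability, is exactly the content of the Hiai--Umegaki machinery you would end up reproving.
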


\begin{remark}
Le th\'eor\`eme \ref{InfIntTH1} contient le th\'eor\`eme de permutation ``convexe"  de Bouchitt\'e-Valadier (voir \cite[Theorem 1]{bouchitte-valadier88}) dans le cas des int\'egrandes de Carath\'eo-dory (voir \cite[\S 5.1]{oah-jpm03}). Bouchitt\'e et Valadier ont prouv\'e leur th\'eor\`eme de fa\c con directe sans faire de lien avec les th\'eor\`emes de permutation ``non convexe et non normalement d\'ecomposable" de Rockafellar (voir \cite[Theorem 3A]{rockafellar75}) et  Hiai-Umegaki (voir \cite[Theorem 2.2]{hiai-umegaki77}, voir aussi le th\'eor\`eme \ref{PermuThEo1}). (En fait, le th\'eor\`eme de Rockafellar implique le th\'eor\`eme de Hiai-Umegaki, mais il n'y avait pas de lien entre le th\'eor\`eme de Hiai-Umegaki et celui de Bouchitt\'e-Valadier.) Dans \cite{oah-jpm03}, nous avons d\'emontr\'e le th\'eor\`eme \ref{InfIntTH1} (et donc celui de Bouchitt\'e-Valadier) \`a partir des th\'eor\`emes de permutation et de d\'ecomposabilit\'e (voir \cite[Theorem 3.1]{hiai-umegaki77}, voir aussi le th\'eor\`eme \ref{PermuThEo2}) de Hiai-Umegaki. Dans notre contexte, on a donc 
$$
\hbox{th\'eor\`eme R.}\then\hbox{th\'eor\`eme H.-U.}\then \hbox{th\'eor\`eme \ref{InfIntTH1}} \then\hbox{th\'eor\`eme B.-V.} 
 $$
(voir \cite[\S 2]{oah-jpm03}). 
\end{remark}

\begin{remark}\label{NormallyDecomposableSets}
Les espaces $L^p_\mu(X;Y)$, $C(X;Y)$ (espace des fonctions continues de $X$ dans $Y$), $C_{\rm c}(X;Y)$ (espace des fonctions continues \`a support compact de $X$ dans $Y$), $C^k(X;Y)$ (espace des fonctions $C^k$-diff\'erentiables de $X$ dans $Y$) et $C^k_{\rm c}(X;Y)$ (espace des fonctions $C^k$-diff\'erentiables \`a support compact de $X$ dans $Y$) sont normalement d\'ecomposables. Plus g\'en\'eralement, si $\Gamma:X\dto Y$ est une multifonction \`a valeurs convexes et si $\mathcal{E}$ est un ensemble normalement d\'ecomposable de fonctions mesurables de $X$ dans $Y$, alors
$
\{\varphi\in\mathcal{E}:\varphi(x)\in\Gamma(x)\ \mu\hbox{-p.p. dans }X\}
$
est normalement d\'ecomposable. D'autre part, \'etant donn\'e un ensemble $\mathcal{U}$  de fonctions mesurables de $X$ dans $[0,1]$, on dit qu'un ensemble $\mathcal{H}$ de fonctions mesurables de $X$ dans $Y$ est $\mathcal{U}$-d\'ecomposable si $\theta\varphi+(1-\theta)\hat\varphi\in{\cal H}$ pour tous $\varphi,\hat\varphi\in \mathcal{H}$ et tout $\theta\in\mathcal{U}$. Les ensembles $\mathcal{U}$-d\'ecomposables avec $\mathcal{U}$ contenant $C_{\rm c}(X;[0,1])$ ou $C^k_{\rm c}(X;[0,1])$ sont normalement d\'ecomposables (voir \cite[Proposition 3.1(1)]{oah-jpm03}).
\end{remark}

\begin{remark}\label{mu-Essential supremum} On peut repr\'esenter le $\mu$-essentiel supremum $\Gamma:X\dto Y$ d'un ensemble $\mathcal{H}$ de fonctions mesurables de $X$ dans $Y$ comme suit (voir \cite[\S 2.2]{bouchitte-valadier88}). 
\begin{itemize}
\item[$\diamond$] Si $\mathcal{H}\subset L^p_\mu(X;Y)$ alors il existe un ensemble d\'enombrable $\mathcal{D}\subset\mathcal{H}$ tel que $\Gamma(x)={\rm adh}\{\varphi(x):\varphi\in\mathcal{D}\}$ $\mu$-p.p. dans $X$, o\`u ${\rm adh}$ d\'esigne l'adh\'erence dans $Y$.
\item[$\diamond$] Si $\mathcal{H}\subset C_{\rm c}(X;Y)$ alors $\Gamma(x)={\rm adh}\{\varphi(x):\varphi\in\mathcal{H}\}$ $\mu$-p.p. dans $X$.
\end{itemize}
\end{remark}

\begin{proof}[Sch\'ema de d\'emonstration du th\'eor\`eme \ref{InfIntTH1}]
Soient $\Gamma:X\dto Y$ le $\mu$-essentiel supremum de $\mathcal{H}$ et ${\rm adh}_p(\mathcal{H})$ l'adh\'erence de $\mathcal{H}$ dans $L^p_\mu(X;Y)$. On montre d'abord que sous (\ref{HypotHeSePerMut}), on a 
\begin{equation}\label{EQuAlITy1PerMUtAtIOnTheoREM}
\inf_{\varphi\in \mathcal{H}}\int_{\M}f(x,\varphi(x))d\mu(x)=\inf_{\varphi\in {\rm adh}_p(\mathcal{H})}\int_{\M}f(x,\varphi(x))d\mu(x)
\end{equation}
(voir \cite[Proposition 4.2]{oah-jpm03}). On prouve ensuite que puisque $\mathcal{H}$ est normalement d\'ecomposable, ${\rm adh}_p(\mathcal{H})$ est $\mathcal{X}(\Omega)$-d\'ecomposable, i.e., $\theta\varphi+(1-\theta)\hat\varphi\in{\rm adh}_p(\mathcal{H})$ pour tous $\varphi,\hat\varphi\in {\rm adh}_p(\mathcal{H})$ et tout $\theta\in\mathcal{X}(\Omega):=\{1_E:E\subset X,\ E\hbox{ mesurable}\}$ avec $1_E$ d\'esignant la fonction caract\'eristique de $E$
(voir \cite[Proposition 4.1(1)]{oah-jpm03}). On consid\`ere alors les deux th\'eor\`emes suivants d\'emontr\'es par Hiai et Umegaki (voir \cite[Theorem 3.1 et Theorem 2.2]{hiai-umegaki77}).
\begin{theorem}\label{PermuThEo2}\begin{sloppypar}
Un ensemble ferm\'e non vide $\mathcal{L}\subset L^p_\mu(X;Y)$ est $\mathcal{X}(\Omega)$-d\'ecomposa-ble si et seulement si il existe une multifonction mesurable \`a valeurs ferm\'ees non vides $\Lambda:X\dto Y$ telle que $\mathcal{L}=L^p_\mu(X;\Lambda):=\big\{\varphi\in L^p_\mu(X;Y):\varphi(x)\in\Lambda(x)\ \mu\hbox{-p.p. dans }X\big\}$.\end{sloppypar}
\end{theorem}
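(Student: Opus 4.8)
\textbf{Plan of proof for Theorem \ref{PermuThEo2}.} The plan is to prove the two implications separately, the ``if'' direction being elementary and the ``only if'' direction being the substantial one. First I would dispatch the easy direction: given a measurable closed-valued multifunction $\Lambda:X\dto Y$ with $\mathcal{L}=L^p_\mu(X;\Lambda)$, one checks directly that $\mathcal{L}$ is closed in $L^p_\mu(X;Y)$ (a limit in $L^p_\mu$ has an a.e.-convergent subsequence, and $\Lambda(x)$ is closed, so the limit still takes values in $\Lambda$) and that it is $\mathcal{X}(\Omega)$-decomposable (if $\varphi(x)\in\Lambda(x)$ and $\hat\varphi(x)\in\Lambda(x)$ $\mu$-a.e. and $E\subset X$ is measurable, then $1_E\varphi+(1-1_E)\hat\varphi$ equals $\varphi$ on $E$ and $\hat\varphi$ off $E$, hence still lies pointwise a.e.\ in $\Lambda$); nonemptiness of $\mathcal{L}$ follows from a measurable selection theorem applied to $\Lambda$ (which requires $\Lambda$ to admit an $L^p$ selection, a point I would handle by the standard truncation argument since $Y$ is separable).

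The core of the argument is the ``only if'' direction: starting from a nonempty closed $\mathcal{X}(\Omega)$-decomposable set $\mathcal{L}\subset L^p_\mu(X;Y)$, I must manufacture the multifunction $\Lambda$. The natural candidate is the $\mu$-essential supremum of $\mathcal{L}$, i.e.\ the smallest measurable closed-valued $\Lambda$ with $\varphi(x)\in\Lambda(x)$ $\mu$-a.e.\ for every $\varphi\in\mathcal{L}$; since $Y$ is separable this object exists and, by Remark \ref{mu-Essential supremum}, can be represented as $\Lambda(x)={\rm adh}\{\varphi_k(x):k\in\NN\}$ $\mu$-a.e.\ for a countable family $\{\varphi_k\}\subset\mathcal{L}$. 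With this choice the inclusion $\mathcal{L}\subset L^p_\mu(X;\Lambda)$ is immediate, so everything reduces to the reverse inclusion $L^p_\mu(X;\Lambda)\subset\mathcal{L}$. Here is where decomposability does the work: given $\psi\in L^p_\mu(X;\Lambda)$, I would approximate $\psi$ in $L^p_\mu$ by elements of $\mathcal{L}$ and then invoke closedness of $\mathcal{L}$. The approximation is built by partitioning $X$ (up to a set of small measure and using $\sigma$-compactness / the fact that $\mu$ is Radon) into finitely many measurable pieces $E_1,\dots,E_r$ on each of which $\psi$ is within $\eps$ of some single $\varphi_{k_i}(x)$; patching the $\varphi_{k_i}$ together across the $E_i$ via characteristic functions stays inside $\mathcal{L}$ by $\mathcal{X}(\Omega)$-decomposability (applied finitely many times), and on the leftover small-measure set one keeps a fixed element of $\mathcal{L}$. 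Letting $\eps\to0$ gives $\psi$ as an $L^p_\mu$-limit of members of $\mathcal{L}$, hence $\psi\in\mathcal{L}$.

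The main obstacle I anticipate is the careful construction of that finite partition: one needs, for each $x$ outside a null set, that $\psi(x)$ lies in the closure of $\{\varphi_k(x):k\in\NN\}$, and one must choose the assignment $x\mapsto k(x)$ in a \emph{measurable} way and then coarsen it to a \emph{finite} measurable partition with controlled $L^p$-error — this is essentially a measurable selection plus an Egorov/Lusin-type argument, and the integrability bookkeeping (making sure the patched function is genuinely in $L^p_\mu$, using that the $\varphi_k$ and $\psi$ all are) has to be done with some care because $X$ need not have finite measure. A secondary technical point is verifying that the $\mu$-essential supremum is itself a \emph{measurable} multifunction with \emph{closed} values; this follows from the countable-family representation together with the fact that a countable union of measurable multifunctions, followed by pointwise closure, is measurable (graph-measurability is preserved), so I would cite the relevant measurability facts for multifunctions in a separable space rather than reproving them. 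Once the reverse inclusion is in hand, the theorem follows.
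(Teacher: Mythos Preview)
The paper does not give its own proof of this statement: Theorem~\ref{PermuThEo2} is quoted inside the \emph{sch\'ema de d\'emonstration} of Theorem~\ref{InfIntTH1} and attributed directly to Hiai--Umegaki (see the citation to \cite[Theorem 3.1]{hiai-umegaki77}), with no argument supplied. So there is nothing in the paper to compare your proposal against.

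That said, your outline is essentially the standard proof of the Hiai--Umegaki characterisation and is sound in structure. The ``if'' direction is handled correctly. For the ``only if'' direction, your choice of $\Lambda$ as the $\mu$-essential supremum of $\mathcal{L}$, represented via a countable family $\{\varphi_k\}\subset\mathcal{L}$, is exactly the right object, and the approximation of an arbitrary $\psi\in L^p_\mu(X;\Lambda)$ by finite patchworks $\sum_i 1_{E_i}\varphi_{k_i}$ (plus a fixed element of $\mathcal{L}$ on a residual set) is the standard mechanism. One refinement worth making explicit: rather than Egorov/Lusin, it is cleaner to set $A_k^n:=\{x:|\psi(x)-\varphi_k(x)|<1/n\}$, disjointify in $k$, and observe that $\cup_k A_k^n$ has full measure; the $L^p$-error on the tail $X\setminus\cup_{k\le r}A_k^n$ is controlled by $\int_{X\setminus\cup_{k\le r}A_k^n}(|\psi|^p+|\varphi_0|^p)\,d\mu\to 0$ as $r\to\infty$ by dominated convergence, since both $\psi$ and the fixed $\varphi_0\in\mathcal{L}$ lie in $L^p_\mu$. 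This avoids any finiteness assumption on $\mu$ and makes the integrability bookkeeping you flagged straightforward. The measurability of $\Lambda$ is, as you say, immediate from the countable-family representation.
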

\begin{theorem}\label{PermuThEo1}
Si $\Lambda:X\dto Y$ est une multifonction mesurable \`a valeurs ferm\'ees non vides alors 
\begin{equation}\label{EqUaLItyPermuThEo1}
\inf_{\varphi\in L^p_\mu(X;\Lambda)}\int_{\M}f(x,\varphi(x))d\mu(x)=\int_{\M}\inf_{\zeta\in\Lambda(x)}f(x,\zeta)d\mu(x).
\end{equation}
\end{theorem}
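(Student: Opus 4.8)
The plan is to prove the two inequalities in \eqref{EqUaLItyPermuThEo1} separately. Set $m(x):=\inf_{\zeta\in\Lambda(x)}f(x,\zeta)$. First I would check that $m$ is measurable: since $Y$ is separable and $\Lambda$ is a measurable multifunction with nonempty closed values, the Castaing representation theorem provides measurable maps $u_n\colon X\to Y$, $n\ge1$, with $\Lambda(x)=\overline{\{u_n(x):n\ge1\}}$ for every $x$; as $f(x,\cdot)$ is continuous, passing to the closure leaves the infimum unchanged, so $m(x)=\inf_{n\ge1}f(x,u_n(x))$, a countable infimum of measurable functions. The inequality ``$\ge$'' is then immediate: every $\varphi\in L^p_\mu(X;\Lambda)$ satisfies $f(x,\varphi(x))\ge m(x)$ for $\mu$-a.e.\ $x$, hence $\int_X f(x,\varphi(x))\,d\mu\ge\int_X m(x)\,d\mu$, and one takes the infimum over $\varphi$.

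For ``$\le$'' I would start from the easy reductions. If $L^p_\mu(X;\Lambda)=\emptyset$ there is nothing to prove, and if $\int_X m\,d\mu=+\infty$ the inequality is trivial since $m\le f(\cdot,\varphi(\cdot))$ for every admissible $\varphi$; so assume $\int_X m\,d\mu<+\infty$ and fix a reference element $\bar\varphi\in L^p_\mu(X;\Lambda)$ with, say, $f(\cdot,\bar\varphi(\cdot))\in L^1_\mu$ (automatic when $f\ge0$, as in the paper's applications of this theorem). Given $\varepsilon>0$ and $j\ge1$, use $\sigma$-finiteness of $\mu$ to pick a strictly positive $g_\varepsilon\in L^1_\mu(X)$ with $\int_X g_\varepsilon\,d\mu\le\varepsilon$, and build an almost-optimal measurable selection of $\Lambda$ by disjointification: with $E_n:=\{x:f(x,u_n(x))\le\max(m(x)+g_\varepsilon(x),-j)\}$ and $F_n:=E_n\setminus\bigcup_{k<n}E_k$, the function $\varphi_0:=\sum_{n\ge1}u_n\,1_{F_n}$ is a measurable selection of $\Lambda$ with $f(x,\varphi_0(x))\le\max(m(x)+g_\varepsilon(x),-j)$ $\mu$-a.e.\ (the weight $g_\varepsilon$ keeps the perturbation integrable on sets of infinite $\mu$-measure, and the truncation $-j$ handles the points where $m=-\infty$).

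Since $\varphi_0$ need not belong to $L^p_\mu$, the next step is to splice it with $\bar\varphi$ along an exhaustion $X=\bigcup_N X_N$ with $X_N\uparrow X$ and $\mu(X_N)<\infty$: setting $B_N:=X_N\cap\{|\varphi_0|\le N\}$, the functions $\varphi_N:=\varphi_0\,1_{B_N}+\bar\varphi\,1_{X\setminus B_N}$ are $\Lambda$-valued and lie in $L^p_\mu(X;\Lambda)$, with $\int_X f(x,\varphi_N(x))\,d\mu=\int_{B_N}f(x,\varphi_0(x))\,d\mu+\int_{X\setminus B_N}f(x,\bar\varphi(x))\,d\mu$. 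Letting $N\to\infty$ (Fatou on the first term, bounded above by $\max(m+g_\varepsilon,-j)$, and $f(\cdot,\bar\varphi(\cdot))\in L^1_\mu$ on the second) yields $\inf_\varphi\int_X f(x,\varphi(x))\,d\mu\le\int_X\max(m(x)+g_\varepsilon(x),-j)\,d\mu$; then $j\to\infty$ (monotone convergence) gives $\le\int_X(m+g_\varepsilon)\,d\mu\le\int_X m\,d\mu+\varepsilon$, and finally $\varepsilon\downarrow0$ gives ``$\le$''. When $\mu(X)<+\infty$, the only case needed elsewhere in the paper, one may simply take $g_\varepsilon\equiv\varepsilon$ and the limiting procedure is shorter.

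The hard part is the ``$\le$'' direction, and within it the genuinely delicate point is reconciling the measurable-selection construction with the $L^p$-membership requirement: the Castaing-based selection $\varphi_0$ is essentially optimal pointwise but a priori unrelated to $L^p_\mu$, so it must be glued to a fixed admissible $L^p$ element along an exhaustion of $X$ and the resulting limit controlled — which is exactly where $\sigma$-finiteness of $\mu$ enters, together with some bookkeeping for the values $\pm\infty$ of $f$ and of the integrals. Everything else is soft. As an alternative route, this statement is the Carath\'eodory special case of Rockafellar's interchange theorem (see \cite[Theorem 3A]{rockafellar75}), from which it could be quoted directly.
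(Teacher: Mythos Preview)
The paper does not give its own proof of this statement: Th\'eor\`eme~\ref{PermuThEo1} is simply quoted from Hiai--Umegaki \cite[Theorem 2.2]{hiai-umegaki77} (and, as the paper notes in the remark following Th\'eor\`eme~\ref{InfIntTH1}, it also follows from Rockafellar \cite[Theorem 3A]{rockafellar75}). It is used as a black box in the proof of Th\'eor\`eme~\ref{InfIntTH1}.

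Your sketch is essentially the standard proof one finds in Hiai--Umegaki: Castaing representation to get measurability of the infimum and to build near-optimal measurable selections, then splicing with a fixed $L^p$ element along an exhaustion to force $L^p$-membership. A couple of small points of looseness worth tightening if you write this out in full: (i) the existence of a reference $\bar\varphi\in L^p_\mu(X;\Lambda)$ with $f(\cdot,\bar\varphi(\cdot))\in L^1_\mu$ is \emph{not} automatic from $f\ge 0$ alone --- in the paper this is exactly hypothesis \eqref{HypotHeSePerMut} at the level of Th\'eor\`eme~\ref{InfIntTH1}, and without it the left-hand side may be $+\infty$ while the right-hand side is finite; (ii) in the limit $N\to\infty$, the first term requires an upper bound, so what you use is monotone/dominated convergence on $\int_{B_N}\max(m+g_\varepsilon,-j)\,d\mu$ (with dominating function $\max(m,0)+g_\varepsilon\in L^1_\mu$), not Fatou. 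Neither point affects the overall correctness of the strategy.
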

 Du th\'eor\`eme \ref{PermuThEo2} on d\'eduit que ${\rm adh}_p(\mathcal{H})=L^p_\mu(X;\Lambda)$ avec $\Lambda:X\dto Y$ une multifonction mesurable \`a valeurs ferm\'ees non vides. Comme $\Gamma(x)={\rm adh}\{\varphi(x):\varphi\in\mathcal{D}\}$ $\mu$-p.p. dans $X$ avec $\mathcal{D}$ un sous-ensemble d\'enombrable de $\mathcal{H}$ (voir la remarque \ref{mu-Essential supremum}) et $\mathcal{H}\subset{\rm adh}_p(\mathcal{H})$, on a $\Gamma(x)\subset\Lambda(x)$ $\mu$-p.p. dans $X$, donc $L^p_\mu(X;\Gamma)\subset{\rm adh}_p(\mathcal{H})$. D'autre part, si $\varphi\in{\rm adh}_p(\mathcal{H})$ alors il existe $\{\varphi_n\}_{n\ge 1}\subset\mathcal{H}$ tel que $\varphi_n(x)\to\varphi(x)$ $\mu$-p.p. dans $X$. Or $\varphi_n(x)\in\Gamma(x)$ $\mu$-p.p. dans $X$ pour tout $n\geq 1$ (par d\'efinition du $\mu$-essentiel du supremum), donc $\varphi(x)\in\Gamma(x)$ $\mu$-p.p. dans $X$ puisque $\Gamma$ est \`a valeurs ferm\'ees, d'o\`u ${\rm adh}_p(\mathcal{H})\subset L^p_\mu(X;\Gamma)$. Il suit que ${\rm adh}_p(\mathcal{H})=L^p_\mu(X;\Lambda)=L^p_\mu(X;\Gamma)$. Ainsi, utilisant le th\'eor\`eme \ref{PermuThEo1}, on a (\ref{EqUaLItyPermuThEo1}) avec $L^p_\mu(X;\Lambda)={\rm adh}_p(\mathcal{H})$ et $\Lambda=\Gamma$ que l'on combine avec (\ref{EQuAlITy1PerMUtAtIOnTheoREM}) pour obtenir (\ref{interchangeproblem}).
\end{proof}

\subsection{Une version continue du th\'eor\`eme de Hiai-Umegaki}
Rappelons d'abord la d\'efinition suivante (voir \cite{aubin-frankowska90} pour plus de d\'etails).
\begin{definition}\label{DefMultifonctionSCI}
On dit que  $\Lambda:X\dto Y$  est une multifonction sci si pour chaque ferm\'e $F\subset X$, chaque $x\in X$ et chaque $\{x_n\}_{n\geq 1}\subset X$ tels que $x_n\to x$ et $\Lambda(x_n)\subset F$ pour tout $n\geq 1$, on a $\Lambda(x)\subset F$. 
\end{definition}
Le th\'eor\`eme suivant est une cons\'equence du th\'eor\`eme \ref{InfIntTH1} (voir \cite[Corollary 5.4]{oah-jpm03}).
\begin{theorem}\label{InfIntTH2}
Soit $\Sigma\subset\RR^N$ un ouvert born\'e. Supposons que {:}
\begin{eqnarray}\label{ContInterHyp1}
&&\ \ f:\overline{\Sigma}\times\RR^m\to[0,+\infty]\hbox{ est une int\'egrande de Carath\'eodory}\hbox{ ;}\\
&&\ \ \Gamma:\overline{\Sigma}\dto\RR^m\hbox{ est une multifonction sci \`a valeurs convexes ferm\'ees non vides}\hbox{ ;}\label{ContInterHyp2}\\
&&\ \ \int_\Sigma\max_{\alpha\in[0,1]}f(x,\alpha\varphi(x)+(1-\alpha)\hat\varphi(x))dx<+\infty \hbox{ pour tous }\varphi,\hat\varphi\in C(\overline{\Sigma};\Gamma)\label{ContInterHyp3}
\end{eqnarray}
{(}avec $C(\overline{\Sigma};\Gamma):=\big\{\varphi\in C(\overline{\Sigma};\RR^m):\varphi(x)\in\Gamma(x)\hbox{ p.p. dans }\overline{\Sigma}\big\}$ o\`u $C(\overline{\Sigma};\RR^m)$ d\'esigne l'espace des fonctions continues de $\overline{\Sigma}$ dans $\RR^m${)}. Alors 
\[
\inf_{\varphi\in C(\overline{\Sigma};\Gamma)}\int_\Omega f(x,\varphi(x))dx=\int_{\Sigma}\inf_{\zeta\in\Gamma(x)}f(x,\zeta)dx.
\]
\end{theorem}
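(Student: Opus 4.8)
The statement is Theorem~\ref{InfIntTH2}, a ``continuous'' version of the Hiai--Umegaki interchange theorem. The plan is to deduce it directly from Theorem~\ref{InfIntTH1} by checking that the set $\mathcal{H}:=C(\overline{\Sigma};\Gamma)$ satisfies the hypotheses of that theorem with $M=\overline{\Sigma}$, $Y=\RR^m$ and $\mu$ the Lebesgue measure, and by identifying the $\mu$-essential supremum of $\mathcal{H}$ with $\Gamma$ itself. First I would observe that $\overline{\Sigma}$ is a compact metric space (hence $\sigma$-compact and locally compact) and $\RR^m$ a separable real Banach space, so the framework of \S5.1 applies, and $(\ref{ContInterHyp1})$ gives exactly that $f$ is a Carath\'eodory integrand.

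\textbf{Step 1: $\mathcal{H}$ is nonempty and normally decomposable.} Since $\Gamma$ is a lower semicontinuous multifunction with nonempty closed convex values on a compact (hence paracompact) space, Michael's selection theorem yields a continuous selection, so $C(\overline{\Sigma};\Gamma)\neq\emptyset$. For normal decomposability: given $\varphi,\hat\varphi\in\mathcal{H}$ and $K\subset V\subset\overline{\Sigma}$ with $K$ compact and $V$ open, Urysohn's lemma provides a continuous $\theta:\overline{\Sigma}\to[0,1]$ with $\theta=1$ on $K$ and $\theta=0$ outside $V$; then $\theta\varphi+(1-\theta)\hat\varphi$ is continuous and, because $\Gamma(x)$ is \emph{convex}, it takes values in $\Gamma(x)$ for every $x$, hence lies in $\mathcal{H}$. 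This is precisely the situation described in Remark~\ref{NormallyDecomposableSets} (the set of $\Gamma$-valued continuous functions with $\Gamma$ convex-valued is normally decomposable).

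\textbf{Step 2: the integrability hypothesis $(\ref{HypotHeSePerMut})$.} Here I would fix any $\hat\varphi\in\mathcal{H}$ (which exists by Step~1). For an arbitrary $\varphi\in\mathcal{H}$, hypothesis $(\ref{ContInterHyp3})$ says that $\func_\varphi(x)=\max_{\alpha\in[0,1]}f(x,\alpha\varphi(x)+(1-\alpha)\hat\varphi(x))$ is not just locally integrable but in fact in $L^1(\Sigma)$; in particular $\func_\varphi\in L^1_{{\rm loc}}(\overline{\Sigma})$, and, taking $\varphi=\hat\varphi$, also $\hat\varphi\in\mathcal{H}\cap D(f)$. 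So $(\ref{HypotHeSePerMut})$ holds a fortiori. (One should note $f\geq 0$, so there is no issue with the sign of the integrand.)

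\textbf{Step 3: identifying the $\mu$-essential supremum.} By Remark~\ref{mu-Essential supremum}, the $\mu$-essential supremum $\Lambda$ of $\mathcal{H}=C(\overline{\Sigma};\Gamma)$ satisfies $\Lambda(x)={\rm adh}\{\varphi(x):\varphi\in\mathcal{H}\}$ for a.e.\ $x$; I would need to show this equals $\Gamma(x)$. The inclusion $\Lambda(x)\subset\Gamma(x)$ is immediate since every $\varphi\in\mathcal{H}$ satisfies $\varphi(x)\in\Gamma(x)$ and $\Gamma(x)$ is closed. For the reverse inclusion, fix $\zeta\in\Gamma(x_0)$; using lower semicontinuity of $\Gamma$ together with Michael's theorem (applied after enlarging $\Gamma$ near $x_0$, or using a local selection through $\zeta$ patched to a global selection via a Urysohn cutoff), one produces a continuous $\Gamma$-valued $\varphi$ with $\varphi(x_0)=\zeta$, so $\zeta\in\Lambda(x_0)$ for every $x_0$ — in fact everywhere, not just a.e. Hence $\Lambda=\Gamma$. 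Applying Theorem~\ref{InfIntTH1} with this $\mathcal{H}$ and $\Gamma$ then gives $\inf_{\varphi\in C(\overline{\Sigma};\Gamma)}\int_\Sigma f(x,\varphi(x))dx=\int_\Sigma\inf_{\zeta\in\Gamma(x)}f(x,\zeta)dx$, which is the claim.

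\textbf{Main obstacle.} The routine parts (Urysohn, normal decomposability, the trivial inequality $\geq$) are painless; the real work is Step~3, the construction of continuous $\Gamma$-valued selections passing through a prescribed point $\zeta\in\Gamma(x_0)$, which is where lower semicontinuity of $\Gamma$ and convex-valuedness are genuinely used (this is the continuous-selection input that replaces measurable selection in the original Hiai--Umegaki setting). Since this theorem is quoted as \cite[Corollary~5.4]{oah-jpm03}, I would present the above as the structure of the reduction and refer to that paper for the selection details.
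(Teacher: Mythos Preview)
Your proposal is correct and follows essentially the same route as the paper: both apply Theorem~\ref{InfIntTH1} with $\mathcal{H}=C(\overline{\Sigma};\Gamma)$, check normal decomposability via convexity of $\Gamma(x)$, derive (\ref{HypotHeSePerMut}) from (\ref{ContInterHyp3}), and identify the $\mu$-essential supremum with $\Gamma$ using Michael's selection theorem. The only cosmetic difference is that in Step~3 you sketch the construction of a selection through a prescribed point, whereas the paper simply invokes the full statement of Michael's theorem (Theorem~\ref{MichaelSelectionTheorem}), which already asserts $\Gamma(x)=\{\varphi(x):\varphi\in C(\overline{\Sigma};\Gamma)\}$ for every $x$.
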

\begin{proof}
On applique le th\'eor\`eme \ref{InfIntTH1} avec $X=\overline{\Sigma}$, $Y=\RR^m$, $f:\overline{\Sigma}\times\RR^m\to[0,+\infty]$ (qui est une int\'egrande de Carath\'eodory d'apr\`es \eqref{ContInterHyp1}) et $\mathcal{H}=C(\overline{\Sigma};\Gamma)$ (qui est normalement d\'ecomposable puisque d'apr\`es \eqref{ContInterHyp2} $\Gamma$ est \`a valeurs convexes). Comme (par \eqref{ContInterHyp2}) $\Gamma$ est sci \`a valeurs convexes ferm\'ees non vides, on a $C(\overline{\Sigma};\Gamma)\not=\emptyset$ par le th\'eor\`eme  suivant (de s\'election continue de Michael, voir \cite{michael56}).

\begin{theorem}\label{MichaelSelectionTheorem}Si $\Lambda:X\dto Y$ est une multifonction sci \`a valeurs convexes ferm\'ees non vides
alors $\Lambda(x)=\{\varphi(x):\varphi\in C(X;\Lambda)\}$ pour tout $x\in X$. (En particulier, $C(X;\Lambda)\not=\emptyset$.)
\end{theorem}
D'o\`u \eqref{HypotHeSePerMut}  se d\'eduit de \eqref{ContInterHyp3}. Donc, la seule chose \`a prouver est  que 
\begin{equation}\label{ContinuousHUTheorEMEquatION}
\Gamma(x)=\hat\Gamma(x)\hbox{ p.p. dans }\overline{\Sigma},
\end{equation}
o\`u $\hat\Gamma$ d\'esigne l'essentiel supremum (par rapport \`a la mesure de Lebesgue)  de $C(\overline{\Sigma};\Gamma)$. D'apr\`es le premier point de la remarque \ref{mu-Essential supremum}, il existe un ensemble d\'enombrable $\mathcal{D}\subset C(\overline{\Sigma};\Gamma)$ tel que $\hat\Gamma(x)={\rm adh}\{\varphi(x):\varphi\in\mathcal{D}\}$ p.p. dans $\overline{\Sigma}$. Donc $\hat\Gamma(x)\subset\Gamma(x)$ p.p. dans $\overline{\Sigma}$. D'autre part, (par le th\'eor\`eme \ref{MichaelSelectionTheorem}) on a $\Gamma(x)=\{\varphi(x):\varphi\in C(\overline{\Sigma};\RR^m)\}$ pour tout $x\in\overline{\Sigma}$. De plus, comme $\overline{\Sigma}$ est compact on a $\hat\Gamma(x)={\rm adh}\{\varphi(x):\varphi\in C(\overline{\Sigma};\RR^m)\}$ d'apr\`es la deuxi\`eme point de la remarque \ref{mu-Essential supremum}, donc $\Gamma(x)\subset\hat\Gamma(x)$ p.p. dans $\overline{\Sigma}$, et \eqref{ContinuousHUTheorEMEquatION} suit.
\end{proof}
\begin{remark}
On peut facilement \'etendre le th\'eor\`eme \ref{InfIntTH2} comme suit.

\begin{theorem}
Soient $\Sigma\subset\RR^N$ un ouvert born\'e et $\Lambda:\overline{\Sigma}\dto\RR^m$ une multifonction. Sous (\ref{ContInterHyp1}) s'il existe une suite $\{\Gamma_n\}_{n\geq 1}$ de multifonctions sci de $\overline{\Sigma}$ dans $\RR^m$ \`a valeurs convexes ferm\'ees non vides telle que 
\[
 \Gamma_1(x)\subset\Gamma_2(x)\subset\Gamma_3(x)\subset\cdots\subset\cup_{n\geq 1}\Gamma_n(x)=\Lambda(x)\hbox{ pour tout }x\in\overline{\Sigma}
 \] 
et si pour chaque $n\geq 1$ on a (\ref{ContInterHyp3}) avec $\Gamma=\Gamma_n$ alors 
\[
\inf_{\varphi\in C(\overline{\Sigma};\Lambda)}\int_\Omega f(x,\varphi(x))dx=\int_{\Sigma}\inf_{\zeta\in\Lambda(x)}f(x,\zeta)dx.
\]
\end{theorem}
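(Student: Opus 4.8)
The plan is to reduce the statement to Theorem~\ref{InfIntTH2} applied to each $\Gamma_n$ separately, and then to recover the full multifunction $\Lambda$ by an exhaustion (monotone limit) argument. Only the inequality ``$\leq$'' needs work; the reverse one is immediate. Indeed, for every $\varphi\in C(\overline{\Sigma};\Lambda)$ we have $f(x,\varphi(x))\geq\inf_{\zeta\in\Lambda(x)}f(x,\zeta)$ for a.e.\ $x$, hence $\int_\Sigma f(x,\varphi(x))dx\geq\int_\Sigma\inf_{\zeta\in\Lambda(x)}f(x,\zeta)dx$, and taking the infimum over $\varphi$ yields
\[
\inf_{\varphi\in C(\overline{\Sigma};\Lambda)}\int_\Sigma f(x,\varphi(x))dx\ \geq\ \int_\Sigma\inf_{\zeta\in\Lambda(x)}f(x,\zeta)dx .
\]
Here the integrand on the right is measurable: since the $\Gamma_n$ exhaust $\Lambda$, one has $\inf_{\zeta\in\Lambda(x)}f(x,\zeta)=\inf_{n\geq 1}\inf_{\zeta\in\Gamma_n(x)}f(x,\zeta)$, a countable infimum of the measurable functions $x\mapsto\inf_{\zeta\in\Gamma_n(x)}f(x,\zeta)$ furnished by Theorem~\ref{InfIntTH2}.

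For ``$\leq$'', fix $n\geq 1$. The multifunction $\Gamma_n$ is sci with nonempty closed convex values, and~(\ref{ContInterHyp3}) holds with $\Gamma=\Gamma_n$ by hypothesis, so Theorem~\ref{InfIntTH2} gives
\[
\inf_{\varphi\in C(\overline{\Sigma};\Gamma_n)}\int_\Sigma f(x,\varphi(x))dx=\int_\Sigma\inf_{\zeta\in\Gamma_n(x)}f(x,\zeta)dx .
\]
Since $\Gamma_n(x)\subset\Lambda(x)$ for all $x$ we have $C(\overline{\Sigma};\Gamma_n)\subset C(\overline{\Sigma};\Lambda)$, whence $\inf_{\varphi\in C(\overline{\Sigma};\Lambda)}\int_\Sigma f(x,\varphi(x))dx\leq\int_\Sigma\inf_{\zeta\in\Gamma_n(x)}f(x,\zeta)dx$ for every $n$. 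Now I let $n\to+\infty$: the functions $g_n(x):=\inf_{\zeta\in\Gamma_n(x)}f(x,\zeta)$ are nonnegative, nonincreasing in $n$ (because $\Gamma_n(x)\subset\Gamma_{n+1}(x)$), and converge pointwise to $\inf_{\zeta\in\Lambda(x)}f(x,\zeta)$; moreover $g_1\in L^1(\Sigma)$, since $C(\overline{\Sigma};\Gamma_1)\neq\emptyset$ by Michael's selection theorem (Theorem~\ref{MichaelSelectionTheorem}), and for such a $\varphi$, taking $\hat\varphi=\varphi$ in~(\ref{ContInterHyp3}) shows $\int_\Sigma f(x,\varphi(x))dx<+\infty$, hence $\int_\Sigma g_1\,dx\leq\int_\Sigma f(x,\varphi(x))dx<+\infty$ by Theorem~\ref{InfIntTH2}. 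The dominated convergence theorem then gives $\int_\Sigma g_n\,dx\to\int_\Sigma\inf_{\zeta\in\Lambda(x)}f(x,\zeta)dx$, so
\[
\inf_{\varphi\in C(\overline{\Sigma};\Lambda)}\int_\Sigma f(x,\varphi(x))dx\ \leq\ \int_\Sigma\inf_{\zeta\in\Lambda(x)}f(x,\zeta)dx ,
\]
and together with the first inequality this proves the equality.

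I do not anticipate any genuine difficulty: this is a routine exhaustion combined with a limit passage, and every nontrivial tool (Theorem~\ref{InfIntTH2}, Michael's selection theorem) is already at hand. The only two points deserving a line of justification are the measurability of $x\mapsto\inf_{\zeta\in\Lambda(x)}f(x,\zeta)$ and the $L^1$-domination needed to pass to the limit in the decreasing sequence $\{g_n\}_{n\geq 1}$; both are handled as indicated, and hypothesis~(\ref{ContInterHyp3}) is used precisely to secure the latter (it is not needed for $\Lambda$ itself, only for each $\Gamma_n$). One could, if desired, replace dominated convergence by the monotone convergence theorem for decreasing sequences, which requires the same integrability of $g_1$.
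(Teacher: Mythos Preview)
Your proof is correct and is precisely the argument the authors have in mind: the paper does not give a proof of this theorem (it is stated in a remark as an easy extension of Th\'eor\`eme~\ref{InfIntTH2}), but your exhaustion-plus-monotone-limit scheme is exactly the one they carry out in the concrete instances, namely in the proofs of Th\'eor\`emes~\ref{IntermediateTheorDetNot} and~\ref{IntermediateTheorDet>}, where the same monotone convergence step (with the integrability of the first term supplied by the standing hypothesis) is used. Nothing to add.
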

\end{remark}

\bibliographystyle{acm}

\end{document}